\newtheorem{theorem}{Theorem}[section]
\newtheorem{lemma}[theorem]{Lemma}
\newtheorem{proposition}[theorem]{Proposition}
\newtheorem{corollary}[theorem]{Corollary}
\theoremstyle{definition}
\newtheorem{example}[theorem]{Example}
\newtheorem{remark}[theorem]{Remark}
\newcommand{\meet}{\wedge}
\newcommand{\join}{\vee}
\newcommand{\under}{\backslash}
\newcommand{\ovr}{\slash}
\newcommand\upset{\mathord{\uparrow}}
\newcommand{\m}{\mathbf}
\newcommand{\downset}{\downarrow}
\newcommand\g[1]{g_{\bf{#1}}}
\newcommand\f[1]{f_{\bf{#1}}}
\newcommand{\gb}{\g{B}}
\newcommand{\gc}{\g{C}}
\newcommand{\fb}{\f{B}}
\newcommand{\fc}{\f{C}}
\newcommand{\rd}{{/}}
\newcommand{\ld}{{\backslash}}
\newcommand{\ra}{\mathbin{\rightarrow}}
\newcommand{\jn}{\vee}
\newcommand{\mt}{\wedge}
\newcommand{\Lra}{\Leftrightarrow}
\newcommand{\lra}{\leftrightarrow}
\newcommand{\fm}{\mathsf{Fm}}
\newcommand{\var}{\mathsf{Var}}
\newcommand{\CIL}{{\bf sCI}}
\newcommand{\SIL}{{\bf sLI}}
\journal{Annals of Pure and Applied Logic}
\begin{document}

\begin{frontmatter}



\title{Semiconic Idempotent Logic II:\\ Beth Definability and Deductive Interpolation}

 \author[label1]{Wesley Fussner}
 \affiliation[label1]{organization={Mathematical Institute, University of Bern},
             country={Switzerland}}

 \author[label2]{Nikolaos Galatos}
 \affiliation[label2]{organization={Department of Mathematics, University of Denver},
             country={USA}}




\begin{abstract}
Semiconic idempotent logic $\CIL$ is a common generalization of intuitionistic logic, semilinear idempotent logic $\SIL$, and in particular relevance logic with mingle. We establish the projective Beth definability property and the deductive interpolation property for many extensions of $\m{\CIL}$, and identify extensions where these properties fail. We achieve these results by studying the (strong) amalgamation property and the epimorphism-surjectivity property for the corresponding algebraic semantics, viz. semiconic idempotent residuated lattices. Our study is made possible by the structural decomposition of conic idempotent models achieved in the prequel, as well as a detailed analysis of the structure of idempotent residuated chains serving as index sets in this decomposition. Here we study the latter on two levels: as certain enriched Galois connections and as enhanced monoidal preorders. Using this, we show that although conic idempotent residuated lattices do not have the amalgamation property, the natural class of rigid and conjunctive conic idempotent residuated lattices has the strong amalgamation property, and thus has surjective epimorphisms. This extends to the variety generated by rigid and conjunctive conic idempotent residuated lattices, and we establish the (strong) amalgamation and epimorphism-surjectivity properties for several important subvarieties. Using the algebraizability of $\m{\CIL}$, this yields the deductive interpolation property and the projective Beth definability property for the corresponding substructural logics extending $\m{\CIL}$.
\end{abstract}



\begin{keyword}

substructural logics \sep semiconic idempotent logic \sep semilinear residuated lattices \sep interpolation  \sep (strong) amalgamation property \sep Beth definability \sep surjective epimorphisms \sep deduction theorems




\MSC[2020] 03G25 \sep 03B47 \sep 03C40

\end{keyword}

\end{frontmatter}



\section{Introduction}

This is the second in our series of papers on semiconic idempotent logic $\CIL$, a substructural logic comprising a common framework for the study of superintuitionistic logics and semilinear idempotent logics. In the prequel to this paper \cite{FG1}, we provide a detailed structural analysis of the algebraic models of $\CIL$ and deploy this to obtain local deduction theorems for all extensions of $\CIL$. In the present paper, we apply the results of \cite{FG1} to study the Beth definability and deductive interpolation property in $\CIL$ and its extensions. Our study advances the literature on interpolation and Beth definability in substructural logics on several different fronts, but two aspects of this investigation are especially notable. First, our work adds to Maksimova's celebrated classification of superintuitionistic logics with interpolation \cite{Mak77}, and in particular extends Maksimova's work to certain logics without the weakening rule. Second, in this study we identify several logics without the exchange rule that enjoy the deductive interpolation property. Until recently it was unknown whether there are any logics without exchange that enjoy deductive interpolation (cf. \cite{GJM2020}), and here we catalog several natural examples of this phenomenon. Our investigation thus contributes to the study both of logics without weakening and logics without exchange.

We refer the reader to \cite[Section~1]{FG1} for a detailed discussion of substructural logics and their motivation in relation to this study, but recall here that substructural logics are resource-sensitive systems of reasoning that generalize both classical and intuitionistic logic. Typically, they are formulated as extensions of the full Lambek calculus $\m {FL}$, which is often presented by a Gentzen-style sequent calculus. Substructural logics may lack some of the main structural rules appearing in proof theory (contraction, weakening, and exchange), and this drives their widespread application in areas spanning computer science \cite{girard87,OHearnPym1999a}, philosophy \cite{AB75}, physics \cite{FStJ2021,FLP21}, and other areas. $\m {FL}$ and its axiomatic extensions are algebraizable logics in the Blok and Pigozzi's sense, and their equivalent algebraic semantics are given by varieties of residuated lattices; see \cite{GJKO2007}.

Idempotent logic, axiomatized relative to $\m {FL}$ by the contraction and mingle rules, is one of the most prominent substructural logics. It includes intuitionistic logic and relevance logic with mingle; see \cite{FG2019}. On the other hand, semilinear logic is the extension of $\m {FL}$ modeled by the variety generated by totally ordered residuated lattices, and generalizes G\"{o}del-Dummett logic \cite{met:god}, \L{}ukasiewicz logic \cite{CDM2000,FGGM2021}, H\'{a}jek's basic fuzzy logic \cite{Haj98,F2021,FZ2021}, and more generally monoidal t-norm based logics \cite{estgodo:mtl,FU2019}. Semilinear idempotent logic $\SIL$ is the intersection of idempotent logic and semilinear logic, and has been studied quite extensively via research on its algebraic models \cite{R2007,GJM2020,Ga2005,CZ2009}.

Our focus in this paper and in its prequel is semiconic idempotent logic $\CIL$, a generalization of $\SIL$ that also encompasses idempotent logics with the weakening rule, such as intuitionistic logic. In \cite{FG1} we give an analytical hypersequent axiomatization for $\m {\CIL}$ and study its algebraic semantics, the variety of semiconic idempotent residuated lattices. In particular, we prove a structural decomposition theorem for the finitely subdirectly irreducible members of the variety (conic idempotent residuated lattices) and use it to show that $\m {\CIL}$ has a local deduction theorem, among other results. In the present paper, we prove that the deductive interpolation property fails in both $\CIL$ and $\SIL$, and pinpoint the reason for these failures on a semantic level. Using this semantic insight, we identify several natural extensions of $\CIL$ and $\SIL$ that have both the deductive interpolation property and the Beth definability property. Our results rest on the structural description of conic idempotent residuated lattices obtained in \cite{FG1}, but require a further sharpening of this analysis. In particular, our work demands a thorough investigation of idempotent residuated chains appearing as `skeletons' in the decomposition of conic residuated lattices in \cite{FG1}. Therefore, the present study particularly contributes to the study of $\SIL$.

\paragraph{Outline of the paper} In Section~\ref{sec:connections}, we define the logic $\m {\CIL}$ and its algebraic semantics, viz.  semiconic idempotent residuated lattices. Further, we review various logical properties and corresponding bridge theorems that establish the connections between logical and algebraic properties. These include bridge theorems between local deduction theorems and the congruence extension property, between the deductive interpolation and amalgamation properties, and between the Beth definability property and epimorphism surjectivity.

In Section~\ref{sec:basicstructure}, we state the results from the prequel \cite{FG1} that will be used in this paper. In particular, we explain the importance of the inverse operations $^\ell$ and $^r$ in conic idempotent residuated lattices and how they allow us to get a structural decomposition via a nucleus. Moreover, we present a full description of congruence generation for the idempotent semilinear case and show how it leads to the proof of a local deduction theorem and a characterization of the finitely subdirectly irreducible semiconic idempotent residuated lattices.

In Section~\ref{sec:chains}, we describe totally ordered idempotent residuated chains, aiming in particular to describe the structure of those appearing as skeletons in the decomposition of Section~\ref{sec:basicstructure}, viz. quasi-involutive idempotent residuated chains. In Section~\ref{s:idempotent Galois connections}, we characterize the $\{\meet,\join,{}^r,{}^\ell,1\}$-reducts of idempotent residuated chains as a universal class consisting of certain enriched Galois connections, which we call \emph{idempotent Galois connections}. We show that idempotent Galois connections are definitionally equivalent to idempotent residuated chains. Then, in Section~\ref{s:flow diagrams}, we introduce \emph{flow diagrams} as a diagrammatic tool for understanding the action of the inversion operations in idempotent residuated chains. Flow diagrams provide a bridge between idempotent Galois connections and \emph{enhanced monoidal preorders}, introduced in Sections~\ref{s:ircs to empos} and \ref{s:empos to ircs} as another definitionally equivalent presentation of idempotent residuated chains. Enhanced monoidal preorders improve on the monoidal preorder representations considered in \cite{GJM2020} by expanding the latter by information about which elements are positive and negative, as well as with a minimality and maximality condition that is crucial in the infinite setting. In conjunction with idempotent Galois connections and flow diagrams, enhanced monoidal preorders provide a pictorial and combinatorial description of arbitrary idempotent residuated chains. In Section~\ref{sec:subalgebras of chains}, we describe subalgebra generation in terms of enhanced monoidal preorders in a transparent way. Then, in Section~\ref{s:weaklyinvolutive}, we identify a fundamental problem with amalgamating quasi-involutive idempotent residuated chains, and introduce the class of $^\star$-involutive idempotent residuated chains in order to rectify this problem; algebras that are not $^\star$-involutive always give rise to V-formations that do not possess an amalgam. In Section~\ref{s:one-generated}, we completely characterize one-generated $^\star$-involutive idempotent residuated chains in terms of their enhanced monoidal preorders and in Section~\ref{nestedsums} characterize arbitrary $^\star$-involutive idempotent residuated chains as nested sums of one-generated ones.

In Section~\ref{s:amalgamation}, we combine the ingredients assembled in the previous sections in order to study amalgamation in conic idempotent residuated lattices. We illustrate in Subsection~\ref{s:APfailschains} that the amalgamation property fails for idempotent residuated chains (via non $^\star$-involutive examples). Actually, we prove that there are V-formations of idempotent residuated chains that do not have an amalgam in the whole variety of semilinear idempotent residuated lattices, thus the latter lacks the amalgamation property. Consequently, $\SIL$ does not have the deductive interpolation property. Further, in Section~\ref{s:failure for rigid conic}, we show that the amalgamation property fails even for \emph{rigid} conic idempotent residuated lattices (i.e., those whose skeleton is $^\star$-involutive); in fact, the amalgamation property fails even for the variety of rigid semiconic idempotent residuated lattices and its commutative subvariety (Theorem~\ref{thm:failures rigid variety}). The problem causing the failure of amalgamation is that some blocks $\gamma^{-1}(a)$ may not be lattices. This problem is avoided if and only if the conic idempotent residuated lattice is \emph{conjunctive} (i.e., satisfies $\gamma(x \mt y)=\gamma(x) \mt \gamma(y)$). In Theorem~\ref{t:sAPqichains} we show that the class of $^\star$-involutive idempotent residuated chains has the strong amalgamation property, and in Theorem~\ref{t:sAPconic} that the class of rigid conjunctive conic idempotent residuated lattices has the strong amalgamation property. Using some new tools on extending the strong amalgamation property, we show that the varieties of $^\star$-involutive semilinear idempotent residuated lattices and rigid conjunctive semiconic idempotent residuated lattices both have the strong amalgamation property. In Section~\ref{s:amalg subvarieties}, we further establish the amalgamation and strong amalgamation property for several subvarieties, notably studying this property for the commutative subvarieties. As a consequence of the strong amalgamation property, we also obtain the epimorphism-surjectivity property for a number of the varieties we consider. We conclude by deducing from these results the deductive interpolation property and the projective Beth definability property for the corresponding extensions of $\m {\CIL}$.

\section{Semiconic idempotent logic and logical preliminaries}\label{sec:connections}

\subsection{Semiconic idempotent logic and its semantics} Semiconic idempotent logic $\m {\CIL}$ is an axiomatic extension of the basic substructural logic $\m {FL}$ (full Lambek calculus) and it admits an analytic hypersequent calculus; see \cite{FG1}. The logic  $\m {FL}$ is algebraizable and its algebraic semantics is the variety of residuated lattices. Here we introduce $\CIL$ in terms of its equivalent algebraic semantics, directing the reader to \cite{FG1} for further discussion and a proof-theoretic formulation.

A \emph{residuated lattice} is an algebra ${\bf A} = (A,\meet,\join,\cdot,\under,\ovr,1)$ such that $(A,\meet,\join)$ is a lattice, $(A,\cdot,1)$ is a monoid, and $\cdot,\under,\ovr$ are binary operations on $A$ with
$$y\leq x\under z \iff x\cdot y\leq z \iff x\leq z\ovr y$$
for all $x,y,z\in A$.
When $\cdot$ is commutative, we have $ x\under y = y\ovr x$ for all $x,y$ and their common value is denoted by $x\to y$.

In any residuated lattice ${\m A}$, we may define two \emph{inverse} operations by $x \mapsto x^r =x\under 1$ and $x \mapsto x^\ell = 1\ovr x$. These play a very important role in the classes of residuated lattices studied in this paper. We denote by $A^i$ the set of all inverse elements of $A$, i.e., $A^i=\{a^\ell : a \in A\} \cup \{a^r : a \in A\}$. Given a residuated lattice ${\m A}$, a  \emph{nucleus} on ${\m A}$ is a closure operator $\gamma$ on the lattice reduct of ${\m A}$ that satisfies $\gamma (x)\cdot\gamma (y)\leq\gamma (x\cdot y)$. If ${\m A}$ is a residuated lattice and $\gamma$ is a nucleus on ${\m A}$, then the image of $A$ under $\gamma$ forms a residuated lattice ${\m A}_\gamma$, where the operations $\meet,\under,\ovr$ are inherited from ${\m A}$, $\gamma(1)$ is the new unit, and the new product and join are defined by $(x,y)\mapsto \gamma(x\join y)$ and $(x,y)\mapsto \gamma(x\cdot y)$, respectively. For the residuated lattices implicated in our study, we obtain the set of inverses as a nuclear image of an appropriately chosen nucleus; see Section~\ref{subsec:skeleton}.

A residuated lattice ${\bf A}$ is called \emph{conic} if $a\leq 1$ or $1\leq a$ every $a\in A$. Our terminology derives from the fact that a residuated lattice $\m A$ is conic precisely when $A=A^- \cup A^+$, where $A^-=\{x \in A: x \leq 1\}$ is the  \emph{negative cone} and $A^+=\{x \in A: x \geq 1\}$ is the \emph{positive cone} of $\m A$. Elements of $A^+$ are said to be \emph{positive} or to have \emph{positive sign} and elements of $A^-$ are  \emph{negative} or have  \emph{negative sign}; note that the element $1$ has both signs, so it has simultaneously the same sign and the opposite sign of any other element. A residuated lattice ${\bf A}$ is \emph{idempotent} if it satisfies the identity $x^2=x$. It is called \emph{integral} if it satisfies $x \leq 1$, \emph{linearly ordered} if every element is comparable to $1$, and \emph{semilinear} if it is a subdirect product of linearly ordered residuated lattices. Thus, conic residuated lattices simultaneously generalize integral and linearly ordered residuated lattices.

\emph{Semiconic residuated lattices} are members of the variety generated by the conic residuated lattices. This variety is axiomatized relative to all residuated lattices by the identities
$$1 = \gamma_1(x\meet 1)\join \gamma_2((x\ld 1)\meet 1),$$
where $\gamma_1$ and $\gamma_2$ range over all iterated conjugates.\footnote{Recall that the left and right \emph{conjugate} of $a$ by $x$ are the elements $\lambda_x(a)=x \ld ax \mt 1 $ and $\rho_x(a)=xa \rd x \mt 1$. An \emph{iterated conjugate} is an arbitrary compositions of left and right conjugates with no restriction to the conjugating elements used.}

The logic  $\m {\CIL}$ is algebraizable and its algebraic semantics are semiconic idempotent residuated lattices; see \cite{GO2006}.
We denote by $\vdash_{\m {\CIL}}$ the deductive system (finitary consequence relation) related to $\m {\CIL}$. Since this system is algebraizable, in particular it is equivalential; see~\cite{FG1,BP1989,F2016}.

\subsection{Interpolation and Beth definability} This paper's chief goal is to establish the deductive interpolation property and the projective Beth definability property for various extensions of $\m {\CIL}$, and we now briefly discuss these properties. See \cite{FG1,BP1989,F2016} for further information.

Recall that in general a deductive system $\vdash$ over a language $\mathcal{L}$ has the \emph{(deductive) interpolation property} if 
\begin{center}
	whenever $\Gamma\vdash\varphi$, there exists a set of formulas $\Gamma'\subseteq\fm_\mathcal{L}$ with $\var(\Gamma')\subseteq\var(\Gamma)\cap\var(\varphi)$ and $\Gamma\vdash\Gamma'$, $\Gamma'\vdash\varphi$.\footnote{Here $\fm_\mathcal{L}$ denotes the collection of $\mathcal{L}$-formulas as usual, and $\var(\varphi)$ denotes the collection of propositional variables appearing in the formulas $\varphi$. The latter notation is extended sets of formulas in the obvious way: $\var(\Gamma) = \bigcup \{\var(\varphi) : \varphi\in\Gamma\}$.}
\end{center}
Intuitively, the \emph{interpolant} $\Gamma'$ gives an `explanation' for why $\varphi$ follows from $\Gamma$ within the logic $\vdash$.

Now suppose that $\vdash$ is an equivalential deductive system with the set of equivalence formulas $\Delta$, and let $X,Y,Z$ be pairwise disjoint sets of variables with $X\neq\emptyset$. Let $\Gamma\subseteq\fm_\mathcal{L}$ with $\var(\Gamma)\subseteq X\cup Y\cup Z$. The set of formulas $\Gamma$ is said to \emph{implicitly define $Z$ in terms of $X$ via $Y$} if 
\begin{center}
	for every $z\in Z$ and every substitution $\sigma$ with $\sigma(x)=x$ for all $x\in X$,\\ we have $\Gamma\cup\sigma[\Gamma]\vdash\Delta(z,\sigma(z))$.
\end{center}
 When $Y=\emptyset$, we simply say that $\Gamma$ \emph{implicitly defines $Z$ in terms of $X$}. We also say that $\Gamma$ \emph{explicitly defines $Z$ in terms of $X$ via $Y$} if 
\begin{center}
	for every $z\in Z$ there exists a formula $t_z$ with $\var(t_z)\subseteq X$ such that $\Gamma\vdash \Delta(z,t_z)$.
\end{center}
 When $Y=\emptyset$, we just say that $\Gamma$ \emph{explicitly defines $Z$ in terms of $X$}.

We say that an equivalential deductive system $\vdash$ has the \emph{infinite Beth definability property} if for any $\Gamma\subseteq\fm_\mathcal{L}$ with $\var(\Gamma)\subseteq X\cup Z$, we have that 
\begin{center}
	if $\Gamma$ implicitly defines $Z$ in terms of $X$,\\ then $\Gamma$ also explicitly defines $Z$ in terms of $X$.
\end{center}
 On the other hand, $\vdash$ is said to have the \emph{finite Beth definability property} if we add that $Z$ is finite in the previous definition. Furthermore, $\vdash$ has the \emph{projective Beth definability property} if for any $\Gamma\subseteq\fm_\mathcal{L}$ with $\var(\Gamma)\subseteq X\cup Y\cup Z$, if $\Gamma$ implicitly defines $Z$ in terms of $X$ via $Y$, then $\Gamma$ also explicitly defines $Z$ in terms of $X$ via $Y$.


\subsection{The bridge theorems} The algebraic study of deductive systems is motivated, in part, by correspondences between properties like those just introduced and various associated algebraic properties. `Bridge theorems' announcing connections of this sort may be found throughout the literature. We will use several of these bridge theorems.

To articulate the first of these, recall that an algebra ${\m B}$ has the \emph{congruence extension property} (or \emph{CEP}) if for any subalgebra ${\m A}$ of ${\m B}$, if $\Theta$ is a congruence of ${\m A}$, then there exists a congruence $\Psi$ of ${\m B}$ such that $\Psi\cap A^2=\Theta$. A variety is said to have the CEP if each algebra contained in it does. Moreover, we say that a deductive system $\vdash$ has a \emph{local deduction theorem} if there exists a family $\Lambda$ of sets of binary formulas such that, for all $\Gamma\cup\{\varphi,\psi\}$, 
\begin{center}
	$\Gamma,\varphi\vdash\psi$ iff there exists $\lambda\in\Lambda$ such that for all $l\in\lambda$, $\Gamma\vdash l(\varphi,\psi)$.
\end{center}
The following bridge theorem may be found in \cite{BP1988}:
\begin{theorem}\label{t:local deduction}
Let $\vdash$ be an algebraizable deductive system whose equivalent algebraic semantics is the variety $\mathcal{V}$. Then $\vdash$ has a local deduction theorem iff $\mathcal{V}$ has the congruence extension property.
\end{theorem}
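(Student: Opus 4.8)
The plan is to route both properties through the standard algebraization dictionary and reduce them to statements about the generation of $\vdash$-filters. Recall the consequences of algebraizability that I will use: writing $\Delta(x,y)$ for the equivalence formulas and $E(x)$ for the defining equations, for every algebra $\m A$ in the language of $\vdash$ the assignment $F\mapsto\Omega^{\m A}(F)$ is a lattice isomorphism from the $\vdash$-filters of $\m A$ onto the congruences $\theta$ of $\m A$ with $\m A/\theta\in\mathcal V$, and it is natural: $h^{-1}[\Omega^{\m B}(G)]=\Omega^{\m A}(h^{-1}[G])$ for every homomorphism $h\colon\m A\to\m B$. For $\m A\in\mathcal V$ this is an isomorphism $\mathcal{F}i_\vdash(\m A)\cong\operatorname{Con}(\m A)$, and for the formula algebra it identifies the $\vdash$-theories with $\operatorname{Con}$ of the free $\mathcal V$-algebra. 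Under this dictionary, the CEP of $\mathcal V$ is equivalent to \emph{$\vdash$-filter conservativity}: for all $\m A\le\m B$ and all $F\in\mathcal{F}i_\vdash(\m A)$, $\operatorname{Fg}^{\m B}(F)\cap A=F$ (equivalently, $F$ extends to a $\vdash$-filter of $\m B$). I will also use the ``filter form'' of a local deduction theorem: by structurality, a family $\Lambda$ of finite sets of binary formulas witnesses a local deduction theorem iff for every algebra $\m A$, every $F\in\mathcal{F}i_\vdash(\m A)$, and all $a,b\in A$, one has $b\in\operatorname{Fg}^{\m A}(F\cup\{a\})$ iff $\lambda^{\m A}(a,b)\subseteq F$ for some $\lambda\in\Lambda$.

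For the direction ``local deduction theorem $\Rightarrow$ CEP'' I would verify $\vdash$-filter conservativity directly. Given $\m A\le\m B$, $F\in\mathcal{F}i_\vdash(\m A)$, and $b\in\operatorname{Fg}^{\m B}(F)\cap A$, finitarity gives $b\in\operatorname{Fg}^{\m B}(\{g_1,\dots,g_n\})$ with $g_i\in F$, and I would peel off the $g_i$ one at a time. Writing $\operatorname{Fg}^{\m B}(\{g_1,\dots,g_n\})=\operatorname{Fg}^{\m B}(\operatorname{Fg}^{\m B}(\{g_1,\dots,g_{n-1}\})\cup\{g_n\})$, the local deduction theorem supplies $\lambda\in\Lambda$ with $\lambda^{\m B}(g_n,b)\subseteq\operatorname{Fg}^{\m B}(\{g_1,\dots,g_{n-1}\})$; since $\lambda$ consists of \emph{binary} formulas and $g_n,b\in A$, each element of $\lambda^{\m B}(g_n,b)=\lambda^{\m A}(g_n,b)$ again lies in $A$, hence by induction in $\operatorname{Fg}^{\m A}(\{g_1,\dots,g_{n-1}\})\subseteq F$; so $\lambda^{\m A}(g_n,b)\subseteq F$, and applying the local deduction theorem once more inside $\m A$ gives $b\in\operatorname{Fg}^{\m A}(F\cup\{g_n\})=F$. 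The base case $n=0$ is the absoluteness of $E$ between an algebra and a subalgebra.

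For the converse ``CEP $\Rightarrow$ local deduction theorem'' I would exhibit $\Lambda$ explicitly as the collection of all finite sets $\lambda$ of binary formulas with $\lambda(x,y),x\vdash y$; the ``if'' half of the biconditional defining a local deduction theorem then holds for this $\Lambda$ and any $\vdash$, by cut. For the ``only if'' half, suppose $\Gamma,\varphi\vdash\psi$ and fix, by finitarity, a finite $\Gamma_0\subseteq\Gamma$ with $\Gamma_0,\varphi\vdash\psi$. Passing to the free $\mathcal V$-algebra $\m F$ and the dictionary, every defining equation holds at $\psi$ modulo $\operatorname{Cg}^{\m F}\!\bigl(\Omega^{\m F}(C(\Gamma))\cup E(\varphi)\bigr)$, with $E(\varphi)$ read as the set of pairs obtained by evaluating the defining equations at $\varphi$. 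Let $\m A\le\m F$ be the subalgebra generated by $\varphi$, $\psi$, and the formulas of $\Gamma_0$, so that all relevant pairs lie in $A^2$; then $\m A\in\mathcal V$, so by the CEP and Maltsev's description of congruence generation there is a finite generation chain lying entirely inside $\m A$, built from unary polynomials of $\m A$ and generating pairs from $E(\varphi)$ and the $E(\gamma)$ for $\gamma\in\Gamma_0$. Quotienting $\m A$ by $\Omega^{\m F}(C(\Gamma))\cap A^2$ trivializes the pairs $E(\gamma)$ and makes every element of $\m A$ the value of a binary formula in $\varphi,\psi$; translating the resulting parameter-restricted chain back through $\Delta$ then produces a finite set $\lambda$ of binary formulas with $\Gamma\vdash\lambda(\varphi,\psi)$, and one checks that $\lambda(x,y),x\vdash y$, so $\lambda\in\Lambda$. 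This is, in essence, the argument of \cite{BP1988}, which I would follow.

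The step I expect to be the main obstacle is this last translation in the converse direction: extracting from the Maltsev chain a \emph{finite} set of \emph{binary} formulas, bookkeeping how $\Delta$ and $E$ encode the chain's steps, and checking that the resulting $\lambda$ genuinely satisfies $\lambda(x,y),x\vdash y$. It is precisely here that the CEP is used --- to confine the chain to the subalgebra generated by the displayed formulas --- and that the further quotient by the theory of $\Gamma$ is needed to bring all parameters down to binary formulas. The surrounding compactness and structurality arguments (passing among $\m{Fm}$, the free $\mathcal V$-algebra, arbitrary algebras, and their subalgebras, and deriving the ``filter form'' of the local deduction theorem) are routine but must be set up carefully; algebraizability is what guarantees throughout that $\Delta$ and $E$ invert each other up to $\vdash$ and that the filter/congruence correspondence is the natural isomorphism used above.
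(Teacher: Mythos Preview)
The paper does not prove this theorem; it simply cites it from \cite{BP1988}. So there is no paper proof to compare against, and your proposal is essentially an outline of the classical Blok--Pigozzi argument.

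Your forward direction (local deduction theorem $\Rightarrow$ CEP via filter conservativity and the ``peeling'' induction) is correct and standard.

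In the converse direction your overall strategy is right, but one step is misstated. You form the subalgebra $\m A\leq\m F$ generated by $\varphi,\psi$ and the finitely many $\gamma\in\Gamma_0$, and then claim that after quotienting by $\Omega^{\m F}(C(\Gamma))\cap A^2$ ``every element of $\m A$ is the value of a binary formula in $\varphi,\psi$''. That is not true: trivializing the pairs $E(\gamma)$ only forces each $[\gamma]$ into the least filter; it does not make $[\gamma]$ equal to any term built from $[\varphi]$ and $[\psi]$, so the quotient is still generated by $[\varphi],[\psi],[\gamma_1],\dots,[\gamma_n]$ and your Maltsev chain will in general involve parameters that are not binary in $\varphi,\psi$.

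The standard fix reverses the order of operations: first pass to the quotient $\m F'=\m F/\Omega^{\m F}(C(\Gamma))\in\mathcal V$, and only then take the subalgebra $\m B\leq\m F'$ generated by $[\varphi]$ and $[\psi]$. Now $\m B$ is genuinely $2$-generated. From $\Gamma,\varphi\vdash\psi$ one gets $[\psi]\in\operatorname{Fg}^{\m F'}(\{[\varphi]\})$, and the CEP (in its filter form) yields $\operatorname{Fg}^{\m F'}(\{[\varphi]\})\cap B=\operatorname{Fg}^{\m B}(\{[\varphi]\})$, so $[\psi]\in\operatorname{Fg}^{\m B}(\{[\varphi]\})$. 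A finite generation witness inside the $2$-generated $\m B$ then translates, via $\Delta$ and $E$, into the desired finite set $\lambda$ of binary formulas with $\Gamma\vdash\lambda(\varphi,\psi)$ and $\lambda(x,y),x\vdash y$. With this correction your sketch matches the argument in \cite{BP1988}.
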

In \cite{FG1}, we apply Theorem~\ref{t:local deduction} to conclude the existence of a local deduction theorem from a proof of the congruence extension property for the algebraic models:
\begin{theorem} \cite{FG1}
Every extension of the logic $\m {\CIL}$ has a local deduction theorem.
\end{theorem}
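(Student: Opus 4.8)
The plan is to derive this from the bridge theorem (Theorem~\ref{t:local deduction}) together with the congruence extension property (CEP) for the algebraic semantics. Since $\m{\CIL}$ is algebraizable with equivalent algebraic semantics the variety $\mathcal{V}$ of semiconic idempotent residuated lattices, every axiomatic extension of $\m{\CIL}$ is again algebraizable, and its equivalent algebraic semantics is a subvariety of $\mathcal{V}$. As the CEP is a property of individual algebras, it is automatically inherited by subclasses; so it would suffice to prove that every member of $\mathcal{V}$ has the CEP, whence Theorem~\ref{t:local deduction} delivers a local deduction theorem for each extension of $\m{\CIL}$. The problem thus reduces to establishing the CEP for $\mathcal{V}$.

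For this I would rely on the fact that congruences of a residuated lattice correspond bijectively to its convex normal subalgebras, so that the CEP for $\mathcal{V}$ amounts to the following: for every $\m{B}\in\mathcal{V}$, every subalgebra $\m{A}\leq\m{B}$, and every convex normal subalgebra $H$ of $\m{A}$, the convex normal subalgebra $\overline{H}$ of $\m{B}$ generated by $H$ satisfies $\overline{H}\cap A=H$. The inclusion $H\subseteq\overline{H}\cap A$ is immediate, so all the content lies in showing $\overline{H}\cap A\subseteq H$, i.e.\ that generating a convex normal subalgebra inside $\m{B}$ pulls no new elements of $A$ into $H$.

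The core step is an explicit description of membership in $\overline{H}$. In general $\overline{H}$ is produced by interleaving monoid multiplication, the lattice operations $\meet,\join$, and conjugation $\lambda_x(\cdot),\rho_x(\cdot)$ by arbitrary $x\in B$. I would exploit the structural decomposition of conic idempotent residuated lattices via the nucleus onto the skeleton of inverses (Section~\ref{subsec:skeleton}) together with the explicit description of congruence generation in the semilinear idempotent case recalled in Section~\ref{sec:basicstructure}: idempotency makes iterated products stabilize, conicity makes every element comparable with $1$, and the behavior of $^{\ell}$ and $^{r}$ on the two cones shows that conjugates of elements near $1$ are order-bounded by iterated applications of $^{\ell}$ and $^{r}$ to the original elements. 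Using this, one shows that every $b\in\overline{H}$ is sandwiched, from below and above, between terms built from elements of $H$ using $\meet$, $\join$, $^{\ell}$, and $^{r}$ only. Since $\m{A}$ is closed under these operations and $H\subseteq A$, such witnessing terms lie in $A$; if moreover $b\in A$, then convexity of $H$ in $\m{A}$ forces $b\in H$, which is exactly the required inclusion.

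The hard part is this last taming of conjugation: in the absence of integrality and commutativity, convex normal generation a priori involves unbounded nesting of conjugations by elements of $\m{B}$ lying outside $\m{A}$, interleaved with multiplication and the lattice operations, and it is precisely the fine structure theory of \cite{FG1} --- the action of $^{\ell}$ and $^{r}$ on the negative and positive cones, the nuclear decomposition onto the skeleton, and the explicit congruence-generation formula in the semilinear idempotent case --- that makes this nesting collapse to subalgebra-internal terms. Granting those inputs, the remaining ingredients (algebraizability of $\m{\CIL}$, the reduction via Theorem~\ref{t:local deduction}, inheritance of the CEP by subvarieties, and the order-theoretic convexity argument) are routine, and together they yield a local deduction theorem for every extension of $\m{\CIL}$.
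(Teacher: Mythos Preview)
Your proposal is correct and follows essentially the same route as the paper: reduce to the CEP for semiconic idempotent residuated lattices via the bridge theorem (Theorem~\ref{t:local deduction}), note that CEP passes to subvarieties, and establish the CEP using the explicit description of congruence-filter generation in terms of the unary terms $t_n$ built from iterated $^\ell$ and $^r$ (Lemma~\ref{l:congruencefilters2}), which shows that generating inside $\m B$ uses only operations under which the subalgebra $\m A$ is already closed. The paper simply cites \cite{FG1} for the CEP (recorded here as Lemma~\ref{t:CEP}(1)) rather than rehearsing the argument, but your sketch of that argument matches the one underlying Lemma~\ref{l:congruencefilters2}.
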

The presence of a local deduction theorem simplifies a number of other algebra-to-logic links, such as the following result proven in \cite{CP1999}:
\begin{theorem}\label{t:deductive interpolation}
Let $\vdash$ be an algebraizable deductive system with a local deduction theorem, and suppose that the variety $\mathcal{V}$ is its equivalent algebraic semantics. Then $\vdash$ has the deductive interpolation property iff $\mathcal{V}$ has the amalgamation property.
\end{theorem}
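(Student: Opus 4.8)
The plan is to push the statement to the algebraic side using the algebraizability of $\vdash$, and then to relate deductive interpolation of the equational consequence relation of $\mathcal{V}$ to amalgamation, with the local deduction theorem doing the essential work. First I would record that, since $\vdash$ is algebraizable with equivalent algebraic semantics $\mathcal{V}$, fixing a defining set of equations $\tau$ and a set $\Delta$ of equivalence formulas, these translations introduce no new propositional variables, i.e. $\var(\tau(\varphi)) = \var(\varphi)$ and $\var(\Delta(\varepsilon)) = \var(\varepsilon)$. Using the defining biconditionals of algebraizability ($\Gamma \vdash \varphi$ iff $\tau[\Gamma] \models_{\mathcal{V}} \tau(\varphi)$, and $\Sigma \models_{\mathcal{V}} \varepsilon$ iff $\Delta[\Sigma] \vdash \Delta(\varepsilon)$, with $\varphi$ interderivable from $\Delta[\tau(\varphi)]$ and $\varepsilon$ equivalent over $\mathcal{V}$ to $\tau[\Delta(\varepsilon)]$), one checks that $\vdash$ has the deductive interpolation property if and only if $\models_{\mathcal{V}}$ has the analogously phrased deductive interpolation property: an equational interpolant $\Pi$ for $\Sigma \models_{\mathcal{V}} \varepsilon$ translates to the formula interpolant $\Delta[\Pi]$, a formula interpolant $\Gamma'$ translates back to $\tau[\Gamma']$, and the variable-containment conditions survive because the translations are variable-preserving. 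Finitarity lets us take all premise sets to be finite throughout.

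Next I would prove the algebraic equivalence: $\models_{\mathcal{V}}$ has deductive interpolation if and only if $\mathcal{V}$ has the amalgamation property. By Theorem~\ref{t:local deduction}, the assumed local deduction theorem for $\vdash$ is exactly the congruence extension property for $\mathcal{V}$, and this is used in both directions. For ``amalgamation $\Rightarrow$ interpolation'': given $\Sigma \models_{\mathcal{V}} s \approx t$ with $Y = \var(\Sigma) \cap \var(s \approx t)$, let $\Pi^{\ast}$ be the set of all $Y$-equations that are $\models_{\mathcal{V}}$-consequences of $\Sigma$, and form the relatively free quotients $\m{C} = \m{F}_{\mathcal{V}}(Y)/\Pi^{\ast}$, $\m{A} = \m{F}_{\mathcal{V}}(\var\Sigma)/\mathrm{Cg}(\Sigma)$ and $\m{B} = \m{F}_{\mathcal{V}}(Y \cup \var(s \approx t))/\mathrm{Cg}(\Pi^{\ast})$. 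The natural map $\m{C} \to \m{A}$ is an embedding by the maximality of $\Pi^{\ast}$, and $\m{C} \to \m{B}$ is an embedding by the congruence extension property (the congruence that $\Pi^{\ast}$ generates in the larger free algebra restricts back to $\Pi^{\ast}$). If this consequence had no interpolant, then $\Pi^{\ast} \not\models_{\mathcal{V}} s \approx t$, so $s \approx t$ fails in $\m{B}$ under the canonical valuation while it holds throughout $\m{A}$; but an amalgam of the V-formation $\m{A} \hookleftarrow \m{C} \hookrightarrow \m{B}$ would let the valuation witnessing $\Sigma$ (coming from $\m{A}$) cohere with the valuation of the remaining variables (coming from $\m{B}$), forcing $s \approx t$ to hold in the image of $\m{B}$ -- a contradiction. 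For ``interpolation $\Rightarrow$ amalgamation'': from a V-formation $\m{A} \hookleftarrow \m{C} \hookrightarrow \m{B}$ whose pushout $\m{A} \ast_{\m{C}} \m{B}$ collapses a pair $(s,t)$ not already collapsed in $\m{A}$ (the symmetric case being analogous), present the three algebras over free algebras and extract, by finitarity, finite sets of defining relations $\Sigma_{\m{A}}$ and $\Sigma_{\m{B}}$ with $\Sigma_{\m{A}} \cup \Sigma_{\m{B}} \models_{\mathcal{V}} s \approx t$; apply the local deduction theorem to internalize $\Sigma_{\m{A}}$, obtaining a consequence $\Sigma_{\m{B}} \models_{\mathcal{V}} \lambda(\Sigma_{\m{A}}, s \approx t)$ whose shared variables are confined to those of $\m{C}$. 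Any interpolant $\Pi$, being in the variables of $\m{C}$ and a $\models_{\mathcal{V}}$-consequence of $\Sigma_{\m{B}}$, must lie inside the defining congruence of $\m{C}$, hence of $\m{A}$; but then $\Pi$ together with $\Sigma_{\m{A}}$ would already collapse $(s,t)$ inside $\m{A}$ -- a contradiction. So no interpolant exists.

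Chaining the two equivalences yields the theorem. I expect the main obstacle to be the two appeals to the local deduction theorem (equivalently, the congruence extension property) in the second stage: first, verifying that $\m{C}$ really embeds into $\m{B}$, so that one has an honest V-formation to amalgamate; and second, carrying out the internalization of the premises $\Sigma_{\m{A}}$ through the deduction-theorem family $\lambda$ while checking that this manoeuvre leaves the shared variables exactly those of $\m{C}$. The remainder is bookkeeping with relatively free algebras and with the algebraizability translations. One should also keep in mind that the local deduction theorem is genuinely needed here: without it, the correspondence between $\models_{\mathcal{V}}$ deductive interpolation and the amalgamation property can fail.
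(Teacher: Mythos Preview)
The paper does not prove this theorem: it is stated as a known bridge theorem and attributed to \cite{CP1999} (``The following result proven in \cite{CP1999}''), with no argument given. So there is nothing to compare your proposal against in the paper itself.

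Your sketch follows the standard Czelakowski--Pigozzi route and is broadly on target: reduce to equational interpolation for $\models_{\mathcal{V}}$ via the algebraizability translations, then link equational interpolation to amalgamation using the congruence extension property (which Theorem~\ref{t:local deduction} identifies with the local deduction theorem). A couple of points deserve tightening if you actually write it out. In the ``interpolation $\Rightarrow$ AP'' direction, the step where you ``internalize $\Sigma_{\m A}$'' via the local deduction theorem is phrased for $\vdash$, but you are working with $\models_{\mathcal{V}}$; you need to transfer the deduction-theorem family through the translations first, or argue directly via CEP on the algebraic side. Also, the presentation-by-relations argument needs care: the algebras in the V-formation need not be finitely presented, so one should argue that failure of amalgamation is witnessed by a \emph{single} pair collapsed in the pushout, invoke compactness of $\models_{\mathcal{V}}$ to get finite $\Sigma_{\m A},\Sigma_{\m B}$, and check that the shared variables of the resulting consequence really are controlled by the generators of $\m C$. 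These are the places where the argument can silently go wrong; the rest is, as you say, bookkeeping.
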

From the previous theorem, the study of deductive interpolation for an appropriately chosen deductive system boils down to the study of amalgamation for its equivalent algebraic semantics. We thus turn to a discussion of the amalgamation property. Let $\mathcal{K}$ be a class of similar algebras. A \emph{V-formation in $\mathcal{K}$} is an ordered quintuple $({\m A},{\m B},{\m C},\fb,\fc)$, where $\m A,\m B,\m C\in \mathcal{K}$ and $\fb\colon\m A\to\m B$ and $\fc\colon\m A\to\m C$ are embeddings. Given a V-formation $V=({\m A},{\m B},{\m C},\fb,\fc)$ in $\mathcal{K}$ and a class $\mathcal{M}$ of algebras in the type of $\mathcal{K}$, an \emph{amalgam} of $V$ in $\mathcal{M}$ is an ordered triple $({\m D},\gb,\gc)$, where ${\m D}\in\mathcal{M}$ and $\gb\colon\m B\to\m D$ and $\gc\colon\m C\to\m D$ are embeddings such that $\gb\circ\fb=\gc\circ\fc$; see Figure~\ref{fig:amalgamation}. A class $\mathcal{K}$ of similar algebras is said to \emph{have the amalgamation property in $\mathcal{M}$} if every V-formation in $\mathcal{K}$ has an amalgam in $\mathcal{M}$. The class $\mathcal{K}$ is said to \emph{have the amalgamation property} if $\mathcal{K}$ has the amalgamation property in $\mathcal{K}$. 

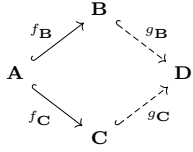
\begin{figure}[t]
\centering
\begin{scriptsize}
\begin{tikzcd}
& {\m B} \ar[dashed, hook, rd, "g_{\m B}"] &       \\
{\m A} \ar[hook, ru, "f_{\m B}"] \ar[hook, rd,"f_{\m C}"'] &      & \m D  \\
 & {\m C} \ar[dashed, hook, ru, "g_{\m C}"'] &     
\end{tikzcd}
\end{scriptsize}
\caption{The amalgamation property rendered as a commutative diagram}
\label{fig:amalgamation}
\end{figure}

We say that a V-formation $({\m A},{\m B},{\m C},\fb,\fc)$ is \emph{reduced} if ${\m A}$ is a subalgebra of each of ${\m B}$ and ${\m C}$, and $\fb$ and $\fc$ are the inclusion maps.
 Note that for classes closed under isomorphisms, we can assume without loss of generality that all V-formations are reduced. The amalgamation property ensures that the maps $\gb$ and $\gc$ do not send two different elements of $A$ to the same element of $D$, but an element of $B$ and an element of $C$ may be sent to the same element of $D$ since the intersection of the images of these maps may be properly larger than the image of $A$. In other words, $\m B$ and $\m C$  may be identified with subalgebras of $\m D$ simultaneously, but only if we are allowed to rename their elements that have the same image in $\m D$. Therefore, for classes closed under isomorphisms the amalgamation property can be stated as: If the algebras $\m B$ and $\m C$ have a common subalgebra $\m A$, then there exists algebras $\m B'$ and $\m C'$ isomorphic to $\m B$ and $\m C$, respectively, also having $\m A$ as a common subalgebra, and there exists an algebra $\m D$ that contains $\m B'$ and $\m C'$ as subalgebras.

 The situation is much more transparent if a stronger version of the amalgamation holds. 
  A class of similar algebras $\mathcal{K}$ has the \emph{strong amalgamation property} iff it has the amalgamation property and amalgams $({\m D}, g_{\m B}, g_{\m C})$ may be selected so that $(g_{\m B}\circ f_{\m B})[A]=g_{\m B}[B]\cap g_{\m C}[C]$. In this event, we say that $({\m D}, g_{\m B}, g_{\m C})$ is a \emph{strong amalgam}. 
  It is easy to see that the strong amalgamation property has a much easier formulation if the class $\mathcal{K}$ of similar algebras is closed under isomorphisms: If the algebras $\m B$ and $\m C$ in $\mathcal{K}$ intersect at a common subalgebra $\m A$, there exists an algebra $\m D$ in $\mathcal{K}$ having $\m B$ and $\m C$ as subalgebras. Note that no renaming of elements (as in $\m B'$ and $\m C'$) is necessary. 

The strong amalgamation property is closely linked to the surjectivity of epimorphisms. A variety $\mathcal{V}$ has the \emph{strong epimorphism-surjectivity property} if whenever ${\m B}\in\mathcal{V}$, ${\m A}$ is a subalgebra of ${\m B}$, and $b\in B - A$, then there exists ${\m C}\in\mathcal{V}$ and homomorphisms $f,g\colon {\m B}\to {\m C}$ such that $f\restriction_A = g\restriction_A$ but $f(b)\neq g(b)$.\footnote{Clearly, if $\mathsf{V}$ has the strong epimorphism-surjectivity property then every epimorphism between members of $\mathsf{V}$ is a surjection.} The strong amalgamation property entails the strong epimorphism-surjectivity property (see the discussion in \cite{GR2015}). Further, the following bridge theorem from \cite{H2000} links strong epimorphism surjectivity to Beth definability:
\begin{theorem}\label{t:Beth def}
Let $\vdash$ be an algebraizable deductive system and suppose that the variety $\mathcal{V}$ is its equivalent algebraic semantics. Then $\vdash$ has the projective Beth definability property iff $\mathcal{V}$ has the strong epimorphism-surjectivity property.
\end{theorem}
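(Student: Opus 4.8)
The plan is to push everything through the algebraizability dictionary and reduce both properties to a single statement about \emph{dominions} in $\mathcal{V}$. Fix equivalence formulas $\Delta$ and defining equations $E$ witnessing that $\vdash$ is algebraizable, and write $\m{F}_{\mathcal{V}}(V)$ for the free $\mathcal{V}$-algebra over a set $V$ of variables. For a set $\Gamma$ of $\mathcal{L}$-formulas with $\var(\Gamma)\subseteq V$, let $\theta_\Gamma$ be the $\mathcal{V}$-congruence on $\m{F}_{\mathcal{V}}(V)$ generated by $E[\Gamma]$ and put $\m{B}_\Gamma=\m{F}_{\mathcal{V}}(V)/\theta_\Gamma\in\mathcal{V}$; the dictionary then gives $\Gamma\vdash\Delta(s,t)$ iff $s^{\m{B}_\Gamma}=t^{\m{B}_\Gamma}$ for all $\mathcal{L}$-terms $s,t$ over $V$, and every $\m{B}\in\mathcal{V}$ --- together with a subalgebra $\m{A}$ and an element $b\in B$ --- can be realized as $\m{B}\cong\m{B}_\Gamma$ for a presentation over variables $X\cup\{z\}\cup Y$ in which $X$ maps onto a generating set of $\m{A}$, $z\mapsto b$, and $X\cup\{z\}\cup Y$ generates $\m{B}$ (take $\Gamma$ to consist of the formulas $\Delta(s,t)$ as $(s,t)$ ranges over a generating set of the kernel of a chosen surjection $\m{F}_{\mathcal{V}}(X\cup\{z\}\cup Y)\twoheadrightarrow\m{B}$). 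For $\m{A}\le\m{B}$ in $\mathcal{V}$ write $\mathrm{dom}_{\m{B}}(\m{A})=\{b\in B: f(b)=g(b)\text{ whenever }f,g\colon\m{B}\to\m{D}\text{ in }\mathcal{V}\text{ agree on }\m{A}\}$ for the dominion of $\m{A}$ in $\m{B}$; since $\mathcal{V}$ has pushouts, $b\in\mathrm{dom}_{\m{B}}(\m{A})$ iff $\iota_1(b)=\iota_2(b)$, where $\iota_1,\iota_2\colon\m{B}\to\m{B}\sqcup_{\m{A}}\m{B}$ are the coprojections into the amalgamated coproduct of two copies of $\m{B}$ over $\m{A}$. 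The strong epimorphism-surjectivity property, as stated in the theorem, is precisely the assertion that $\mathrm{dom}_{\m{B}}(\m{A})=A$ for every $\m{A}\le\m{B}$ in $\mathcal{V}$.

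The heart of the argument is a translation lemma. Fix pairwise disjoint variable sets $X,Y,Z$ with $X\ne\emptyset$ and $\Gamma$ with $\var(\Gamma)\subseteq X\cup Y\cup Z$, put $\m{B}=\m{B}_\Gamma$ over $V=X\cup Y\cup Z$, and let $\m{A}\le\m{B}$ be the subalgebra generated by $\{x^{\m{B}}:x\in X\}$. I claim: (a) $\Gamma$ implicitly defines $Z$ in terms of $X$ via $Y$ iff $z^{\m{B}}\in\mathrm{dom}_{\m{B}}(\m{A})$ for all $z\in Z$; and (b) $\Gamma$ explicitly defines $Z$ in terms of $X$ via $Y$ iff $z^{\m{B}}\in A$ for all $z\in Z$. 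Part (b) is immediate from the dictionary: a witness $t_z$ with $\var(t_z)\subseteq X$ and $\Gamma\vdash\Delta(z,t_z)$ is precisely an expression of $z^{\m{B}}$ as a term value in $\{x^{\m{B}}:x\in X\}$, i.e.\ as an element of $A=\langle\{x^{\m{B}}:x\in X\}\rangle$. For (a), by structurality of $\vdash$ it suffices to test implicit definability on the single substitution $\sigma$ that renames $Y\cup Z$ to fresh disjoint copies $Y'\cup Z'$ (substituting $Y'\cup Z'\mapsto\tau[Y\cup Z]$ into a derivation $\Gamma\cup\sigma[\Gamma]\vdash\Delta(z,\sigma z)$ recovers $\Gamma\cup\tau[\Gamma]\vdash\Delta(z,\tau z)$ for an arbitrary $\tau$ fixing $X$). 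By the dictionary, ``$\Gamma\cup\sigma[\Gamma]\vdash\Delta(z,\sigma z)$ for all $z\in Z$'' says exactly that $z$ and $\sigma z$ have equal image in $\m{F}_{\mathcal{V}}(X\cup Y\cup Z\cup Y'\cup Z')/\theta_{\Gamma\cup\sigma[\Gamma]}$. The crucial observation is that this algebra is canonically the pushout $\m{B}\sqcup_{\m{A}}\m{B}$: it is the coproduct of the two presented copies of $\m{B}$ modulo the relations identifying the two copies of $\m{A}$, and because $\m{A}$ is generated by $X$ these relations collapse to identifying the two copies of the variables $X$ --- which is exactly the presentation of $\m{B}\sqcup_{\m{A}}\m{B}$. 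Under this identification $z^{\m{B}}$ and $(\sigma z)^{\m{B}}$ become $\iota_1(z^{\m{B}})$ and $\iota_2(z^{\m{B}})$, so the condition is $\iota_1(z^{\m{B}})=\iota_2(z^{\m{B}})$, i.e.\ $z^{\m{B}}\in\mathrm{dom}_{\m{B}}(\m{A})$.

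Granting the lemma, both implications are short. If $\mathcal{V}$ has strong epimorphism-surjectivity and $\Gamma$ implicitly defines $Z$ in terms of $X$ via $Y$, then by (a) each $z^{\m{B}}$ lies in $\mathrm{dom}_{\m{B}}(\m{A})=A$, so by (b) $\Gamma$ explicitly defines $Z$ in terms of $X$ via $Y$; hence $\vdash$ has the projective Beth definability property. Conversely, assuming the projective Beth definability property, take $\m{A}\le\m{B}$ in $\mathcal{V}$ and $b\in\mathrm{dom}_{\m{B}}(\m{A})$; present $\m{B}$ over $X\cup\{z\}\cup Y$ as in the first paragraph (with $X\ne\emptyset$, e.g.\ containing a variable for a constant of the signature, and $z\mapsto b$), obtaining $\Gamma$ with $\m{B}\cong\m{B}_\Gamma$ and with the subalgebra generated by $X$ equal to $\m{A}$. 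Since $z^{\m{B}}=b\in\mathrm{dom}_{\m{B}}(\m{A})$, part (a) shows $\Gamma$ implicitly defines $\{z\}$ in terms of $X$ via $Y$, so by projective Beth definability it explicitly defines $\{z\}$, and (b) yields $b=z^{\m{B}}\in A$. Hence $\mathrm{dom}_{\m{B}}(\m{A})=A$ for every $\m{A}\le\m{B}$ in $\mathcal{V}$, i.e.\ $\mathcal{V}$ has strong epimorphism-surjectivity. I expect the only genuine obstacle to be part (a) of the translation lemma --- specifically the reduction to the fresh-renaming substitution and, above all, the identification of the presented algebra with the pushout $\m{B}\sqcup_{\m{A}}\m{B}$ together with the (standard) fact that dominions are computed by this pushout; the remainder is bookkeeping with the algebraizability dictionary.
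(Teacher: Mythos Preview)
The paper does not prove this theorem; it merely quotes it as a known bridge theorem from \cite{H2000}. Your argument is essentially the standard dominion-based proof due to Hoogland (and Blok--Hoogland), and it is correct in outline: the identification of strong epimorphism-surjectivity with ``all dominions are trivial'', the translation of explicit definability into membership in the $X$-generated subalgebra, and the key identification of the presented algebra $\m{F}_{\mathcal{V}}(X\cup Y\cup Z\cup Y'\cup Z')/\theta_{\Gamma\cup\sigma[\Gamma]}$ with the pushout $\m{B}\sqcup_{\m{A}}\m{B}$ are all sound. The reduction to the single fresh-renaming substitution via structurality is also the right move.

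One small point to tighten: in the converse direction you need $X\neq\emptyset$ to invoke the projective Beth property as stated in the paper. Your parenthetical ``e.g.\ containing a variable for a constant of the signature'' handles this when the language has constants (as it does for residuated lattices, where $1$ is always present), but for a fully general algebraizable $\vdash$ you should remark that if $A$ is nonempty you may take $X$ to index any generating set of $A$, and if $A$ is empty the dominion condition is vacuous or handled by convention. This is a genuine edge case in the general statement, though it does not arise in any of the paper's applications.
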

We will prove that several varieties have the epimorphism-surjectivity property by proving that they have the strong amalgamation property, further establishing the projective Beth definability property for the corresponding deductive systems. 

\section{The structure of conic idempotent residuated lattices}\label{sec:basicstructure}

In order to establish the deductive interpolation and projective Beth definability properties for the deductive systems we consider, we will prove that the corresponding varieties of semiconic idempotent residuated lattices have the strong amalgamation property and apply Theorems~\ref{t:deductive interpolation} and \ref{t:Beth def}. To establish the strong amalgamation property for the specified varieties, we prove in Theorem~\ref{thm:strongMMT2014} that we can focus on amalgamating within the class of finitely subdirectly irreducible algebras from the variety. In our case, these will be in particular idempotent and conic.

To prove Theorem~\ref{thm:strongMMT2014}, the ideas supporting our proof of the local deduction theorem from \cite{FG1} will be crucial. Moreover, according to \cite{FG1}, the structure of idempotent conic residuated lattices is given by an ordinal sum decomposition of blocks along a linear skeleton. Our strategy is to amalgamate blocks and skeletons separately, and then put the amalgams together by another use of the decomposition result. Skeletons are characterized as idempotent quasi-involutive residuated chains in \cite{FG1}. To amalgamate idempotent quasi-involutive residuated chains, we must develop a deep understanding of their structure in the present paper. We accomplish this by studying the behavior of the two inversion operations, and by making use of general results of \cite{FG1} on semiconic idempotent residuated lattices specialized to the linear case.

Consequently, we recall some pertinent facts from \cite{FG1}, where all proofs may be found. We start with two examples that will play an important role in the proof of amalgamation.

\begin{example}\label{ex:sugihara} A commutative, idempotent, semilinear residuated lattice is called an \emph{odd Sugihara monoid} provided the map $x\mapsto x\to 1$ is an involution. Linearly ordered odd Sugihara monoids are of the form $A=\{a_i: i \in I\} \cup \{1\} \cup \{b_i: i \in I\}$, where $I$ is a chain with $b_i<b_j<1<a_m<a_n$, for $i<j$ and $m>n$; see Figure~\ref{f:Sugihara}. Multiplication satisfies $a_ib_j=a_i$ if $i<j$,  $a_ib_j=b_j$ if $i>j$, and $a_ib_i=b_i$. The product is \emph{conservative} in the sense that $xy\in\{x,y\}$. In fact, the product of the two elements is always whichever of them is the furthest away from $1$; when two elements have the same distance from $1$, the product is the meet. Here, distance from $1$ is understood from the index set $I$ (cf. Corollary~\ref{c:idempotentchainsoperations}). In odd Sugihara monoids, we define $x^*:=x \ra 1$. Observe that the signs of $x$ and $x^*$ are opposite and that whether $y$ is closer to $1$ than $x$ is depends on whether $y$ is in the interval between $x$ and $x^*$ or not. These comments show that multiplication can be defined in terms of ${}^*$ and the order.
\end{example}

  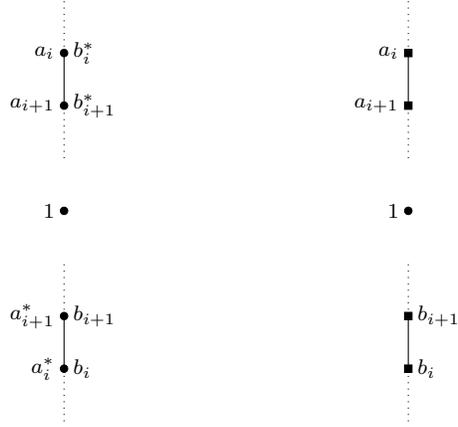
\begin{figure}[htbp]
\begin{center}
\begin{tikzpicture}[scale=0.7]
\draw[dotted] (3,6) --(3,7); 
\node[left] at (3,6) {\footnotesize  $a_i$};
\node[right] at (3,6) {\footnotesize  $b_i^*$};
\draw[mark=*]  plot coordinates {(3,6)};
\draw (3,5) --(3,6); 
\node[left] at (3,5) {\footnotesize  $a_{i+1}$};
\node[right] at (3,5) {\footnotesize  $b_{i+1}^*$};
\draw[mark=*]  plot coordinates {(3,5)};
\draw[dotted] (3,4) --(3,5); 
\node[left] at (3,3) {\footnotesize  $1$};
\draw[mark=*]  plot coordinates {(3,3)};
\draw[dotted] (3,1) --(3,2); 
\node[right] at (3,1) {\footnotesize  $b_{i+1}$};
\node[left] at (3,1) {\footnotesize  $a_{i+1}^*$};
\draw[mark=*]  plot coordinates {(3,1)};
\draw (3,0) --(3,1); 
\node[right] at (3,0) {\footnotesize  $b_i$};
\node[left] at (3,0) {\footnotesize  $a_i^*$};
\draw[mark=*]  plot coordinates {(3,0)};
\draw[dotted] (3,-1) --(3,0); 
\end{tikzpicture}
\qquad \qquad \qquad \qquad
\begin{tikzpicture}[scale=0.7]
\draw[dotted] (3,6) --(3,7); 
\node[left] at (3,6) {\footnotesize  $a_i$};
\draw[mark=square*]  plot coordinates {(3,6)};
\draw (3,5) --(3,6); 
\node[left] at (3,5) {\footnotesize  $a_{i+1}$};
\draw[mark=square*]  plot coordinates {(3,5)};
\draw[dotted] (3,4) --(3,5); 
\node[left] at (3,3) {\footnotesize  $1$};
\draw[mark=*]  plot coordinates {(3,3)};
\draw[dotted] (3,1) --(3,2); 
\node[right] at (3,1) {\footnotesize  $b_{i+1}$};
\draw[mark=square*]  plot coordinates {(3,1)};
\draw (3,0) --(3,1); 
\node[right] at (3,0) {\footnotesize  $b_i$};
\draw[mark=square*]  plot coordinates {(3,0)};
\draw[dotted] (3,-1) --(3,0); 
\end{tikzpicture}
  \end{center}
\caption{Sugihara chains and some non-commutative analogues.\label{f:Sugihara}}
\end{figure}

\begin{example}\label{e:noncom Sug}
 Non-commutative analogues of linearly ordered odd Sugihara monoids are given in \cite{Ga2005}; see Figure~\ref{f:Sugihara} on the right. The underlying order of these algebras is as in Example~\ref{ex:sugihara}, but non-commutativity introduces many possible variations. In these algebras, multiplication is encoded in a subset $J$ of the index set $I$. The product is the same as in Example~\ref{ex:sugihara} for elements with different index, but the products where there is a tie are determined by $J$: 
\begin{enumerate}
\item if $i \in J$, then $a_i b_i=a_i$ and $b_ia_i=b_i$ (Left), and
\item if $i \not \in J$, then $a_i b_i=b_i$ and $b_ia_i=a_i$ (Right).
\end{enumerate}
\end{example}
Observe that for no index $i$ do we have $a_i b_i=a_i$ and $b_ia_i=a_i$, or $a_i b_i=b_i$ and $b_ia_i=b_i$ (cf. Lemma~\ref{l:centralizer}(5)).

Note that although the algebras in Examples~\ref{ex:sugihara} and \ref{e:noncom Sug} are defined very similarly, the two examples vary considerably in terms of subalgebras. In Example~\ref{ex:sugihara}, for any $i$ the set $\{b_i, 1, a_i\}$ gives a subalgebra. Indeed, every linearly ordered odd Sugihara monoid is a \emph{nested sum} (to be defined formally in Section~\ref{nestedsums}) of copies of this 3-element algebra. In contrast, the algebras in Example~\ref{e:noncom Sug} have no proper, nontrivial subalgebras and each of them is generated by any non-identity element. Due to these comments, the two examples differ markedly when considering amalgamation.

\subsection{The skeleton of inverse elements}\label{subsec:skeleton}

In any residuated lattice ${\m A}$, we say $x\in A$ is \emph{conical} if $x\leq y$ or $y\leq x$ for all $y\in A$. When ${\m A}$ is a conic idempotent residuated lattice, the inverse elements of ${\m A}$ are conical and ${\m A}$ may be decomposed along these. Because inverse elements are conical in any conic idempotent residuated lattice, $\{a^\ell\meet a^r, a^\ell\join a^r\}=\{a^r, a^\ell\}$, and we define $a^*:= a^\ell \jn a^r$ and $a^\star=a^\ell \mt a^r$.\footnote{Observe that if $\m A$ is commutative, then $a^*=(a \ra 1) \jn (a \ra 1)=a \ra 1$ and the definition of $^*$ is the same as the one given in Example~\ref{ex:sugihara}.} We further define $\gamma(x):=x^{\ell r} \mt x^{r \ell}$ in any conic idempotent residuated lattice ${\m A}$.

The following lemma---which summarizes parts of Lemmas~3.1, 3.5, 3.9 and 3.11, Corollaries 3.6 and 3.7, and Theorem 3.11 of \cite{FG1}---indicates how the inverse elements of a conic idempotent residuated lattice form a skeleton of the whole algebra. This skeleton is a totally ordered subalgebra and it is \emph{quasi-involutive}, that is, it satisfies $x^{\ell r} \mt x^{r \ell}=x$.

To properly state the lemma, we recall a couple of definitions. A semigroup ${\m S}$ is \emph{left-zero} if it satisfies $xy=x$ and \emph{right-zero} if it satisfies $xy=y$. Further, a join semilattice $A$ with a greatest element is a \emph{prelattice} if $A$ can be made into a lattice by adjoining a new bottom element, i.e., $A^\bot :=A\cup\{\bot\}$ is a lattice if the order $\leq$ on $A$ is extended by setting $\bot\leq a$ for all $a\in A$. An element is called \emph{central} if it commutes with all elements.

\begin{lemma}\label{l:A^i_has_conical_elements} \label{l:centralizer} \label{l:uniquenoncommuting} \label{l:covering} \label{c:non-commuting} \label{c:starand*}
Let ${\m A}$ be a conic idempotent residuated lattice and let $x,y\in A$.
\begin{enumerate}
\item Each of $x^\ell$ and $x^r$ are conical elements and both have the same sign, which is opposite to the sign of $x$. In particular, $A^i$ is a chain.
\item $x$ is central iff $x^\ell = x^r$.
 \item If $x$ is central, then $\{x, x^*\}$ forms a semilattice with multiplication equal to the inherited meet.
 \item If $x$ is not central, then $xy=yx$ for all $y \not = x^*$ and $\{x, x^*\}$ forms a left-zero or a right-zero semigroup.
\item If $x$ and $y$ do not commute, then $x^*=y$, $y^*=x$ and $\{x,y\}$ forms a left-zero or a right-zero semigroup.
\item There is no element between the conical elements $x^\ell$ and $x^r$ and these elements form a covering pair.
\item $x \leq y^\star$ iff $y \leq x^\star$, so $((-)^\star, (-)^\star)$ is a Galois connection. Moreover, $(x^\star, x^*)$ forms a splitting pair: For all $c$, $c \leq x^\star$ or $x^* \leq c$.
\item  $\gamma$ is a nucleus on ${\m A}$.
\item  $A^i$ is the universe of a totally ordered quasi-involutive subalgebra $\m A^i$ of $\m A$, and ${\m A}_\gamma={\m A}^i$.
\item  For each $a \in A^i$, $\gamma^{-1}(a)$ is a prelattice. Furthermore:
\begin{enumerate}
\item If $a$ has no lower cover in $A^i$, then  $\gamma^{-1}(a)$ is a lattice.
\item For negative $a \in A^i$, the block $\gamma^{-1}(a)$ is a Brouwerian lattice and the implication in this Brouwerian lattice is given by $x\Rightarrow_a y = (x \under y)\meet a = (y\ovr x)\meet a$.
\item If $a \in A^i$ is not central then  $\gamma^{-1}(a)$ is trivial.
  \end{enumerate}
\end{enumerate}
\end{lemma}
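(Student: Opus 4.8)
This lemma collects facts whose detailed proofs appear in \cite{FG1}; the plan for a self-contained treatment is to prove the eleven items in the listed order, as each leans on its predecessors, with item~(1) as the cornerstone that forces the skeleton to be ``one-dimensional.'' For the signs in~(1): from $1\under 1 = 1 = 1\ovr 1$ and antitonicity of the residuals in the divided argument, $x\geq 1$ gives $x^r = x\under 1\leq 1$ and $x^\ell = 1\ovr x\leq 1$, and dually $x\leq 1$ gives $x^r, x^\ell\geq 1$; so $x^\ell$ and $x^r$ share the sign opposite to that of $x$. Conicality of $x^r$ --- say $x$ is negative, so $x^r\geq 1$ --- is the delicate point: for an arbitrary $y$ one reduces to the case $y\geq 1$, then applies the idempotent laws $x^2=x$, $y^2=y$, $(xy)^2=xy$ and the conic dichotomy to the products $xy$ and $yx$ to conclude $xy\leq 1$ or $xy\geq 1$, whence residuation gives $y\leq x^r$ or $x^r\leq y$; the argument for $x^\ell$ is symmetric. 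As every element of $A^i$ is then conical, the elements of $A^i$ are pairwise comparable, so $A^i$ is a chain.

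For~(2), ``$x$ central $\Rightarrow x^\ell = x^r$'' is immediate, and the converse recovers commutation of $x$ with every element from the residuation identities once $x^\ell = x^r$ is known; in the central case $x^* = x^\ell = x^r$ and a short computation gives~(3). When $x$ is non-central, $x^\ell\neq x^r$, so by~(1) the pair $\{x^\star, x^*\} = \{x^\ell\meet x^r,\, x^\ell\join x^r\}$ is a genuine two-element chain, and the core of~(4)--(5) is to show that $xy = yx$ for every $y\neq x^*$ --- by routing each product through a $\meet 1$ or $\join 1$ and invoking conicality --- and that multiplication on $\{x, x^*\}$ is conservative, idempotent and associative, hence a left-zero or a right-zero band, the two options being genuinely distinct because $\under$ and $\ovr$ play asymmetric roles. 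Item~(5) is then the symmetric restatement, and for~(6) any element strictly between $x^\ell$ and $x^r$ would again be conical, contradicting the description of inverses from~(1), so they form a covering pair.

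For~(7), $x\leq y^\star \Leftrightarrow y\leq x^\star$ reduces, after unfolding $\star$, to the manifestly symmetric condition ``$xy\leq 1$ and $yx\leq 1$,'' and the splitting property of $(x^\star, x^*)$ follows by applying the covering fact from~(6) to $\gamma(x)$. For~(8), $\gamma(x) = x^{\ell r}\meet x^{r\ell}$ is inflationary since $x\leq x^{\ell r}$ and $x\leq x^{r\ell}$ follow from $(1\ovr x)x\leq 1$ and $x(x\under 1)\leq 1$, and it is monotone trivially; idempotence and the nucleus inequality $\gamma(x)\gamma(y)\leq\gamma(xy)$ are the substantive residuated-lattice computations, carried out using the conic and idempotent identities and the de Morgan-type interaction of $^\ell$ and $^r$ along the chain $A^i$. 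For~(9), the image of $\gamma$ is exactly $A^i$ --- each $x^{\ell r}$ and $x^{r\ell}$ is an inverse element, and every inverse element is $\gamma$-fixed --- while $A^i$ is closed under all operations by~(1)--(6); quasi-involutivity $x^{\ell r}\meet x^{r\ell}=x$ on $A^i$ says precisely that $\gamma$ restricts to the identity there, and matching the nuclear operations against the inherited ones yields ${\m A}_\gamma = {\m A}^i$.

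For~(10), one checks that $\gamma^{-1}(a)$ is a join-subsemilattice of $A$ with greatest element $a$ closed under those binary meets that exist in $A$, so that adjoining a bottom makes it a lattice --- a prelattice; when $a$ has no lower cover in $A^i$, the meets that might be missing already lie in the block, giving~(a). For negative $a$ the block is integral and idempotent, its multiplication coincides with the meet, and a residuation argument shows $\gamma^{-1}(a)$ is a Brouwerian lattice with relative pseudocomplement $x\Rightarrow_a y = (x\under y)\meet a = (y\ovr x)\meet a$, which is~(b); for~(c), if $a$ is non-central then $a^\ell\neq a^r$ and any $x$ with $\gamma(x)=a$ would force $a$ to be central, so the block collapses to $\{a\}$. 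The main obstacles I anticipate are the conicality claim in~(1), on which the whole decomposition rests, and, computationally, the nucleus inequality in~(8) together with the Brouwerian structure and the explicit implication in~(10)(b); these are exactly the points where the conic and idempotent hypotheses are used essentially.
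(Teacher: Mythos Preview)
The paper gives no proof of this lemma; it simply records that the eleven items are drawn from Lemmas~3.1, 3.5, 3.9, 3.11, Corollaries~3.6, 3.7, and Theorem~3.11 of \cite{FG1}. Your proposal goes further by sketching a self-contained argument, and the overall architecture --- prove conservativity-type facts first, then read off the structure of $A^i$, then verify the nucleus axioms and the block structure --- is the right one and matches how \cite{FG1} proceeds.

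There is, however, a genuine gap in your treatment of~(1), and since you correctly flag it as the cornerstone this matters. You write that from ``$xy\leq 1$ or $xy\geq 1$'' residuation gives ``$y\leq x^r$ or $x^r\leq y$.'' The first implication is indeed residuation, but the second is not: $xy>1$ does \emph{not} by itself yield $x^r\leq y$. What is missing is the conservativity of the product on opposite-sign pairs, namely that if $x\leq 1\leq y$ then $xy\in\{x,y\}$, with $xy=x$ when $xy\leq 1$ and $xy=y$ when $xy\geq 1$. This is where idempotency is actually used: from $x\leq xy\leq y$ and, say, $xy\geq 1$, both $xy$ and $y$ are positive, so $xy\cdot y=xy\vee y=y$, while $xy\cdot y=xy^2=xy$, forcing $xy=y$. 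With conservativity in hand, the conicality of $x^r$ follows: if $xy>1$ then $x(x^r\vee y)=xx^r\vee xy=xx^r\vee y=y>1$, so by conservativity $x(x^r\vee y)=x^r\vee y$, whence $x^r\vee y=y$, i.e.\ $x^r\leq y$. The same conservativity fact is also what drives the converse in~(2), the commutation claim in~(4)--(5), and the covering argument in~(6) (where the contradiction comes from~(4)/(5), not directly from~(1) as you suggest: an element strictly between $x^\ell$ and $x^r$ would fail to commute with $x$ yet differ from $x^*$). Once you insert this lemma early and route the subsequent items through it, your outline goes through.
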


The prelattices $\gamma^{-1}(a)$ for $a\in A^i$ are called \emph{blocks}. The fundamental operations in any conic idempotent residuated lattice ${\m A}$ are determined by the subalgebra ${\m A}^i$ together with the order-theoretic information coming from its blocks. The following lemma depicts this in numerous different ways, all of which will be used in proofs later; this summarizes portions of Lemma 3.3, Theorem 3.11, and Corollary 3.12 from \cite{FG1}.

\begin{lemma}\label{l:idcon_mult} \label{c:simple mult} 
Let ${\m A}$ be a conic idempotent residuated lattice and let $x,y\in A$.
\begin{enumerate}
	\item Then
\[ x y = \begin{cases} 
      x\meet y & x,y\leq 1 \\
      x\join y & x,y\geq 1 \\
      x & y\leq 1\leq y^\ell<x\text{ or }x \leq 1\leq y\leq x^r \\
      &   \Lra   \;\; x^r< y\leq 1\leq x\text{ or }x \leq 1\leq y\leq x^r \\
      &   \Lra   \;\; y\in  (x^r, 1] \cup [1, x^r]    \Lra   \;\; y^\ell \in  [x, 1] \cup [1, x) \\		
      y & x\leq 1\leq x^r<y\text{ or }y \leq 1\leq x\leq y^\ell \\
      & \Lra   \;\; y^ \ell < x\leq 1\leq y\text{ or }y \leq 1\leq x\leq y^\ell  \\
      & \Lra   \;\; x\in  (y^\ell, 1] \cup [1, y^\ell] 
        \Lra   \;\; x^r\in  [y, 1] \cup [1, y) 
   \end{cases}
\]
\item Also,
\[ x y = \begin{cases} 
      x\meet y & x,y\leq 1 \text{ and }  x,y \text{ are incomparable} \\
      x\join y & x,y\geq 1  \text{ and }  x,y\text{ are incomparable} \\
      x &  y\in  (x^r, x] \cup  [x, x^r] \\
      y &  x\in  (y^\ell, y] \cup [y, y^\ell]
   \end{cases}
\]
\item Moreover,
\[ x y = \begin{cases} 
      x\meet y & xy\leq 1  \;\; (\Lra x \leq y^\ell \Lra y \leq x^r)\\
      x\join y & 1< xy  \;\; (\Lra y^\ell < x \Lra x^r <y)
   \end{cases}
\]
\item Suppose $s=\gamma(x)$ and $t=\gamma(y)$. Then
\[ xy = \begin{cases} 
      x\meet y & s,t\leq 1 \\
      x\join y & s, t>1\\
      x & st=s\text{ and }s,t\text{ have opposite sign} \\
      y & st=t\text{ and }s,t\text{ have opposite sign}  \\
   \end{cases}
	= \begin{cases} 
      x\meet y & s=t\leq 1 \\
      x\join y & s=t>1\\
      x & st=s\text{ and }s\neq t \\
      y & st=t\text{ and }s\neq t \\
   \end{cases}
\]
\item We also have 
$$x\ld y = \begin{cases} 
      x^r \jn y=\gamma(x)^r \jn y & x\leq y \\
      x^r \mt y=\gamma(x)^r \mt y & \gamma(y) < \gamma(x),\text{ or }1<\gamma(x)=\gamma(y)\text{ and }x \not \leq y\\
       x \Rightarrow_a y & \gamma(x)=\gamma(y)=a \leq 1\text{ and }x \not \leq y.
   \end{cases}
$$
$$y\rd x = \begin{cases} 
      x^\ell \jn y=\gamma(x)^\ell \jn y & x\leq y \\
      x^\ell \mt y=\gamma(x)^\ell \mt y & \gamma(y) < \gamma(x),\text{ or }1<\gamma(x)=\gamma(y)\text{ and }x \not \leq y\\
       x \Rightarrow_a y & \gamma(x)=\gamma(y) = a \leq 1\text{ and }x \not \leq y.
   \end{cases}
$$
\end{enumerate}
\end{lemma}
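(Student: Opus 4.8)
The plan is to derive all four descriptions of the product and the two descriptions of the divisions from the skeleton decomposition recalled in Lemma~\ref{l:A^i_has_conical_elements}, together with idempotence, conicity, and residuation. Since the lattice operations and the unit of $\m A$ are read off directly from the chain $\m A^i$ and the blocks $\gamma^{-1}(a)$ (a prelattice with meet inherited from $\m A$), everything reduces to computing $xy$ and then, by residuation, $x\ld y$ and $y\rd x$.

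First I would settle the \emph{same-sign} products. If $x,y\leq 1$, then $xy\leq x\mt y$ by integrality of the negative cone, while $x\mt y=(x\mt y)^2\leq xy$ by idempotence, so $xy=x\mt y$; dually $x,y\geq 1$ gives $xy=x\jn y$. For the \emph{opposite-sign} case, say $y\leq 1\leq x$ (the case $x\leq 1\leq y$ being symmetric, with $^\ell$ and $^r$ interchanged), one has $y=1\cdot y\leq xy\leq x\cdot 1=x$, and by conicity $xy\leq 1$ or $xy\geq 1$. If $xy\leq 1$ then, since $(xy)y=xy^2=xy$ and both $xy,y\leq 1$, the negative-cone rule gives $xy=(xy)\mt y\leq y$, hence $xy=y$; and $xy\leq 1\Lra y\leq x^r$ by residuation. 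If instead $xy\geq 1$ (equivalently $x^r<y$, since $x^r$ is conical by Lemma~\ref{l:A^i_has_conical_elements}(1), hence comparable to $y$), then $x(xy)=x^2y=xy$ with both $x,xy\geq 1$, so the positive-cone rule gives $xy=x\jn(xy)\geq x$, hence $xy=x$. This establishes form~(2); forms~(1) and~(3) follow by rewriting the side conditions via the order-theoretic identities for inverse elements in Lemma~\ref{l:A^i_has_conical_elements} (e.g.\ $xy\leq 1\Lra y\leq x^r\Lra x\leq y^\ell$, and $y\in(x^r,1]\cup[1,x^r]\Lra y^\ell\in[x,1]\cup[1,x)$).

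Form~(4) is the translation of~(1)--(3) through the nucleus. Here I would use that $\gamma(x)$ has the same sign as $x$ and that, by the relations in the totally ordered quasi-involutive algebra $\m A^i=\m A_\gamma$ from Lemma~\ref{l:A^i_has_conical_elements}(9), the side conditions ``$y\in(x^r,1]\cup[1,x^r]$'' and ``$x\in(y^\ell,1]\cup[1,y^\ell]$'' become the conditions $st=s$ with $s\neq t$ and $st=t$ with $s\neq t$ in the (conservative) multiplication of the chain $\m A^i$, where $s=\gamma(x)$, $t=\gamma(y)$; the remaining two clauses coincide with the same-sign clauses because $s=t$ exactly when $x$ and $y$ lie in the same block. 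Finally, the division formulas in~(5) follow from the product formulas by residuation, splitting on whether $x\leq y$ (where $z=1$ witnesses $x\ld y\geq 1$, and one checks $x\ld y=x^r\jn y=\gamma(x)^r\jn y$), whether $\gamma(y)<\gamma(x)$ or $1<\gamma(x)=\gamma(y)$ with $x\not\leq y$ (where the product formulas force $x\ld y=x^r\mt y$), and whether $\gamma(x)=\gamma(y)=a\leq 1$ with $x\not\leq y$; in this last case $x\ld y$ is computed inside the block $\gamma^{-1}(a)$, which by Lemma~\ref{l:A^i_has_conical_elements}(10)(b) is a Brouwerian lattice with implication $\Rightarrow_a$. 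That these three cases are exhaustive uses conicality of $\gamma(x)$: if $\gamma(x)<\gamma(y)$ then $\gamma(x)\leq y$ and hence $x\leq y$, so $x\not\leq y$ forces $\gamma(y)\leq\gamma(x)$. The right division $y\rd x$ is handled symmetrically.

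I expect the main effort to be the bookkeeping in the last two steps: verifying that the several superficially different side conditions recorded simultaneously in~(1)--(5) really are equivalent, and matching the product formulas against each branch of the division formulas. The genuinely new content — the opposite-sign product dichotomy $xy\in\{x,y\}$ — is short once the same-sign rule and the conicality of inverse elements are in hand, and the conceptual core, namely that multiplication is governed by the nucleus $\gamma$ onto the skeleton, is already supplied by Lemma~\ref{l:A^i_has_conical_elements}.
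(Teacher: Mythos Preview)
The paper does not prove this lemma at all: it is presented as a summary of results from the prequel \cite{FG1} (specifically Lemma~3.3, Theorem~3.11, and Corollary~3.12 there), so there is no proof in the present paper to compare against. Your sketch is therefore not a competing argument but an independent reconstruction, and as such it is essentially correct: the same-sign cases follow from idempotence and monotonicity exactly as you say, and your opposite-sign dichotomy (using $(xy)y=xy$ when $xy\le 1$ and $x(xy)=xy$ when $xy\ge 1$ to force $xy\in\{x,y\}$) is the clean way to get conservativity across signs. The reductions of (1)--(3) to one another and of (4) to (1)--(3) via $\gamma$ are indeed bookkeeping once one knows from Lemma~\ref{l:A^i_has_conical_elements} that $\gamma$ is a nucleus onto a chain, that $\gamma(x)$ has the sign of $x$, and that the blocks are stacked as an ordinal sum; your exhaustiveness argument for (5) (that $x\not\le y$ forces $\gamma(y)\le\gamma(x)$) uses exactly this stacking.

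One small point to tighten: in the opposite-sign case you write ``$xy\ge 1$ (equivalently $x^r<y$)'', but residuation gives $xy\le 1\Lra y\le x^r$, so the genuine complement is $xy>1\Lra x^r<y$; the overlap at $xy=1$ is harmless (it forces $x=y=1$ by your two arguments combined), but the equivalence as stated is off by that boundary case.
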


\subsection{Decomposing of conic idempotent residuated lattices}

The information presented in Section~\ref{subsec:skeleton} underwrites a decomposition theorem for conic idempotent residuated lattices. A \emph{decomposition system} is a structure $(\m S, \{\m A_s: s \in S\})$, where $\m S$ is an idempotent residuated chain and, for every $s \in S$, $\m A_s$ is a prelattice with top element $s$ such that:
\begin{enumerate}
\item If $s$ has no lower cover in $\m S$, then $A_s$ is a lattice.
\item For negative $s \in \m S$, the block $A_s$ is a Brouwerian lattice. 
\item If $s$ is not central, then $A_s$ is trivial.
  \end{enumerate}
Whenever $\m A$ is a conic idempotent residuated lattice, $(\m A^i, \{\gamma^{-1}(s): s \in A^i\})$ is a decomposition system and in this situation, $\m A^i$ is quasi-involutive. 

Given a decomposition system $D=(\m S, \{\m A_s: s \in S\})$, we consider the ordinal sum $A_D:=\bigoplus_{s \in S} A_s$. The order $\leq$ on $A_D$ is the order given by the usual ordinal sum of posets. Given $s\in S$, $s^r$ and $s^\ell$ give the usual elements of $S$. For any $x\in A_s$ and $y\in A_t$, we further define

$$ x\cdot y = \begin{cases} 
      x\meet y & s=t\leq 1 \\
      x\join y & s=t>1\\
      x & st=s\text{ and }s\neq t \\
      y & st=t\text{ and }s\neq t \\
   \end{cases}
	$$

$$x\ld y = \begin{cases} 
      s^r \jn y \\
      s^r \mt y \\
      x \Rightarrow_s y \\
   \end{cases}
	\quad
y\rd x = \begin{cases} 
      s^\ell \jn y  & \qquad \text{ if }x\leq y \\
      s^\ell \mt y  & \qquad \text{ if }t < s,\text{ or }1<s=t\text{ and }x \not \leq y\\
       x \Rightarrow_s y  & \qquad \text{ if }s=t \leq 1\text{ and }x \not \leq y.
   \end{cases}
$$
  Let ${\m A}_D$ denote the algebra defined with the operations above, the lattice operations of the ordinal sum, and unit $1$ of $\m S$.
 
  \begin{theorem}[{\cite[Theorem 3.13]{FG1}}]\label{t:decomposition}
  Given a decomposition system $D$, the algebra $\m A_D$ is a conic idempotent residuated lattice.
  \end{theorem}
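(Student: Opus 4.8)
The plan is to verify directly that $\m A_D$, equipped with the operations specified in the definition preceding the statement, satisfies all the defining quasi-identities of a residuated lattice, is idempotent, and is conic. Since the ordinal sum of posets is used for the lattice order, $(A_D, \meet, \join)$ is automatically a lattice, and conicity is immediate because every $A_s$ with $s \leq 1$ lies below the unit block while those with $s > 1$ lie above it (and $1 \in A_1$, using that $\m S$ is an idempotent residuated chain, hence conic, so $1$ is its own top block element). Idempotency $x \cdot x = x$ is also immediate from the first two cases of the multiplication table (when $x \in A_s$ we have $s = t$, and $x \meet x = x = x \join x$). So the real content is: (i) $(A_D, \cdot, 1)$ is a monoid, and (ii) the residuation law $y \leq x \ld z \iff x \cdot y \leq z \iff x \leq z \rd y$ holds, with $\ld, \rd$ as defined.

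The key organizational idea is to reduce everything to the skeleton $\m S$ via the projection $\pi\colon A_D \to S$ sending $x \in A_s$ to $s$. The multiplication table is designed so that $\pi$ is a monoid homomorphism onto $(\m S, \cdot)$: for $x \in A_s$, $y \in A_t$, one checks case by case that $\pi(x \cdot y) = s \cdot_{\m S} t$ using Lemma~\ref{l:idcon_mult}(4) applied inside $\m S$ (noting that $s, t$ are conical in $\m S$, so they are comparable to $1$, and $s\cdot t \in \{s \meet t, s \join t, s, t\}$ is governed by exactly the four cases listed). For associativity of $\cdot$ on $A_D$, I would split on the image pattern $(\pi x, \pi y, \pi z)$ in $\m S$. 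When the three skeleton elements are not all equal, the product $x \cdot y \cdot z$ selects, in both bracketings, the ``outermost'' factor as dictated by the monoid operation in $\m S$ together with the rule ``$x$ if $st = s$, $y$ if $st = t$'', and these agree precisely because $\m S$ itself is associative; the only genuinely two-dimensional case is when all of $\pi x, \pi y, \pi z$ coincide with a single $s$, where the product reduces to the meet (if $s \leq 1$) or join (if $s > 1$) within the prelattice $A_s$, and associativity of those follows from $A_s$ being a (pre)lattice. That $1$ (the unit of $\m S$, which is the top of the block $A_1$) is neutral follows from the $s = t$ cases together with the fact that $1$ has both signs.

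For residuation, the strategy is again to push the inequality $x \cdot y \leq z$ down to $\m S$ whenever the blocks involved are distinct, and to handle the equal-block case using Lemma~\ref{l:A^i_has_conical_elements}(10)(b): negative blocks are Brouwerian lattices with implication $x \Rightarrow_s y$. Concretely, I would show $x \cdot y \leq z$ holds in $A_D$ iff one of: (a) $\pi(x)\pi(y) < \pi(z)$ in $\m S$; (b) $\pi(x)\pi(y) = \pi(z)$ and the surviving factor of $x \cdot y$ is $\leq z$ in that common block. Then I match this against the definition of $x \ld z$: the first branch ($s^r \jn y$, active when $x \leq y$ — here $s = \pi(x)$) and the second branch ($s^r \mt y$) are exactly reproducing, via the skeleton's own residuation $s^r = s \ld_{\m S} 1$ together with Lemma~\ref{l:idcon_mult}(5) specialized to $\m S$, the condition that $x \cdot (\text{something}) \leq z$; the third branch invokes the Brouwerian implication in the block $A_s$ when $\pi(x) = \pi(z) = s \leq 1$. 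The left-right symmetric statement for $\rd$ is verified the same way with $\ell$ in place of $r$. I would lean heavily on Lemma~\ref{l:idcon_mult}, especially parts (3), (4) and (5), reading those formulas as \emph{identities holding in the idempotent residuated chain $\m S$} (which is legitimate since $\m S$ is itself a conic idempotent residuated lattice, being linearly ordered), so that the verification for $\m A_D$ becomes a bookkeeping exercise of checking that the piecewise definitions glue the block data onto the skeleton data consistently.

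The main obstacle I anticipate is the proliferation of cases in the residuation check when the three relevant sign/comparability configurations interact — in particular, making sure the boundary behavior at $s^r$ and $s^\ell$ (where $s^\ell$ and $s^r$ form a covering pair in $\m S$ by Lemma~\ref{l:covering}, and where the non-central blocks are trivial by Lemma~\ref{l:A^i_has_conical_elements}(10)(c)) is handled without gaps, and that the two sub-cases of the middle branch of $x \ld z$ (namely $\gamma(z) < \gamma(x)$ versus $1 < \gamma(x) = \gamma(z)$ with $x \not\leq z$) are genuinely exhaustive together with the other branches. A secondary subtlety is verifying that when $s$ is not central, the block $A_s$ being trivial is actually forced to be consistent with the multiplication rule ``$x$ if $st = s$'' — but this is fine because a non-central $s$ has $s^\ell \neq s^r$ (Lemma~\ref{l:centralizer}(2)) and the left-zero/right-zero behavior at $\{s, s^*\}$ from Lemma~\ref{l:centralizer}(4) is exactly what the table encodes. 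Once the case analysis is set up with $\pi$ as the guiding homomorphism, each individual case is a short computation, so the proof is long but not deep; I would present the monoid axioms first, then residuation, and relegate the most tedious sub-cases to a remark that they follow by the left-right dual argument.
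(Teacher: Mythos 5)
A preliminary caveat: this paper does not actually prove Theorem~\ref{t:decomposition} — it is imported from the prequel \cite{FG1} — so there is no in-paper argument to compare yours against. Taken on its own terms, your plan (direct verification of the monoid and residuation axioms, organized around the projection $\pi\colon A_D\to S$, reducing each case either to the skeleton $\m S$ or to the lattice/Brouwerian structure of a single block) is the natural way to prove this, and your identification of the delicate spots — conservativity of the product on $\m S$, and the fact that non-central skeleton elements carry trivial blocks, which is what rescues associativity in the pattern $\pi x=\pi z\neq\pi y$ — is on target.

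There is, however, one concrete gap: the claim that $(A_D,\meet,\join)$ is \emph{automatically} a lattice because the order is an ordinal sum of posets. It is not. A block $\m A_s$ may be a proper prelattice, in which case two of its elements can have no common lower bound inside $A_s$; their meet in $\m A_D$ must then be supplied by the top of the block immediately below, and such a block exists only because of condition (1) in the definition of a decomposition system (if $A_s$ is not a lattice, then $s$ has a lower cover $s^\downarrow$ in $\m S$, and $s^\downarrow$ is the required meet). That hypothesis is in the definition precisely to make the ordinal sum a lattice, and your write-up never uses it — a sign the step has been asserted rather than checked. A smaller point of the same kind: you should verify that the expressions $s^r\join y$ and $s^r\meet y$ in the definition of the divisions always denote existing elements; they do, because $s^r$ is the top of its block and hence comparable to everything, but this deserves a sentence. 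Finally, the residuation equivalences, which you correctly identify as the bulk of the work, remain at the level of a sketch; the case split you propose (push $x\cdot y\leq z$ down to $\m S$ unless all relevant elements share a block, then invoke the block's Brouwerian implication) is the right one, and I see no obstruction to completing it, but as written it is a plan rather than a proof.
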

  
A decomposition system $(\m S, \{\m A_s: s \in S\})$ is called \emph{quasi-involutive} if $\m S$ is quasi-involutive. 
	
	 \begin{remark} For a commutative conic idempotent residuated, $x^*=x^\ell=x^r$. Thus, in the presence of commutativity, quasi-involutivity becomes equivalent to involutivity $x^{**}=x$ and skeletons are odd Sugihara monoids. 
	\end{remark}
	
For subalgebras, it is useful to think of decomposition systems as (multisorted) partial algebras. For $(\m S, \{\m A_s: s \in S\})$ a decomposition system:
	\begin{enumerate}
		\item If $s \in S$ is negative, $\m A_s$ is a total algebra in the language of Brouwerian algebras.
		\item If $s \in S$, $\m A_s$ is a topped prelattice, i.e. 
		a total algebra with respect to 
		join and with respect to a constant operation that produces the top element $s$ as an element of $\m S$, and a partial operation with respect to meet. 
		\item 	$\m S$ is a total algebra in the language of residuated lattices, and a partial algebra with respect to an operation $s \mapsto s^\downarrow$, where if $\m A_s$ is a proper prelattice (i.e., a prelattice that is not a lattice), then $s^\downarrow$ is a lower cover of $s$ in $\m S$.
	\end{enumerate}
We say $(\m S_A, \{\m A_s: s \in S_A\})$ is a \emph{subsystem} of $(\m S_B, \{\m B_s: s \in S_B\})$ if it is a sub-partial algebra, understood as above, i.e., the subsystem is closed under all operations whenever they are defined in the bigger system. Spelled out,  $(\m S_A, \{\m A_s: s \in S_A\})$ is  a subsystem of  $(\m S_B, \{\m B_s: s \in S_B\})$ if:
	\begin{enumerate}
		\item $\m S_A$ is  subalgebra of $\m S_B$.
		\item For every $s \in S^-_A$, $\m A_s$ is a Brouwerian subalgebra of $\m B_s$.
		\item For every $s \in S^+_A$, $\m A_s$ is a topped subprelattice of $\m B_s$ (in particular if $\m B_s$ is a lattice, then $\m A_s$ is a lattice).
		\item If $\m A_s$ is a proper prelattice (hence also $\m B_s$ is a proper prelattice) then $S_A$ contains the lower cover $s^\downarrow$ of $s$ in $S_B$. 
	\end{enumerate}
	
    \begin{lemma}[{\cite[Lemma 3.16]{FG1}}]\label{l:subsystems}
Let $(\m S_A, \{\m A_s: s \in S_A\})$ be the decomposition system corresponding to the algebra ${\m A}$ and $(\m S_B, \{\m B_s: s \in S_B\})$ be the decomposition system corresponding to the algebra ${\m B}$. Then $(\m S_A, \{\m A_s: s \in S_A\})$ is a subsystem of $(\m S_B, \{\m B_s: s \in S_B\})$ iff ${\m A}$ is a subalgebra of ${\m B}$. 
  \end{lemma}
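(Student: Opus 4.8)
The plan is to prove both implications by reducing ``subalgebra'' to closure under the fundamental operations and then computing those operations explicitly via Lemma~\ref{l:idcon_mult}: in any conic idempotent residuated lattice the lattice, monoid, and residual operations are entirely determined by the skeleton $\m A^i$ of inverse elements together with the (pre)lattice structure of the blocks $\gamma^{-1}(a)$. The key preliminary observation is that $x\mapsto x^\ell$, $x\mapsto x^r$, and $x\mapsto\gamma(x)=x^{\ell r}\mt x^{r\ell}$ are term operations; hence if $\m A$ is a subalgebra of $\m B$ then automatically $A^i\subseteq B^i$, $\gamma_{\m A}=\gamma_{\m B}\restriction A$, and $\gamma_{\m A}^{-1}(a)=\gamma_{\m B}^{-1}(a)\cap A$ for each $a\in A^i$, while dually a subsystem inclusion gives $A=\bigcup_{s\in S_A}A_s\subseteq\bigcup_{s\in S_B}B_s=B$. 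With this dictionary in place, each direction becomes a finite case check against the four clauses defining ``subsystem'' on one side and against the operation tables of Lemma~\ref{l:idcon_mult} on the other, and much of the content is simply unwinding the sub-partial-algebra bookkeeping.

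For ``$\m A\le\m B$ implies subsystem'' I would verify the four clauses in turn. Clause~(1), that $\m A^i$ is a subalgebra of $\m B^i$, holds because the skeleton operations---meet and the two residuals inherited from the ambient algebra, the modified join $\gamma(x\jn y)$, and the modified product $\gamma(xy)$---are built from operations of $\m A$ and the term $\gamma$, so they restrict correctly, and $A^i\subseteq B^i$ by Lemma~\ref{l:A^i_has_conical_elements}(1),(9). Clauses~(2) and~(3), that negative blocks are Brouwerian subalgebras and positive blocks topped subprelattices, follow since inside a block the meet and join coincide with those of $\m A$ (the relevant joins and meets stay in the block because $\gamma$ is a closure operator and $s$ is the top of its block), the Brouwerian implication is $x\Rightarrow_s y=(x\ld y)\mt s$ by Lemma~\ref{l:A^i_has_conical_elements}(10)(b) and hence absolute, and the top constant $s$ lies in $A^i$. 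The clause with genuine content is~(4): if a positive block $\gamma_{\m A}^{-1}(s)$ is a proper prelattice, one must show $\m B_s$ is also a proper prelattice and that the lower cover $s^\downarrow$ of $s$ in $\m B^i$ already lies in $A^i$. Here I would use that $\m A$ is closed under meet inside $\m B$: for $x,y\in A_s$ with no meet in $\m A_s$, in the ordinal-sum order of $\m B$ their meet must be the top of the block immediately below $s$; this forces $\m B_s$ to be non-lattice and that top---namely $s^\downarrow$---to lie in $A$, whence $\gamma_{\m A}(s^\downarrow)=\gamma_{\m B}(s^\downarrow)=s^\downarrow$ shows $s^\downarrow\in A^i$. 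A parallel argument gives the ``if $\m B_s$ is a lattice then $\m A_s$ is a lattice'' stipulation built into clause~(3).

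For the converse, assume $(\m S_A,\{\m A_s\})$ is a subsystem of $(\m S_B,\{\m B_s\})$; I would show $A$ is a subuniverse of $\m B$ and that the induced operations agree with those of $\m A$, reading values off Lemma~\ref{l:idcon_mult}. Since $S_A$ is a chain, for $x\in A_s$ and $y\in A_t$ with $s\ne t$ the elements $x,y$ are comparable in both ordinal sums, so $x\mt y$ and $x\jn y$ reduce to $x$ or $y$; for $s=t$ the lattice operations are computed inside a single block, where negative blocks are closed by clause~(2) and positive blocks are closed under join and the partial meet by clause~(3), the only escape route---a meet dropping to the block below---being covered by clause~(4). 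Products are conservative (Lemma~\ref{l:idcon_mult}(4)), with $xy$ one of $x,y,x\mt y,x\jn y$ determined by the product of $\gamma(x),\gamma(y)$ in $\m S_A$, which equals their product in $\m S_B$ since $\m S_A\le\m S_B$. Residuals (Lemma~\ref{l:idcon_mult}(5)) are either $\gamma(x)^r\jn y$ or $\gamma(x)^r\mt y$---a join or meet of the skeleton element $\gamma(x)^r\in S_A$ with $y$, which lands in $A$ by the comparability argument---or the Brouwerian implication $x\Rightarrow_s y$, which stays in $A_s$ by clause~(2) or~(3); and $1=\gamma(1)\in S_A$. The main obstacle throughout is exactly clause~(4) together with the lattice-versus-prelattice stipulation in clause~(3): these are the points where one must control the interaction of lattice meets with the block decomposition, since a meet of two block elements that has no meet within its block ``escapes'' to the top of the next block down, and one must certify that this witnessing skeleton element is present on both sides. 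Everything else is routine once the operation tables of Lemma~\ref{l:idcon_mult} and the subsystem definition are unfolded.
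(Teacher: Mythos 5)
This lemma is recalled from the prequel (it is cited as \cite[Lemma 3.16]{FG1}) and the present paper contains no proof of it, so there is nothing here to compare your argument against line by line; judged on its own terms, your proof is correct and is the natural unwinding of the two definitions that one would expect the prequel's proof to be. You correctly reduce everything to the term-definability of $^\ell$, $^r$, and $\gamma$ together with the operation tables of Lemma~\ref{l:idcon_mult}, and—most importantly—you isolate the one genuinely non-routine point: the interaction of ambient meets with the block decomposition, where a meet of two elements of a proper prelattice block ``escapes'' to the top of the covering block below, which is exactly what clause (4) of the subsystem definition (and the lattice-versus-prelattice proviso in clause (3)) is there to control. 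Your treatment of that point is sound in both directions: in the forward direction the escaped meet $x\wedge_{\m B}y=x\wedge_{\m A}y$ forces $\m B_s$ to be a proper prelattice and pins down $s^{\downarrow}$ as an element of $A^i$ (one should add the one-line check that the lower cover of $s$ in $S_A$ coincides with the lower cover in $S_B$, which follows since nothing of $S_B$ lies strictly between $s^\downarrow$ and $s$); in the converse direction, closure under the partial meet plus clause (4) covers both the ``meet stays in the block'' and ``meet escapes'' cases. The remaining verifications (conservativity of the product via the skeleton, residuals landing in $A$ because $\gamma(x)^r\in S_A$ is conical, Brouwerian implications staying in negative blocks) are routine exactly as you say, modulo the small slip that $\Rightarrow_s$ only arises for negative $s$, so only clause (2) is relevant there.
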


	\subsection{Congruence generation and the local deduction theorem}
	
	A subset $F$ of a residuated lattice ${\m A}$ is a \emph{congruence filter} of ${\m A}$ if $F$ is a lattice filter, $F$ is closed under multiplication, and $F$ is closed under conjugation, i.e., whenever $y \in F$ and $x \in A$ then $x \ld yx, \; xy\rd x \in F$. Congruences of ${\m A}$ are in bijective correspondence with congruence filters of ${\m A}$ via the mutually inverse maps $\theta\mapsto F_\theta=\upset [1]_\theta$ and $F\mapsto\theta_F$, where $a \mathrel{\theta_F} b$ iff $a \ld b, \; b \ld a \in F$; see \cite[Section 3.6]{GJKO2007}.
	
We write $y^{u^n}:= y^{uu \cdots u}$ when $u\in\{r,\ell\}$ appears $n$-many times. Also define:

 $$s(y)=s_1(y)=y \mt y^{\ell \ell} \mt  y^{rr},$$
 $$s_2(y)= y \mt    y^{\ell \ell \ell \ell} \mt  y^{\ell \ell r r } \mt  y^{r r \ell \ell} \mt y^{r r rr},$$ 
 and for all $n\in\mathbb{N}$, 

      $$s_n(y):= y \mt \bigwedge  \{y^{c_1c_1c_2c_2\cdots c_n c_n}:  c_1, c_2, \ldots, c_n \in \{\ell, r\}\}.$$ Moreover, write $s^n$ for the $n$-fold composition of $s$ with itself, define $t_n(y):=1 \mt s_n(y)$ and $t(y):=1\mt s(y)$, and use $t^n$ for the $n$-fold composition of $t$ with itself. Finally, if ${\m A}$ is a residuated lattice and $Y\subseteq A$, write $Y\meet 1 := \{y\meet 1 : y\in Y\}$.

    \begin{lemma}[{\cite[Lemma 4.2]{FG1}}]\label{l:congruencefilters2} 
    Let $\m A$ be a semiconic idempotent residuated lattice and $Y \subseteq A$.
    \begin{enumerate}
    \item $\m A$ satisfies the identities $s^n(y)=s_n(y)$ and $t^n(y)=t_n(y)$. 
    \item The congruence filter  of $A$ generated by $Y$ is given by\\ $\langle Y \rangle={\uparrow} \{t_{n_1}(y_1) \mt \cdots \mt t_{n_k}(y_k): k, n_1, \ldots, n_k  \in \mathbb{N}, y_1, \ldots, y_k \in Y \mt 1\}$. 
		\item  $\langle Y \rangle={\uparrow} \{t_{n}(y): n  \in \mathbb{N}, y \in Y^F\}$, where $Y^F$ is the $\mt$-subalgebra generated by $Y \cup \{1\}$. 
\item   If $Y$ is a closed under meets, then $\langle Y \rangle={\uparrow} \{t_{n}(y):  n  \in \mathbb{N}, y \in Y \mt 1\}$.
\item If $Y$ is a congruence filter and $a \in A$, then\\
 $\langle Y \cup \{a\} \rangle ={\uparrow} \{ y \mt s_{n}(a):  n  \in \mathbb{N}, y \in Y\}$.
\end{enumerate}
  \end{lemma}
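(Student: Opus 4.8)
The plan is to treat part~(1) as the engine: it consists of two identities, so --- since the finitely subdirectly irreducible semiconic idempotent residuated lattices are exactly the conic ones and identities are inherited by subdirect products --- it suffices to prove $s^n(y)=s_n(y)$ and $t^n(y)=t_n(y)$ in an arbitrary conic idempotent residuated lattice $\m A$, where Lemmas~\ref{l:A^i_has_conical_elements} and~\ref{l:idcon_mult} apply. The same reduction handles every auxiliary inequality needed below, since $u\le v$ is the identity $u\mt v=u$. Granting~(1), I would then show directly that the set displayed in~(2) is the least congruence filter containing $Y$, and obtain (3), (4) and (5) from (2) together with (1) by regrouping.

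\emph{Part~(1).} Induct on $n$, the base case being the definition of $s_1$. For the step, two preliminary facts do the work. First, the inverse operations factor through the nucleus: $x^\ell=\gamma(x)^\ell$ and $x^r=\gamma(x)^r$ for all $x$. This follows by computing $x\ld 1$ from Lemma~\ref{l:idcon_mult}(5) and observing that $x\le 1$ forces $\gamma(x)\le 1$ and $1\le x$ forces $1\le\gamma(x)$; consequently every iterated-inverse term $y^{c_1c_1\cdots c_kc_k}$ with $k\ge 1$ lies in the chain $\m A^i$, and hence so does $\sigma_n(y):=\bigwedge\{y^{c_1c_1\cdots c_nc_n}:c_1,\dots,c_n\in\{\ell,r\}\}$. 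Second, $\gamma(y\mt c)=\gamma(y)\mt c$ whenever $c\in A^i$: since $c$ is conical (Lemma~\ref{l:A^i_has_conical_elements}(1)) one has $c\le y$ or $c\ge y$, and in the second case $c$ is a $\gamma$-closed element above $y$, hence above $\gamma(y)$. Using these, $\gamma(s_n(y))=\gamma(y\mt\sigma_n(y))=\gamma(y)\mt\sigma_n(y)$, so $s_n(y)^{\ell\ell}$ and $s_n(y)^{rr}$ are evaluated inside $\m A^i$; combining this with $y\le\gamma(y)$, the identity $s(s_n(y))=s_{n+1}(y)$ reduces to the same identity computed inside the chain $\m A^i$. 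That chain identity I would prove by a direct analysis of iterated inverses in a quasi-involutive idempotent residuated chain, using quasi-involutivity $a^{\ell r}\mt a^{r\ell}=a$ together with the covering- and splitting-pair facts of Lemma~\ref{l:A^i_has_conical_elements}(6),(7). The $t$-identity follows from $s(1\mt z)=1\mt s(z)$ --- immediate from $1^\ell=1=1^r$ and the same meet/nucleus behaviour --- which also gives $t(z)=s(1\mt z)$ and $t(1\mt z)=t(z)$, whence $t^n=t_n$ by a one-line induction.

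\emph{Parts~(2)--(5).} Write $F$ for the up-set in~(2). It is an up-set; it is closed under $\mt$, since a meet of two generators is a generator (concatenate index-tuples); and it is closed under $\cdot$, because every generator is below $1$ while the identities $(x\mt 1)(z\mt 1)=x\mt z\mt 1$ and $uv\ge u\mt v$ hold in semiconic idempotent residuated lattices. The substantive point is closure under conjugation: one must verify $\lambda_x(t_n(y)),\rho_x(t_n(y))\in F$ for all $x\in A$ and $y\in Y\mt 1$. I would do this by evaluating $x\ld(t_n(y)\,x)$ and $(x\,t_n(y))\rd x$ via the division formulas of Lemma~\ref{l:idcon_mult}(5) and checking that each conjugate dominates $t_{n+1}(y)$ (or a finite meet of such terms); the $s_n,t_n$ hierarchy is designed so that exactly one extra round of conjugation is absorbed on passing $n\mapsto n+1$, and part~(1), as $s^{n+1}=s\circ s_n$, is what makes this stabilize. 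Granting this, $F$ is a congruence filter; it contains $Y$, since $y\ge y\mt 1\ge t_1(y\mt 1)$; and it is contained in every congruence filter $G\supseteq Y$, because the $1$-class of a congruence is a convex subalgebra, so $G$ is closed under $\mt$, $^{\ell\ell}$ and $^{rr}$ and hence contains each $t_{n_i}(y_i)$ and their meets. Thus $F=\langle Y\rangle$. For~(3): part~(1) gives $s_{n+1}=s\circ s_n\le s_n$, so $(s_n)_n$ and $(t_n)_n$ are decreasing, and each $s_n,t_n$ is monotone (being built from meets and even iterates of $^\ell$ and $^r$); hence for a generator $t_{n_1}(y_1)\mt\cdots\mt t_{n_k}(y_k)$ of~(2), putting $N=\max_i n_i$ and $y=y_1\mt\cdots\mt y_k\in Y^F$ gives $t_N(y)\le\bigwedge_i t_{n_i}(y_i)$, while any $t_n(y)$ with $y=\bigwedge_i y_i\in Y^F$ satisfies $t_n(y)=t_n(y\mt 1)\le\bigwedge_i t_n(y_i\mt 1)$, a generator of~(2). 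Part~(4) is~(3) specialised to meet-closed $Y$, for which $Y^F=Y\cup(Y\mt 1)\cup\{1\}$ and $t_n(y)=t_n(y\mt 1)$ collapse the index set to $Y\mt 1$. For~(5), with $Y$ a congruence filter and $a\in A$, apply~(2) to $Y\cup\{a\}$: since $Y\mt 1\subseteq Y$ and $t_m(z)\in Y$ for $z\in Y\mt 1$ (by~(3) applied to $Y=\langle Y\rangle$), and since the finitely many $a\mt 1$-terms of a generator consolidate via $(t_m)_m$ decreasing, every generator equals $(y'\mt 1)\mt s_M(a)$ for some $y'\in Y$ and $M\in\mathbb{N}$ (using $a\mt 1=a$ when $a\le 1$; when $1\le a$ one has $s_m(a)\ge 1$ and both $\langle Y\cup\{a\}\rangle$ and the claimed up-set collapse to $Y$). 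As the reverse inclusion $\upset\{y\mt s_n(a):n\in\mathbb{N},\,y\in Y\}\subseteq\langle Y\cup\{a\}\rangle$ is immediate, (5) follows.

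\emph{Main obstacle.} The work concentrates in two places: in part~(1), reducing to the skeleton chain $\m A^i$ and then resolving the resulting identity about iterated inverses there; and in part~(2), the conjugation-closure computation, which is the very reason the hierarchies $s_n,t_n$ are defined as they are. Both lean on the fine arithmetic of Lemmas~\ref{l:A^i_has_conical_elements} and~\ref{l:idcon_mult}; once they are in place, parts~(3)--(5) reduce to routine bookkeeping with up-sets, the decreasing monotone family $(t_n)_n$, and meet-closure.
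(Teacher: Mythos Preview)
This lemma is quoted in the present paper from the prequel \cite{FG1} and is not proved here; there is no proof in this paper to compare your attempt against. Your sketch correctly identifies the architecture one expects the \cite{FG1} argument to have: reduce the identities in (1) to the conic case and ultimately to the skeleton chain, then use (1) to show the displayed set in (2) is closed under conjugation, with (3)--(5) following by bookkeeping.

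That said, your write-up is a plan rather than a proof at the two load-bearing points you yourself flag. In (1), after reducing to $\m A^i$ you appeal to ``a direct analysis of iterated inverses in a quasi-involutive idempotent residuated chain''; this is the heart of the matter and is not carried out. In (2), closure under conjugation is asserted (``checking that each conjugate dominates $t_{n+1}(y)$'') but not computed; this is exactly the step that justifies the definition of the $s_n$, and it does not follow from what you have written. A smaller issue: in (5) your case split on the sign of $a$ is not quite right, since for $1\le a$ one need not have $s_m(a)\ge 1$ in a general semiconic idempotent residuated lattice (iterated double inverses of a positive element can be negative); the correct argument absorbs $a\mt 1$ into the $t$-terms uniformly rather than splitting on sign.
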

The previous lemma is used in the proof of the following, which summarizes Theorems 4.3, Lemmas 4.6 and 4.8, and Corollaries 4.4 and 4.9 of \cite{FG1}.

\begin{lemma} \label{t:CEP} \label{c:CEP} \label{l:adding_conjugates} \label{l:FSI} \label{c:FSI}
\begin{enumerate}
\item[]
\item The variety of semiconic idempotent residuated lattices has the congruence extension property.
\item The variety of semilinear idempotent residuated lattices has the congruence extension property. 
\item Semiconic idempotent residuated lattices satisfy the implication $x\join y = 1 \Rightarrow s_n(x)\join s_m(x) = 1$ for all $n,m\in\mathbb{N}$.
\item A semiconic idempotent residuated lattice is finitely subdirectly irreducible iff $1$ is join irreducible. Also, each one of these assumptions imply that the algebra is conic.
\item A semilinear idempotent residuated lattice is finitely subdirectly irreducible iff $1$ is join irreducible. Also, each one of these assumptions imply that the algebra is linear
\end{enumerate}
\end{lemma}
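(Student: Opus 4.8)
The plan is to derive all five items from the structure theory recalled above --- principally Lemma~\ref{l:congruencefilters2} (the explicit description of generated congruence filters) and the skeleton decomposition of Section~\ref{subsec:skeleton} --- together with the congruence/congruence-filter correspondence recalled above and a few standard facts. For item~1, let $\m A$ be a subalgebra of a semiconic idempotent residuated lattice $\m B$ and let $F$ be a congruence filter of $\m A$; it suffices to show that the congruence filter $\langle F\rangle_{\m B}$ of $\m B$ generated by $F$ satisfies $\langle F\rangle_{\m B}\cap A=F$, since then $\theta_{\langle F\rangle_{\m B}}\cap A^2=\theta_F$. By Lemma~\ref{l:congruencefilters2}(2) (or~(4), as $F$ is closed under meets), $\langle F\rangle_{\m B}$ is the $\m B$-upset generated by the elements $t_{n_1}(y_1)\meet\cdots\meet t_{n_k}(y_k)$ with each $y_i\in F\meet 1\subseteq A$. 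The point is that every $t_n$ is a term operation --- built from $\meet$, $1$, and the term-definable inversions ${}^\ell,{}^r$ --- so these generators, computed in $\m B$, already lie in $A$ and coincide with the corresponding generators of $\langle F\rangle_{\m A}=F$; since the order of $\m A$ is the restriction of that of $\m B$, intersecting the generated upset with $A$ returns $F$. Item~2 is then immediate: the semilinear idempotent residuated lattices form a subvariety of the semiconic idempotent ones (every linearly ordered residuated lattice is conic), and the congruence extension property is inherited by subvarieties.

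Item~3 is the heart of the matter, and here I would first reduce to the conic case: every semiconic idempotent residuated lattice is a subdirect product of conic idempotent ones (a standard consequence of J\'onsson's lemma --- residuated lattices are congruence-distributive --- together with the fact that conicity is a universal first-order sentence preserved by ultraproducts, subalgebras, and homomorphic images), and the implication at issue is a quasi-identity, hence transfers from the conic factors. So let $\m A$ be conic with $x\jn y=1$. If one of $x,y$ is $\ge 1$, it lies below $x\jn y=1$ and so equals $1$, and the claim is trivial. Otherwise $x,y<1$; then $x\le\gamma(x)\le 1$ and $y\le\gamma(y)\le 1$, and since $A^i$ is a chain, $1=x\jn y\le\gamma(x)\jn\gamma(y)\le 1$ forces $\gamma(x)\jn\gamma(y)=1$, which --- since in the ordinal-sum decomposition of Section~\ref{subsec:skeleton} a lower block lies entirely below a higher one --- forces $\gamma(x)=\gamma(y)=1$, i.e., $x,y\in\gamma^{-1}(1)$. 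On this block the inversions factor through $\gamma$ (Lemma~\ref{l:idcon_mult}(5)), so $x^\ell=\gamma(x)^\ell=1=x^r$ and likewise for $y$; hence every iterated inversion appearing in $s_n$ collapses to $1$, giving $s_n(x)=x\meet 1=x$, $s_m(y)=y$, and therefore $s_n(x)\jn s_m(y)=x\jn y=1$.

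Items~4 and~5 follow by feeding item~3 into the axiomatizations and the congruence-filter calculus. Suppose a semiconic idempotent residuated lattice $\m A$ either fails to be conic or is conic with $1$ join reducible. In the first case, picking $x_0$ incomparable to $1$ and instantiating the semiconic axioms at trivial conjugates gives $1=(x_0\meet 1)\jn((x_0\ld 1)\meet 1)$ with both summands strictly below $1$; in the second case, $1=a\jn b$ with $a,b\neq 1$, and hence (by conicity) $a,b<1$. Either way $1=a\jn b$ with $a,b<1$, so by Lemma~\ref{l:congruencefilters2}(4) the congruence filters generated by $a$ and by $b$ are $\upset\{s_n(a):n\in\mathbb N\}$ and $\upset\{s_m(b):m\in\mathbb N\}$ (using $t_n(a)=1\meet s_n(a)=s_n(a)$, as $a\le 1$), whence item~3 gives that their intersection is $\upset 1$; thus $\m A$ has two nontrivial congruences with trivial intersection and is not finitely subdirectly irreducible. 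Contrapositively, a finitely subdirectly irreducible semiconic idempotent residuated lattice is conic with $1$ join irreducible. For the remaining direction, join irreducibility of $1$ forces conicity (again via the semiconic axioms), and in a conic algebra with $1$ join irreducible any two nontrivial congruence filters $F_1,F_2$ contain elements $a_1,a_2<1$, so that $a_1\jn a_2\in F_1\cap F_2$ with $a_1\jn a_2\le 1$; were $F_1\cap F_2=\upset 1$ this would force $a_1\jn a_2=1$, contradicting join irreducibility, so $\m A$ is finitely subdirectly irreducible. This establishes item~4, and item~5 is proved identically, using the semilinearity axioms (at trivial conjugates) to force $x\le y$ or $y\le x$ --- hence a total order --- in place of conicity, together with the elementary fact that the congruence filters of a totally ordered residuated lattice form a chain, so that the trivial congruence is meet irreducible.

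The only genuinely delicate step is item~3; everything else is bookkeeping with the congruence-filter calculus and the bridge between join irreducibility of $1$ and meet irreducibility of the trivial congruence. The content of item~3 --- that $x\jn y=1$ confines $x,y$ to the block $\gamma^{-1}(1)$, where the inversions, and hence the operators $s_n$, act as the identity --- together with the passage from the semiconic to the conic setting, is precisely where the decomposition of Section~\ref{subsec:skeleton} (and, ultimately, the sharper analysis of idempotent residuated chains developed later in the paper) is needed.
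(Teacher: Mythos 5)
Your proof is correct. The paper itself contains no argument for this lemma---it is imported from the prequel \cite{FG1} (Theorem 4.3, Lemmas 4.6 and 4.8, Corollaries 4.4 and 4.9), with the explicit remark that Lemma~\ref{l:congruencefilters2} is used in its proof---and your reconstruction follows exactly that indicated route: the explicit description of generated congruence filters for items 1, 2, 4, 5 (your filter-extension computation for item 1 in fact reappears verbatim inside the paper's proof of Theorem~\ref{t:SAPsemiconic}), and the skeleton/block decomposition for item 3, where you also correctly read the statement's ``$s_n(x)\jn s_m(x)$'' as the intended ``$s_n(x)\jn s_m(y)$''. The only step deserving one more explicit line is the passage from $\gamma(x)\jn\gamma(y)=1$ to $\gamma(x)=\gamma(y)=1$ in item 3: if, say, $\gamma(y)<1=\gamma(x)$, then $y\le\gamma(y)<x$ because $\gamma(y)$ is conical and cannot lie above $x$, whence $x\jn y=x<1$, contradicting the hypothesis.
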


\section{The structure of (quasi-involutive) idempotent residuated chains}\label{sec:chains}

We have seen that each conic idempotent residuated lattice can be decomposed as an ordinal sum based on a quasi-involutive idempotent residuated chain skeleton, where the blocks are arbitrary prelattices, lattices, or Brouwerian algebras. Since the blocks are fairly well understood, we now describe the structure of quasi-involutive idempotent residuated chains. More generally, we study the structure of all idempotent residuated chains by using three tools: An equivalent formulation of idempotent residuated chains in terms of their $\{\mt, \jn, 1, {}^r, {}^\ell\}$-reducts (see Section~\ref{s:idempotent Galois connections}), a pictorial presentation of these called flow diagrams (see Section~\ref{s:flow diagrams}), and another equivalent formulation in terms of certain enhancements of their monoidal preorders (see Sections~\ref{s:ircs to empos} and \ref{s:empos to ircs}).

With an eye on amalgamation, in Section~\ref{s:weaklyinvolutive} we use the tools developed so far to identify a crucial property, rigidity, that is necessary for amalgamating quasi-involutive idempotent residuated chains. Rigidity states that the quasi-involutive skeleton is $^\star$-involutive. Then, in Section~\ref{s:one-generated}, we study one-generated rigid quasi-involutive idempotent residuated chains, and in Section~\ref{nestedsums} we show that all rigid quasi-involutive idempotent residuated chains are nested sums of one-generated ones.

As previously mentioned, the decomposition of Section~\ref{sec:basicstructure} applies to idempotent residuated chains since they are examples of conic idempotent residuated lattices. Combined with the decomposition results given before, the results of this section provide a fine-grained analysis of the structure of idempotent residuated chains. Since residuated chains are the subject of a considerable literature, we expect the results of this section to therefore be of broad interest outside of its applications to the conic case.

Lemma~\ref{l:idcon_mult} specializes to chains as follows:

\begin{corollary} \label{c:idempotentchainsoperations}
In every idempotent residuated chain we have
\[ x y = \begin{cases} 
      x &  y\in  (x^r, x] \text{ or } y\in  [x, x^r] \\
      y &  x\in  (y^\ell, y] \text{ or } x\in [y, y^\ell]
   \end{cases}
	 = \begin{cases} 
      x\meet y & xy\leq 1  \;\; (\Lra x \leq y^\ell \Lra y \leq x^r)\\
      x\join y & 1< xy  \;\; (\Lra y^\ell < x \Lra x^r <y)
   \end{cases}
\]
	\[
x\ld y = \begin{cases} 
      x^r \jn y & x\leq y \\
      x^r \mt y & y < x
   \end{cases} \qquad \qquad
y\rd x = \begin{cases} 
      x^\ell \jn y & x\leq y \\
      x^\ell \mt y & y < x
   \end{cases}
\]
\end{corollary}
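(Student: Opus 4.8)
The plan is to derive the corollary by specializing the structural results of Section~\ref{subsec:skeleton} to the linearly ordered setting, using only the elementary observation that every idempotent residuated chain is a conic idempotent residuated lattice (each element is comparable to $1$, so the algebra is conic, and it is moreover linearly ordered). Hence Lemma~\ref{l:idcon_mult} applies, and the task reduces to pruning its case distinctions under the standing hypothesis that any two elements of $A$ are comparable.

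For multiplication, the second displayed formula is \emph{verbatim} part~(3) of Lemma~\ref{l:idcon_mult}, which holds in every conic idempotent residuated lattice; no further argument is needed. The first displayed formula is obtained from part~(2) of Lemma~\ref{l:idcon_mult} by deleting its first two clauses: in a chain distinct elements are never incomparable, so ``$x,y\leq 1$ and $x,y$ incomparable'' and ``$x,y\geq 1$ and $x,y$ incomparable'' never apply, while the diagonal case $x=y$ is already covered by the two surviving clauses, since $x\in[x,x^r]$ when $x\leq 1$ and $x\in(x^r,x]$ when $x>1$ (using that $x^r$ has the opposite sign to $x$, Lemma~\ref{l:centralizer}(1)), in either case giving $xy=x=x^2$. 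Exhaustiveness of the two remaining clauses is inherited from the exhaustiveness of the four-clause split in Lemma~\ref{l:idcon_mult}(2).

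For the residuals I would specialize part~(5) of Lemma~\ref{l:idcon_mult}. Its first clause already gives $x\ld y=x^r\jn y$ and $y\rd x=x^\ell\jn y$ when $x\leq y$, so it remains to treat $x\not\leq y$, which in a chain means precisely $y<x$. Lemma~\ref{l:idcon_mult}(5) then leaves two subcases: the subcase ``$\gamma(y)<\gamma(x)$, or $1<\gamma(x)=\gamma(y)$ and $x\not\leq y$'' yields exactly $x\ld y=x^r\mt y$ and $y\rd x=x^\ell\mt y$, as required; and the subcase $\gamma(x)=\gamma(y)=a\leq 1$ with $x\not\leq y$ gives $x\ld y=y\rd x=x\Rightarrow_a y$, where I would observe that the block $\gamma^{-1}(a)$ is a Brouwerian chain (a Brouwerian lattice by Lemma~\ref{l:centralizer}(10)(b), linearly ordered as a subset of a chain), so its relative pseudocomplement satisfies $x\Rightarrow_a y=y$ for $y<x$, while $x^r\mt y=y=x^\ell\mt y$ since $x\leq a\leq 1$ forces $x^r,x^\ell\geq 1\geq y$ by Lemma~\ref{l:centralizer}(1). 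This is essentially a bookkeeping exercise with no genuine obstacle; the only point requiring a moment of care is this last subcase, where one must recognize that on a Brouwerian chain the implication $x\Rightarrow_a y$ for $y<x$ collapses to $y$, and reconcile this with the meet expressions $x^r\mt y$ and $x^\ell\mt y$ appearing in the statement.
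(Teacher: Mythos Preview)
Your proposal is correct and takes essentially the same approach as the paper, which simply states that the corollary is the specialization of Lemma~\ref{l:idcon_mult} to chains without further proof. You have merely filled in the details of that specialization, including the only point requiring any care (collapsing the Brouwerian implication $x\Rightarrow_a y$ to $y$ in the negative-block subcase and reconciling it with $x^r\mt y$), and your handling of that case is correct.
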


 \subsection{The reducts of idempotent residuated chains}\label{s:idempotent Galois connections} In view of the above corollary, subalgebra generation is performed by closure under the unary operations $ {}^r$ and  ${}^\ell$ and closure under $1$. Also, every idempotent residuated chain is definitionally-equivalent to a structure over the language $\{\mt, \jn, {}^r, {}^\ell, 1\}$. We will characterize these $\{\mt, \jn, {}^r, {}^\ell, 1\}$-reducts of idempotent residuated chains. This characterization will be useful in connecting idempotent residuated chains to enhanced monoidal preorders (see Sections~\ref{s:ircs to empos} and \ref{s:empos to ircs}).

By Lemma~\ref{l:centralizer}(6,7), if $\m A$ is an idempotent residuated chain, then the reduct $(A, \mt, \jn, {}^\ell, {}^r, 1)$ is a chain such that $({}^\ell, {}^r)$ forms a Galois connection,  $1^\ell=1^r=1$, and for all $x$ there is no element between $x^\ell$ and $x^r$. We call such algebras \emph{idempotent Galois connections}.
Note that $(A, \mt, \jn, {}^\ell, {}^r, 1)$ is an idempotent Galois connection iff it satisfies the following definition in terms of positive universal sentences:
\begin{enumerate}
\item $(A, \mt, \jn)$ is a lattice,
\item $1^\ell=1^r=1$,
\item  $x \leq x^{\ell r}$, $x \leq x^{r \ell}$,  $(x \jn y)^\ell = x^\ell \mt x^\ell$, $(x \jn y)^r = x^r \mt x^r$, $x^{\ell r \ell}=x^\ell$ and $x^{r \ell r}=x^r$ 
(Galois connection on a lattice; see Lemma~7.26 of \cite{DavPrie2002}),
\item $x \leq y$ or $y \leq x$ (totally ordered),
\item $y \leq x^r \mt x^\ell$ or $x^r \jn x^\ell \leq y$ (there is no element between $x^\ell$ and $x^r$).
\end{enumerate}

Note that condition 5 can be written as ($y \leq x^\star$ or $x^* \leq y$), i.e. $(x^\star, x^*)$ forms a splitting pair.

\medskip

Conversely, given $\m C=(C, \mt, \jn, {}^\ell, {}^r, 1)$, the algebra $\m R(\m C)=(C, \mt, \jn, \cdot, {\ld}, {\rd}, 1)$ is defined by

$$ x y 
	 = \begin{cases} 
      x\meet y &  x \leq y^\ell \; (\Lra y \leq x^r)\\
      x\join y & y^\ell < x \; (\Lra x^r <y)
   \end{cases} \;\;\;
x\ld y = \begin{cases} 
      x^r \jn y & x\leq y \\
      x^r \mt y & y < x
   \end{cases} \;\;\; 
y\rd x = \begin{cases} 
      x^\ell \jn y & x\leq y \\
      x^\ell \mt y & y < x
   \end{cases}
$$

\begin{lemma} \label{l:idGC}
If $\m C=(C, \mt, \jn, {}^\ell, ^r, 1)$ is an idempotent Galois connection, then $\m R(\m C)$ is an idempotent residuated chain.
\end{lemma}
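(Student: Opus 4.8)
The plan is to verify directly that the algebra $\m R(\m C)$ satisfies the defining conditions of an idempotent residuated chain: that $(C,\mt,\jn)$ is a chain (immediate from conditions 1 and 4 of the definition of idempotent Galois connection), that $(C,\cdot,1)$ is a monoid with $\cdot$ idempotent, and that the residuation law $y \le x\ld z \Lra xy \le z \Lra x \le z\rd y$ holds. The lattice structure and totality are free, so the real work is in the monoid axioms and residuation. Throughout I will lean on the properties of the Galois connection $({}^\ell,{}^r)$ recorded in condition 3 (for instance $x \le x^{\ell r}$, $x^{\ell r\ell}=x^\ell$, and the fact that ${}^\ell,{}^r$ are order-reversing), together with the ``no element strictly between $x^\ell$ and $x^r$'' condition 5, which is what forces the case splits in the definition of $\cdot$ to be exhaustive and consistent.

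First I would establish the basic compatibilities of the sign/inverse apparatus inside $\m R(\m C)$: that $x^r$ and $x^\ell$ as computed in $\m R(\m C)$ via $x\ld 1$ and $1\ovr x$ agree with the given operations ${}^r$, ${}^\ell$; that $x \le 1$ or $1 \le x$ for every $x$ (conicity), obtained from condition 5 applied with a suitable element, together with $1^\ell = 1^r = 1$; and that the sign of $x^\ell$, $x^r$ is opposite to that of $x$. These are the ``$\m R(\m C)$ is conic'' facts, and they reduce the verification of the residuation/monoid identities to a finite number of sign-configurations of the arguments. Next I would prove associativity of $\cdot$. The product is conservative ($xy \in \{x\mt y, x\jn y\} \subseteq \{x,y\}$ on comparable elements, and is just the lattice meet/join otherwise), so associativity is checked by a case analysis on the relative positions of $x,y,z$ with respect to $1$ and with respect to each other's inverses; the key algebraic inputs are the Galois identities and the splitting-pair condition, which guarantee, e.g., that ``$y \le x^r$'' and ``$x \le y^\ell$'' are equivalent so the two clauses of the definition of $\cdot$ cohere. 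Idempotence $xx = x$ is immediate since $x \le x^r$ or $x^r < x$ always makes $xx$ equal to $x\mt x$ or $x\jn x$.

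The central step is residuation. I would fix $x,y,z$ and show $xy \le z \Lra y \le x\ld z$, the right-residual being symmetric. Here one unwinds the definition of $x\ld z$ according to whether $x \le z$ or $z < x$, then compares with $xy$ computed according to whether $y \le x^r$ or $x^r < y$; in each of the resulting cases the desired biconditional comes down to an inequality between elements expressed through ${}^r$, ${}^\ell$, $\mt$, $\jn$, which is handled by the Galois connection axioms (order-reversal, $x \le x^{\ell r}$, $x^{r\ell r}=x^r$) and the no-intermediate-element condition. In particular the subtle sub-case is when $x$ and $z$ sit on the same ``side'' and the block containing them is nontrivial: there $x \ld z$ can take the value $x^r \mt z$ (which, when $z$ is negative, is exactly the Brouwerian-style implication), and one must check the adjunction against the conservative product — this is where the interplay between the Galois connection and the residuated structure is most delicate, and I expect it to be the main obstacle. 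Everything else is bookkeeping over finitely many sign patterns. Finally, having verified monoid axioms, idempotence, lattice axioms, residuation, and conicity, I conclude that $\m R(\m C)$ is an idempotent residuated chain, which completes the proof.
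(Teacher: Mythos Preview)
Your approach is essentially the paper's: verify the residuated-lattice axioms directly by case analysis, using the Galois identities and the splitting condition~5 to make the case splits cohere. A few remarks, however.

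First, two of your ``preliminary'' steps are either trivial or unnecessary. Conicity ($x\le 1$ or $1\le x$) is immediate from condition~4 (total order), not from condition~5 as you suggest. And checking that $x\ld 1$, $1\rd x$ agree with the given ${}^r$, ${}^\ell$ is not needed to prove the lemma itself; the definitions of $\cdot$, $\ld$, $\rd$ in $\m R(\m C)$ are already written in terms of the abstract ${}^\ell,{}^r$, so you just work with those.

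Second, your discussion of the ``subtle sub-case'' is off target. You write that when $x$ and $z$ are on the same side and ``the block containing them is nontrivial'' then $x\ld z = x^r\mt z$ is a ``Brouwerian-style implication''. But $\m C$ is a chain: there are no nontrivial blocks and no Brouwerian implication enters. The two-clause definition of $\ld$ is the whole story; the $\Rightarrow_a$ case you have in mind belongs to the general conic situation (Lemma~\ref{l:idcon_mult}(5)), not here.

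Third, the paper's organization differs slightly from yours in one useful way: it first shows that multiplication is order-preserving (a short two-case check), which immediately gives distributivity of $\cdot$ over $\jn,\mt$ and streamlines the later associativity cases. You do not mention this step; without it your associativity case analysis will be a bit heavier. For associativity the paper splits on the four combinations of $x\le y^\ell$ vs.\ $y^\ell<x$ and $z\le y^r$ vs.\ $y^r<z$, and condition~5 (no element strictly between $y^\ell$ and $y^r$) is exactly what resolves the mixed cases. For residuation the paper splits first on $x\le z$ vs.\ $z<x$ and then on the value of $xy$; this is the same shape you describe.

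In short: same method, but prune the detours and drop the block/Brouwerian language, which does not apply in the chain setting.
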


\begin{proof} 
 First we show that multiplication is order-preserving; assume that $x \leq z$. If $z \leq y^\ell$, then also $x \leq y^\ell$. Therefore, $xy=x\mt y \leq z \mt y=zy$. If $y^\ell <z$, then $xy \leq x \jn y \leq z \jn y=zy$. Since $C$ is a chain, this implies that multiplication distributes over join and meet. 

Note that for $x \leq 1=1^\ell=1^r$, we have $x1=x \mt 1=x$ and $1x=1 \mt x=x$;
 for $1^r=1^\ell=1<x$, we have $x1=x \jn 1=x$ and $1x=1 \jn x=x$. Therefore, $1$ is the multiplicative unit. 
Clearly, multiplication is conservative, hence idempotent. To show that it is associative, we consider $x,y,z$ and show that $(xy)z=x(yz)$ by distinguishing cases. 

If $x\leq y^\ell$ and $y^r<z$, then 
since there are no elements between $y^\ell$ and $y^r$, we get $x \leq z$. So, 
$(xy)z=(x \mt y)z=xz \mt yz =xz \mt (y \jn z)=xz$, because $xz \leq zz=z \leq y \jn z$, and
$x(yz)=x(y\jn z)=xy \jn xz=(x \mt y) \jn xz=xz$, because $x \mt y \leq x=xx \leq xz$.

If $y^\ell<x$ and $z \leq y^r$, then 
since there are no elements between $y^\ell$ and $y^r$, we get $z \leq x$. So, 
$(xy)z=(x \jn y)z=xz \jn yz =xz \jn (y \mt z)=xz$, because $y \mt z \leq z=zz \leq xz$, and
$x(yz)=x(y\mt z)=xy \mt xz=(x \jn y) \mt xz=xz$, because $xz \leq xx=x \leq x \jn y$.

If $x\leq y^\ell$ and $z \leq y^r$, then $y \leq x^r$ and $y \leq z^\ell$, so $(xy)z=(x \mt y)z=xz \mt yz =xz \mt y \mt z$ and
$x(yz)=x(y\mt z)=xy \mt xz=x \mt y \mt xz$. If $x \leq z^\ell$, then $xz=x \mt z$ and the two sides are equal to $x \mt y \mt z$. If $z^\ell <x$, then $x^r <z$ and $xz=x \jn z$, so one side is equal to $y \mt z$ and the other to $y \mt x$. Moreover, we obtain $y \leq z^\ell <x$ and $y \leq x^r<z$, so both sides equal to $y$.

If $y^\ell<x$ and $y^r<z$, then $x^r<y$ and $z^\ell < y$, so $(xy)z=(x \jn y)z=xz \jn yz =xz \jn y \jn z$ and
$x(yz)=x(y\jn z)=xy \jn xz=x \jn y \jn xz$. If $z^\ell <x$, then $xz=x \jn z$ and the two sides are equal to $x \jn y \jn z$. If $x \leq z^\ell$, then $z \leq x^r$ and $xz=x \mt z$, so one side is equal to $y \jn z$ and the other to $y \jn x$. Moreover, we obtain $x \leq z^\ell<y$ and $z \leq x^r<y$, so both sides equal to $y$.

Finally, we will show $xy \leq z$ iff $y \leq x \ld z$, for all $x,y,z \in A$, by considering cases. The equivalence to $x \leq z \rd y$ is analogous.

Assume $z<x$. Then $xy \leq z$ implies $xy\not = x$, so $y=xy\leq z<x$. 
Note that if we had $x^r<y$ then $xy=x\jn y=x$, a contradiction; so $y \leq x^r$.

Therefore, $xy \leq z$ iff $x^r \geq xy=y\leq z$.
On the other hand,  $y \leq x \ld z$ iff $y \leq x^r \mt z$ iff $y \leq x^r$ and $y \leq z$. 
So, $xy=x \mt y=y$, since $y \leq z<x$.

Therefore,  $y \leq x \ld z$ iff $x^r \geq xy=y\leq z$. 
 Consequently,  $xy \leq z$ iff $y \leq x \ld z$.

Assume $x \leq z$ and $xy=x$. Then  $xy \leq z$ is true. By way of contradiction, if  $y \leq x \ld z$ were false we would have $x^r \jn z <y$ so $x^r <y$ and $z<y$; hence $x \leq z < y$  and so $x=xy=x \jn y=y>x$, a contradiction.

Assume $x \leq z$, $xy=y$ and $y \leq z$. Then $xy \leq z$ is true. Also $y \leq z \leq x^r \jn z$, so also $y \leq x \ld z$ is true.

Assume $x \leq z$, $xy=y$ and $z<y$. So $z < xy$; hence $xy \leq z$ is false. Also note that we cannot have $y \leq x^r$, as that would imply the contradiction $y=xy=x \mt y=x<y$; hence $x^r <y$. Since $z<y$ we have $x^r \jn z <y$, so $y \leq x \ld z$ is also false. 
\end{proof}

The condition that  for all $x$ there is no element between $x^\ell$ and $x^r$ does not follow from the other conditions, as a 5-element example shows, and it is precisely what guarantees associativity of multiplication. 

\begin{corollary}
Idempotent residuated chains are definitionally equivalent to idempotent Galois connections. 
\end{corollary}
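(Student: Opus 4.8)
\emph{Proposal.} The statement is that the construction $\m C\mapsto\m R(\m C)$ and the passage to the $\{\mt,\jn,{}^\ell,{}^r,1\}$-reduct are mutually inverse term translations between the two classes, so I plan to set up both translations explicitly and then verify the two round trips. In one direction the translation is the reduct functor: the operations ${}^\ell$ and ${}^r$ are the term operations $x\mapsto 1\ovr x$ and $x\mapsto x\under 1$, while $\mt,\jn,1$ are retained. By the remarks immediately preceding the definition of idempotent Galois connections---which rest on Lemma~\ref{l:centralizer}(6,7) together with linearity---the $\{\mt,\jn,{}^\ell,{}^r,1\}$-reduct of any idempotent residuated chain $\m A$ is an idempotent Galois connection. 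In the other direction, $\m C\mapsto\m R(\m C)$ defines $\cdot,\ld,\rd$ from the operations of $\m C$ by the clauses stated before Lemma~\ref{l:idGC}, and Lemma~\ref{l:idGC} guarantees that $\m R(\m C)$ is again an idempotent residuated chain.

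What remains is to check that the two translations cancel. For an idempotent residuated chain $\m A$ with reduct $\m C$, one has $\m R(\m C)=\m A$ essentially by inspection: the lattice operations and $1$ are untouched, and the defining clauses for $\cdot,\ld,\rd$ in $\m R(\m C)$ are precisely the descriptions of multiplication and of the residuals of $\m A$ recorded in Corollary~\ref{c:idempotentchainsoperations}. For the converse round trip, let $\m C=(C,\mt,\jn,{}^\ell,{}^r,1)$ be an idempotent Galois connection and $\m A=\m R(\m C)$; since $\mt,\jn,1$ are unchanged, the only thing to verify is that the derived operations $x\under 1$ and $1\ovr x$ computed in $\m A$ return $x^r$ and $x^\ell$. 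Here one uses that ${}^\ell$ and ${}^r$ are antitone (immediate from axiom (3): $x\leq y$ gives $(x\jn y)^\ell=x^\ell\mt y^\ell$, i.e.\ $y^\ell\leq x^\ell$, and symmetrically for ${}^r$) and that they fix $1$ (axiom (2)); since $C$ is a chain, every element is comparable to $1$, so $x\leq 1$ forces $x^\ell,x^r\geq 1$ and $1\leq x$ forces $x^\ell,x^r\leq 1$. Feeding this into the defining clauses for $\ld$ in $\m R(\m C)$: if $x\leq 1$ then we land in the case $x\leq 1$, giving $x\under 1=x^r\jn 1=x^r$; if $1<x$ we land in the case $1<x$, giving $x\under 1=x^r\mt 1=x^r$; and $1\under 1=1^r\jn 1=1=1^r$. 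The verification that $1\ovr x=x^\ell$ is symmetric. Hence the two translations are mutually inverse, and the two classes are definitionally equivalent.

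I expect the only point requiring care to be this second round trip---i.e.\ confirming that the residuals manufactured inside $\m R(\m C)$ genuinely recompute the Galois operations ${}^\ell,{}^r$---which is exactly where axioms (2) and (3) (fixing $1$ and antitonicity) enter. The rest is bookkeeping, since the genuinely analytic content (associativity of the product and the residuation law for $\m R(\m C)$) has already been discharged in Lemma~\ref{l:idGC}, and the expression of the fundamental operations of an idempotent residuated chain through its $\{\mt,\jn,{}^\ell,{}^r,1\}$-reduct is Corollary~\ref{c:idempotentchainsoperations}.
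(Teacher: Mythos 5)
Your proposal is correct and follows essentially the same route as the paper, which states the corollary without an explicit proof as an immediate consequence of the remarks before Lemma~\ref{l:idGC} (the reduct of an idempotent residuated chain is an idempotent Galois connection), Lemma~\ref{l:idGC} itself, and Corollary~\ref{c:idempotentchainsoperations} (which shows the residuated operations are recovered by the clauses defining $\m R$). Your explicit check of the second round trip---that $x\under 1$ and $1\ovr x$ computed in $\m R(\m C)$ return $x^r$ and $x^\ell$ via antitonicity and $1^\ell=1^r=1$---is exactly the detail the paper leaves implicit, and it is carried out correctly.
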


 \subsection{Flow diagrams}\label{s:flow diagrams}
We now provide a visual way to move between the action of the inverse operations ${}^\ell$, ${}^r$, and the ordering of the chain. This will eventually lead to the connection with the enhanced monoidal preorders.
 
It follows from Lemma~\ref{c:non-commuting}(5) that if  $a,b$ are non-commuting elements of an idempotent residuated chain, then  $a$ and $b$ have different sign, $a^*=b$,  $b^*=a$ and  $\{a, b\}$ forms either a left-zero or a right-zero semigroup. If $\{a, b\}$ forms a right-zero semigroup and $a$ is positive (and $b$ negative), we write $a \mathrel{R} b$. 
If $\{a, b\}$ forms a left-zero semigroup  and $a$ is positive (and $b$ negative), we write $a \mathrel{L} b$. We write $a \mathrel{C} b$ if  $a$ is positive (and $b$ negative) and $a^\ell=a^r=b$ (i.e., if $ab=ba$). Note that these relations are not symmetric because they encode the signs of the elements. We follow the naming convention of \cite{Ga2004} of \textbf{a}bove and \textbf{b}elow the identity element $1$, for $a$ and $b$. Also, recall that $x \prec y$ means that $x$ is a lower cover of $y$.

\begin{lemma}\label{l:RandL}
Let $a,b$ be elements of an idempotent residuated chain $\m A$ such that $a$ is positive and $b$ is negative.
\begin{enumerate}
\begin{multicols}{2}
\item The following are equivalent:
\begin{enumerate}
\item $a \mathrel{R} b$.
\item$a^\ell \prec a^r=b$.
\item  $b^r \prec b^\ell=a$.
\item  $a^{rr} \prec a$ and $b=a^r$.
\item $b^{\ell \ell} \prec b$ and $a=b^\ell$.
\item  $a=a^{r \ell}$ is not central and $b=a^r$.
\item  $b = b^{\ell r}$ is not central and $a=b^\ell$.
\end{enumerate}
\columnbreak
\item The following are equivalent:
\begin{enumerate}
\item $a \mathrel{L} b$.
\item$a^r \prec a^\ell=b$.
\item  $b^\ell \prec b^r=a$.
\item  $a^{\ell \ell} \prec a$ and $b=a^\ell$.
\item $b^{rr} \prec b$ and $a=b^r$.
\item  $a=a^{\ell r}$ is not central and $b=a^\ell$.
\item  $b = b^{r \ell}$ is not central  and $a=b^r$.
\end{enumerate}
\end{multicols}
\end{enumerate}
\end{lemma}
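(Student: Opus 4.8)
The plan is to prove the chain of equivalences in part (1) (part (2) being entirely symmetric, obtained by swapping the roles of $^\ell$ and $^r$, i.e. by applying the opposite-multiplication automorphism, so I would simply remark this at the end rather than repeat the argument). I would organize the argument as a cycle of implications among (a)--(g), choosing the route so that each single step is a short computation using Corollary~\ref{c:idempotentchainsoperations} (the explicit description of multiplication and residuation in an idempotent residuated chain) together with the basic facts from Lemma~\ref{l:centralizer}: that $x^\ell, x^r$ are conical and have sign opposite to $x$, that $x^\ell$ and $x^r$ form a covering pair with nothing strictly between them, that $x$ is central iff $x^\ell = x^r$, and that non-commuting elements $a,b$ satisfy $a^* = b$, $b^* = a$.

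First I would unwind the definition: $a \mathrel{R} b$ means $\{a,b\}$ is a right-zero semigroup with $a$ positive and $b$ negative, i.e. $ab = ba = b$, $1 \le a$, $b \le 1$. By Lemma~\ref{c:non-commuting}(5), $a$ and $b$ don't commute (since right-zero forces $ab=ba=b$ but if they commuted the common value would have to be a meet or join, which it isn't for a genuine covering pair), so $a^* = b$ and $b^* = a$; combined with the sign information this gives $a^r = b$ and $b^\ell = a$ (using that $a^* = a^\ell \vee a^r$ and the sign of $a^r$ is negative while one of $a^\ell, a^r$ equals $a$... — more carefully, since $a$ is positive its inverses are negative and conical, and $a^* = a^\ell \vee a^r$; non-centrality gives $a^\ell \ne a^r$, so one of them is the lower cover of the other). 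To pin down that $a^r = b$ rather than $a^\ell = b$, I use Corollary~\ref{c:idempotentchainsoperations}: $ab = b$ (with $b < 1 < a$, so $a \not\le b$) translates via the formula $x\ld y$ / the product formula to the condition distinguishing $a^\ell \prec a^r$ from $a^r \prec a^\ell$. Concretely, $ab=b$ with $b<a$ happens iff $a \in (b^\ell, b]\cup[b,b^\ell]$, i.e. (since $b^\ell$ is positive and $b<1$) iff $b^\ell \le a$; and combined with $a^\ell \wedge a^r \le a \le a^\ell \vee a^r$... this is exactly the bookkeeping that yields (b): $a^\ell \prec a^r = b$. The reverse direction — that $a^\ell \prec a^r = b$ forces $ab = ba = b$ — is the same computation read backwards, again via Corollary~\ref{c:idempotentchainsoperations}.

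The remaining equivalences (b)$\Leftrightarrow$(c)$\Leftrightarrow$(d)$\Leftrightarrow$(e)$\Leftrightarrow$(f)$\Leftrightarrow$(g) are then mostly algebraic manipulations of the Galois-connection identities $x^{\ell r \ell} = x^\ell$, $x^{r\ell r} = x^r$, $x \le x^{\ell r}$, $x \le x^{r\ell}$ (part (3) of the definition of idempotent Galois connection), together with the no-element-between condition. For instance, $a^\ell \prec a^r = b$ applied to $b$: $b^\ell = a^{r\ell}$, and since $a$ is non-central $a^{r\ell}$ should equal $a$ (one must check $a^{r\ell} = a$, which follows because $a^r = b \le 1 \le a$ and $a \le a^{r\ell} \le a^{r\ell r \ell}\cdots$ collapses using the covering condition — this is Lemma~\ref{l:centralizer} territory), giving $b^\ell = a$; and $b^r = a^{rr}$, with $a^r \prec a$ transported by order-reversal of $^r$... — here one uses that $^\ell, ^r$ are antitone to turn $a^\ell \prec a^r$ into $a^{rr} \prec a^{r\ell} = a$, i.e. (c) and (d). Conditions (f) and (g) just repackage "$a$ is not central and $a = a^{r\ell}$" which is the fixed-point form of being in the image of the closure operator $^{r\ell}$ restricted to non-central elements.

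I expect the main obstacle to be the very first step: carefully extracting, from the bare semigroup identity $ab = ba = b$, precisely \emph{which} of the two inverses of $a$ equals $b$ and that the \emph{other} inverse is its lower cover — i.e. establishing (a)$\Rightarrow$(b) with the correct asymmetry $a^\ell \prec a^r$ (and not $a^r \prec a^\ell$, which is the $L$ case). This requires being scrupulous about signs and about the direction of the covering relation, and it is where the distinction between $R$ and $L$ genuinely lives; once (a)$\Leftrightarrow$(b) is nailed down, the rest is a routine propagation of identities through the Galois connection, and part (2) follows by the left–right symmetry of the axioms for idempotent Galois connections.
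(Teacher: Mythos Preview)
Your proposal has a genuine error at the very first step that undermines the rest. You write that $a \mathrel{R} b$ means ``$ab = ba = b$,'' but a \emph{right-zero} semigroup satisfies $xy = y$, so the defining equations are $ab = b$ and $ba = a$. In particular $a$ and $b$ do \emph{not} commute, which is why your next sentence---invoking Lemma~\ref{c:non-commuting}(5) to conclude they don't commute ``since right-zero forces $ab = ba = b$''---is internally contradictory. Once you correct this, your route through $a^* = b$, $b^* = a$ becomes unnecessary machinery; you never need Lemma~\ref{c:non-commuting}(5) at all.

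The paper's argument is much more direct. With the correct equations $ab = b$ and $ba = a$ (and $b \le 1 \le a$), Lemma~\ref{l:idcon_mult}(1) translates $ab = b$ into $a \le b^\ell$ (equivalently $b \le a^r$), and $ba = a$ into $b^r < a$ (equivalently $a^\ell < b$). Combining gives $b^r < a \le b^\ell$ and $a^\ell < b \le a^r$; since $\{b^\ell, b^r\}$ and $\{a^\ell, a^r\}$ are each covering pairs (Lemma~\ref{l:centralizer}(6)), these collapse to $b^r \prec b^\ell = a$ and $a^\ell \prec a^r = b$, i.e.\ (b) and (c) simultaneously. Items (d) and (e) then follow by substituting (b) into (c) and vice versa. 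Your idea of deriving (d) from (b) by ``using that ${}^\ell, {}^r$ are antitone to turn $a^\ell \prec a^r$ into $a^{rr} \prec a^{r\ell}$'' does not work: antitone maps on chains do not preserve covers in general.

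For (f) and (g) the paper does not use Galois-connection identities either; instead it argues by dichotomy. If (f) holds then $a$ is non-central, so (by Lemma~\ref{l:centralizer}) exactly one of $a \mathrel{R} a^*$ or $a \mathrel{L} a^*$ holds. If the latter, then 2(b) would give $a^r \prec a^\ell$, contradicting the hypothesis $b = a^r$ together with the already-established 2(b)-form $b = a^\ell$. So 1(a) must hold.
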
  
 
 \begin{proof}
1(a-e). Given that $a$ is positive and $b$ is negative, $a \mathrel{R} b$ is equivalent to the conjunction of $ab=b$ and $ba=a$.  By Lemma~\ref{l:idcon_mult}(1), $ab=b$ is equivalent to $a \leq b^\ell$, which is also equivalent to  $ab \leq 1$ and $b \leq a^r$. 
Also, by Lemma~\ref{l:idcon_mult}(1), $ba=a$ is equivalent to $b^r < a$, which is also equivalent to  $a \not \leq b^r$, $ba \not \leq 1$, $b \not \leq a^\ell$, and $a^\ell < b$. 

Therefore,   $a \mathrel{R} b$ is equivalent to $b^r < a \leq b^\ell$, and since $b^r$ and $b^\ell$ form a covering pair (by Lemma~\ref{l:centralizer}(6)), this is further equivalent to  $b^r \prec a= b^\ell$.  

 Also,  $a \mathrel{R} b$ is equivalent to $a^\ell < b \leq a^r$, and since  $a^\ell$ and $a^r$ form a covering pair (by Lemma~\ref{l:centralizer}(6)), this is further equivalent to  $a^\ell \prec b=a^r$. 
 
  All these establish that (a), (b) and (c) are equivalent. Clearly, (d) follows from (b) and (c). Conversely, if (d) holds, then in particular $b^r < a$ and $b \leq a^r$, which is equivalent to (a), by the calculations above. The equivalence of (e) is established in  similar way.
  2(a-e) are established in a similar way. 
	
	Now to show that 1(f) is equivalent to 1(a), we note that (f) follows from (b) and (c).  Conversely, if (f) holds, then $a$ is not central, so one of 1(a) and 2(a) hold. It cannot be the case that 2(a) holds because that would imply 2(b), which contradicts 1(f); so 1(a) holds. Likewise we get the equivalence of 1(g) to the rest of 1, as well as the equivalence of 2(f) and 2(g) to the rest of 2.
 \end{proof}

 The following result is an immediate consequence of Lemma~\ref{l:RandL}.
 
 \begin{corollary}\label{c:RandL}
 Let  $\m A$ be an  idempotent residuated chain. 
  \begin{enumerate}
\item If $a$ is a positive non-central element of $\m A$, then exactly one of the following situations happen.
 \begin{enumerate}
\item $a^{rr}\prec a^{r \ell}=a \mathrel{R} a^r=a^* \succ a^\star = a^\ell$.
\item $a^{\ell \ell}\prec a^{\ell r}=a \mathrel{L} a^\ell=a^* \succ a^\star =a^r$.
\end{enumerate}
\item If $b$ is a negative non-central element of $\m A$, then exactly one of the following situations happen.
 \begin{enumerate}
\item $b^{r}=b^\star \prec b^*= b^{\ell} \mathrel{R} b=b^{\ell r} \succ b^{\ell \ell}$.
\item $b^{\ell}=b^\star\prec b^*=b^r \mathrel{L} b=b^{r \ell} \succ b^{rr}$.
\end{enumerate}
\item If $x$ is a central element of $\m A$, then $x^\star=x^*=x^\ell=x^r$.
\end{enumerate}
 \end{corollary}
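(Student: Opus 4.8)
The plan is to read off the corollary directly from Lemma~\ref{l:RandL}, using as the sole case split the dichotomy on how the two inverses of a non-central element are ordered. The point is that if $x$ is non-central then $x^\ell\neq x^r$ by Lemma~\ref{l:centralizer}(2), these two elements are conical (hence comparable) by Lemma~\ref{l:centralizer}(1), and they form a covering pair by Lemma~\ref{l:centralizer}(6); so exactly one of $x^\ell\prec x^r$, $x^r\prec x^\ell$ holds, and this is precisely the alternative that drives the "exactly one" clause in parts (1) and (2).

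For part (1), let $a$ be positive and non-central; then $a^\ell,a^r$ are negative (Lemma~\ref{l:centralizer}(1)) and, as above, exactly one of $a^\ell\prec a^r$ or $a^r\prec a^\ell$ holds. In the first subcase I would set $b:=a^r$ and observe that $a^\ell\prec a^r=b$ is literally condition (b) of Lemma~\ref{l:RandL}(1), so all the equivalent conditions there are available: condition (a) gives $a\mathrel{R}a^r$, condition (f) gives $a^{r\ell}=a$ (and $a$ not central), condition (d) gives $a^{rr}\prec a$; moreover from $a^\ell\prec a^r$ we get $a^*=a^\ell\jn a^r=a^r$ and $a^\star=a^\ell\mt a^r=a^\ell$. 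Assembling these equalities and covers yields exactly the displayed chain in (1a). The subcase $a^r\prec a^\ell$ is handled symmetrically with $b:=a^\ell$ via the corresponding conditions of Lemma~\ref{l:RandL}(2), giving (1b); here the roles of $a^*$ and $a^\star$ swap, $a^*=a^\ell$ and $a^\star=a^r$. That both (1a) and (1b) cannot hold is immediate since the two covering relations on $\{a^\ell,a^r\}$ are mutually exclusive in a chain.

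Part (2) is entirely parallel: for $b$ negative and non-central the inverses $b^\ell,b^r$ are positive, and one takes $a:=b^\ell$ in the subcase $b^r\prec b^\ell$ (invoking the "(c)-branch" $b^r\prec b^\ell=a$ of Lemma~\ref{l:RandL}(1) to extract $b^\ell\mathrel{R}b$, $b^{\ell r}=b$, $b^{\ell\ell}\prec b$, with $b^*=b^\ell$, $b^\star=b^r$) to get (2a), and $a:=b^r$ in the subcase $b^\ell\prec b^r$ via Lemma~\ref{l:RandL}(2) to get (2b). Part (3) is immediate: if $x$ is central then $x^\ell=x^r$ by Lemma~\ref{l:centralizer}(2), so $x^\star=x^\ell\mt x^r$ and $x^*=x^\ell\jn x^r$ both equal this common value. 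I do not anticipate any genuine obstacle—the statement is a bookkeeping repackaging of Lemma~\ref{l:RandL}; the only thing needing care is tracking which of $x^\ell,x^r$ plays the role of $x^*$ versus $x^\star$ (it flips between the $R$ and $L$ cases) and keeping the signs straight when translating "$a\mathrel{R}b$ with $a$ positive" into the inverse-operation descriptions, and organizing the argument around the single dichotomy above keeps this transparent.
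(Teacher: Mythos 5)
Your proposal is correct and follows exactly the route the paper intends: the paper offers no separate proof, stating only that the corollary is an immediate consequence of Lemma~\ref{l:RandL}, and your case split on $a^\ell\prec a^r$ versus $a^r\prec a^\ell$ (justified by Lemma~\ref{l:centralizer}(2) and (6)) together with the appropriate equivalent conditions (a), (d)/(e), (f)/(g) of that lemma is precisely the intended bookkeeping. The identification of $x^*$ and $x^\star$ with $x^\ell$ or $x^r$ in each branch and the treatment of the central case are all handled correctly.
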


 Figure~\ref{flowdiagrams} demonstrates the two situations and provides an explanation for the notation used.

\begin{figure}[htbp]
\begin{center}
\begin{tikzpicture}[scale=1]
\node[left]  at (-.5,5) {\footnotesize  $b^\star=b^\ell$};
\node[left] at (-.5,4) {\footnotesize  $b^*=b^r=a$};
\draw[mark=square*]  plot coordinates {(-.5,4)};
\node at (0,4) {\footnotesize  $L$};
\node[right] at (.5,4) {\footnotesize  $b=a^\ell=a^*$};
\draw[mark=square*]  plot coordinates {(.5,4)};
\node[right] at (.5,3) {\footnotesize  $a^r=a^\star$};
\draw[very thick] (-.5,5) -- (-.5,4);
\draw[very thick] (.5,4) -- (.5,3);
\draw  [->>,dashed]  (.5,4) to [out=150,in=-60] (-.5,5);
\draw  [->>]  (.5,4) to [out=-150,in=-30] (-.5,4);
\draw  [->>, dashed]  (-.5,4) to [out=30,in=150] (.5,4);
\draw [->>]  (-.5,4) to [out=-30,in=120] (.5,3);
\end{tikzpicture}
\quad
\begin{tikzpicture}[scale=1]
\node[left]  at (-.5,5) {\footnotesize  $b^\star=b^r$};
\node[left] (a) at (-.5,4) {\footnotesize  $b^*=b^\ell=a$};
\draw[mark=square*]  plot coordinates {(-.5,4)};
\node at (0,4) {\footnotesize  $R$};
\node[right] (b) at (.5,4) {\footnotesize  $b=a^r=a^*$};
\draw[mark=square*]  plot coordinates {(.5,4)};
\node[right] at (.5,3) {\footnotesize  $a^\ell=a^\star$};
\draw[very thick] (-.5,5) -- (-.5,4);
\draw[very thick] (.5,4) -- (.5,3);
\draw  [->>]  (.5,4) to [out=120,in=-30] (-.5,5);
\draw  [->>, dashed]  (.5,4) to [out=150,in=30] (-.5,4);
\draw  [->>]  (-.5,4) to [out=-30,in=-150] (.5,4);
\draw [->>, dashed]  (-.5,4) to [out=-60,in=150] (.5,3);
\end{tikzpicture}
\end{center}
\vskip 5pt
\begin{center}
\begin{tikzpicture}[scale=1]
\draw[very thick] (-3.5,0)--(-3.5,1);
\node[left] at (-3.5,1) {\footnotesize  $a_1$};
\node[left] at (-3.5,0) {\footnotesize  $a_2$};
\node at (-4.2, 0.5) {\footnotesize  $a_1 \prec a_2$};
\draw[very thick] (-2.5,0)--(-2.5,1);
\node[right] at (-2.5,1) {\footnotesize  $b_2$};
\node[right] at (-2.5,0) {\footnotesize  $b_1$};
\node at (-1.8, 0.5) {\footnotesize  $b_1 \prec b_2$};
\draw[->, dashed] (0,1)--(1,1);
\node[left] at (0,1) {\footnotesize  $x$};
\node[right] at (1,1) {\footnotesize  $x^\ell$};
\draw[->] (0,0.5)--(1,0.5);
\node[left] at (0,0.5) {\footnotesize  $x$};
\node[right] at (1,0.5) {\footnotesize  $x^r$};
\node[right] at (3.1,1) {\footnotesize  non-central};
\draw[mark=square*]  plot coordinates {(3,1)};
\node[right] at (3, 0.5) {\footnotesize  $a$'s: positive};
\node[right] at (3, 0) {\footnotesize  $b$'s: negative};
\end{tikzpicture}
\end{center}
\caption{The flow diagrams for non-central elements\label{flowdiagrams}}
\end{figure}

\subsection{From idempotent residuated chains to enhanced monoidal preorders}\label{s:ircs to empos}
In this section and the next, we are finally ready to introduce the main tool for amalgamating idempotent residuated chains, enhanced monoidal preorders. These are definitionally equivalent to both idempotent Galois connections and to idempotent residuated chains, and will be key in the arguments in the sequel.

 Given an idempotent residuated chain $\m A$, the \emph{natural order}, also considered in \cite{CZ2009}, is defined by: $x \leq_n y$ iff $xy=yx=x$.

   The \emph{monoidal preorder}, also considered in \cite{GJM2020}, is defined by: $x \sqsubseteq y$ iff $xy=x$. We use the convention that Hasse diagrams for preordered sets are similar to ones for ordered sets with the only difference being that mutually-comparable elements are placed on the same level and are connected by horizontal line segments. 
   For a preorder $\sqsubseteq$ we write $x \sqsubset y$ if $x \sqsubseteq y$ and $y \not \sqsubseteq x$; this is a stronger demand than simply asking that  $x \sqsubseteq y$ and $y \not = x$. Observe that distinct $x,y$ are mutually-comparable in $\sqsubseteq$ iff $x \mathrel{L} y$ or $y \mathrel{L} x$. Likewise, distinct $x,y$ are mutually-incomparable iff $x \mathrel{R} y$ or $y \mathrel{R} x$.
   
  \begin{lemma}\label{l:naturalmonoidal1} 
The following hold in idempotent residuated chains.
	\begin{enumerate}
	\item The relation $\leq_n$ is an order, and the relation $\sqsubseteq$ is a preorder. 
	\item $x <_n y$ iff $x \sqsubset y$.
	\item $xy=y$ iff $y \not \sqsubseteq x$.
	\item $x \sqsubset y$ iff $y \not = x=xy=yx$.
   \end{enumerate}
    \end{lemma}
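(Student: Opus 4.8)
The statement collects four facts about the natural order and the monoidal preorder in an idempotent residuated chain $\m A$. The strategy is to work entirely from the conservativity of multiplication ($xy\in\{x,y\}$) together with Corollary~\ref{c:idempotentchainsoperations} and Lemma~\ref{c:non-commuting}(5), reducing each clause to elementary case analysis over the sign of elements and whether they commute.

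\emph{Clause (1).} For $\leq_n$: reflexivity is immediate since $xx=x$ (idempotence/conservativity), and antisymmetry is immediate since $x\leq_n y\leq_n x$ forces $x=xy=y$ by the defining condition. For transitivity, suppose $xy=yx=x$ and $yz=zy=y$; I need $xz=zx=x$. Here I would multiply: $xz=(xy)z=x(yz)=xy=x$, using associativity, and symmetrically $zx=z(yx)=(zy)x=yx=x$. For $\sqsubseteq$: reflexivity is $xx=x$ again; transitivity of $x\sqsubseteq y\sqsubseteq z$ gives $xz=(xy)z=x(yz)$, and since $yz\in\{y,z\}$ we get $x(yz)\in\{xy,xz\}$, handled by noting $xy=x$ in the first case and recursing/using conservativity in the second — more carefully, if $yz=y$ then $xz=x(yz)=xy=x$, and if $yz=z$ then $x\sqsubseteq y$ and we still need $xz=x$, which follows because $x\sqsubseteq z$ would have to be checked directly: actually the clean route is to invoke that $\sqsubseteq$ is exactly the monoidal preorder of \cite{GJM2020} restricted to the idempotent case, or to argue via conservativity that $xz\in\{x,z\}$ and rule out $xz=z\neq x$ using $x\sqsubseteq y$.

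\emph{Clauses (2)--(4).} These are really one intertwined computation. First note $x<_n y$ means $xy=yx=x$ with $x\neq y$, while $x\sqsubset y$ means $xy=x$ and $yx\neq y$. For (2): if $xy=yx=x$ and $x\neq y$, then $yx\neq y$ (else $x=y$), so $x\sqsubset y$; conversely if $xy=x$ and $yx\neq y$, then by conservativity $yx\in\{x,y\}$ forces $yx=x$, and $x\neq y$ (since $yx\neq y$), giving $x<_n y$. For (3): $xy\in\{x,y\}$, so $xy=y$ iff $xy\neq x$, i.e. iff $y\not\sqsubseteq x$ — wait, one must be careful that $\sqsubseteq$ is defined by $y\sqsubseteq x$ iff $yx=y$, not $xy=y$; so I would restate: $xy=y$ means (using conservativity) $xy\neq x$ when $x\neq y$, and separately handle $x=y$. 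The cleanest phrasing: $xy=y$ and $xy=x$ together force $x=y$; conservativity says at least one holds; so for $x\neq y$, exactly one holds, and $y\sqsubseteq x$ is by definition $yx=y$. Hmm — this needs the relation between $yx$ and $xy$, which is where Lemma~\ref{c:non-commuting}(5) enters: if $x,y$ commute the two products agree, and if they don't then $x^*=y$, $y^*=x$ and $\{x,y\}$ is left-zero or right-zero, so $\{xy,yx\}=\{x,y\}$ in either case. Using this, $xy=y$ iff $yx\neq y$ iff $y\not\sqsubseteq x$. Finally (4): combine (2) and (3) — $x\sqsubset y$ iff $x<_n y$ iff $xy=yx=x$ and $x\neq y$, which is exactly "$y\neq x=xy=yx$."

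\textbf{Main obstacle.} The only real subtlety is bookkeeping the asymmetry of $\sqsubseteq$ versus the products $xy$ and $yx$: since $\sqsubseteq$ is defined via $xy=x$ but its failure $y\not\sqsubseteq x$ refers to $yx\neq y$, every equivalence silently uses Lemma~\ref{c:non-commuting}(5) (or equivalently Corollary~\ref{c:idempotentchainsoperations}) to relate the two one-sided products. Once one records at the outset that, for $x\neq y$, the set $\{xy,yx\}$ is either $\{x\cdot y\}$ (commuting case) or $\{x,y\}$ itself (non-commuting case, where $x^*=y$), all four clauses fall out by pure case-checking with no further input. I would therefore lead with that observation and then dispatch (1)--(4) in quick succession.
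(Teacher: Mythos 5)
Your treatment of clauses (1), (2) and (4) is essentially the paper's own argument: conservativity plus associativity, with $xz=(xy)z=x(yz)=xy=x$ doing all the work for transitivity, and (2) obtained by unfolding $x\sqsubset y$ into ``$xy=x$ and $yx\neq y$'' and using conservativity to force $yx=x$. Two small remarks there. First, your transitivity argument for $\sqsubseteq$ is needlessly tangled: the hypothesis $y\sqsubseteq z$ \emph{is} the equation $yz=y$, so the ``second case $yz=z$'' you worry about arises only when $y=z$, and the one-line computation you already wrote down is the entire proof; the fallback of ``invoking'' that $\sqsubseteq$ is the monoidal preorder of the cited reference is not an argument. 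Second, the dichotomy you lead with ($\{xy,yx\}$ is a singleton or equals $\{x,y\}$) is not needed for (1), (2), (4), and it is exactly where things go wrong in (3).

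Clause (3) is where there is a genuine problem. The biconditional you end up asserting, ``$xy=y$ iff $yx\neq y$,'' is false for commuting elements: take $y<x\leq 1$, so that $xy=yx=x\wedge y=y$; the left side holds and the right side fails. Indeed, your own preliminary observation that commuting elements satisfy $xy=yx$ turns the claim into ``$xy=y$ iff $xy\neq y$.'' Note that the printed statement of (3) unfolds to exactly this false biconditional, since $y\sqsubseteq x$ means $yx=y$; the paper's proof of (3) consists of the single phrase ``follows by definition,'' which only matches the reading ``$xy=x$ iff $x\sqsubseteq y$,'' equivalently (for $x\neq y$, by conservativity) ``$xy=y$ iff $x\not\sqsubseteq y$.'' That last equivalence is in fact what your first, ``uncorrected'' computation gives ($xy=y$ iff $xy\neq x$); you were led astray by trying to force the conclusion to match the misplaced variables in the statement. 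The fix is to record the correct definitional unfolding, flag the apparent typo in the statement of (3), and drop the commuting/non-commuting case split from this clause entirely.
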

		
 \begin{proof}
For 1, observe that if $xy=x$ and $yz=y$, then $xz=xyz=xy=x$. Thus $\sqsubseteq$ is transitive.
Also, if $yx=x$ and $zy=y$, then $zx=zyx=yx=x$, so also $\leq_n$ is transitive.
Since $xx=x$,  $\sqsubseteq$  is also reflexive, hence it is a preorder. Likewise, $\leq_n$ is reflexive. Finally, if $x \leq_n y$ and $y \leq_n x$, then $x=xy=y$, so $\leq_n$ is an order.

 Note that $x \sqsubset y$ iff ($x \sqsubseteq y$ and $y \not \sqsubseteq x$) iff ($xy=x$ and $yx \not=y$) iff ($xy=yx=x\not = y$) (due to conservativity) iff $x <_n y$. This gives 2; 3 follows by definition. 4 follows from 2.
 \end{proof}
            
 Lemma~\ref{l:naturalmonoidal1} shows that the monoidal preorder completely encodes the multiplication operation and contains more information than the natural order. Still, there are different idempotent residuated structures on the same set that have the same monoidal preorder, as can be seen from Figure~\ref{f:samemon}.

\begin{figure}[htbp]
\begin{center}
\begin{tikzpicture}[scale=1]
\node[left] at (3,6) {\footnotesize  $a_2$};
\draw[mark=square*]  plot coordinates {(3,6)};
\node[left] at (3,5) {\footnotesize  $\mathbf{c_3}$};
\draw[mark=*]  plot coordinates {(3,5)};
\node[left] at (3,4) {\footnotesize  $a_5$};
\draw[mark=square*]  plot coordinates {(3,4)};
\node[left] at (3,3) {\footnotesize  $1$};
\draw[mark=*]  plot coordinates {(3,3)};
\node[right] at (3,2) {\footnotesize  $b_5$};
\draw[mark=square*]  plot coordinates {(3,2)};
\node[right] at (3,1) {\footnotesize  $b_4$};
\draw[mark=*]  plot coordinates {(3,1)};
\node[right] at (3,0) {\footnotesize  $b_2$};
\draw[mark=square*]  plot coordinates {(3,0)};
\node[right] at (3,-1) {\footnotesize  $b_1$};
\draw[mark=*]  plot coordinates {(3,-1)};
\draw (3,-1) --(3,0)--(3,1)--(3,2)--(3,3)--(3,4)--(3,5)--(3,6);
\end{tikzpicture}
\qquad \qquad
\begin{tikzpicture}[scale=1]
\node[left] at (0,5) {\footnotesize  $1$};
\draw[mark=*]  plot coordinates {(0,5)};
\node[left] at (-.5,4) {\footnotesize  $a_5$};
\draw[mark=square*]  plot coordinates {(-.5,4)};
\node[right] at (.5,4) {\footnotesize  $b_5$};
\draw[mark=square*]  plot coordinates {(.5,4)};
\node[right] at (0,3) {\footnotesize  $b_4$};
\draw[mark=*]  plot coordinates {(0,3)};
\draw  (0,5) --(-.5,4) --(0,3);
\draw (0,5) --(.5,4) -- (0,3)--(0,2);
\draw  (-.5,4)-- (.5,4);
\node[right] at (0,2) {\footnotesize  $c_3$};
\draw[mark=*]  plot coordinates {(0,2)};
\node[left] at (-.5,1) {\footnotesize  $a_2$};
\draw[mark=square*]  plot coordinates {(-.5,1)};
\node[right] at (.5,1) {\footnotesize  $b_2$};
\draw[mark=square*]  plot coordinates {(.5,1)};
\node[right] at (0,0) {\footnotesize  $b_1$};
\draw[mark=*]  plot coordinates {(0,0)};
\draw  (0,2) --(-.5,1) --(0,0);
\draw (0,2) --(.5,1) -- (0,0);

\qquad
\node[left] at (3,6) {\footnotesize  $a_2$};
\draw[mark=square*]  plot coordinates {(3,6)};
\node[left] at (3,5) {\footnotesize  $a_5$};
\draw[mark=square*]  plot coordinates {(3,5)};
\node[left] at (3,4) {\footnotesize  $1$};
\draw[mark=*]  plot coordinates {(3,4)};
\node[right] at (3,3) {\footnotesize  $b_5$};
\draw[mark=square*]  plot coordinates {(3,3)};
\node[right] at (3,2) {\footnotesize  $b_4$};
\draw[mark=*]  plot coordinates {(3,2)};
\node[right] at (3,1) {\footnotesize  $\mathbf{c_3}$};
\draw[mark=*]  plot coordinates {(3,1)};
\node[right] at (3,0) {\footnotesize  $b_2$};
\draw[mark=square*]  plot coordinates {(3,0)};
\node[right] at (3,-1) {\footnotesize  $b_1$};
\draw[mark=*]  plot coordinates {(3,-1)};
\draw (3,-1) --(3,0)--(3,1)--(3,2)--(3,3)--(3,4)--(3,5)--(3,6);
\end{tikzpicture}
\end{center}
\caption{Two algebras (sides) with the same monoidal order (middle).\label{f:samemon}}
\end{figure}
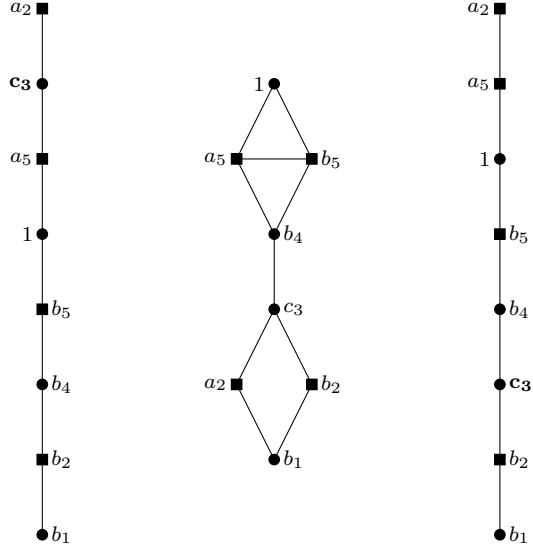

 However, as we show in Corollary~\ref{c:irc-mp}, if we further add the information of which elements are positive and which are negative (and impose a maximality and a minimality condition), then we can fully recover the whole residuated lattice structure. We will work toward this direction now, by establishing some useful properties of the monoidal preorder.

   \begin{lemma} \label{l:naturalmonoidal}
	Let $x,y$ be elements of an idempotent residuated chain.
   \begin{enumerate}
		\item $x \sqsubset y$ or $y \sqsubset x$ or $x=y$ or $xy\not = yx$.
		\item If $x,y \leq 1$, then $x \sqsubset y$ iff $x<y$. If $x,y \geq 1$, then $x \sqsubset y$ iff $x>y$.
		\item If $x$ and $y$ do not commute, then \emph{they are on the same layer}: For all $z$ we have $z \sqsubset x$ iff $z \sqsubset y$,  and $x \sqsubset z$ iff $y \sqsubset z$.
   \end{enumerate}
    \end{lemma}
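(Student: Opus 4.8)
The plan is to prove all three parts using only the basic conservativity and associativity facts about idempotent residuated chains collected in Corollary~\ref{c:idempotentchainsoperations} and Lemmas~\ref{l:centralizer} and \ref{l:naturalmonoidal1}. Throughout I will freely use that multiplication is conservative ($xy\in\{x,y\}$), that $x\sqsubseteq y$ means $xy=x$, that $x\sqsubset y$ iff $x=xy=yx\ne y$ (Lemma~\ref{l:naturalmonoidal1}(4)), and that $xy\ne yx$ forces (by Lemma~\ref{l:centralizer}(5)) $y=x^*$, $x=y^*$, with $x,y$ of opposite sign and $\{x,y\}$ a left- or right-zero semigroup.

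For part (1): given $x\ne y$ with $xy=yx$, conservativity says $xy\in\{x,y\}$. If $xy=yx=x$ then $x\sqsubset y$ (using $x\ne y$ and Lemma~\ref{l:naturalmonoidal1}(4)); if $xy=yx=y$ then symmetrically $y\sqsubset x$. So the four listed alternatives exhaust all cases — this part is essentially immediate from conservativity.

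For part (2): suppose $x,y\le 1$. By Corollary~\ref{c:idempotentchainsoperations}, $xy=x\meet y$ when $xy\le 1$, which certainly holds here since both factors are below $1$; so $xy=x\meet y=yx$, and this equals $x$ iff $x\le y$. Combined with part (1) (the non-commuting alternative is impossible when both elements are negative, as they would then need opposite signs, forcing one of them to be $1$ and hence comparable — actually more directly, $xy=yx$ already holds) we get $x\sqsubset y$ iff $x=xy=yx\ne y$ iff $x<y$. The positive case is dual, using the $x\join y$ clause: for $x,y\ge1$ we have $1<xy$ unless $x=y=1$, so $xy=x\join y=yx$, which equals $x$ iff $y\le x$, i.e. $x\sqsubset y$ iff $x>y$.

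For part (3): assume $x,y$ do not commute, so by Lemma~\ref{l:centralizer}(5) $x^*=y$, $y^*=x$, $x$ and $y$ have opposite sign, and $\{x,y\}$ is a left- or right-zero semigroup. Fix any $z$; I want $z\sqsubset x\iff z\sqsubset y$ and $x\sqsubset z\iff y\sqsubset z$. First dispose of $z\in\{x,y\}$: since $x\sqsubset x$ is false and $x\sqsubset y$ is false (as $xy\ne yx$, so neither $\sqsubset$ relation between $x$ and $y$ holds by Lemma~\ref{l:naturalmonoidal1}(4)), both sides of each biconditional are false when $z=x$, and similarly when $z=y$. Now take $z\notin\{x,y\}$. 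The key computation is to pin down $zx$ and $zy$ using Corollary~\ref{c:idempotentchainsoperations} ($zw=z$ iff $w\in(z^r,z]\cup[z,z^r]$, and $zw=w$ iff $z\in(w^\ell,w]\cup[w,w^\ell]$) together with the covering fact that $x$ and $y$ are consecutive in the chain with no element strictly between $x^\star=y^\star$-side and the $x^*$-side (Corollary~\ref{c:RandL}): since $\{x,y\}=\{x^\star\text{ or }x^*,\dots\}$ form a covering pair straddling $1$ in a controlled way, any $z\notin\{x,y\}$ lies entirely "outside" the interval $\{x,y\}$, hence $z\le_n$-relates to $x$ and to $y$ identically. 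Concretely, one shows $z\sqsubseteq x\iff z\sqsubseteq y$ by checking that $zx=z\iff zy=z$: both hold iff $z$ lies in the appropriate $(-)^r$-interval, and because $x,y$ are adjacent and $z$ is a third element, $z$ belongs to the $x$-interval iff it belongs to the $y$-interval; similarly $xz=z\iff yz=z$, using $x^\ell,x^r$ versus $y^\ell,y^r$ and the relations $x^*=y$, $y^*=x$ from Lemma~\ref{l:centralizer}(5) which tie these inverse elements together. Then combining $z\sqsubseteq x\iff z\sqsubseteq y$ with $x\not\sqsubseteq z$ and $y\not\sqsubseteq z$ being equivalent (again by the symmetric interval computation and the fact $zx=z\iff zy=z$), and invoking Lemma~\ref{l:naturalmonoidal1}(2),(4), yields the strict versions.

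The main obstacle is part (3): the bookkeeping of which of the several interval conditions from Corollary~\ref{c:idempotentchainsoperations} applies, and systematically using $x^*=y,\ y^*=x$ and the covering relations from Lemma~\ref{l:centralizer}(6) and Corollary~\ref{c:RandL} to show a "third" element $z$ cannot distinguish $x$ from $y$. The cleanest route is probably to observe directly that for $z\notin\{x,y\}$ one has $zx=zy$ and $xz=yz$ as products (because $z$ commutes with both $x$ and $y$ by Lemma~\ref{l:centralizer}(5), and conservativity plus the adjacency of $x,y$ in the chain forces the value to be $z$ in both cases or $x$-resp.-$y$ — but the latter only when... ), from which the layer condition follows instantly; the delicate point to get right is ruling out the mixed possibility where, say, $zx=x$ but $zy=z$, which is exactly where the covering pair structure of $\{x,y\}$ is used.
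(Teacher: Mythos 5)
Parts (1) and (2) of your proposal are correct and essentially identical to the paper's argument: (1) is conservativity plus Lemma~\ref{l:naturalmonoidal1}(4), and (2) uses the clauses $xy=x\mt y$ (resp.\ $x\jn y$) for two negative (resp.\ positive) elements.

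Part (3), however, has a genuine gap, and it is the only part with real content. Two specific problems. First, your geometric picture is wrong: non-commuting $x,y$ have \emph{opposite} signs (one is strictly positive, the other strictly negative), so in the chain order they are separated by $1$ and possibly by many other elements --- they are emphatically not ``adjacent'' or ``a covering pair straddling $1$.'' The actual covering relations supplied by Lemma~\ref{l:centralizer}(6) and Corollary~\ref{c:RandL} are $x^\star\prec x^*=y$ and $y^\star\prec y^*=x$, not $x\prec y$, so the claim that a third element $z$ ``lies entirely outside the interval $\{x,y\}$'' does not parse as stated. Second, your proposed ``cleanest route'' --- that $zx=zy$ and $xz=yz$ for every $z\notin\{x,y\}$ --- is false: in Example~\ref{e:noncom Sug} with $x=a_i\mathrel{L}b_i=y$ and $z=a_j$ for $j>i$ (so $z$ is strictly between $1$ and $x$), one computes $zx=x$ but $zy=y$. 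Your own sentence trails off at exactly the point (``but the latter only when\ldots'') where the mixed case $zx=x$, $zy=z$ would have to be excluded, and that exclusion is never carried out; so as written the layer property is asserted rather than proved.

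The paper's proof of (3) avoids interval bookkeeping entirely via a short associativity computation. Say $a$ is positive, $b$ negative, $a\mathrel{R}b$, so $ab=b$ and $ba=a$. If $cb=bc=c\neq b$, then $c\neq a$ (since $c$ commutes with $b$ and $a$ does not), hence $c$ commutes with $a$ by Lemma~\ref{l:centralizer}(5) (uniqueness of the non-commuting partner), and $ca=cb\cdot a=c(ba)=\cdots$; more precisely $cb=c(ba)\cdot\!$\,--- the paper writes $cb=cba=ca$, using $b=ba\cdot$-type identities --- giving $ca=ac=c$. The converse and the $L$ case are symmetric, and the upward direction ($a\sqsubset z$ iff $b\sqsubset z$) is handled the same way with $ac=abc=ab=a$. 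If you want to keep your interval-based approach you would need to pin down $x^\ell,x^r,y^\ell,y^r$ explicitly from Corollary~\ref{c:RandL} and run the case analysis in full; the associativity trick is shorter and is the step your proposal is missing.
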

    
    \begin{proof}
   1. If $x \not \sqsubset y$,  $y \not \sqsubset x$, $x \not=y$ and $xy=yx$, then by Lemma~\ref{l:naturalmonoidal1}(4), we get $x \not = xy=yx \not = y$, which contradicts conservativity.
	
 2. If $x,y \leq 1$, then by Lemma~\ref{l:idcon_mult}(1) $xy=x \mt y=yx$, so $x \sqsubset y$ iff $xy=x$ and $yx \not=y$ iff $x\mt y=x$ and $x \mt y \not=y$ iff $x<y$. If  $x,y \geq 1$, then by Lemma~\ref{l:idcon_mult}(1) $xy=x \jn y=yx$, so $x \sqsubset y$ iff $xy=x$ and $yx \not=y$ iff $x\jn y=x$ and $x \jn y \not=y$ iff $y<x$.

3. If $a$ and $b$ do not commute then they have opposite signs, say $a$ is the positive and $b$ is the negative, and either $a \mathrel{L} b$ or $a \mathrel{R} b$. We will show that for all $c\in A$ we have $c \sqsubset b$ iff $c \sqsubset a$, i.e. that $cb=bc=c\not = b$ iff $ca=ac=c \not = a$. 
	Assume first that  $a \mathrel{R} b$ so $ab=b$ and $ba=a$. If $cb=bc=c\not = b$, then $c \not = a$ (because $c$ commutes with $b$ and $a$ does not) so $c$ commutes with $a$ (because $b$ is the unique element that does not commute with $a$) and $cb=cba=ca$, hence $ac=ca=cb=c$. Conversely, if $ca=ac=c\not = a$, then $c \not = b$, so $c$ commutes with $b$ and $bc=bac=ac$, hence $bc=cb=ca=c$. The proof for the case where $a \mathrel{L} b$ is similar. 
	
	We will show that for all $c\in A$ we have $b \sqsubset c$ iff $a \sqsubset c$, i.e. that $cb=bc=b\not = c$ iff $ca=ac=a \not = c$. 
	Assume first that  $a \mathrel{L} b$ so $ab=a$ and $ba=b$. If $cb=bc=b\not = c$, then $c \not = a$ and $ac=abc=ab=a$. Also, $c$ commutes with $a$ (since $c \not = b$), hence $ac=ca=a$. The converse and also the proof for the case where $a \mathrel{R} b$ are similar.  
				\end{proof}

\begin{lemma}\label{l: 3 conditions}
Let  $\m A$ be an idempotent residuated chain. 
  \begin{enumerate}
  \item For $1\neq b \in A^-$, $b^\star$ is the smallest element of $A^+$ such that $b\sqsubset b^\star$ 
  \item For $1\neq a \in A^+$, $a^\star$ is the largest element  of $A^-$ such that $a^\star\sqsubset a$.
	\item $\sqsubseteq$ is \emph{layered}: If two distinct elements are not related by $\sqsubset$ nor $\sqsupset$, then they have different signs and they are in the same layer (their $\sqsubset$-upsets and downsets coincide). 
  \end{enumerate}
	\end{lemma}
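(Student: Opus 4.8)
The three items all concern translating between the inversion operations $(-)^\star$, $(-)^*$ and the monoidal preorder $\sqsubseteq$, and the natural strategy is to reduce each claim to the concrete multiplicative descriptions already available. For item (1), fix $1 \neq b \in A^-$. By Lemma~\ref{c:starand*}(7), $(x^\star, x^*)$ is a splitting pair, and by Corollary~\ref{c:RandL}(2) the element $b^\star$ is the lower cover of $b^*$ among the inverse elements, with $b^*$ positive and $b^\star$ either equal to $b^r$ or $b^\ell$ depending on whether $b \mathrel{R} b^*$ or $b \mathrel{L} b^*$ (using $b^* = b^\ell \jn b^r$ and $b^\star = b^\ell \mt b^r$). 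First I would verify that $b \sqsubset b^\star$: since $b^\star$ is positive and $b$ is negative, Lemma~\ref{c:simple mult} (specifically Lemma~\ref{l:idcon_mult}(1)) shows $b \cdot b^\star = b$ iff $b \leq (b^\star)^\ell$, and one computes $(b^\star)^\ell$ from Corollary~\ref{c:RandL}. Conversely, I would argue that any positive $c$ with $b \sqsubset c$ satisfies $b^\star \leq c$: from $b \sqsubset c$ and Lemma~\ref{l:naturalmonoidal1}(4) we get $bc = cb = b \neq c$, and then Lemma~\ref{l:idcon_mult}(1)--(3) translates $bc = b \wedge c = b$ into the order condition $b \leq c^\ell$, i.e.\ $c \leq b^r$; combined with positivity of $c$ and the covering/splitting description this forces $b^\star \leq c$. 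Item (2) is the dual statement for $1 \neq a \in A^+$ and I would prove it by the order-dual argument, swapping the roles of $^\ell$ and $^r$ and of positive and negative, invoking Corollary~\ref{c:RandL}(1) in place of (2).

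\textbf{Item (3).} This is essentially a restatement of material already in hand. I would deduce it directly from Lemma~\ref{l:naturalmonoidal}: part (1) of that lemma gives that any two distinct $x, y$ either satisfy $x \sqsubset y$, or $y \sqsubset x$, or do not commute; so if $x, y$ are distinct and $\sqsubseteq$-incomparable in both directions they must fail to commute, whence by Lemma~\ref{c:non-commuting}(5) they have opposite signs (one is $x^*$, the other $x^{**}$-ish, etc.), and by Lemma~\ref{l:naturalmonoidal}(3) they lie on the same layer, i.e.\ their $\sqsubset$-upsets and $\sqsubset$-downsets coincide. The only thing to check is that "not commuting" together with "incomparable" is consistent — but that is exactly the content of Lemma~\ref{l:naturalmonoidal}(1) together with the observation (made just before Lemma~\ref{l:naturalmonoidal1}) that non-commuting distinct elements are mutually-incomparable in $\sqsubseteq$ iff they are related by $\mathrel{R}$ — so (3) follows with essentially no new computation.

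\textbf{Expected obstacle.} The routine but fiddly part will be item (1)'s "smallest" clause: given a positive $c$ with $b \sqsubset c$, I must carefully rule out the possibility that $c$ is strictly between $b^\star$ and $b^*$ in the chain order. This is where the "no element between $x^\ell$ and $x^r$" / splitting-pair property (Lemma~\ref{c:starand*}(7) and Lemma~\ref{l:covering}(6)) does the real work: if $b^\star < c < b^*$ then $c$ would be strictly between the elements of a splitting pair, which is impossible. But one must also handle the case $c \leq b^\star$ versus $c \geq b^*$ and show that $b \sqsubset c$ actually holds with $c = b^\star$ being the minimal choice, i.e.\ that no $c \sqsubset b^\star$ (equivalently $c < b^\star$ among positives, there are none below $b^\star$ that still dominate $b$) — and that requires combining the monoidal-preorder translation of Lemma~\ref{l:idcon_mult} with the sign bookkeeping from Lemma~\ref{c:starand*}(1). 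I would organize the proof so that each of (1) and (2) is a short case analysis on whether $b$ (resp.\ $a$) is central — if central, $b^\star = b^* = b^\ell = b^r$ by Corollary~\ref{c:RandL}(3) and the claim is immediate — and otherwise apply the relevant part of Corollary~\ref{c:RandL}.
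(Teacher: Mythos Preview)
Your overall strategy matches the paper's: a case split on whether the element is central, direct computation via Lemma~\ref{l:idcon_mult} in the central case, and an appeal to Corollary~\ref{c:RandL} in the non-central case; item (3) is indeed immediate from Lemma~\ref{l:naturalmonoidal}(1) and (3), exactly as you say. One difference worth noting: for non-central $b$ the paper does \emph{not} compute $b\,b^\star$ and $b^\star b$ directly, but instead observes that $b^\star \prec a=b^*$ makes $b^\star$ the $\sqsubseteq$-least positive element with $a \sqsubset b^\star$, and then transfers this to $b$ using the same-layer lemma. Your more computational route also works and is arguably more elementary.

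There is, however, a genuine slip that would derail the minimality argument as written. On $A^+$ the monoidal preorder is the \emph{reverse} of the chain order: Lemma~\ref{l:naturalmonoidal}(2) gives $x \sqsubset y \iff x > y$ for $x,y \geq 1$. Hence ``$b^\star$ is the $\sqsubseteq$-smallest positive $c$ with $b \sqsubset c$'' means ``$b^\star$ is the $\leq$-\emph{largest} such $c$'', i.e.\ one must show $c \leq b^\star$ for every such $c$, not $b^\star \leq c$ as you write. Your derivation actually delivers the correct inequality: from $cb = bc = b \leq 1$ and Lemma~\ref{l:idcon_mult}(3) you get $c \leq b^\ell$ and $c \leq b^r$, hence $c \leq b^\ell \wedge b^r = b^\star$. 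Likewise your parenthetical ``$c \sqsubset b^\star$ (equivalently $c < b^\star$ among positives)'' has the equivalence backward; it should read $c > b^\star$. Once you fix this order reversal, the splitting-pair considerations in your ``expected obstacle'' paragraph become unnecessary: the inequality $c \leq b^\star$ already gives minimality outright, with no need to worry about elements between $b^\star$ and $b^*$.
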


 \begin{proof}
To verify the first two conditions, 
we refer to Corollary~\ref{c:RandL} and Figure~\ref{flowdiagrams}. 

 1. Assume that $b$ is strictly negative. If $a \mathrel{L} b$ or $a \mathrel{R} b$, for some (necessarily positive) $a$, then $b^\star$ is positive and $b^\star \prec a$, so $b^\star$ is the largest element of $(A^+, \leq)$ such that $b^\star < a$ and, by Lemma~\ref{l:naturalmonoidal}(2), $b^\star$ is the smallest element of $(A^+,\sqsubseteq)$ such that $a \sqsubset b^\star$. By Lemma~\ref{l:naturalmonoidal}(3), $b^\star$ is the smallest element of $(A^+,\sqsubseteq)$ such that $b\sqsubset b^\star $.
 Now, if $b$ is central, then $b^\ell=b^r=b^\star$ is a positive element,  $b b^\star=b^\star b$ and 	$b^\star b=b^\ell b=b^\ell \mt b=b$; also $b^\star \not=b$. Therefore,
	$b <_n b^\star$ which is equivalent to  $b\sqsubset b^\star $ by Lemma~\ref{l:naturalmonoidal1}(2). To show that $b^\star$ is the smallest element of $(A^+,\sqsubseteq)$ with this property, or equivalently that it is the largest element of $(A^+,\leq)$ with this property, let $b \sqsubset c$ for some $c \in A^+$, so $cb=bc=b \not = c$. By Lemma~\ref{l:idcon_mult}(3) we have $c \leq b^\ell=b^\star$.
 
2. Assume that $a$ is strictly positive. If $a \mathrel{L} b$ or $a \mathrel{R} b$, for some (necessarily negative) $b$, then $a^\star$ is negative and $a^\star \prec b$, so $a^\star$ is the largest element of $(A^-, \leq)$ such that $a^\star < b$ and by Lemma~\ref{l:naturalmonoidal}(2) the largest element of $(A^-,\sqsubseteq)$ such that $a^\star \sqsubset b$. Since $a$ and $b$ do not commute, by Lemma~\ref{l:naturalmonoidal}(3) their $\sqsubset$-downsets are equal, so $a^\star$ is the largest element of $(A^-,\sqsubseteq)$ such that $a^\star \sqsubset a$.
  Now, if $a$ is central so $a^\ell=a^r=a^\star$ is a negative element,  $a a^\star=a^\star a$ and 
$a^\star a=a^\ell a=a^\ell \mt a=a^\ell=a^\star$; also $a^\star \not=a$. Therefore,
	$a^\star <_n a$ which is equivalent to  $a^\star \sqsubset a$ by Lemma~\ref{l:naturalmonoidal1}(2). To show that $a^\star$ is the largest element of $(A^-,\sqsubseteq)$ with this property, or equivalently (by Lemma~\ref{l:naturalmonoidal}(2)) that it is the largest element of $(A^-,\leq)$ with this property, let $c \sqsubset a$ for some $c \in A^-$, so $ca=ac=c \not = a$. By Lemma~\ref{l:idcon_mult}(3) we have $c \leq a^\ell=a^\star$.
	
 3. To show that $\sqsubseteq$ is layered, we note that if two distinct elements are not comparable by $\sqsubset$ nor $\sqsupset$, then by Lemma~\ref{l:naturalmonoidal}(1) they cannot commute, so they have different signs and by Lemma~\ref{l:naturalmonoidal}(3) they are on the same layer. 
  \end{proof}

Given an idempotent residuated chain $\m A$, we associate to it the structure $(A, \sqsubseteq, A^+, A^-, 1,{}^\star)$.  Note that by Lemma~\ref{l:naturalmonoidal}(2) the positive cone appears inverted in the monoidal preorder.

 We now characterize these structures abstractly;  conditions 1 and 2 below are extracted from \cite{CZ2009}, where they are stipulated for the the natural order, but we formulate them in terms of the the richer monoidal preorder of \cite{GJM2020}.   \medskip

 $(P,  \sqsubseteq, P^+, P^-, 1, {}^\star)$ is an \emph{enhanced monoidal preorder}, if
 $(P, \sqsubseteq)$ is a pre-ordered set with sole maximum element  $1$ ($x \sqsubset 1$, for all $x\not=1$),  $P^+$ and $P^-$ are totally-ordered subsets of $P$ (i.e., the restriction of $\sqsubseteq$ to each of $P^+$, $P^-$ antisymmetric and total) such that $P^+ \cup P^-=P$ and $P^+ \cap P^-=\{1\}$ and ${}^\star$ is a unary operation on $P$ such that $1^\star =1$ and for all other elements:
  \begin{enumerate}
  \item For $1\neq b \in P^-$, $b^\star$ is the smallest element of $P^+$ such that $b\sqsubset b^\star$ 
  \item For $1\neq a \in P^+$, $a^\star$ is the largest element  of $P^-$ such that $a^\star\sqsubset a$.
	\item The preordered is \emph{layered}: If two distinct elements are not related by $\sqsubset$ nor $\sqsupset$, then they have different signs and their $\sqsubset$-upsets and downsets coincide. 
  \end{enumerate}

\begin{lemma}\label{l:emp to IdRC}
If $\m A$ is an idempotent residuated chain, then $(A, \sqsubseteq, A^+, A^-, 1,{}^\star)$ is an enhanced monoidal preorder.
\end{lemma}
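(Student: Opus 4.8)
The goal is to verify that each defining condition of an enhanced monoidal preorder holds for the structure $(A, \sqsubseteq, A^+, A^-, 1,{}^\star)$ extracted from an idempotent residuated chain $\m A$. Most of the work has already been done in the preceding lemmas, so the proof is largely a matter of assembling these pieces in the right order. First I would check that $(A,\sqsubseteq)$ is a preorder: this is Lemma~\ref{l:naturalmonoidal1}(1). Next, I would verify that $1$ is the sole maximum of $\sqsubseteq$; since $\m A$ is conic, for any $x \neq 1$ we have either $x < 1$ or $1 < x$, and in either case Lemma~\ref{l:naturalmonoidal}(2) gives $x \sqsubset 1$ (taking $y = 1$), so $1$ is a maximum, and it is the \emph{sole} maximum because if $y$ were another maximum we would have $1 \sqsubset y$ and $y \sqsubset 1$, whence by Lemma~\ref{l:naturalmonoidal1}(4) $1 = y \cdot 1 = 1 \cdot y = y$.

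Then I would check the conditions on $P^+ = A^+$ and $P^- = A^-$. That $A^+ \cup A^- = A$ and $A^+ \cap A^- = \{1\}$ is immediate from the definition of a conic residuated lattice (the negative and positive cones cover $A$ and meet exactly at $1$). That $\sqsubseteq$ restricted to $A^-$ (resp.\ $A^+$) is antisymmetric and total follows from Lemma~\ref{l:naturalmonoidal}(2): on $A^-$ the relation $\sqsubseteq$ coincides with the lattice order $\leq$, and on $A^+$ it coincides with the reverse order $\geq$; since $\m A$ is a chain, both are total orders, hence antisymmetric. For the operation ${}^\star$, recall $a^\star = a^\ell \meet a^r$, and $1^\star = 1^\ell \meet 1^r = 1$ by Lemma~\ref{l:centralizer}(9) (or directly, $1^\ell = 1^r = 1$). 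Conditions 1 and 2 of the definition of enhanced monoidal preorder are \emph{exactly} Lemma~\ref{l: 3 conditions}(1) and (2), and condition 3 (layeredness) is exactly Lemma~\ref{l: 3 conditions}(3).

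There is, however, one subtlety worth spelling out: the definition of enhanced monoidal preorder requires that, for $1 \neq b \in P^-$, there \emph{exists} a smallest element of $P^+$ above $b$ (and dually for $P^+$), i.e., the description in conditions 1 and 2 is not vacuous. This is guaranteed because $b^\star$ is always a genuine element of $A$ witnessing the property, and Lemma~\ref{l: 3 conditions}(1)--(2) asserts it is the extremal such element; I would note this explicitly to confirm that the abstract definition is satisfied and not merely consistent. So the plan is: (i) preorder — Lemma~\ref{l:naturalmonoidal1}(1); (ii) sole maximum $1$ — conicity plus Lemma~\ref{l:naturalmonoidal}(2) and Lemma~\ref{l:naturalmonoidal1}(4); (iii) cone conditions — definition of conic plus Lemma~\ref{l:naturalmonoidal}(2); (iv) $1^\star = 1$ — direct; (v) conditions 1--3 — Lemma~\ref{l: 3 conditions}(1)--(3).

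I do not anticipate a serious obstacle here: the lemma is essentially a bookkeeping corollary of Lemmas~\ref{l:naturalmonoidal1}, \ref{l:naturalmonoidal}, and especially \ref{l: 3 conditions}, which were evidently stated with precisely this conclusion in mind. The only mild care needed is to make sure the "sole maximum" clause and the existence/extremality phrasing of conditions 1--2 are addressed, rather than just quoting the earlier lemmas verbatim.
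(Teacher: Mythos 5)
Your proposal is correct and follows essentially the same route as the paper: preorder via Lemma~\ref{l:naturalmonoidal1}, the cone conditions via Lemma~\ref{l:naturalmonoidal}(2), and conditions 1--3 of the definition via Lemma~\ref{l: 3 conditions}. The only cosmetic difference is that the paper establishes $x \sqsubset 1$ for $x \neq 1$ by the direct computation $x1 = x$ and $1x = x \neq 1$, whereas you route it through conicity and Lemma~\ref{l:naturalmonoidal}(2); both are fine.
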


 \begin{proof} By Lemma~\ref{l:naturalmonoidal1}, $\sqsubseteq$ is a preorder and for all $x \not =1$, we have $x1=x$ and $1x \not=1$, so $x \sqsubset 1$. Also, by Lemma~\ref{l:naturalmonoidal}(2), $A^+$ and $A^-$ are chains of $\sqsubseteq$ that intersect at $\{1\}$ and union to $A$. The remaining three conditions follow from Lemma~\ref{l: 3 conditions}.
\end{proof}

\subsection{From enhanced monoidal preorders to idempotent residuated chains}\label{s:empos to ircs}
We now prove that the newly introduced enhanced monoidal preorders are definitionally equivalent to idempotent residuated chains.

First, we extend a previous definition to an arbitrary preorder $\sqsubseteq$. If the $\sqsubset$-upsets and $\sqsubset$-downsets of two elements coincide, we say that \emph{they are in the same layer},  we write $\equiv_*$ for this equivalence relation and define a \emph{layer} to be an equivalence class of $\equiv_*$. (Note that this is exactly how we defined the notion of being in the same layer in Lemma~\ref{l:naturalmonoidal}(3).) We will denote by $\overline{x}$ the layer of $x$. 

Note that the layers form a chain with top element $1$, under the ordering: for all $x,y$, $\overline{x} \leq \overline{y}$ iff they are in the same layer or $x \sqsubseteq y$. Clearly, this definition is independent of the representatives (as elements of the same layer have the same $\sqsubset$-upsets and $\sqsubset$-downsets), reflexivity and antisymmetry follow from the properties of $\sqsubseteq$, and antisymmetry follows from Lemma 3.10(2), below.

\begin{lemma}\label{l:layers} Let  $(P,  \sqsubseteq, P^+, P^-, 1, {}^\star)$ be an enhanced monoidal preorder.
\begin{enumerate}
	\item There are at most two elements in the same layer. In this case, they have different sign.
	\item No set of pairwise mutually-comparable elements has more than two elements. In this case, these elements are in the same layer.
	\item No set of pairwise mutually-incomparable elements has more than two elements. In this case, they are in the same layer.	
	\item Each layer contains at most two elements, which are thus either mutually comparable or mutually incomparable.
	\item If the layer of an element is a singleton, then the element is comparable with any element of different sign.
\end{enumerate}
\end{lemma}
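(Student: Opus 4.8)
The plan is to work directly from the abstract axioms for an enhanced monoidal preorder, deriving each item in turn and using earlier items as we go. The backbone of the argument is the fact that $P^+$ and $P^-$ are $\sqsubseteq$-chains whose union is $P$ and whose intersection is $\{1\}$; thus any three elements of $P$ must contain two with the same sign, and two elements with the same sign are always $\sqsubseteq$-comparable (in fact linearly ordered, since the chains are antisymmetric). This last observation gives item (1) almost immediately: if $x,y,z$ lie in a common layer, two of them, say $x,y$, have the same sign, hence are strictly $\sqsubseteq$-comparable (they cannot be equal and cannot be $\equiv_*$-related and distinct while having the same sign, because on $P^+$ or $P^-$ the preorder is antisymmetric); but then, say $x \sqsubset y$, which means $x$'s $\sqsubset$-downset differs from $y$'s (it does not contain $x$ while the other one need not — more carefully, $x \sqsubset y$ and $x \not\sqsubset x$ but $x$ is in the up-set of itself... the clean statement is that $\sqsubset$-comparable elements cannot be in the same layer, since $x\sqsubset y$ forces $y$ into the $\sqsubset$-upset of $x$ but not of $y$). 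So a layer has at most two elements, and if it has exactly two they must have different signs (otherwise they'd be $\sqsubset$-comparable, contradiction). This also records for later use the key dichotomy: two elements in the same layer are either mutually $\sqsubseteq$-comparable or mutually incomparable.

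For items (2) and (3) I would argue symmetrically. For (2), suppose $x,y,z$ are pairwise mutually $\sqsubseteq$-comparable (each pair related both ways). Two of them, say $x,y$, share a sign, so lie on a common chain $P^\pm$; mutual comparability together with antisymmetry of $\sqsubseteq$ on that chain forces $x=y$, a contradiction unless we only had two elements to begin with. Hence no more than two. If $x,y$ are mutually comparable and distinct, they must then have opposite signs, and I claim they lie in the same layer: this is exactly where the layeredness axiom (3) for enhanced monoidal preorders is used in reverse — but actually it's cleaner to note directly that mutual comparability means $x \sqsubseteq y \sqsubseteq x$, so the $\sqsubset$-upsets and $\sqsubset$-downsets of $x$ and $y$ coincide (anything $\sqsubset$-above $x$ is $\sqsubset$-above $y$ and conversely, by transitivity), i.e. $x \equiv_* y$. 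Item (3) is the parallel statement for mutually incomparable elements: if $x,y,z$ are pairwise incomparable, two share a sign hence are comparable, contradiction; and if $x,y$ are incomparable and distinct they have different signs by the first observation, and the layeredness axiom directly gives $x\equiv_* y$.

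Item (4) is then a packaging statement: by (1) a layer has at most two elements; if it has two, by (1) they have different signs, and by the dichotomy noted at the end of the (1)-argument two elements of the same layer are either mutually comparable or mutually incomparable — which is all (4) asserts. Finally, for item (5): suppose the layer of $x$ is the singleton $\{x\}$, and let $y$ have the opposite sign to $x$. If $x$ and $y$ were incomparable, then by (3) they would lie in the same layer, contradicting that $\overline{x}=\{x\}$ and $y\neq x$ (they have different signs, so $y\neq x$). Hence $x$ and $y$ are comparable. I expect the main obstacle to be purely bookkeeping: being careful that "comparable" throughout means $\sqsubseteq$-comparable (in at least one direction) while "mutually comparable" means in both directions, and keeping straight that distinctness plus same sign forces strict $\sqsubseteq$-comparability via antisymmetry on the chains $P^+, P^-$ — once that lemma-ette is isolated, every item follows in a couple of lines. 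No heavy computation is needed; the only genuine input beyond the chain structure is the layeredness axiom, used in (3) and (5).
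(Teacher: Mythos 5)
Your proposal is correct and follows essentially the same route as the paper's proof: antisymmetry and totality of $\sqsubseteq$ on each of $P^+$, $P^-$ plus a pigeonhole on signs for the cardinality bounds in (1)--(3), a transitivity argument to show mutually comparable elements share $\sqsubset$-upsets and downsets in (2), the layeredness axiom for (3) and (5), and (4) as a consequence of the rest. The only cosmetic difference is that in (1) you rule out same-sign layer-mates via strict comparability contradicting equal $\sqsubset$-upsets, while the paper compares the restricted downsets within the chain $P^-$ (resp.\ $P^+$); these are the same observation.
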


\begin{proof}
1. First note that only $1$ is in the same layer as $1$. Also, note that if $x,y \in P^-$ and $x \equiv_* y$, then in particular $\{z \in P^- \mid z \sqsubset x\}=\{z \in P^- \mid z \sqsubset y\}$, so since $P^-$ is a chain we get $x=y$. Likewise, $x,y \in P^+$ and $x \equiv_* y$, then $x=y$. Therefore, if $x \equiv_* y$ for distinct $x,y$, then they have different sign. Also, it is impossible to have three distinct elements in the same layer, as two of them would have the same sign and thus they will be equal.

2. If $x,y \in P^-$, then by antisymmetry they cannot be mutually comparable unless they are equal; likewise for $P^+$. So, elements that are mutually comparable have to have different sign. Thus, no three distinct elements are pairwise mutually comparable. Now let $x,y$ be distinct  mutually comparable and let $z \sqsubset x$. Then by transitivity $z \sqsubseteq y$. If we also had $y \sqsubseteq z$, then we would have $y \sqsubseteq z$, a contradiction. Therefore, $x$ and $y$ have the same $\sqsubset$-downsets. Likewise, they have the same upsets.

Property 3 follows from Property 3 of the definition of enhanced monoidal preorders. Property 4 follows from the others.

5. If two elements are not comparable, then in particular they are not related by $\sqsubset$ nor $\sqsupset$, so by condition 3 in the definition of enhanced monoidal preorders, they are in the same layer. So, if the layer of $x$ is a singleton and $y$ is incomparable to $x$, then $y=x$, a contradiction. So, $y$ has to be comparable to $x$.
\end{proof}

	We will draw the Hasse diagrams for enhanced monoidal preorders on two columns, following the convention of putting the elements of the chain $P^+$ on the left column and the elements of the chain $P^-$ on the right column. We place $1$ on the left. 

Figure~\ref{f:samemondiff} shows how the two chains of Figure~\ref{f:samemon} can be distinguished via their enhanced monoidal preorders.

\begin{figure}[htbp]
\begin{center}
\begin{tikzpicture}[scale=1]
\node[left] at (-.5,5) {\footnotesize  $1$};
\draw[mark=*]  plot coordinates {(-.5,5)};
\node[left] at (-.5,4) {\footnotesize  $a_5$};
\draw[mark=square*]  plot coordinates {(-.5,4)};
\node[right] at (.5,4) {\footnotesize  $b_5$};
\draw[mark=square*]  plot coordinates {(.5,4)};
\node[right] at (.5,3) {\footnotesize  $b_4$};
\draw[mark=*]  plot coordinates {(.5,3)};
\draw  (-.5,5) --(-.5,4) --(.5,3);
\draw (-.5,5) --(.5,4) -- (.5,3)--(-.5,2);
\draw  (-.5,4)-- (.5,4);
\node[left] at (-.5,2) {\footnotesize  $a_3$};
\draw[mark=*]  plot coordinates {(-.5,2)};
\node[left] at (-.5,1) {\footnotesize  $a_2$};
\draw[mark=square*]  plot coordinates {(-.5,1)};
\node[right] at (.5,1) {\footnotesize  $b_2$};
\draw[mark=square*]  plot coordinates {(.5,1)};
\node[right] at (.5,0) {\footnotesize  $b_1$};
\draw[mark=*]  plot coordinates {(.5,0)};
\draw  (-.5,2) --(-.5,1) --(.5,0);
\draw (-.5,2) --(.5,1) -- (.5,0);
\qquad
\node[left] at (3,6) {\footnotesize  $a_2$};
\draw[mark=square*]  plot coordinates {(3,6)};
\node[left] at (3,5) {\footnotesize  $\mathbf{a_3}$};
\draw[mark=*]  plot coordinates {(3,5)};
\node[left] at (3,4) {\footnotesize  $a_5$};
\draw[mark=square*]  plot coordinates {(3,4)};
\node[left] at (3,3) {\footnotesize  $1$};
\draw[mark=*]  plot coordinates {(3,3)};
\node[right] at (3,2) {\footnotesize  $b_5$};
\draw[mark=square*]  plot coordinates {(3,2)};
\node[right] at (3,1) {\footnotesize  $b_4$};
\draw[mark=*]  plot coordinates {(3,1)};
\node[right] at (3,0) {\footnotesize  $b_2$};
\draw[mark=square*]  plot coordinates {(3,0)};
\node[right] at (3,-1) {\footnotesize  $b_1$};
\draw[mark=*]  plot coordinates {(3,-1)};
\draw (3,-1) --(3,0)--(3,1)--(3,2)--(3,3)--(3,4)--(3,5)--(3,6);
\end{tikzpicture}
\qquad
\begin{tikzpicture}[scale=1]
\node[left] at (-.5,5) {\footnotesize  $1$};
\draw[mark=*]  plot coordinates {(-.5,5)};
\node[left] at (-.5,4) {\footnotesize  $a_5$};
\draw[mark=square*]  plot coordinates {(-.5,4)};
\node[right] at (.5,4) {\footnotesize  $b_5$};
\draw[mark=square*]  plot coordinates {(.5,4)};
\node[right] at (.5,3) {\footnotesize  $b_4$};
\draw[mark=*]  plot coordinates {(.5,3)};
\draw  (-.5,5) --(-.5,4) --(.5,3);
\draw (-.5,5) --(.5,4) -- (.5,3)--(.5,2);
\draw  (-.5,4)-- (.5,4);
\node[right] at (.5,2) {\footnotesize  $b_3$};
\draw[mark=*]  plot coordinates {(.5,2)};
\node[left] at (-.5,1) {\footnotesize  $a_2$};
\draw[mark=square*]  plot coordinates {(-.5,1)};
\node[right] at (.5,1) {\footnotesize  $b_2$};
\draw[mark=square*]  plot coordinates {(.5,1)};
\node[right] at (.5,0) {\footnotesize  $b_1$};
\draw[mark=*]  plot coordinates {(.5,0)};
\draw  (.5,2) --(-.5,1) --(.5,0);
\draw (.5,2) --(.5,1) -- (.5,0);

\qquad
\node[left] at (3,6) {\footnotesize  $a_2$};
\draw[mark=square*]  plot coordinates {(3,6)};
\node[left] at (3,5) {\footnotesize  $a_5$};
\draw[mark=square*]  plot coordinates {(3,5)};
\node[left] at (3,4) {\footnotesize  $1$};
\draw[mark=*]  plot coordinates {(3,4)};
\node[right] at (3,3) {\footnotesize  $b_5$};
\draw[mark=square*]  plot coordinates {(3,3)};
\node[right] at (3,2) {\footnotesize  $b_4$};
\draw[mark=*]  plot coordinates {(3,2)};
\node[right] at (3,1) {\footnotesize  $\mathbf{b_3}$};
\draw[mark=*]  plot coordinates {(3,1)};
\node[right] at (3,0) {\footnotesize  $b_2$};
\draw[mark=square*]  plot coordinates {(3,0)};
\node[right] at (3,-1) {\footnotesize  $b_1$};
\draw[mark=*]  plot coordinates {(3,-1)};
\draw (3,-1) --(3,0)--(3,1)--(3,2)--(3,3)--(3,4)--(3,5)--(3,6);
\end{tikzpicture}
\end{center}
\caption{Distinguishing two algebras with the same monoidal order\label{f:samemondiff}}
\end{figure}
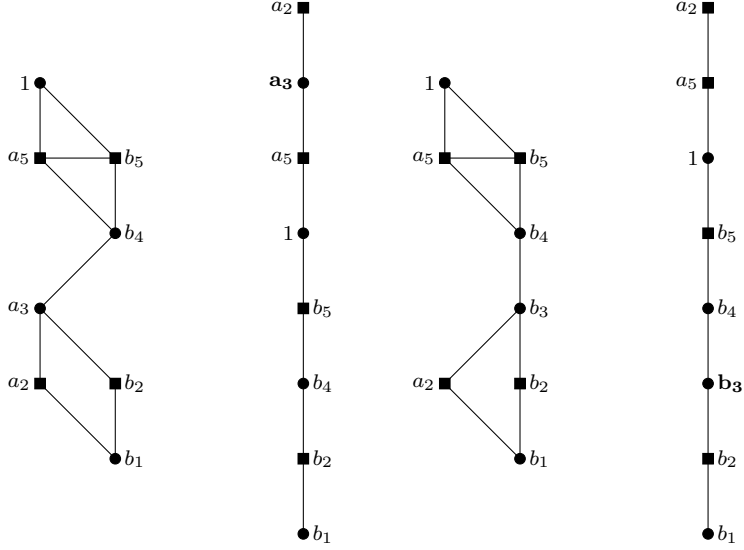

\begin{remark}
Note that in the second algebra/preorder in Figure~\ref{f:samemondiff} we have $b_3^\star=b_3^\ell=b_3^r=a_5$. This can be seen from Figure~\ref{flowdiagrams}/Figure~\ref{f:configurations} and also by the fact that $a_5$ is the $\leq$-biggest, i.e. $\sqsubseteq$-smallest, positive element such that $b_3 \sqsubset a_5$. In other words, $a_5$ is the $\sqsubseteq$-smallest positive element in a layer above $b_3$. But it is not in the smallest layer above $b_3$, which is $\{b_4\}$, as that does not contain any positive elements.  
Therefore, it is delicate to argue about the position of $b^\star$, for some negative $b$, as for some negative $b$ it may not be in the layer above $b$. The correct way to argue is to consider elements of $P^+$ closest to $b$ from above, and this is how the arguments were structured in the proof of Lemma~\ref{l: 3 conditions}.
\end{remark}

  Figure~\ref{f:configurations} shows diagrammatically how to move seamlessly between 
	enhanced monoidal preorders and flow diagrams.
    
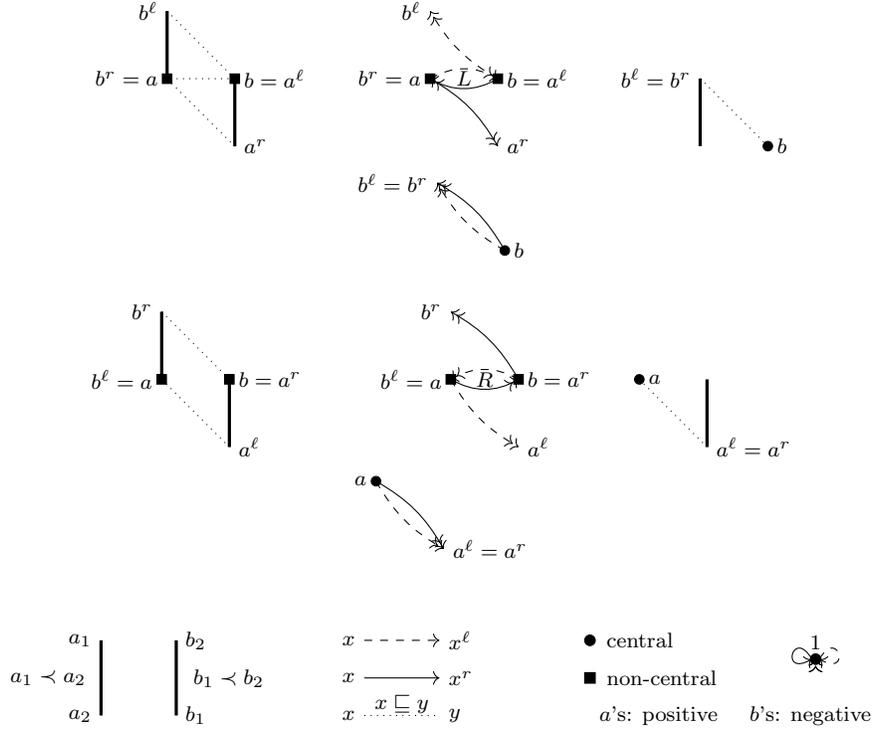
\begin{figure}[htb]
\begin{center}
\begin{tikzpicture}[scale=0.9]
\node[left]  at (-.5,5) {\footnotesize  $b^\ell$};

\node[left] (a) at (-.5,4) {\footnotesize  $b^r=a$};
\draw[mark=square*]  plot coordinates {(-.5,4)};
\node[right] (b) at (.5,4) {\footnotesize  $b=a^\ell$};
\draw[mark=square*]  plot coordinates {(.5,4)};
\node[right] at (.5,3) {\footnotesize  $a^r$};

\draw[very thick] (-.5,5) --(-.5,4);
\draw[dotted] (-.5,5) --(.5,4);
\draw[dotted]   (.5,4) -- (-.5,4);
\draw[dotted] (-.5,4) --(.5,3);
\draw[very thick] (.5,4) -- (.5,3);
\end{tikzpicture}
\quad 
\begin{tikzpicture}[scale=0.9]
\node[left]  at (-.5,5) {\footnotesize  $b^\ell$};

\node[left] (a) at (-.5,4) {\footnotesize  $b^r=a$};
\draw[mark=square*]  plot coordinates {(-.5,4)};
\node at (0,4) {\footnotesize  $L$};
\node[right] (b) at (.5,4) {\footnotesize  $b=a^\ell$};
\draw[mark=square*]  plot coordinates {(.5,4)};
\node[right] at (.5,3) {\footnotesize  $a^r$};

\draw  [->>,dashed]  (.5,4) to [out=150,in=-60] (-.5,5);
\draw  [->>]  (.5,4) to [out=-150,in=-30] (-.5,4);
\draw  [->>, dashed]  (-.5,4) to [out=30,in=150] (.5,4);
\draw [->>]  (-.5,4) to [out=-30,in=120] (.5,3);
\end{tikzpicture}
\quad
\begin{tikzpicture}[scale=0.9]
\node[left]  at (-.5,5) {\footnotesize  $b^\ell=b^r$};

\node[right] (b) at (.5,4) {\footnotesize  $b$};
\draw[mark=*]  plot coordinates {(.5,4)};
\draw[very thick] (-.5,5) --(-.5,4);
\draw[dotted] (-.5,5) --(.5,4);
\end{tikzpicture}
\quad
\begin{tikzpicture}[scale=0.9]
\node[left]  at (-.5,5) {\footnotesize  $b^\ell=b^r$};

\node[right] (b) at (.5,4) {\footnotesize  $b$};
\draw[mark=*]  plot coordinates {(.5,4)};

\draw  [->>,dashed]  (.5,4) to [out=150,in=-60] (-.5,5);
\draw  [->>]  (.5,4) to [out=120,in=-30] (-.5,5);
\end{tikzpicture}
\end{center}
\begin{center}
\begin{tikzpicture}[scale=0.9]
\node[left]  at (-.5,5) {\footnotesize  $b^r$};

\node[left] (a) at (-.5,4) {\footnotesize  $b^\ell=a$};
\draw[mark=square*]  plot coordinates {(-.5,4)};
\node[right] (b) at (.5,4) {\footnotesize  $b=a^r$};
\draw[mark=square*]  plot coordinates {(.5,4)};
\node[right] at (.5,3) {\footnotesize  $a^\ell$};

\draw[very thick] (-.5,5) --(-.5,4);
\draw[dotted] (-.5,5) --(.5,4);
\draw[dotted] (-.5,4) --(.5,3);
\draw[very thick] (.5,4) -- (.5,3);
\end{tikzpicture}
\qquad
\begin{tikzpicture}[scale=0.9]
\node[left]  at (-.5,5) {\footnotesize  $b^r$};

\node[left] (a) at (-.5,4) {\footnotesize  $b^\ell=a$};
\draw[mark=square*]  plot coordinates {(-.5,4)};
\node at (0,4) {\footnotesize  $R$};
\node[right] (b) at (.5,4) {\footnotesize  $b=a^r$};
\draw[mark=square*]  plot coordinates {(.5,4)};
\node[right] at (.5,3) {\footnotesize  $a^\ell$};

\draw  [->>]  (.5,4) to [out=120,in=-30] (-.5,5);
\draw  [->>, dashed]  (.5,4) to [out=150,in=30] (-.5,4);
\draw  [->>]  (-.5,4) to [out=-30,in=-150] (.5,4);
\draw [->>, dashed]  (-.5,4) to [out=-60,in=150] (.5,3);
\end{tikzpicture}
\quad
\begin{tikzpicture}[scale=0.9]
\node[right] (b) at (-.5,4) {\footnotesize  $a$};
\draw[mark=*]  plot coordinates {(-.5,4)};
\node[right] at (.5,3) {\footnotesize  $a^\ell=a^r$};

\draw[dotted] (-.5,4) --(.5,3);
\draw[very thick] (.5,4) -- (.5,3);
\end{tikzpicture}
\quad
\begin{tikzpicture}[scale=0.9]
\node[left] (a) at (-.5,4) {\footnotesize  $a$};
\draw[mark=*]  plot coordinates {(-.5,4)};
\node[right] at (.5,3) {\footnotesize  $a^\ell=a^r$};

\%draw (-.5,4) -- (-.5,5);

\draw [->>]  (-.5,4) to [out=-30,in=120] (.5,3);
\draw [->>, dashed]  (-.5,4) to [out=-60,in=150] (.5,3);
\end{tikzpicture}
\end{center}
\vskip 5pt
\begin{center}
\begin{tikzpicture}[scale=1]
\draw[very thick] (-3.5,0)--(-3.5,1);
\node[left] at (-3.5,1) {\footnotesize  $a_1$};
\node[left] at (-3.5,0) {\footnotesize  $a_2$};
\node at (-4.2, 0.5) {\footnotesize  $a_1 \prec a_2$};
\draw[very thick] (-2.5,0)--(-2.5,1);
\node[right] at (-2.5,1) {\footnotesize  $b_2$};
\node[right] at (-2.5,0) {\footnotesize  $b_1$};
\node at (-1.8, 0.5) {\footnotesize  $b_1 \prec b_2$};
\draw[->, dashed] (0,1)--(1,1);
\node[left] at (0,1) {\footnotesize  $x$};
\node[right] at (1,1) {\footnotesize  $x^\ell$};
\draw[->] (0,0.5)--(1,0.5);
\node[left] at (0,0.5) {\footnotesize  $x$};
\node[right] at (1,0.5) {\footnotesize  $x^r$};
\node[right] at (3.1,1) {\footnotesize  central};
\draw[mark=*]  plot coordinates {(3,1)};
\draw[dotted] (0,0)--(1,0);
\node[left] at (0,0) {\footnotesize  $x$};
\node[right] at (1,0) {\footnotesize  $y$};
\node at (0.5, 0.15) {\footnotesize  $x \sqsubseteq y$};

\node[right] at (3.1,0.5) {\footnotesize  non-central};
\draw[mark=square*]  plot coordinates {(3,0.5)};
\node[right] at (3, 0)  {\footnotesize  $a$'s: positive};
\node[right] at (5, 0) {\footnotesize  $b$'s: negative};
\node[above]  at (6,0.75) {\footnotesize  $1$};
\draw[->, dashed] (6.05,0.75) to [in=-30, out=60, loop] ();
\draw[->] (5.95,0.75) to [in=210, out=120, loop] ();
\draw[mark=*]  plot coordinates {(6,0.75)};
\draw [->>]  (6,0.75) to [out=0,in=120] (6,0.75);
\draw [->>, dashed]  (6,0.75) to [out=-60,in=150] (6,0.75);
\end{tikzpicture}
\end{center}
\caption{Translating between enhanced monoidal preorders and flow diagrams for each of the four cases.\label{f:configurations}}
\end{figure}

  Conversely, given an enhanced monoidal preorder $(P,  \sqsubseteq, P^+, P^-, 1, {}^\star)$, we define the ordered algebra $\m A$ with underlying set $A=P$, with order given by
	\begin{center}
		$x \leq y$ iff ($x,y \in P^-$ and $x \sqsubseteq y$) or  ($x,y \in P^+$ and $y \sqsubseteq x$) or ($x \in P^-$ and $y \in P^+$).
	\end{center}
The inverses are given by: $x^\ell=x^r=x^\star$, if $x$ is a $\sqsubseteq$-conical element ($\sqsubseteq$-comparable to every element) and, for 
$a \in P^+$ and $b \in P^-$,
 \begin{enumerate}
\item $a^\ell= b$, $a^r=a^\star$, $b^\ell=b^\star$, $b^r=a$, if $a,b$ are mutually comparable and  
\item $a^\ell=a^\star$, $a^r=b$, $b^\ell=a$, $b^r=b^\star$, if $a,b$ are incomparable
\end{enumerate}
Observe that $^\ell$ and $^r$ are well-defined by Lemma~\ref{l:layers}. Multiplication and the divisions are given by:

$$x y 
	 = \begin{cases} 
      x\meet y &  x \leq y^\ell \\
      x\join y & y^\ell < x 
   \end{cases} 
\qquad 
x\ld y = \begin{cases} 
      x^r \jn y & x\leq y \\
      x^r \mt y & y < x
   \end{cases} \qquad 
y\rd x = \begin{cases} 
      x^\ell \jn y & x\leq y \\
      x^\ell \mt y & y < x
   \end{cases}
$$

   \begin{lemma}
 The algebra associated to an enhanced monoidal preorder is an idempotent residuated chain.
  \end{lemma}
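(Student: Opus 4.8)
The plan is to recognize that the operations $\cdot,\ld,\rd$ on the algebra $\m A$ associated to an enhanced monoidal preorder $(P,\sqsubseteq,P^+,P^-,1,{}^\star)$ are given by exactly the same formulas that produce $\m R(\m C)$ from an idempotent Galois connection $\m C$ in Section~\ref{s:idempotent Galois connections}. Thus it suffices to show that the $\{\mt,\jn,{}^\ell,{}^r,1\}$-reduct of $\m A$ is an idempotent Galois connection: once this is done, $\m A$ is literally $\m R$ applied to its own reduct, and Lemma~\ref{l:idGC} immediately gives that $\m A$ is an idempotent residuated chain. So the entire task reduces to verifying conditions (1)--(5) of the definition of idempotent Galois connection for this reduct.

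Conditions (1) and (4) I would dispatch first: $(A,\leq)$ is a chain, since $P^+$ and $P^-$ are $\sqsubseteq$-antisymmetric and $\sqsubseteq$-total, they cover $P$ and meet in $\{1\}$, and the definition of $\leq$ places all negatives below all positives (so the only delicate point, antisymmetry across signs, cannot arise); being a chain it is a lattice. Condition (2), $1^\ell=1^r=1$, is immediate because $1$ is $\sqsubseteq$-conical and $1^\star=1$. For condition (3)---that $({}^\ell,{}^r)$ is a Galois connection on $(A,\leq)$---I would use that on a chain this reduces to showing $^\ell$ and $^r$ are order-reversing together with the adjunction inequalities $x\leq x^{\ell r}$ and $x\leq x^{r\ell}$, whence the listed identities follow formally (cf.\ Lemma~7.26 of \cite{DavPrie2002}). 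This verification is a case analysis over the four configurations of Figure~\ref{f:configurations}---a $\sqsubseteq$-conical element, whose two inverses both equal $x^\star$; a non-central positive element with its $L$-partner; the same with its $R$-partner; and negative analogues---using the extremality clauses (1)--(2) and the layering clause (3) of the definition of enhanced monoidal preorder, with Lemma~\ref{l:layers} ensuring that $^\ell$ and $^r$ are well defined (each layer has at most two elements).

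The remaining condition (5)---no element of $A$ lies strictly between $x^\ell$ and $x^r$---is where I expect the real difficulty to lie. It is vacuous for conical $x$. For a non-central positive $a$ with partner $b$, one of $a^\ell,a^r$ is $b$ and the other is $a^\star$, so the claim is that $a^\star$ and $b$ form a covering pair in $(A,\leq)$; any element strictly between them must be negative, so it amounts to showing there is no negative $c$ with $a^\star\sqsubset c\sqsubset b$. Since $a$ and $b$ do not commute they lie in the same layer (clause (3)), so $c\sqsubset b$ iff $c\sqsubset a$; but $a^\star$ is by clause (2) the largest element of $P^-$ with $a^\star\sqsubset a$, forcing $c\sqsubseteq a^\star$ and contradicting $a^\star\sqsubset c$. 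The negative case is symmetric, using clause (1). As the Remark following Lemma~\ref{l:layers} stresses, the argument must be phrased in terms of elements of $P^+$ lying closest to $b$ from above, \emph{not} naively in terms of the layer immediately above $b$, since $b^\star$ need not belong to that layer; getting this bookkeeping right---and coordinating it with the well-definedness of $^\ell,{}^r$ from Lemma~\ref{l:layers}---is the main obstacle. Once (1)--(5) are in hand, Lemma~\ref{l:idGC} finishes the proof.
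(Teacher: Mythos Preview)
Your proposal is correct and follows essentially the same route as the paper: verify that the $\{\mt,\jn,{}^\ell,{}^r,1\}$-reduct is an idempotent Galois connection and then invoke Lemma~\ref{l:idGC}. The only difference is one of emphasis. You single out condition~(5)---that $x^\ell$ and $x^r$ form a covering pair---as the crux and give a careful argument via the extremality clauses for ${}^\star$, whereas the paper dismisses this as ``clear from the definition'' and instead spends its effort on the order-reversal of ${}^\ell,{}^r$ and the inequalities $x\le x^{\ell r}$, $x\le x^{r\ell}$, which you in turn treat as a routine case analysis. Both allocations are defensible; your argument for~(5) is in fact more explicit than the paper's. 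One small wording issue: when you write ``since $a$ and $b$ do not commute,'' you are implicitly appealing to a multiplicative fact not yet established---what you mean (and what clause~(3) gives you directly) is that $a$ and $b$ are the two elements of a common layer, so their $\sqsubset$-upsets and $\sqsubset$-downsets coincide.
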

  
  \begin{proof}
 Let $\m A$ be the algebra associated to the enhanced monoidal preorder $(P,  \sqsubseteq, P^+, P^-, 1, {}^\star)$. Since $P^-$ and $P^+$ are sub-chains of $P$ and since every element of $P^-$ is less than every element of $P^+$, we get that $(A, {\leq})$ is a chain.

 Note that the inverse $b^i\in \{b^\ell, b^r\}$ of a negative element $b$ is a positive element either at the same layer as $b$ or it is the largest element of $(A^+, \leq)$ such that $b\sqsubset b^i$. If $b_1 < b_2$ are two negative elements then $b_1 \sqsubset b_2$. In case  $b_2 \sqsubset b_2^i$ we have $b_1 \sqsubset b_2^i$; hence  
$b_2^i \leq b_1^i$. In case $b_2^i$ is at the same layer as $b_2$, since $b_1 \sqsubset b_2$ we have $b_1 \sqsubset b_2^i$; hence $b_2^i \leq b_1^i$. A similar argument shows that if $a_1 < a_2$ are two positive elements, then  $a_2^i \leq a_1^i$. The remaining way to have $x \leq y$ in $A$ is to have $x \in P^-$ and $y \in P^+$, in which case $x^\ell, x^r \in P^+$ and $y^\ell, y^r \in P^-$, so $y^\ell \leq x^\ell$ and $y^r \leq x^r$. Therefore, both inverse operations are order reversing on $\m A$. 

Assume that $b$ is a strictly negative element. In case that $a$ is on the same layer as $b$ and they are incomparable we have $b^\ell=a$, and in case  $a$ is on the same layer as $b$ and they are mutually comparable, or $b$ is $\sqsubseteq$-conical, we have that $b^\ell$
it is the largest element of $(A^+, \leq)$ such that $b\sqsubset b^\ell$. In the first case, we have $(b^\ell)^r=b$. In the second case note that $(b^\ell)^r$ is a negative element either at the same layer as $b^\ell$ or it is a the largest element of $(A^-, \leq)$ such that $(b^\ell)^r \sqsubset b^\ell$ or $b^\ell=1=(b^\ell)^r$. Therefore, in all cases $b \leq b^{\ell r}$ and likewise $b \leq b^{r \ell}$. Also, we can similarly show that $a \leq a^{\ell r}$ and  $a \leq a^{r \ell}$ for every strictly positive element, and clearly $1^{\ell r}=1^{r \ell}=1$. Therefore, $({}^\ell, {}^r)$ forms a Galois connection on $\m A$. 

Also, it is clear from the definition that there is no element $\leq$-between $x^\ell$ and $x^r$. Finally, since $1$ is $\sqsubseteq$-conical and both positive and negative, the definitions yield $1^\ell=1^r=1$. Therefore, $\m A':=(A, \mt, \jn, {}^\ell, {}^r, 1)$ is an idempotent Galois connection. By Lemma~\ref{l:idGC}, the expansion $\m R({\m A'})$ of $\m A'$ is an idempotent residuated lattice. Since the multiplication and divisions of $\m A$ are defined in the same way as $\m R({\m A'})$, we get that $\m A=\m R({\m A'})$.
  \end{proof}
	
	   \begin{corollary}\label{c:irc-mp}
		The defined correspondences between idempotent residuated chains and enhanced monoidal preorders are inverses of each other. 
  \end{corollary}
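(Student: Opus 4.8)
The plan is to verify that the two constructions invert each other coordinate by coordinate. Write $\Phi$ for the map of Lemma~\ref{l:emp to IdRC}, which sends an idempotent residuated chain $\m A$ to the enhanced monoidal preorder $\Phi(\m A)=(A,\sqsubseteq,A^+,A^-,1,{}^\star)$, and $\Psi$ for the map of the preceding lemma, which sends an enhanced monoidal preorder $\m P=(P,\sqsubseteq,P^+,P^-,1,{}^\star)$ to the idempotent residuated chain $\Psi(\m P)$ on the same underlying set. Since neither map moves elements, establishing $\Psi\circ\Phi=\mathrm{id}$ and $\Phi\circ\Psi=\mathrm{id}$ reduces, in each direction, to recovering the lattice order, the inversion operations ${}^\ell$ and ${}^r$ (equivalently ${}^\star$), the preorder $\sqsubseteq$, and the product and residuals. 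I would handle the residuated operations at once: in any idempotent residuated chain $\cdot,\ld,\rd$ are expressed through $\le$, ${}^\ell$, ${}^r$ and $1$ by the formulas of Corollary~\ref{c:idempotentchainsoperations}, which are exactly the formulas used to define the operations of $\Psi(\m P)$; so once $\le$, ${}^\ell$, ${}^r$ and $1$ are matched, so are $\cdot,\ld,\rd$.

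For $\Psi\circ\Phi=\mathrm{id}$ I would first recover the order: by Lemma~\ref{l:naturalmonoidal}(2) the preorder $\sqsubseteq$ restricts to $\le$ on the negative cone and to $\ge$ on the positive cone of $\m A$, so the order that $\Psi$ reconstructs from $(\sqsubseteq,A^+,A^-)$ is precisely $\le$. For the inverses I would observe that in $\m A$ an element is $\sqsubseteq$-conical iff it is central (immediate from Lemma~\ref{l:naturalmonoidal}(1) and Lemma~\ref{l:naturalmonoidal1}(4)); a central $x$ satisfies $x^\ell=x^r=x^\star$ by Lemma~\ref{l:centralizer}(2) and the definition of ${}^\star$, which is what the conical clause of $\Psi$ assigns. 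For non-central $x$, its unique non-commuting partner $x^*$ lies in the same $\sqsubseteq$-layer (Lemma~\ref{l:naturalmonoidal}(3)), and the pair $\{x,x^*\}$ is mutually $\sqsubseteq$-comparable exactly when it is a left-zero semigroup and mutually incomparable exactly when it is right-zero (the observation in Section~\ref{s:ircs to empos}); comparing Corollary~\ref{c:RandL} with the two non-conical clauses of $\Psi$ then shows that $\Psi$ restores the original values of ${}^\ell$ and ${}^r$ at $x$ and $x^*$. As $1$ is preserved trivially, this direction is finished.

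For $\Phi\circ\Psi=\mathrm{id}$ I would put $\m A=\Psi(\m P)$ and first read off, from the definition of the order on $\m A$ (using that $1$ is the $\sqsubseteq$-maximum of $\m P$ and lies in $P^+\cap P^-$), that the positive and negative cones of $\m A$ are $P^+$ and $P^-$. Next I would verify $x^\ell\meet x^r=x^\star$ in $\m A$ by inspecting the three clauses defining ${}^\ell,{}^r$ together with the defining properties of ${}^\star$ in an enhanced monoidal preorder. Finally I would show that the monoidal preorder of $\m A$, namely the relation defined by $x\sqsubseteq y\iff xy=x$, coincides with the original $\sqsubseteq$; this is a case analysis on the signs of $x$ and $y$ in which one evaluates $xy$ by the product formula of $\m A$, substitutes the value of $y^\ell$ prescribed by the clauses of $\Psi$, and uses Lemma~\ref{l:naturalmonoidal}(2) in the same-sign cases. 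Since $\Phi(\m A)$ is automatically an enhanced monoidal preorder by Lemma~\ref{l:emp to IdRC}, these identifications give $\Phi(\Psi(\m P))=\m P$.

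I expect the non-central elements to be the main obstacle. In the direction $\Psi\circ\Phi$ one must correctly align the dichotomy ``$L$ versus $R$'' of Corollary~\ref{c:RandL} with the dichotomy ``mutually comparable versus incomparable in $\sqsubseteq$'', and keep track of which of ${}^\ell,{}^r$ produces the non-commuting partner $x^*$ and which produces $x^\star$. In the direction $\Phi\circ\Psi$ the delicate point is the one flagged in the remark following Lemma~\ref{l:layers}: for a negative $b$ the element $b^\star$ need not lie in the layer of $b$, so the comparisons cannot be settled layer by layer and one must instead argue through the positive elements immediately above $b$, exactly as in the proof of Lemma~\ref{l: 3 conditions}.
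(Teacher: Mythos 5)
Your proposal is correct and follows essentially the same route as the paper: both directions are settled by matching the order, the cones, the inverses, and the monoidal preorder (via Lemma~\ref{l:naturalmonoidal}(2), Corollary~\ref{c:RandL}, and the layer case analysis), and then invoking Corollary~\ref{c:idempotentchainsoperations} to recover the remaining residuated operations. The only small imprecision is your claim that ``$\sqsubseteq$-conical iff central'': a non-central left-zero pair is mutually $\sqsubseteq$-comparable and hence each member is $\sqsubseteq$-conical, so the correct dichotomy (the one the paper's proof actually uses) is singleton layer versus two-element layer, which coincides with central versus non-central and leaves your argument intact.
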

	
	\begin{proof}
	Let $\m P=(P,  \sqsubseteq, P^+, P^-, 1, {}^\star)$ be an enhanced monoidal preorder, $\m A$ its associated idempotent residuated chain, and  $\m P_A=(A, \sqsubseteq_A, A^+, A^-, 1_A,{}^\star)$ the enhanced monoidal preorder associated to $\m A$. By following the definitions we see that $A=P$, $A^-=P^-$, $A^+=P^+$ and $1_A=1$. We have $x \sqsubseteq_A y$ iff $xy=x$ in $\m A$ iff ($x^r<y\leq x$ or $x\leq y \leq x^r$) iff ($x^r<y\leq x>1$ or $1 \geq x\leq y \leq x^r$) iff 
	($1< x \sqsubseteq y$ or $1\geq x \sqsubseteq y$) iff $x \sqsubseteq y$. Therefore, $\m P_A=\m P$.
	We used two equivalences, which we now prove. 
	
	First we show that $x^r<y\leq x>1$ is equivalent to $1< x \sqsubseteq y$; in other words that under the assumption $x>1$, the statements  $x^r<y\leq x$ and $x \sqsubseteq y$ are equivalent.
	We have $x^r<y\leq x$ iff ($1<y\leq x$ or $x^r<y\leq 1$) iff ($1\sqsupset_{P^+} y\sqsupseteq x$ or $x^r \sqsubset y \sqsubseteq_{P^-} 1$) iff   ($1\sqsupset_{P^+} y\sqsupseteq x$ or $x \sqsubseteq y \sqsubseteq_{P^-} 1$) iff $x \sqsubseteq y$. We used that $x^r \sqsubset y \sqsubseteq_{P^-} 1$ is equivalent to $x \sqsubseteq y \sqsubseteq_{P^-} 1$, which we now prove. Note from the definition of $x^r$ in $\m A$ that if there is $z\not = x$ in the layer of $x$ with $z$ incomparable to $x$, then $x^r=z$, so in that case $x^r \sqsubset y \sqsubseteq_{P^-} 1$ is equivalent to $x \sqsubseteq y \sqsubseteq_{P^-} 1$ (since $x^r, x$ have the same $\sqsubset$-upsets and they are incomparable). On the other hand, if there is $z\not = x$ in the layer of $x$ with $z$ comparable to $x$, then $x^r=x^\star$, which is the largest element of $P^-$ with $x^\star \sqsubset x$, or equivalently with  $x^\star \sqsubset z$ since 
	$x$ and $z$ are in the same layer. Therefore, in this case  $x^r \sqsubset y \sqsubseteq_{P^-} 1$ is equivalent to $z \sqsubseteq y \sqsubseteq_{P^-} 1$ and to $x \sqsubseteq y \sqsubseteq_{P^-} 1$, since $x,z$ are in the same layer and comparable.
	 Finally, if $x$ is the only element of its layer, then $x^r=x^\star$, which is the largest element of $P^-$ with $x^\star \sqsubset x$. So, in that case $x^r \sqsubset y \sqsubseteq_{P^-} 1$ is equivalent to $x \sqsubseteq y \sqsubseteq_{P^-} 1$, by Lemma~\ref{l:layers}(5). 

	Now we assume $x \leq 1$ and prove that $x\leq y \leq x^r$ is equivalent to $x \sqsubseteq y$. We have  $x\leq y \leq x^r$ iff ($x\leq y \leq 1$ or  $1\leq y \leq x^r$) iff ($x \sqsubseteq y \sqsubseteq_{P^-} 1$ or  $1\sqsupseteq_{P^+} y \sqsupseteq x^r$) iff ($x \sqsubseteq y \sqsubseteq_{P^-} 1$ or  $1\sqsupseteq_{P^+} y \sqsupseteq x$) iff $x \sqsubseteq y$. We used the equivalence of  $1\sqsupseteq_{P^+} y \sqsupseteq x^r$ and $1\sqsupseteq_{P^+} y \sqsupseteq x$, which we now prove.
	Note from the definition of $x^r$ in $\m A$ that if there is $z\not = x$ in the layer of $x$ with $z$ comparable to $x$, then $x^r=z$, so in that case $x^r \sqsubseteq y \sqsubseteq_{P^+} 1$ is equivalent to $x \sqsubseteq y \sqsubseteq_{P^+} 1$ (since $x^r, x$ have the same $\sqsubset$-upsets and they are comparable). 
	On the other hand, if there is $z\not = x$ in the layer of $x$ with $z$ incomparable to $x$, then $x^r=x^\star$, which is the smallest element of $P^+$ with $x \sqsubset x^\star$, or equivalently with  $z \sqsubset x^\star$ since 
	$x$ and $z$ are in the same layer. Therefore, in this case  $x^r \sqsubseteq y \sqsubseteq_{P^+} 1$ is equivalent to $z \sqsubset y \sqsubseteq_{P^+} 1$ and to $x \sqsubseteq y \sqsubseteq_{P^+} 1$, since $x,z$ are in the same layer and incomparable.
	 Finally, if $x$ is the only element of its layer, then $x^r=x^\star$, which is the smallest element of $P^+$ with $x \sqsubset x^\star$. So, in that case $x^r \sqsubseteq y \sqsubseteq_{P^+} 1$ is equivalent to $x \sqsubseteq y \sqsubseteq_{P^+} 1$, by Lemma~\ref{l:layers}(5).

Now let $\m A$ be an idempotent residuated chain, $(A, \sqsubseteq, A^+, A^-, 1,{}^\star)$ its enhanced monoidal preorder, and let $\m B=(B, \mt, \jn, \cdot, \ld, \rd, 1_B)$ be the idempotent residuated chain associated to that. By the definitions it follows that $B=A$ and $1_B=1$. Also, $x\leq_B y$ iff  ($x,y \in A^-$ and $x \sqsubseteq y$) or  ($x,y \in A^+$ and $y \sqsubseteq x$) or ($x \in A^-$ and $y \in A^+$) iff  ($x,y \in A^-$ and $xy=x$) or  ($x,y \in A^+$ and $yx=y$) or ($x \in A^-$ and $y \in A^+$) iff  ($x,y \in A^-$ and $x\mt y=x$) or  ($x,y \in A^+$ and $y \jn x=y$) or ($x \in A^-$ and $y \in A^+$) iff $x \leq y$.
Now, let $x$ be an element of $B$ with a singleton layer. Then for all $y$ we have $x \sqsubset y$ or $y \sqsubseteq x$; so, by Lemma~\ref{l:naturalmonoidal1}(4), $x$ and $y$ commute. Therefore, $x$ is central in $\m A$ and $x^\ell=x^r=x^\star$. Since this is exactly how the inverses of $x$ in $\m B$ are defined, we get that the inverses of $x$ in $\m A$ and in $\m B$ coincide. If $x$ is an element of $\m B$ in the same layer as another element $y$, incomparable to $x$, then $x \not \sqsubseteq y$ and $y  \not \sqsubseteq x$, so $xy \not =x$ and $yx \not = y$. By conservativity, we get $xy=y$ and $yx=x$. Then the values of $x^\ell$ and $x^r$ given by Corollary~\ref{c:RandL} coincide with the definition of  $x^\ell$ and $x^r$ in $\m B$. Likewise, the values of $x^\ell$ and $x^r$ in $\m A$ and in $\m B$ coincide, if there is a  $y$  incomparable to $x$ and in the same layer as $x$. Consequently, the $\{\mt, \jn, {}^\ell, {}^r, 1\}$-reducts of $\m A$ and $\m B$ coincide. By Corollary~\ref{c:idempotentchainsoperations}, we get that $\m A=\m B$.
	\end{proof}
  
    Therefore, the common notation between idempotent residuated chains and enhanced monoidal preorders (such as $^{\star}$, $\sqsubseteq$, etc) is consistent and we will use it when talking about either one of these structures.

	\begin{remark}
	The motivation for considering the natural order in \cite{CZ2009} is the connection to semigroup theory, and the motivation for the monoidal preorder in \cite{GJM2020} seems to be the desire to capture the behavior of multiplication. However, our reason for considering the enhanced monoidal preorder is very different. After all, by Lemma~\ref{l:idGC} we already know that multiplication can be captured in the language $\{\mt, \jn, {}^\ell, {}^r, 1\}$, i.e. by the idempotent Galois connection reduct. Hence subalgebra generation, which is crucial for proving amalgamation, occurs solely via ${}^\ell$ and ${}^r$. Unfortunately, this generation takes place remotely: A positive element $a$ will generate negative elements $a^\ell$ and $a^r$, which are both far away from $a$ in the chain ordering. The benefit of the enhanced monoidal preorder is that the action of  ${}^\ell$ and ${}^r$ happens locally: $a^\ell$ and $a^r$ are adjacent to $a$ in the enhanced monoidal preorder. Therefore, the reason that we consider enhanced monoidal preorders is that, in that setting, subalgebras are convex subsets. The insight that convex subsets should be kept intact is key to how to amalgamate such structures in a transparent way (see Section~\ref{s:amalgamation}).
	\end{remark}

\subsection{Subalgebras}\label{sec:subalgebras of chains}
		Note that if $x$ is not central, then $x \not = x^*$ and its layer is exactly $\{x,x^*\}$. Also, if $x$ is central, then its layer is $\{x\}$. 
		For all $x$, we define $x^\leftrightarrow$ to be $x$, if $x$ is central, and $x^*$, if $x$ is not central. Therefore, $x^\leftrightarrow$ is the only other element in the layer of $x$, if the layer has two elements, and it is equal to $x$ if the layer has only one element. The next lemma follows directly from Corollary~\ref{c:RandL} and the definition of $x^\leftrightarrow$.
		
		   \begin{lemma}\label{l:generation} Let $\m A$ be idempotent residuated chain and $x \in A$. We have $\{x^\ell, x^r\}=\{x^\star, x^*\}=\{x^\star, x^\leftrightarrow\}$.
      \end{lemma}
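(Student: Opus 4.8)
The plan is to reduce the statement to the case analysis already recorded in Corollary~\ref{c:RandL}, splitting on whether $x$ is central, so that essentially no fresh computation is needed. For the equality $\{x^\ell,x^r\}=\{x^\star,x^*\}$ no case split is even required: by Lemma~\ref{l:A^i_has_conical_elements}(1) both $x^\ell$ and $x^r$ are conical, hence comparable in the chain, and for any two comparable elements $p,q$ of a lattice one has $\{p\mt q,\,p\jn q\}=\{p,q\}$; applying this with $p=x^\ell$, $q=x^r$ and recalling $x^\star=x^\ell\mt x^r$, $x^*=x^\ell\jn x^r$ gives $\{x^\ell,x^r\}=\{x^\star,x^*\}$ at once (this was in fact already noted in the discussion preceding Lemma~\ref{l:centralizer}).

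For the equality $\{x^\star,x^*\}=\{x^\star,x^\leftrightarrow\}$ I would unwind the definition of $x^\leftrightarrow$ in the two cases. If $x$ is not central, then $x^\leftrightarrow=x^*$ by definition, so the equality is immediate; Corollary~\ref{c:RandL}(1)--(2) moreover records which of $x^\ell$, $x^r$ equals $x^\star$ and which equals $x^*$, according to whether the non-commuting pair $\{x,x^*\}$ forms an $L$- or an $R$-pair. If $x$ is central, then Corollary~\ref{c:RandL}(3) gives $x^\ell=x^r=x^\star=x^*$, so each of $\{x^\ell,x^r\}$ and $\{x^\star,x^*\}$ collapses to the singleton $\{x^\star\}$, and one concludes by invoking the convention that $x^\leftrightarrow=x$ exactly when the layer of $x$ is a singleton.

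The step I expect to need the most care is precisely this central, singleton-layer case: one must make sure that the (possibly one-element) sets $\{x^\ell,x^r\}$, $\{x^\star,x^*\}$ and $\{x^\star,x^\leftrightarrow\}$ are all being read consistently with the layer conventions fixed just before the lemma, so that the claimed equalities hold simultaneously there. The non-central case, by contrast, is an immediate consequence of Corollary~\ref{c:RandL} together with the defining clause $x^\leftrightarrow=x^*$, and the first equality needs nothing beyond conicality of the inverse elements.
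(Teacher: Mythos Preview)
Your approach coincides with the paper's: it simply cites Corollary~\ref{c:RandL} together with the definition of $x^\leftrightarrow$, and your case split on centrality is exactly the content of that corollary. The first equality $\{x^\ell,x^r\}=\{x^\star,x^*\}$ is handled correctly, and the non-central branch of the second equality is immediate as you say.

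Your caution about the central case is in fact warranted, and the step cannot be completed as the lemma is literally stated. For a central element $x\neq 1$ we have $x^\ell=x^r=x^\star=x^*$ by Corollary~\ref{c:RandL}(3), so $\{x^\star,x^*\}=\{x^\star\}$; but $x^\leftrightarrow=x$ by definition, and since $x^\star$ has sign opposite to $x$ (Lemma~\ref{l:A^i_has_conical_elements}(1)) we get $x\neq x^\star$, whence $\{x^\star,x^\leftrightarrow\}=\{x^\star,x\}$ is a genuine two-element set. So the second equality fails as a set identity in this case. This is a minor slip in the lemma's formulation rather than in your argument: what the paper actually uses downstream (e.g.\ in the description of $\langle x\rangle$ before Lemma~\ref{l: cyclic subalgebras} and in Lemma~\ref{l:subemp}) is only that closure under $\{{}^\ell,{}^r\}$ coincides with closure under $\{{}^\star,{}^\leftrightarrow\}$, and that does hold, since for central $x$ the operation ${}^\leftrightarrow$ is the identity and ${}^\star$ already recovers $x^\ell=x^r$.
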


   \begin{lemma}\label{l:subemp}
Let $\m A$ and $\m B$ be idempotent residuated chains and let $\m P_{\m A}$ and $\m P_{\m B}$ be the corresponding enhanced monoidal preorders. Then $\m A$ is a subalgebra of $\m B$ iff  $\m P_{\m A}$ is closed under ${}^\leftrightarrow$, $^\star$ and $1$.
      \end{lemma}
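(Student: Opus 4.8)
The plan is to reduce this to known facts about subalgebra generation in idempotent residuated chains, namely Corollary~\ref{c:idempotentchainsoperations} (subalgebras are exactly the subsets closed under ${}^\ell$, ${}^r$, and containing $1$) together with Lemma~\ref{l:generation} ($\{x^\ell,x^r\}=\{x^\star,x^\leftrightarrow\}$), and to translate everything across the definitional equivalence established in Corollary~\ref{c:irc-mp}. First I would fix the observation, already implicit in the preceding paragraph, that for a subset $C\subseteq A$ the induced substructure $\m P_{\m C}$ of $\m P_{\m A}$ (with the restricted preorder $\sqsubseteq$, the sets $C\cap A^+$, $C\cap A^-$, the constant $1$, and the operation ${}^\star$ restricted to $C$) genuinely \emph{is} an enhanced monoidal preorder whenever $C$ is closed under ${}^\star$ and contains $1$ — the maximality/minimality conditions~1 and~2 and the layering condition~3 are inherited because $C\cap A^\pm$ are subchains and $b^\star$ (resp.\ $a^\star$) computed in $\m P_{\m A}$ already lies in $C$ and is still the extremal witness among elements of $C$ of the appropriate sign, since it is extremal among \emph{all} such elements. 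This is the step I expect to require the most care: one must check that being "the smallest element of $P^+$ with $b\sqsubset b^\star$" is not disturbed by passing to a subset, using that $C\cap A^+$ is a subchain of the chain $A^+$, so the same element remains the minimum of the (possibly smaller) set of witnesses.

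Next I would prove the forward direction. Suppose $\m A$ is a subalgebra of $\m B$; then by definition $A$ contains $1$ and is closed under ${}^\ell$ and ${}^r$ (the multiplication and divisions are definable from these by Lemma~\ref{l:idGC}/Corollary~\ref{c:idempotentchainsoperations}, so closure under the full signature reduces to closure under $\{\mt,\jn,{}^\ell,{}^r,1\}$, and in a chain $\mt,\jn$ are automatic). By Lemma~\ref{l:generation}, $\{x^\ell,x^r\}=\{x^\star,x^\leftrightarrow\}$, so closure under ${}^\ell,{}^r$ is equivalent to closure under ${}^\star$ and ${}^\leftrightarrow$. Hence $\m P_{\m A}$ — which shares its universe with $\m A$ — is closed under ${}^\leftrightarrow$, ${}^\star$, and contains $1$, as required.

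For the converse, suppose $\m P_{\m A}$, viewed as a subset of $\m P_{\m B}$, is closed under ${}^\leftrightarrow$, ${}^\star$, and $1$. Again by Lemma~\ref{l:generation}, closure under ${}^\star$ and ${}^\leftrightarrow$ gives closure under ${}^\ell$ and ${}^r$ (computed in $\m B$); together with $1\in A$ and the chain structure this says $A$ is closed under the operations $\{\mt,\jn,{}^\ell,{}^r,1\}$ of $\m B$. By Corollary~\ref{c:idempotentchainsoperations}, $\cdot,\ld,\rd$ are then definable from this reduct, so $A$ is closed under all the residuated-lattice operations of $\m B$, i.e.\ $\m A$ is a subalgebra of $\m B$. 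One final remark I would add: the identification "$\m A$ subalgebra of $\m B$" $\iff$ "$\m P_{\m A}$ substructure of $\m P_{\m B}$ closed under the listed operations" is consistent with Corollary~\ref{c:irc-mp}, since the correspondence is definitional and hence carries subalgebras to subalgebras in both directions; the content of this lemma is the concrete identification of which operations one must close under, namely the \emph{convex-friendly} pair ${}^\star,{}^\leftrightarrow$ rather than the nonlocal pair ${}^\ell,{}^r$ — which is exactly the point flagged in the preceding remark and is what makes subalgebras of enhanced monoidal preorders convex.
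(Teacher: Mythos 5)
Your proposal is correct and follows essentially the same route as the paper: reduce subalgebra closure to closure under $\{{}^\ell,{}^r,1\}$ via Corollary~\ref{c:idempotentchainsoperations}, then use Lemma~\ref{l:generation} to trade $\{x^\ell,x^r\}$ for $\{x^\star,x^\leftrightarrow\}$. The paper's proof is just a three-line version of this; your additional check that the restricted structure remains an enhanced monoidal preorder is extra care the paper omits but does not change the argument.
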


  \begin{proof} Recall that subalgebra generation is done via $^\ell$, $^r$ and $1$. 
	Also, we know that $\{x^\ell, x^r\}=\{x^*, x^\star\}$, for all $x$, due to the comparability of the two inverses. From	Figure~\ref{f:configurations} we see that for non-central elements, closure under $^*$ is the same as closure under same-layer elements. 
   \end{proof}

Figure~\ref{f:connectedcomponents} shows three enhanced monoidal preorders. We call the corresponding idempotent residuated chains $\m A$, $\m B$ and $\m C$. The subalgebras of $\m A$ are $\{1\}$, $\{b_4, b_5,a_5, 1\}=\upset_* b_4$, $\{b_1, a_2, b_2, a_3,1\}={\downset}_* a_3$ and $\m A$ itself (see Section~\ref{s:weaklyinvolutive} for the definitions of $\upset_*$ and $\downset_*$). The subalgebras of $\m B$ are $\{1\}$, $\{\ldots a_4, b_4, b_5,a_5, 1\}=\upset_* \{b_5, b_4, ....\}$, $\{b_1, a_2, b_2, a_3, 1\}$ and $\m B$ itself. The subalgebras of $\m C$ are only $\{1\}$, $\{b_4, b_5,a_5, 1\}$ and $\m C$ itself. Note that $\{b_1, a_2, b_2, b_3,1\}$ is not a subalgebra of $\m C$, since $b_2^\star=b_3^\star=a_5$.

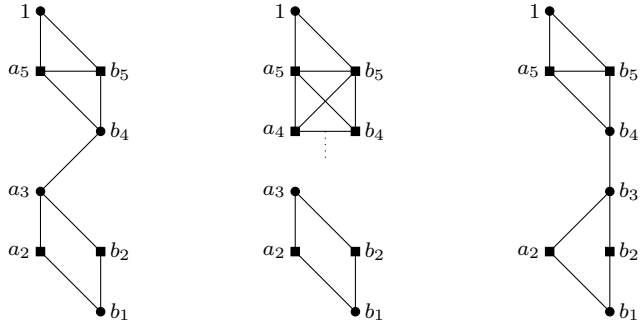
\begin{figure}[htbp]
    \begin{center}
\begin{tikzpicture}[scale=0.8]
\node[left] at (-.5,5) {\footnotesize  $1$};
\draw[mark=*]  plot coordinates {(-.5,5)};
\node[left] at (-.5,4) {\footnotesize  $a_5$};
\draw[mark=square*]  plot coordinates {(-.5,4)};
\node[right] at (.5,4) {\footnotesize  $b_5$};
\draw[mark=square*]  plot coordinates {(.5,4)};
\node[right] at (.5,3) {\footnotesize  $b_4$};
\draw[mark=*]  plot coordinates {(.5,3)};
\draw  (-.5,5) --(-.5,4) --(.5,3);
\draw (-.5,5) --(.5,4) -- (.5,3);
\draw  (-.5,4)-- (.5,4);
\draw (.5,3)--(-.5,2);
\node[left] at (-.5,2) {\footnotesize  $a_3$};
\draw[mark=*]  plot coordinates {(-.5,2)};
\node[left] at (-.5,1) {\footnotesize  $a_2$};
\draw[mark=square*]  plot coordinates {(-.5,1)};
\node[right] at (.5,1) {\footnotesize  $b_2$};
\draw[mark=square*]  plot coordinates {(.5,1)};
\node[right] at (.5,0) {\footnotesize  $b_1$};
\draw[mark=*]  plot coordinates {(.5,0)};
\draw  (-.5,2) --(-.5,1) --(.5,0);
\draw (-.5,2) --(.5,1) -- (.5,0);
\end{tikzpicture}
\qquad \qquad 
\begin{tikzpicture}[scale=0.8]
\node[left] at (-.5,5) {\footnotesize  $1$};
\draw[mark=*]  plot coordinates {(-.5,5)};
\node[left] at (-.5,4) {\footnotesize  $a_5$};
\draw[mark=square*]  plot coordinates {(-.5,4)};
\node[right] at (.5,4) {\footnotesize  $b_5$};
\draw[mark=square*]  plot coordinates {(.5,4)};
\node[left] at (-.5,3) {\footnotesize  $a_4$};
\draw[mark=square*]  plot coordinates {(-.5,3)};
\node[right] at (.5,3) {\footnotesize  $b_4$};
\draw[mark=square*]  plot coordinates {(.5,3)};
\draw[dotted] (0,3)--(0,2.5);
\draw  (-.5,5) --(-.5,4) --(.5,3);
\draw (-.5,5) --(.5,4) -- (.5,3);
\draw  (-.5,4)-- (.5,4);
\draw (-.5,4)--(-.5,3)--(.5,3);
\draw (-.5,3)--(.5,4);
\node[left] at (-.5,2) {\footnotesize  $a_3$};
\draw[mark=*]  plot coordinates {(-.5,2)};
\node[left] at (-.5,1) {\footnotesize  $a_2$};
\draw[mark=square*]  plot coordinates {(-.5,1)};
\node[right] at (.5,1) {\footnotesize  $b_2$};
\draw[mark=square*]  plot coordinates {(.5,1)};
\node[right] at (.5,0) {\footnotesize  $b_1$};
\draw[mark=*]  plot coordinates {(.5,0)};
\draw  (-.5,2) --(-.5,1) --(.5,0);
\draw (-.5,2) --(.5,1) -- (.5,0);
\end{tikzpicture}
\qquad \qquad 
\begin{tikzpicture}[scale=0.8]
\node[left] at (-.5,5) {\footnotesize  $1$};
\draw[mark=*]  plot coordinates {(-.5,5)};
\node[left] at (-.5,4) {\footnotesize  $a_5$};
\draw[mark=square*]  plot coordinates {(-.5,4)};
\node[right] at (.5,4) {\footnotesize  $b_5$};
\draw[mark=square*]  plot coordinates {(.5,4)};
\node[right] at (.5,3) {\footnotesize  $b_4$};
\draw[mark=*]  plot coordinates {(.5,3)};
\draw  (-.5,5) --(-.5,4) --(.5,3);
\draw (-.5,5) --(.5,4) -- (.5,3)--(.5,2);
\draw  (-.5,4)-- (.5,4);
\node[right] at (.5,2) {\footnotesize  $b_3$};
\draw[mark=*]  plot coordinates {(.5,2)};
\node[left] at (-.5,1) {\footnotesize  $a_2$};
\draw[mark=square*]  plot coordinates {(-.5,1)};
\node[right] at (.5,1) {\footnotesize  $b_2$};
\draw[mark=square*]  plot coordinates {(.5,1)};
\node[right] at (.5,0) {\footnotesize  $b_1$};
\draw[mark=*]  plot coordinates {(.5,0)};
\draw  (.5,2) --(-.5,1) --(.5,0);
\draw (.5,2) --(.5,1) -- (.5,0);
\end{tikzpicture}
\end{center}
\caption{The enhanced monoidal preorders of three algebras: $\m A$, $\m B$ and $\m C$. \label{f:connectedcomponents}}
\end{figure}

\subsection{Rigidity}\label{s:weaklyinvolutive}
In Section~\ref{s:amalgamation}, we investigate the the amalgamation property for idempotent residuated chains. However, we show that amalgamation fails in this class, and unfortunately even if the V-formation consists of quasi-involutive ones; see Section~\ref{s:APfailschains}. There we identify the problem as: $x^\star$ belonging to a subalgebra without $x$ also belonging to this subalgebra.

 We will show below in Lemma~\ref{l:star-involutive} that, in the context of quasi-involutive idempotent residuated chains, this problem does not occur iff the the residuated chain is \emph{${}^\star$-involutive}: That is, if it satisfies $x^{\star\star}=x$. An enhanced monoidal preorder is called \emph{$^\star$-involutive} if its associated idempotent residuated chain is.

Recall that $\overline{x}$ denotes the layer of $x$, and that the layers form a chain. Given an enhanced monoidal preorder, we consider the preorder $\sqsubseteq_*$ given by $x \sqsubseteq_* y$ iff $x \sqsubseteq y$ or $x,y$ are in the same layer; we write $\downset_*$ and $\upset_*$ for the downward and upward closures of this relation. In the following we will denote by $\langle x \rangle_1$ the subalgebra generated by $x$ and by $\langle x \rangle$ the 1-free subalgebra (subalgebra with respect to all the operations except possibly for $1$) generated by $x$. By Corollary~\ref{c:idempotentchainsoperations} and Lemma~\ref{l:generation}, we get that $\langle x \rangle=\{x^{c_1 \cdots c_n}: n \in \mathbb{N}, c_i \in \{\star, {\leftrightarrow}\}\}$ and $\langle x \rangle_1 =\langle x \rangle \cup \{1\}$.  

\begin{lemma}\label{l: cyclic subalgebras}
Let $\m A$ be an idempotent residuated chain.
\begin{enumerate}
	\item If $b$ is a negative central element of $A$, then $\langle b \rangle \subseteq {\uparrow}_* b$.
	\item If $a$ is a positive central element of $A$, then $\langle a \rangle \subseteq {\downarrow}_* a$.
	\item If $b$ is a negative element, $b < b^{\star \star}$ and  $b <c \leq b^{\star \star}$, then $c$ is central.
	\item If  $x \in \langle x^\star \rangle$ for all $x \in A$, then $\m A$ is ${}^\star$-involutive.
	\item Let $\m P$ be a ${}^\star$-involutive enhanced monoidal preorder. 
	\begin{enumerate}
		\item 	For every strictly negative $b \in P$, there is a layer of $\m P$ directly above the layer of $b$ and $b^\star$ is in that layer. 
		\item Also, for every strictly positive $a \in P$, there is a layer of $\m P$ directly below the layer of $a$ and $a^\star$ is in that layer.
	\end{enumerate}

\end{enumerate}
\end{lemma}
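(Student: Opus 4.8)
## Proof Proposal for Lemma~\ref{l: cyclic subalgebras}

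The plan is to prove the five parts roughly in order, since the later parts build on the earlier ones. Throughout I would work with the enhanced monoidal preorder presentation and use Corollary~\ref{c:RandL} (or equivalently Figure~\ref{f:configurations}) to read off the positions of $x^\star$, $x^*$, and $x^\leftrightarrow$ relative to the layer of $x$.

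\textbf{Parts (1) and (2).} These are dual, so it suffices to do (1). For a negative central element $b$, the only relevant inverse is $b^\star = b^\ell = b^r$, which by Corollary~\ref{c:RandL}(3) is positive and satisfies $b \sqsubset b^\star$, hence $b \sqsubseteq_* b^\star$; also $b^\leftrightarrow = b$ since $b$ is central. So the generators obtainable from $b$ in one step are $b$ and $b^\star$, both in $\upset_* b$. For the inductive step I would observe that once we pass to $b^\star$ (now positive), applying $^\star$ again lands \emph{below} $b^\star$ in the $\leq$-order but we need to stay $\sqsubseteq_*$-above $b$; here the cleanest argument is to note that $\langle b\rangle$ is a chain contained in $A$, and by Lemma~\ref{l:generation} and Corollary~\ref{c:idempotentchainsoperations} each element of $\langle b\rangle$ is $b$ itself or lies $\sqsubseteq_*$-above $b$ because the subalgebra $\langle b\rangle_1$ has $1$ as top and $b$ as a minimal generator, so by Lemma~\ref{l:subemp} every element of $\langle b\rangle$ that is $\sqsubseteq$-below some other element is $\sqsubseteq$-above $b$ or in its layer. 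I expect this bookkeeping to be routine once one commits to reasoning inside the chain $\langle b\rangle_1$.

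\textbf{Part (3).} Suppose $b$ is negative with $b < b^{\star\star}$, and $b < c \leq b^{\star\star}$. Since $b$ is negative and $b^{\star\star} = (b^\star)^\star$, with $b^\star$ positive and $b^{\star\star}$ negative (signs alternate under $^\star$ by Corollary~\ref{c:RandL}), $c$ is negative. The key is to locate $b^{\star\star}$: by Lemma~\ref{l: 3 conditions}(2), $b^\star{}^\star = (b^\star)^\star$ is the largest negative element $\sqsubseteq$-below $b^\star$, and by Lemma~\ref{l: 3 conditions}(1), $b^\star$ is the smallest positive element with $b \sqsubset b^\star$. Combining these, any negative $c$ strictly $\leq$-above $b$ and $\leq$-below $b^{\star\star}$ must have $c^\star$ squeezed between consecutive values, forcing $c^\ell = c^r$, i.e. $c$ central; the argument is essentially that if $c$ were non-central then $c^* \ne c$ would create an element of $P^+$ strictly between $b$ and $b^\star$ in the relevant sense, contradicting minimality of $b^\star$. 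I would make this precise by a direct case analysis using Corollary~\ref{c:RandL}.

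\textbf{Parts (4) and (5).} For (4), fix $x$; if $x$ is central then $x^\star = x^{\leftrightarrow\star}$ and applying the hypothesis to $x$ and then unwinding $\langle x^\star\rangle$ gives $x^{\star\star} = x$ directly, using that on central elements $^\star$ agrees with $^\ell=^r$ and $^{\ell r}=^{r\ell}$. If $x$ is non-central, then $\langle x^\star\rangle = \{ (x^\star)^{c_1\cdots c_n} : c_i \in \{\star,\leftrightarrow\}\}$, and I would argue that $x \in \langle x^\star\rangle$ together with the layer structure (Lemma~\ref{l:layers}: at most two elements per layer, of opposite sign) forces $x = (x^\star)^\star$ or $x = (x^\star)^{\leftrightarrow}$ — and the latter is impossible since $x^{\star\leftrightarrow}$ has the same sign as $x^\star$, opposite to $x$; so $x = x^{\star\star}$. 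For (5), apply $^\star$-involutivity: for strictly negative $b$, $b = b^{\star\star}$, so by Lemma~\ref{l: 3 conditions}(1) $b^\star$ is the smallest positive element with $b \sqsubset b^\star$ and $b$ is the largest negative element with $b \sqsubset b^\star{}$... — combining these with part (3) (which says nothing non-central sits strictly between $b$ and $b^{\star\star}=b$, hence the "gap" above $b$ is a genuine cover of layers) shows the layer of $b^\star$ is directly above that of $b$; dually for positive $a$.

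\textbf{Main obstacle.} The hardest part is (5)(a)–(b): proving that there is a layer \emph{directly} above the layer of $b$ with $b^\star$ in it requires ruling out infinitely descending/ascending configurations of layers between $b$ and $b^\star$, and this is exactly the place where the minimality/maximality conditions in the definition of enhanced monoidal preorder (conditions 1 and 2, "smallest"/"largest") do the real work — together with $^\star$-involutivity collapsing $b$ and $b^{\star\star}$. I would structure that step carefully around Lemma~\ref{l: 3 conditions} and part (3), treating central and non-central $b$ separately and drawing on the Remark following Figure~\ref{f:samemondiff} as a warning about where $b^\star$ can fail to be in the immediately-adjacent layer in the general (non-rigid) case.
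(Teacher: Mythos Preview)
Your proposal has genuine gaps in parts (1), (4), and (5).

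For part (1), your inductive sketch never handles the key step: when $c \in {\uparrow}_* b$ is \emph{positive}, why is the negative element $c^\star$ still in ${\uparrow}_* b$? Appealing to Lemma~\ref{l:subemp} does not help here. The paper's argument is simply to show that ${\uparrow}_* b$ is closed under ${}^\star$ and ${}^\leftrightarrow$. Closure under ${}^\leftrightarrow$ is trivial (same layer). For ${}^\star$, the negative case is easy since $c \sqsubset c^\star$; the positive case uses the \emph{minimality} of $b^\star$ among positives with $b \sqsubset b^\star$: from $c \in {\uparrow}_* b \cap P^+$ one gets $b \sqsubset b^\star \sqsubseteq_{P^+} c$, hence $b \sqsubset c$, and then the \emph{maximality} of $c^\star$ among negatives with $c^\star \sqsubset c$ gives $b \sqsubseteq_{P^-} c^\star$.

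For part (4), your direct argument fails. You claim that $x \in \langle x^\star\rangle$ forces $x = (x^\star)^\star$ or $x = (x^\star)^\leftrightarrow$, but $\langle x^\star\rangle$ consists of \emph{all} words $(x^\star)^{c_1\cdots c_n}$, not just length-one words, and layer considerations alone do not pin $x$ down to one step. The paper argues the contrapositive and this is where parts (1) and (3) earn their keep: if $b < b^{\star\star}$, then part (3) (with $c = b^{\star\star}$) gives that $b^{\star\star}$ is central; part (1) then yields $\langle b^{\star\star}\rangle \subseteq {\uparrow}_* b^{\star\star}$; and since $b^{\star\star\star} = b^\star$ (Galois connection), we have $\langle b^\star\rangle = \langle b^{\star\star}\rangle \subseteq {\uparrow}_* b^{\star\star}$, which excludes $b$.

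For part (5), invoking part (3) is a dead end: under ${}^\star$-involutivity $b^{\star\star} = b$, so the interval $(b, b^{\star\star}]$ is empty and (3) says nothing. The paper argues directly on layers: if some layer sat strictly between $\overline{b}$ and $\overline{b^\star}$, then either it contains a positive element (contradicting the minimality of $b^\star$), or it contains only a negative element $c$ with $b \sqsubset_{P^-} c \sqsubset b^\star$, and then the maximality of $b^{\star\star}$ forces $b \sqsubset_{P^-} c \sqsubseteq b^{\star\star}$, contradicting $b = b^{\star\star}$.
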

 
\begin{proof}
1. We prove that $\uparrow_* b$ is closed under ${}^\star$ and ${}^{\leftrightarrow}$; since it also contains $b$, the result will follow. Let $c \in {\uparrow}_* b$. Since $c^\leftrightarrow$ is in the same layer as $c$, it follows that $c^\leftrightarrow \in {\uparrow}_* b$. We will now show that also $c^\star \in {\uparrow}_* b$. If $c$ is negative, then  $c \in {\uparrow}_* b$ implies $\overline{b} \leq \overline{c}<\overline{c^\star}$, since $c \sqsubset c^\star$; hence  $c^\star \in {\uparrow}_* b$. If $c$ is positive, then by the definition of $b^\star$ we have
$b \sqsubset b^\star \sqsubseteq_{P^+} c$. So $b \sqsubset c$ and by the definition of $c^\star$ we get $b \sqsubseteq_{P^-} c^\star$; hence $b \sqsubseteq_* c^\star$ and  $c^\star \in {\uparrow}_* b$.

The proof of 2 is analogous.

3.  From $b < b^{\star \star}$ we get that $\overline{b} < \overline{b^{\star \star}} <\overline{b^\star}$. Assume that  $b <c \leq b^{\star \star}$; hence $c$ is negative. If $c$ is not central, then $c \not = c^\leftrightarrow$ and $c^\leftrightarrow$ is positive. Also, $\overline{b} < \overline{c}=\overline{c^\leftrightarrow} \leq \overline{b^{\star \star}}<\overline{b^\star}$. So, 
$b \sqsubset c^\leftrightarrow \sqsubset_{P^+} b^\star$, a contradiction to the definition of $b^\star$.

4. We will prove the contrapositive. If $\m A$  is not ${}^\star$-involutive, then  $x \not = x^{\star \star}$ for some $x$. By Lemma~\ref{c:starand*}(6), $x \leq x^{\star \star}$, so $x < x^{\star \star}$. We will show that this implies $x \not \in \langle x^\star \rangle$. We will do the proof for the case where $x$ is some negative element $b$, as the proof for positive elements is analogous. From  $b < b^{\star \star}$, (3) yields that $b^{\star \star}$ is central. By (1) we get that $\langle b^{\star \star} \rangle \subseteq {\uparrow}_* b^{\star \star}$. Also $\langle b^{\star \star} \rangle=\langle b^{\star} \rangle$, since $({}^\star, {}^\star)$ is a Galois connection and $b^{\star \star \star}=b^{\star}$, so  $\langle b^{\star} \rangle \subseteq {\uparrow}_* b^{\star \star}$.
 At the same time, from $b < b^{\star \star}$, we get  $b \not \in {\uparrow}_* b^{\star \star}$. Therefore, $b \not \in \langle b^\star \rangle$. 

5a. Recall that if $b$ is negative, then $\overline{b} < \overline {b^\star}$, since $b \sqsubset b^\star$. If there is a positive $c$ such that $\overline{b} < \overline{c} < \overline {b^\star}$, then 
$b \sqsubset c^\leftrightarrow \sqsubset_{P^+} b^\star$, a contradiction to the definition of $b^\star$. If there is a negative $c$ such that $\overline{b} < \overline{c} < \overline {b^\star}$, then 
$b \sqsubset_{P^-} c \sqsubset b^\star$, so $b \sqsubset_{P^-} c \sqsubseteq b^{\star \star}$, by the definition of $b^{\star \star}$; hence $b < b^{\star \star}$,
a contradiction to $^\star$-involutivity. Therefore, there is no level between $\overline{b}$ and $\overline {b^\star}$.
The proof of 5b is analogous.
\end{proof}

\begin{lemma}\label{l:star-involutive}\
 \item For a quasi-involutive idempotent residuated chain $\m B$ the following are equivalent:
\begin{enumerate}
	\item $\m B$ is $^\star$-involutive 
		\item For each $x \in B$, $x \in \langle x^\star \rangle_1$.
	\item For each $x \in B$ and each subalgebra $\m A$ of $\m B$, $x^\star \in A$ implies $x \in A$.
\end{enumerate}
\end{lemma}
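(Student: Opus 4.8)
The plan is to prove the cycle of implications $(1)\Rightarrow(3)\Rightarrow(2)\Rightarrow(1)$. The key preliminary observation, used throughout, is that $^\star$ is a term operation in the language of residuated lattices: $x^\star = x^\ell\meet x^r$ with $x^\ell = 1\ovr x$ and $x^r = x\under 1$. Consequently every subalgebra of $\m B$ is closed under $^\star$ (and, by Lemma~\ref{l:generation}, under $^{\leftrightarrow}$ as well), and $\langle x^\star\rangle_1$ is the least subalgebra of $\m B$ containing $x^\star$.

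Granting this, $(1)\Rightarrow(3)$ is immediate: if $\m A$ is a subalgebra of $\m B$ with $x^\star\in A$, then $x^{\star\star}\in A$ by closure under $^\star$, and $^\star$-involutivity rewrites $x^{\star\star}$ as $x$, so $x\in A$. For $(3)\Rightarrow(2)$, apply $(3)$ to the subalgebra $\langle x^\star\rangle_1$ generated by $x^\star$; since $x^\star\in\langle x^\star\rangle_1$, we get $x\in\langle x^\star\rangle_1$. For $(2)\Rightarrow(1)$, observe that $(2)$ gives $x\in\langle x^\star\rangle_1=\langle x^\star\rangle\cup\{1\}$ for every $x\in B$, and since $1^\star=1$ we also have $1\in\langle 1^\star\rangle$; hence $x\in\langle x^\star\rangle$ for all $x\in B$, which is precisely the hypothesis of Lemma~\ref{l: cyclic subalgebras}(4), and that lemma then yields that $\m B$ is $^\star$-involutive.

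In this sense the real content of the statement is already packaged in Lemma~\ref{l: cyclic subalgebras}(4) (which rests in turn on parts (1) and (3) of Lemma~\ref{l: cyclic subalgebras} and the Galois-connection property of $((-)^\star,(-)^\star)$ from Lemma~\ref{c:starand*}): namely that a failure of $^\star$-involutivity at, say, a negative $b$ forces $b<b^{\star\star}$ with $b^{\star\star}$ central, whence $\langle b^\star\rangle=\langle b^{\star\star}\rangle\subseteq\upset_* b^{\star\star}$, a set that visibly cannot contain $b$. I therefore do not expect a genuine obstacle in assembling the present cycle; the only point demanding a moment's care is keeping the $1$-free generation $\langle\cdot\rangle$ and the subalgebra $\langle\cdot\rangle_1$ straight, which is handled above via the trivial remark $1^\star=1$. (Quasi-involutivity of $\m B$ is the standing setting in which this lemma is applied, but it is not actually invoked in the argument sketched here.)
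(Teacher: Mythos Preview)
Your proof is correct and follows essentially the same approach as the paper. The paper organizes the equivalences as $(1)\Leftrightarrow(2)$ and $(2)\Leftrightarrow(3)$ rather than your cycle $(1)\Rightarrow(3)\Rightarrow(2)\Rightarrow(1)$, but the individual steps are the same (your direct $(1)\Rightarrow(3)$ via closure under $^\star$ is in fact slightly cleaner than the paper's detour through $(2)$), and your handling of the $x=1$ case and the appeal to Lemma~\ref{l: cyclic subalgebras}(4) match the paper exactly; your remark that quasi-involutivity is not actually invoked is also accurate.
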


\begin{proof}
 That (1) implies (2) follows from the fact that $x=x^{\star \star} \in  \langle x^\star \rangle_1$. For (2) implies (1), assume  $x \in \langle x^\star \rangle_1$, for all $x$. If $x=1$, then actually $1 \in \langle 1^\star \rangle$. Also, if $x \not = 1$, then from $x \in \langle x^\star \rangle_1= \langle x^\star \rangle \cup \{1\}$, we get  $x \in \langle x^\star \rangle$. Therefore,  $x \in \langle x^\star \rangle$, for all $x$. By Lemma~\ref{l: cyclic subalgebras}(4) we get (1).

Condition (3) implies (2) by taking $A= \langle x^\star \rangle_1$. To show that (2) implies (3), let  $x^\star \in A$ for some subalgebra $\m A$. Then $\langle x^\star \rangle_1 \subseteq A$, and by (2), $x \in \langle x^\star \rangle_1 \subseteq A$.
\end{proof}
 
Since our strategy for amalgamation of conic idempotent residuated lattices is to amalgamate their quasi-involutive skeletons, we need to identify the ones whose quasi-involutive skeletons are ${}^\star$-involutive. This is done in the next definition and lemma. An semiconic idempotent  residuated lattice is called \emph{rigid} if it satisfies the identities $x^r=x^{r \star \star}$ and $x^\ell=x^{\ell \star \star}$. 

 \begin{lemma}\label{l:weaklyinvolutive}\
\begin{enumerate}
	\item A  conic idempotent residuated lattice is rigid iff its quasi-involutive skeleton is rigid iff its quasi-involutive skeleton is ${}^\star$-involutive.
	\item If an idempotent residuated chain is ${}^\star$-involutive, then it is also quasi-involutive.
\end{enumerate}
\end{lemma}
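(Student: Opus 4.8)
The plan is to push everything down to the quasi-involutive skeleton and, ultimately, to the behaviour of ${}^\star$ on an idempotent residuated chain. For the first equivalence in~(1), I would invoke Lemma~\ref{l:centralizer}(9), which says that $\m A^i$ is a (totally ordered, quasi-involutive) \emph{subalgebra} of $\m A$---crucially, a genuine subalgebra, not merely the residuated lattice $\m A_\gamma$ with a modified product and join. Consequently ${}^\ell$, ${}^r$, and hence ${}^\star = {}^\ell \mt {}^r$, are computed the same way on elements of $A^i$ whether one works inside $\m A$ or inside $\m A^i$. Since $A^i = \{x^\ell : x \in A\} \cup \{x^r : x \in A\}$ by definition, the two rigidity identities $x^\ell = x^{\ell\star\star}$ and $x^r = x^{r\star\star}$ hold in $\m A$ if and only if $a = a^{\star\star}$ holds for every $a \in A^i$; that is, $\m A$ is rigid iff $\m A^i$ is ${}^\star$-involutive.

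It then remains to prove ``$\m A^i$ rigid iff $\m A^i$ is ${}^\star$-involutive'', which I would establish for an arbitrary quasi-involutive idempotent residuated chain $\m S$. One direction is immediate: if $x^{\star\star} = x$ for all $x \in S$, then applying this with $x^\ell$ and with $x^r$ in place of $x$ gives exactly the rigidity identities. For the converse, fix $x \in S$. Because $({}^\ell, {}^r)$ forms a Galois connection on any idempotent residuated chain (Lemma~\ref{l:centralizer}(6,7)), we have $x \le x^{\ell r}$ and $x \le x^{r\ell}$; combined with quasi-involutivity $x = x^{\ell r} \mt x^{r\ell}$ and the totality of the order on $\m S$, this forces $x = x^{\ell r}$ or $x = x^{r\ell}$. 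In the first case $x = (x^\ell)^r$, so the rigidity identity $y^r = y^{r\star\star}$ applied at $y = x^\ell$ gives $x = x^{\star\star}$; the second case is symmetric via $y = x^r$. Hence $\m S$ is ${}^\star$-involutive, which completes~(1).

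For~(2), let $\m B$ be a ${}^\star$-involutive idempotent residuated chain and fix $x \in B$. As before, the Galois connection yields $x \le x^{\ell r} \mt x^{r\ell} = \gamma(x)$, so it suffices to show $\gamma(x) \le x$, and I would split on whether $x$ is central. If $x$ is not central, Corollary~\ref{c:RandL} gives $x^{\ell r} = x$ or $x^{r\ell} = x$ outright, whence $\gamma(x) = x^{\ell r} \mt x^{r\ell} \le x$. If $x$ is central, then $x^\ell = x^r = x^\star$ by Lemma~\ref{l:centralizer}(2), so $\gamma(x) = (x^\star)^r \mt (x^\star)^\ell = (x^\star)^\star = x^{\star\star} = x$ by ${}^\star$-involutivity. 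Either way $\gamma(x) = x$, so $\m B$ is quasi-involutive.

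I do not anticipate a real obstacle: once the reductions are set up, the argument is bookkeeping. The one delicate point is the opening move---one must know that ${}^\ell$, ${}^r$ and ${}^\star$ inside the skeleton $\m A^i$ genuinely coincide with the ambient operations, which is exactly where it matters that Lemma~\ref{l:centralizer}(9) gives $\m A^i$ as a true subalgebra rather than just the nuclear image $\m A_\gamma$. It is also worth noting (and visible from Corollary~\ref{c:RandL}) that at non-central elements quasi-involutivity holds automatically, so ${}^\star$-involutivity only carries content at the central elements; this is what makes part~(2) short.
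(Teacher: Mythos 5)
Your proof is correct. Part (1) is essentially the paper's argument, just unpacked: the paper's proof is the single observation that rigidity of $\m A$, rigidity of $\m A^i$, and ${}^\star$-involutivity of $\m A^i$ all say precisely that every element of $A^i$ is fixed by ${}^{\star\star}$; your explicit use of quasi-involutivity and totality to show that each $x\in A^i$ equals $x^{\ell r}$ or $x^{r\ell}$ is exactly the justification the paper leaves tacit, and your care about ${}^\star$ being computed the same way in the subalgebra $\m A^i$ is warranted and correct. For part (2) you take a genuinely different (though closely related) route: the paper expands $x=x^{\star\star}$, uses the comparability of $x^\ell$ and $x^r$ to conclude $x\in\{x^{\ell r},x^{r\ell}\}\subseteq A^i$, hence $A=A^i$, and then cites Lemma~\ref{l:centralizer}(9) for quasi-involutivity of $\m A^i$; you instead verify the identity $x^{\ell r}\mt x^{r\ell}=x$ directly, splitting on centrality, using Corollary~\ref{c:RandL} for non-central elements (where the identity holds with no involutivity hypothesis) and ${}^\star$-involutivity only at central elements, where $\gamma(x)=(x^\star)^\star$. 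Your version is slightly more self-contained and has the added virtue of isolating exactly where ${}^\star$-involutivity is needed; the paper's version is shorter because it can lean on the already-established fact that $\m A^i$ is quasi-involutive. Both arguments are sound.
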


\begin{proof}
 1. The rigidity equations for a conic idempotent residuated lattice $\m A$ stipulate that every element of the quasi-involutive skeleton $A^i$ is fixed under double star, and this is also what rigidity for its quasi-involutive skeleton $\m A^i$ stipulates.

2. For every element $x$ in a ${}^\star$-involutive idempotent residuated chain $\m A$, we have that $x=x^{\star \star}=(x^\ell \jn x^r)^\ell \jn (x^\ell \jn x^r)^r$. Since, $x^\ell$ and $x^r$ are comparable, we get that $x \in \{x^{\ell r}, x^{r\ell}\}\subseteq A^i$. Hence, $A=A^i$ and $\m A$ is quasi-involutive.
\end{proof}
 
As previously mentioned, we will see that one of the main obstacles to amalgamating conic idempotent residuated lattices occurs when, for some $x$, $x^\star$ is in some subalgebra of the skeleton but $x$ is not. The conic idempotent residuated lattices for which this does not occur are exactly the rigid ones, which by Lemma~\ref{l:weaklyinvolutive}(1) are precisely those whose skeleton is ${}^\star$-involutive. Hence it suffices to focus on ${}^\star$-involutive quasi-involutive idempotent residuated chains, which by Lemma~\ref{l:weaklyinvolutive}(2) are exactly the $^\star$-involutive idempotent residuated chains. 
   
 \subsection{$^\star$-involutive  idempotent residuated chains: the one-generated case}\label{s:one-generated} We will characterize the $^\star$-involutive idempotent residuated chains as nested sums of one-generated ones. First, we provide a description of the one-generated $^\star$-involutive idempotent residuated chains.

Note that one of the most obvious consequences of ${}^\star$-involutivity is that the only $x$ with $x^\star=1$ is $x=1$, since then $x^{\star \star}=1^\star=1 = 1$. In particular this implies that, for non-identity $x$, we cannot have $x^\ell=1$ or $x^r=1$, as that would imply that $x$ is negative, thus that $x^\ell$ and $x^r$ are positive, so $x^\star=x^\ell \mt x^r=1$. Therefore, in the presence of ${}^\star$-involutivity, $1$ is \emph{isolated}: It is not equal to the inverse of a non-identity element. Note that $1$ is isolated in an idempotent residuated chain iff in the associated enhanced monoidal preorder, it is not the case that there exists a strictly negative element in a layer directly below the layer of $1$. So, if there is a layer directly below $1$, it contains a positive element only (or else there is no layer covered by the layer of $1$). 

Therefore, in this setting, not only do we have $\langle x \rangle_1= \langle x \rangle \cup \{1\}$, for every $x$, but we also have  $\langle x \rangle= \langle x \rangle_1 - \{1\}$. Therefore, we characterize the 1-free subalgebras of the form $\langle x \rangle$, for $x$ in a ${}^\star$-involutive idempotent residuated chain. 

We have already seen examples of non-trivial one-generated ${}^\star$-involutive idempotent residuated chains in Example~\ref{e:noncom Sug} and Figure~\ref{f:Sugihara}, and we now also include their bounded variants and their finite variants. We introduce them by means of their enhanced monoidal preorders.

   An enhanced monoidal preorder is called a \emph{vertical crown} iff $1$ is isolated (in particular its size is not 2) and it has at most two non-identity central elements. Figure~\ref{f:v-crowns} shows all the possible shapes that vertical crowns can take. 
	
	     \begin{figure}[htbp]
 \begin{center}
\begin{tikzpicture}[scale=0.8]
\node[left] at (6.5,6) {\footnotesize  $1$};
\draw[mark=*]  plot coordinates {(6.5,6)};
\draw[dotted] (7,4)--(7,4.5);
\node[left] at (6.5,4) {\footnotesize  $a_{i}$};
\draw[mark=square*]  plot coordinates {(6.5,4)};
\node[right] at (7.5,4) {\footnotesize  $b_{i}$};
\draw[mark=square*]  plot coordinates {(7.5,4)};
\node[right] at (7.5,3) {\footnotesize  $b_j$};
\draw[mark=square*]  plot coordinates {(7.5,3)};
\node[left] at (6.5,3) {\footnotesize  $a_j$};
\draw[mark=square*]  plot coordinates {(6.5,3)};
\draw[dashed]  (6.5,4)-- (7.5,4);
\draw[dotted] (7,3) -- (7,4);
\draw[dashed]  (6.5,3)-- (7.5,3);
\draw[dotted] (7,3)--(7,2.5);
\node[right] at (7.5,2) {\phantom{$b_0$}};
\end{tikzpicture} \;
\begin{tikzpicture}[scale=0.8]
\node[left] at (6.5,6) {\footnotesize  $1$};
\draw[mark=*]  plot coordinates {(6.5,6)};
\draw[dotted] (7,4)--(7,4.5);
\node[left] at (6.5,4) {\footnotesize  $a_i$};
\draw[mark=square*]  plot coordinates {(6.5,4)};
\node[right] at (7.5,4) {\footnotesize  $b_i$};
\draw[mark=square*]  plot coordinates {(7.5,4)};
\node[right] at (7.5,3) {\footnotesize  $b_1$};
\draw[mark=square*]  plot coordinates {(7.5,3)};
\node[left] at (6.5,3) {\footnotesize  $a_1$};
\draw[mark=square*]  plot coordinates {(6.5,3)};
\draw[dashed]  (6.5,4)-- (7.5,4);
\draw[dotted] (7,3) -- (7,4);
\draw[dashed]  (6.5,3)-- (7.5,3);
\node[right] at (7.5,2) {\footnotesize  $b_0$};
\draw[mark=*]  plot coordinates {(7.5,2)};
\draw (7.5,3)--(7.5,2);
\draw[dashed]  (6.5,4)-- (7.5,4);
\draw[dotted] (7,3) -- (7,4);
\draw[dashed]  (6.5,3)-- (7.5,3);
\draw (6.5,3)--(7.5,2);
\end{tikzpicture}  \;
\begin{tikzpicture}[scale=0.8]
\node[left] at (6.5,6) {\footnotesize  $1$};
\draw[mark=*]  plot coordinates {(6.5,6)};
\draw  (6.5,6)-- (6.5,5);
\node[left] at (6.5,5) {\footnotesize  $a_0$};
\draw[mark=*]  plot coordinates {(6.5,5)};
\node[left] at (6.5,4) {\footnotesize  $a_1$};
\draw[mark=square*]  plot coordinates {(6.5,4)};
\node[right] at (7.5,4) {\footnotesize  $b_1$};
\draw[mark=square*]  plot coordinates {(7.5,4)};
\node[right] at (7.5,3) {\footnotesize  $b_i$};
\draw[mark=square*]  plot coordinates {(7.5,3)};
\node[left] at (6.5,3) {\footnotesize  $a_i$};
\draw[mark=square*]  plot coordinates {(6.5,3)};
\draw   (6.5,5) --(6.5,4) ;
\draw (6.5,5) --(7.5,4);
\draw[dashed]  (6.5,4)-- (7.5,4);
\draw[dotted] (7,3) -- (7,4);
\draw[dashed]  (6.5,3)-- (7.5,3);
\draw[dotted] (7,3)--(7,2.5);
\node[right] at (7.5,2) {\phantom{$b_0$}};
\end{tikzpicture} \;
\begin{tikzpicture}[scale=0.8]
\node[left] at (6.5,6) {\footnotesize  $1$};
\draw[mark=*]  plot coordinates {(6.5,6)};
\draw  (6.5,6)-- (6.5,5);
\node[left] at (6.5,5) {\footnotesize  $a_{n+1}$};
\draw[mark=*]  plot coordinates {(6.5,5)};
\node[left] at (6.5,4) {\footnotesize  $a_n$};
\draw[mark=square*]  plot coordinates {(6.5,4)};
\node[right] at (7.5,4) {\footnotesize  $b_n$};
\draw[mark=square*]  plot coordinates {(7.5,4)};
\node[right] at (7.5,3) {\footnotesize  $b_1$};
\draw[mark=square*]  plot coordinates {(7.5,3)};
\node[left] at (6.5,3) {\footnotesize  $a_1$};
\draw[mark=square*]  plot coordinates {(6.5,3)};
\node[right] at (7.5,2) {\footnotesize  $b_0$};
\draw[mark=*]  plot coordinates {(7.5,2)};
\draw   (6.5,5) --(6.5,4) ;
\draw (6.5,5) --(7.5,4);
\draw (7.5,3)--(7.5,2);
\draw[dashed]  (6.5,4)-- (7.5,4);
\draw[dotted] (7,3) -- (7,4);
\draw[dashed]  (6.5,3)-- (7.5,3);
\draw (6.5,3)--(7.5,2);
\end{tikzpicture}  
\qquad
\begin{tikzpicture}[scale=0.5]
\draw[dotted] (3,6) --(3,7); 
\node[left] at (3,6) {\footnotesize  $a_i$};
\draw[mark=square*]  plot coordinates {(3,6)};
\draw (3,5) --(3,6); 
\node[left] at (3,5) {\footnotesize  $a_{i+1}$};
\draw[mark=square*]  plot coordinates {(3,5)};
\draw[dotted] (3,4) --(3,5); 
\node[left] at (3,3) {\footnotesize  $1$};
\draw[mark=*]  plot coordinates {(3,3)};
\draw[dotted] (3,1) --(3,2); 
\node[right] at (3,1) {\footnotesize  $b_{i+1}$};
\draw[mark=square*]  plot coordinates {(3,1)};
\draw (3,0) --(3,1); 
\node[right] at (3,0) {\footnotesize  $b_i$};
\draw[mark=square*]  plot coordinates {(3,0)};
\draw[dotted] (3,-1) --(3,0); 
\end{tikzpicture}
\end{center}
  \caption{Vertical crowns	(dashed lines may be independently present or absent)\label{f:v-crowns}}
\end{figure}
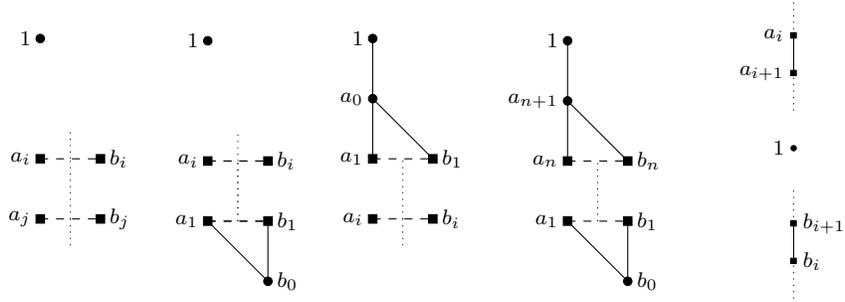
	
	To explain the structures in the picture more precisely, recall the ordinal sum $\bigoplus_{i\in I}\m P_i$ of (pairwise disjoint) (pre)orders $\m P_i=(P_i, \leq_{P_i})$ over a chain $(I, \leq_I)$: It is based on their disjoint union $\bigcup_{i\in I} P_i$ with ordering given by $x \leq y$ iff ($x\in P_i, y\in P_j$ and $i<_I j$) or  ($x, y\in P_i$ and $x \leq_{P_i} y$). We naturally extend the notion of the ordinal sum to \emph{enhanced preorders} (i.e., preorders $(P, \leq)$ enhanced with two subsets $P^+$ and $P^-$) in the natural way: For $\m P=\bigoplus_{i\in I} \m P_i$, we define $P^+:= \bigcup_{i\in I} P^+_i$ and $P^-:= \bigcup_{i\in I} P^-_i$. Examples of enhanced preorders are:
	\begin{itemize}
		\item 	$\m 1_L$: a singleton with $(1_L)^+=1_L$ and $(1_L)^-=\emptyset$, 
	\item $\m 1_R$: a singleton with $(1_R)^+=\emptyset$ and $(1_R)^-=1_R$,
	\item $\m 2_L$: a doubleton $\{a,b\}$ with $(2_L)^+=\{a\}$, $(2_L)^-=\{b\}$, $a \leq b$ and $b \leq a$,
	\item $\m 2_R$: a doubleton $\{a,b\}$ with $(2_R)^+=\{a\}$, $(2_R)^-=\{b\}$, $a \not \leq b$ and $b \not \leq a$. 
	\end{itemize}

	A \emph{vertical crown} is the enhanced monoidal preorder expanding (by ${}^\star$ and $1$, in the unique and obvious way) an enhanced preorder of one of the following types:
	\begin{enumerate}
		\item $\m P_{\mathbb{Z}, L}=(\bigoplus_{i\in I} \m P_i) \oplus \m 1_L$, where $I=\mathbb{Z}$,   
		\item $\m P_{\mathbb{N}, L}=\m 1_R \oplus (\bigoplus_{i\in I} \m P_i) \oplus \m 1_L$, where $I=\mathbb{N}$, 
		\item $\m P_{\mathbb{N^\partial}, L}= (\bigoplus_{i\in I} \m P_i) \oplus  \m 1_L\oplus \m 1_L$, where $I=\mathbb{N^\partial}$, 
				\item $\m P_{n, L}= \m 1_R \oplus (\bigoplus_{i\in I} \m P_i) \oplus  \m 1_L\oplus \m 1_L$, where $I=\{1,2, \ldots, n\}$.
	\end{enumerate}
In all cases, $L \subseteq I$ and $\m P_i= \m 2_L$ for $i \in L$, while $\m P_i= \m 2_R$ for $i \not \in L$. 
	In the case of $\m P_{n, L}$, we allow $n=0$, hence $I=\emptyset$, which yields the 3-element Sugihara monoid. Since these enhanced preorders have a unique expansion to enhanced monoidal preorders, where $1$ is their top element and $^\star$ has the usual definition, we use the same names for the corresponding enhanced monoidal preorders. 
 An idempotent residuated chain whose enhanced monoidal preorder is a vertical crown is called \emph{crownian}. A subset $Q$ of an enhanced monoidal preorder $\m P$ is called \emph{layer-convex} if $q_1, q_2 \in Q$ and $\overline{q_1} \leq \overline{p} \leq \overline{q_2}$, then $p \in Q$. 
 
\begin{lemma}\label{l: level-convex}\
\begin{enumerate}
\item  For a non-identity element $x$ in a ${}^\star$-involutive enhanced monoidal preorder, $\langle x \rangle$ forms a layer-convex subset. 
	\item A non-trivial idempotent residuated chain is one-generated and ${}^\star$-involutive iff it is crownian. 
\end{enumerate}

\end{lemma}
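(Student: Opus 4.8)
The plan is to reduce both parts to a single observation about layers: in a ${}^\star$-involutive enhanced monoidal preorder each generating operation moves an element by at most one layer. Indeed $\overline{y^\leftrightarrow}=\overline{y}$ always, while by Lemma~\ref{l: cyclic subalgebras}(5) the element $b^\star$ lies in the layer directly above $\overline{b}$ for strictly negative $b$, and $a^\star$ lies in the layer directly below $\overline{a}$ for strictly positive $a$. Since $1$ is isolated in the ${}^\star$-involutive setting, for $x\neq 1$ we have $\langle x\rangle=\langle x\rangle_1\setminus\{1\}$, so every element of $\langle x\rangle$ is strictly signed and these clauses apply throughout it.

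For part (1), recall that $\langle x\rangle=\{x^{c_1\cdots c_n}:n\in\mathbb N,\ c_i\in\{\star,\leftrightarrow\}\}$. I would prove by induction on $n$ that, for $y=x^{c_1\cdots c_n}$, every layer lying between $\overline{x}$ and $\overline{y}$ in the chain of layers meets $\langle x\rangle$: the step $c_n={\leftrightarrow}$ is immediate since $\overline{y}=\overline{z}$ for $z=x^{c_1\cdots c_{n-1}}$, and for $c_n={\star}$ the layer $\overline{y}$ is adjacent to $\overline{z}$, so the layers between $\overline{x}$ and $\overline{y}$ are those between $\overline{x}$ and $\overline{z}$ (covered by the hypothesis) together with $\overline{z}$ and $\overline{y}$, which meet $\langle x\rangle$ as $z,y\in\langle x\rangle$. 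Hence the set of layers meeting $\langle x\rangle$ is an interval of the layer chain; since a two-element layer meeting $\langle x\rangle$ has both its elements in $\langle x\rangle$ (closure under ${}^\leftrightarrow$) and a singleton layer meeting $\langle x\rangle$ is contained in it, $\langle x\rangle$ is exactly the union of the layers it meets, and in particular layer-convex.

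For part (2), the backward implication I would verify directly on the four explicit shapes $\m P_{\mathbb Z,L}$, $\m P_{\mathbb N,L}$, $\m P_{\mathbb{N^\partial},L}$, $\m P_{n,L}$: for each, ${}^\star$ is a total involution, so the associated chain is ${}^\star$-involutive, and any non-identity element generates because — by part (1) and the one-step analysis — the generated set reaches every layer, the iteration halting in a given direction only at a central element, and in a vertical crown the at most two central elements are exactly the extreme caps, directly below $1$ and at the bottom; thus $\langle x\rangle=A\setminus\{1\}$ and $\langle x\rangle_1=A$. For the forward implication, let $\m A$ be non-trivial, one-generated and ${}^\star$-involutive with generator $x$; then $x\neq 1$ and $\langle x\rangle=A\setminus\{1\}$, which by part (1) is the union of an interval $I$ of layers containing $\overline{x}$, so $A$ is $\{1\}$ (the top layer) together with $I$. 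I would then show each non-identity central element is an extreme of $I$: by Lemma~\ref{l: cyclic subalgebras}(1),(2) the subalgebra generated by a central element lies entirely on one side of it (in ${\uparrow}_*$ or ${\downarrow}_*$), and by Lemma~\ref{l: cyclic subalgebras}(5) together with the one-step analysis a central element can be reached under generation only from the one adjacent layer on that side; combining these, an interior central element would split $A\setminus\{1\}$ incompatibly with its being singly generated, and a two-element layer at the top of $I$ would put a negative element directly below $1$ while a two-element layer at the bottom would leave a strictly positive element with no strictly $\sqsubseteq$-smaller negative element (violating totality of ${}^\star$). Hence there are at most two non-identity central elements (one positive, directly below $1$; one negative, at the bottom), and ${}^\star$-involutivity makes $1$ isolated, so the shape is an interval of two-element layers capped as described, with those layers indexed by $\mathbb Z$, $\mathbb N$, $\mathbb{N^\partial}$ or a finite chain according to boundedness — precisely the four vertical-crown types, so $\m A$ is crownian.

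The main obstacle is the splitting argument in the forward direction of (2): making precise that an interior central element obstructs single generation, and handling the boundary cases — a central generator, where Lemma~\ref{l: cyclic subalgebras}(1),(2) already confine $\langle x\rangle$ to a half-line of layers; the top and bottom caps forced by totality of ${}^\star$ and by $1$ being isolated; and the degenerate three-element Sugihara monoid obtained when $n=0$. On the backward side, the delicate point is verifying that every vertical crown is genuinely ${}^\star$-involutive, rather than merely having $1$ isolated and at most two central elements — this is where the totality constraints on ${}^\star$ in an enhanced monoidal preorder must be used with care.
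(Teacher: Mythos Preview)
Your proposal is correct and rests on the same key ingredient as the paper's proof, namely Lemma~\ref{l: cyclic subalgebras}(5): in the ${}^\star$-involutive setting, ${}^\star$ moves an element to the adjacent layer while ${}^\leftrightarrow$ stays put. The paper, however, organizes the argument differently. It first observes that since both ${}^\star$ and ${}^\leftrightarrow$ are involutions, every word in $\{\star,\leftrightarrow\}^*$ reduces to an alternating one, yielding an explicit $\mathbb{Z}$-indexed enumeration $\ldots,a_{-1},b_{-1},a_0,b_0,a_1,b_1,\ldots$ of $\langle x\rangle$ with $a_i\stackrel{\leftrightarrow}{=}b_i$ and $b_i\stackrel{\star}{=}a_{i+1}$. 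Layer-convexity is then immediate, and for part~(2) the forward direction proceeds by a direct case split on whether a central element occurs in this sequence (none, a negative one, a positive one, or both), each case collapsing the bi-infinite list to one of the four crown types.

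Your route replaces this explicit bookkeeping with a structural argument: central elements must be extreme because they can only be reached from one side. Both approaches are valid; the paper's enumeration makes the order type of the layer interval (a subinterval of $\mathbb{Z}$) completely explicit from the outset, whereas in your version this fact is implicit in the one-step analysis and could be stated more directly---it is what rules out dense or otherwise exotic order types for the layer chain and pins down the four cases $\mathbb{Z}$, $\mathbb{N}$, $\mathbb{N}^\partial$, finite.
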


\begin{proof}
1. Recall that if $x$ is a non-identity element in an idempotent residuated chain, then 
 $\langle x \rangle=\{x^{c_1 \cdots c_n}: n \in \mathbb{N}, c_i \in \{\star, {\leftrightarrow}\}\}$; also by definition $(x^{\leftrightarrow})^\leftrightarrow=x$. If we assume ${}^\star$-involutivity, we also have $x^{\star \star}=x$, so the $ c_i \in \{\star, {\leftrightarrow}\}$ can be assumed to alternate. Therefore, $\langle x \rangle=\{a_n: n \in \mathbb{Z}\} \cup\{b_n: n \in \mathbb{Z}\}=\{\ldots, a_{-1}, b_{-1}, a_0, b_0, a_1, b_1, \ldots\}$, where  $a_0=x$, $b_0=a_0^\lra$, and for all $n \in \mathbb{N}$,
\begin{itemize}
	\item $a_{n+1}=b_n^\star$ and $b_{n}=a_n^\lra$,
	\item $b_{-(n+1)}=a_{-n}^\star$ and $a_{-n}=b_{-n}^\lra$.
\end{itemize}
In other words we have 

\begin{equation}
\ldots a_{-1}\stackrel{\lra}{=} b_{-1}\stackrel{\star}{=} a_0 \stackrel{\lra}{=} b_0 \stackrel{\star}{=} a_1\stackrel{\lra}{=} b_1 \ldots 
\label{eq:}\tag{$\lra\star$}
\end{equation}
 where $u \stackrel{\star}{=} v$ denotes ($u^\star=v$ and $v^\star=u$) and $u \stackrel{\lra}{=} v$ denotes ($u^\lra=v$ and $v^\lra=u$). Note that $a_i$ and $b_i$ are in the same layer and, by Lemma~\ref{l: cyclic subalgebras}(5), $b_i$ and $a_{i+1}$ are in adjacent layers. Therefore,  $\langle x \rangle$ is layer-convex.

2. Recall that every one-generated idempotent residuated chain is of the form $\langle x \rangle_1 =\langle x \rangle \cup \{1\}$; since the algebra is non-trivial, we will assume $x \not = 1$ below.

If none of the elements of $\langle x \rangle$ is central, then in particular $y \not = y^\lra$. Note that ${}^\star$ always moves to the next layer, and actually in a NW-SE diagonal (the level directly above when applied to a negative element, and the level directly below when applied to a positive element). Moreover, $^\lra$ moves to the other element of the same layer. It follows that all of these elements are distinct, they form a zigzag pattern (with horizontal and NW-SE diagonal lines), and $\overline{a_i}=\overline{b_i} \prec \overline{a_j}=\overline{b_j}$ for all $i<j$ in $\mathbb{Z}$. Therefore, the enhanced monoidal preorder is $\m P_{\mathbb{Z},L}$, for some $L \subseteq \mathbb{Z}$. 

Now assume that one of the strictly negative elements of $\langle x \rangle$ is central. By possible renaming, we may assume that this element is $b_0$. Then $a_0=b_0^\lra=b_0$. Also, by Lemma~\ref{l: cyclic subalgebras}(1), $\langle b_0 \rangle \subseteq {\uparrow}_* b_0$, and to be more precise because $a_0=b_0$ and because of Equation~(\ref{eq:}), we have $\{\ldots, a_{-1}, b_{-1}, a_0\} \subseteq \{b_0, a_1, b_1, \ldots\}$. Thus $\langle x \rangle=\{b_0, a_1, b_1, \ldots\}$. Now, if furthermore no strictly positive element of $\langle x \rangle$ is central, then in particular for every  positive element $a \in \langle x \rangle$, the element $a^\lra$ is different from $a$ (and negative). Also, $\overline{a_i}=\overline{b_i} \prec \overline{a_j}=\overline{b_j}$ for all $i<j$ in $\mathbb{N}$. Therefore, the enhanced monoidal preorder is $\m P_{\mathbb{N},L}$, for some $L \subseteq \mathbb{N}$. On the other hand, if furthermore  there is a strictly positive element of $\langle x \rangle=\{b_0, a_1, b_1, \ldots\}$ that is central, say $a_{n+1}$, then $b_{n+1}=a_{n+1}^\lra=a_{n+1}$ and because of Equation~(\ref{eq:}), we have $\{b_{n+1}, a_{n+2}, b_{n+2}, \ldots \} \subseteq \{ \ldots a_n, b_n, a_{n+1}\} \subseteq \{b_0, a_1, b_1, \ldots a_n, b_n, a_{n+1}\}$, thus $\langle x \rangle=\{b_0, a_1, b_1, \ldots a_n, b_n, a_{n+1}\}$.  Therefore, in this case the enhanced monoidal preorder is $\m P_{n,L}$, for some $L \subseteq \{1, 2, \ldots, n\}$.

Dually to one of the situations above, if there is a strictly positive element of $\langle x \rangle$ that is central, but no strictly negative element is central, then the enhanced monoidal preorder is $\m P_{\mathbb{N^\partial},L}$, for some $L \subseteq \mathbb{N}$.

Conversely, all vertical crowns are $^\star$-involutive enhanced monoidal preorders. Also they are generated by any one of their elements.
\end{proof}

\subsection{Nested sums and $^\star$-involutive  idempotent residuated chains}\label{nestedsums}
Having described the one-generated subalgebras of  $^\star$-involutive  idempotent residuated chains, we now show what arbitrary $^\star$-involutive idempotent residuated chains look like. To do so, we will make use of the nested sum construction.

Loosely speaking, given two structures $\m K$ and $\m L$ over a language with a constant $1$, this construction is performed by replacing $1_{\m K}$ in $\m K \; (=\m K[1_{\m K}])$ by $\m L$ to obtain  their nested sum $\m K[\m L]$. We will also denote $\m K[\m L]$ by $\m K \boxplus \m L$, as this notation allows iterations of this construction: $\boxplus_{i \in I} \m K_i$. We will make use of this construction where the structures are either residuated lattices or enhanced monoidal preorders.

Nested sums of residuated lattices are defined in \cite{Ga2005}. Here we will only describe how nested sums specialize to the conic idempotent case. Assume that $\m A_i$, for $i \in I$, are conic  residuated lattices,  where $I$ is a totally ordered index set. Also, assume that for all $i \in I$ except possibly for the top element of $I$, if it exists, we have $a^\ell \not =1_{\m A_i}$, $a^r \not =1_{\m A_i}$, $a \jn b \not = 1$ and $a \mt b \not = 1$, for all non-identity elements $a, b \in {\m A_i}$.  Their \emph{nested sum} is an algebra $\boxplus_{i \in I} \m A_i$ whose underlying set is the union $\{1\}\cup \bigcup_{i \in I} (A_i \setminus \{1_{\m A_i}\})$ (we identify all the identity elements). The order and operations extend the operations on the $\m A_i$'s, which become subalgebras of the nested sum, by the following additional clauses, where $a_i \in A_i$ and $a_j \in A_j$ are non-identity elements and $i<_I j$: $a_i < a_j$ iff $a_i < 1_{\m A_i}$, and $a_j < a_i$ iff $1_{\m A_i} < a_i$; also, 
 \begin{center}
 $a_i \bullet a_j=a_i \bullet_{\m A_i} 1_{\m A_i}$  and $a_j \bullet a_i=1_{\m A_i} \bullet_{\m A_i} a_i$,
 \end{center}
  where $\bullet$ ranges over multiplication and the divisions. In particular, focusing on the lattice operations, for $i <_I j$, $\m A_j$ is nested inside $\m A_i$ at the location of the identity element. It is shown in \cite{Ga2005} that the nested sum is a residuated lattice. It is obvious that conicity, linearity, and idempotency are preserved. 
	
Note that every non-trivial Sugihara monoid is a nested sum of copies of the 3-element Sugihara monoid. In particular, these copies are subalgebras.

\medskip

We now define nested sums of enhanced monoidal preorders $\m P_i$, over a chain $I$. We again assume that  for all $i \in I$ except possibly for the top element of $I$, if it exists, we have $a^\star \not =1_{\m P_i}$ for every non-identity element $a \in {\m P_i}$. The \emph{nested sum} is the enhanced monoidal preorder $\boxplus_{i \in I} \m P_i$, whose underlying set is the union $\{1\}\cup \bigcup_{i \in I} (P_i \setminus \{1_{\m P_i}\})$. The operation ${}^\star$ is defined as in each $\m P_i$. For the positive and negative elements, we defined $P^+:= \bigcup_{i \in I} P^+_i$ and $P^-:= \bigcup_{i \in I} P^-_i$. The preorder is defined to extend the preorders on the structures $\m P_i$ by setting $1$ to be the largest element, and for non-identity $a_i \in P_i$ and $a_j \in P_j$, stipulating that $a_i \sqsubset a_j$ iff $i < j$. In essence, the preorder is the ordinal sum $ \bigoplus_{i \in I} (P_i \setminus \{1_{\m P_i}\}) \oplus \{1\}$.

 Nested sums for integral residuated chains have been considered in the literature. Confusingly, they are often called ordinal sums \cite{AM2003,HNP2007}, in conflict with the use of the term for (pre)ordered sets. Also, \cite{Ga2005} refers to the generalization beyond the integral case as generalized ordinal sums. Here we try to fix the terminological clash caused by the overload of the term ordinal sum by introducing the term nested sums.

The following is an immediate consequence of the work of this section.

\begin{lemma}\label{l: nested IRC and EMP}
 Nested sums of idempotent residuated chains correspond bijectively to nested sums of the corresponding enhanced monoidal preorders. 
\end{lemma}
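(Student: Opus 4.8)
The plan is to show that the definitional equivalence $\Phi$ between idempotent residuated chains and enhanced monoidal preorders supplied by Corollary~\ref{c:irc-mp} commutes, componentwise, with the two nested sum constructions. Concretely: given a chain $I$ and idempotent residuated chains $\m A_i$ $(i\in I)$ meeting the hypotheses for forming $\boxplus_{i\in I}\m A_i$, and writing $\m P_i=\Phi(\m A_i)$, I will show that $(\m P_i)_{i\in I}$ meets the hypotheses for forming $\boxplus_{i\in I}\m P_i$ and that $\Phi(\boxplus_{i\in I}\m A_i)=\boxplus_{i\in I}\m P_i$. Since $\boxplus_{i\in I}\m A_i$ is itself an idempotent residuated chain (nested sums of residuated lattices are residuated lattices by \cite{Ga2005}, and conicity, linearity and idempotency are clearly preserved), its $\Phi$-image is an enhanced monoidal preorder, so this equality simultaneously confirms that nested sums of enhanced monoidal preorders are enhanced monoidal preorders. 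As $\Phi$ is a bijection, the equality immediately gives the claimed bijective correspondence, the reverse passage being obtained by applying $\Phi^{-1}$ to a nested sum $\boxplus_{i\in I}\m P_i$ and observing that each $\Phi^{-1}(\m P_i)$ is an admissible idempotent residuated chain.

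The first step is to reconcile the side conditions in the two definitions of nested sum. In an idempotent residuated chain, $a\jn b\neq 1$ and $a\mt b\neq 1$ for non-identity $a,b$ are automatic, as $a\jn b=\max\{a,b\}$ and $a\mt b=\min\{a,b\}$. For the conditions $a^\ell\neq 1_{\m A_i}$ and $a^r\neq 1_{\m A_i}$, one checks from Lemma~\ref{l:centralizer}(1) and (6) that a strictly positive $a$ automatically has $a^\ell,a^r<1$ (otherwise residuation forces $a\leq 1$), while for strictly negative $a$ the inverses $a^\ell,a^r$ are positive and comparable, so $a^\star=a^\ell\mt a^r$ equals $1$ exactly when $a^\ell=1$ or $a^r=1$. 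Hence $a^\ell\neq 1_{\m A_i}$ and $a^r\neq 1_{\m A_i}$ hold for all non-identity $a$ iff $a^\star\neq 1_{\m A_i}$ holds for all non-identity $a$; since $\Phi$ preserves $1$ and $^\star$, the family $(\m A_i)_{i\in I}$ is admissible for the nested sum of idempotent residuated chains iff $(\m P_i)_{i\in I}$ is admissible for the nested sum of enhanced monoidal preorders.

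The second step verifies $\Phi(\boxplus_i\m A_i)=\boxplus_i\m P_i$ piece by piece. Both structures have underlying set $\{1\}\cup\bigcup_i(A_i\setminus\{1_{\m A_i}\})$, and in $\boxplus_i\m A_i$ each $\m A_i$ is a subalgebra whose identity is the global $1$. Consequently positivity and negativity, the operation $^\star$ (built from $1$ and the residuals), and the monoidal preorder $\sqsubseteq$ all restrict on $A_i$ to the values they take in $\m A_i$; this yields $A^\pm=\bigcup_i A_i^\pm=\bigcup_i P_i^\pm$, agreement of $^\star$ with the componentwise operation, and $\sqsubseteq$ restricted to $A_i$ equal to $\sqsubseteq_{\m P_i}$. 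For $a_i\in A_i$ and $a_j\in A_j$ in distinct components with $i<j$, the multiplication clause of the nested sum gives $a_ia_j=a_ja_i=a_i$, hence $a_i\sqsubset a_j$; and $1$ is the $\sqsubseteq$-greatest element because it is the monoid unit. These are precisely the clauses defining the preorder, cones and $^\star$ of $\boxplus_i\m P_i$, so the two enhanced monoidal preorders coincide.

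I expect the main difficulty to be bookkeeping rather than substance: one must keep $1_{\m A_i}$ identified with the global identity in every component, confirm that each $\m A_i$ is genuinely a subalgebra of $\boxplus_i\m A_i$ so that $^\ell$, $^r$, and hence $^\star$, are computed locally, and read the cross-component behaviour of $\sqsubseteq$ correctly off the nested-sum multiplication clauses, including the degenerate interactions with $1$. Once these are handled the statement reduces to a matching of definitions transported along Corollary~\ref{c:irc-mp}.
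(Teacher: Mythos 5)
Your proof is correct and follows exactly the route the paper intends: the paper dismisses this lemma as an immediate consequence of the definitional equivalence of Corollary~\ref{c:irc-mp}, and your argument simply spells out the two points that make it immediate, namely that the admissibility conditions on the two sides match (with the observation that in a chain the join/meet conditions are automatic and that $a^\ell\neq 1$, $a^r\neq 1$ for all non-identity $a$ is equivalent to $a^\star\neq 1$) and that the equivalence commutes with the two nested-sum constructions componentwise and across components. The verification is sound, relying correctly on each $\m A_i$ being a subalgebra of the nested sum and on the cross-component multiplication clause $a_ia_j=a_ja_i=a_i$ for $i<j$ yielding $a_i\sqsubset a_j$.
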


    \begin{lemma}\label{l:crowns}
    The $^\star$-involutive idempotent residuated chains are exactly the nested sums of crownians, i.e., those whose enhanced monoidal preorders are crowns. Also, one-generated subalgebras intersect only at $\{1\}$ and they are generated by any of their non-identity elements.
\end{lemma}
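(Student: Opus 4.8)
The plan is to prove the two assertions by leveraging Lemma~\ref{l: cyclic subalgebras}, Lemma~\ref{l:star-involutive}, Lemma~\ref{l: level-convex}, and Lemma~\ref{l: nested IRC and EMP}, working on the level of enhanced monoidal preorders. By Lemma~\ref{l: nested IRC and EMP} it suffices to show that a $^\star$-involutive enhanced monoidal preorder $\m P$ is a nested sum of vertical crowns, and conversely that every nested sum of vertical crowns is $^\star$-involutive. The forward direction is the main content; I would start by fixing a $^\star$-involutive $\m P$ and considering the chain of layers of $\m P$ (recall layers form a chain with top $\overline{1}$, and that each layer has one or two elements by Lemma~\ref{l:layers}).

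For the forward direction, first I would observe that $1$ is isolated in any $^\star$-involutive idempotent residuated chain (this is remarked just before Section~\ref{s:one-generated}: if $x\neq 1$ then $x^\star\neq 1$, since $x^{\star\star}=1^\star=1\neq x$), so the layer of $1$ is a singleton and whatever layer lies directly below it (if any) contains only a positive element. Next, for each non-identity $x$ consider the one-generated subalgebra $\langle x\rangle_1$; by Lemma~\ref{l: level-convex}(2) and Lemma~\ref{l:weaklyinvolutive}, $\langle x\rangle_1$ is crownian, and by Lemma~\ref{l: level-convex}(1) its non-identity part $\langle x\rangle$ is layer-convex. I would then argue that the relation ``$x$ and $y$ lie in $\langle z\rangle$ for a common $z$'' partitions $P\setminus\{1\}$ into layer-convex blocks, each of which (together with $1$) carries the structure of a vertical crown: this uses that two one-generated subalgebras either coincide or intersect only at $\{1\}$ (the second assertion of the lemma, which I would prove first as a sub-step), together with the fact that each one-generated subalgebra is generated by any of its non-identity elements (Lemma~\ref{l: level-convex}(2), since crowns have this property). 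Having partitioned $P\setminus\{1\}$ into layer-convex pieces, each a vertical crown minus its $1$, the linear order on layers induces a chain structure $I$ on the pieces, and the reconstruction of $\m P$ as $\boxplus_{i\in I}\m P_i$ follows by matching the definition of nested sum of enhanced monoidal preorders: $a_i\sqsubset a_j$ iff $i<j$, the positive/negative cones are the unions, and $^\star$ is computed piecewise (using Lemma~\ref{l: cyclic subalgebras}(5): $b^\star$ for negative $b$ sits in the layer directly above $\overline b$, hence in the same piece as $b$, so the nested-sum side condition $a^\star\neq 1_{\m P_i}$ holds for all but possibly the top piece). For the converse, a direct computation using the definition of nested sum shows $x^{\star\star}=x$ in $\boxplus_{i\in I}\m P_i$ whenever it holds in each $\m P_i$, since $^\star$ never crosses between pieces except possibly to the $1$ of the top piece, and that is excluded by the nested-sum side condition.

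The second assertion — that distinct one-generated subalgebras meet only at $\{1\}$ and that each is generated by any of its non-identity elements — I would actually establish first, as it is needed above. Generation by any non-identity element is immediate from Lemma~\ref{l: level-convex}(2) since vertical crowns have this property (this is stated in the converse part of the proof of Lemma~\ref{l: level-convex}(2)). For the intersection claim, suppose $c\in\langle x\rangle\cap\langle y\rangle$ with $c\neq 1$; then $\langle c\rangle_1\subseteq\langle x\rangle_1$ and $\langle c\rangle_1\subseteq\langle y\rangle_1$, but since $c$ generates its own subalgebra and (by the generation-by-any-element property applied inside $\langle x\rangle_1$) $c$ also generates $\langle x\rangle_1$, we get $\langle x\rangle_1=\langle c\rangle_1=\langle y\rangle_1$, a contradiction with distinctness.

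The main obstacle I anticipate is the bookkeeping in the forward direction: verifying carefully that the layer-convex pieces genuinely assemble into a nested sum — in particular, that the side condition in the definition of nested sum of enhanced monoidal preorders ($a^\star\neq 1_{\m P_i}$ for all $i$ except possibly the top) is met, and that the $^\star$ operation of $\m P$ really does decompose as the piecewise $^\star$ plus the global ``$1$ is top'' clause. This hinges on the locality of $^\star$ expressed in Lemma~\ref{l: cyclic subalgebras}(5) (for $^\star$-involutive preorders, $b^\star$ is in the layer directly above $\overline b$ and $a^\star$ in the layer directly below $\overline a$), which keeps $^\star$ from jumping between pieces; making this precise, and handling the boundary layer adjacent to $1$ correctly, is where the care is needed. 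The rest is routine once the decomposition into crownian pieces is in hand.
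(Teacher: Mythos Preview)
Your proposal is correct and follows essentially the same approach as the paper's proof: reduce to enhanced monoidal preorders via Lemma~\ref{l: nested IRC and EMP}, use layer-convexity of $\langle x\rangle$ from Lemma~\ref{l: level-convex}(1) to partition $P\setminus\{1\}$ into crownian pieces, and assemble as a nested sum, with the converse by preservation of $^\star$-involutivity under nested sums. You are more explicit than the paper about the intersection claim for one-generated subalgebras and about the nested-sum side condition $a^\star\neq 1_{\m P_i}$, both of which the paper leaves implicit.
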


\begin{proof}  By Lemma~\ref{l: nested IRC and EMP}, it suffices to show that an enhanced monoidal preorder is $^\star$-involutive iff it is a nested sum of vertical crowns. By Lemma~\ref{l: level-convex}(1), for every non-identity element $x$ in a $^\star$-involutive enhanced monoidal preorder, $\langle x \rangle$ is layer-convex, and together with $1$ forms an enhanced monoidal preorder (the one corresponding to the subalgebra  $\langle x \rangle_1$). As a result, the enhanced monoidal preorder is an ordinal sum of the various  $\langle x \rangle$, hence the nested sum of the various $\langle x \rangle_1$.
The converse follows from the fact that vertical crowns are $^\star$-involutive and that the nested sum of $^\star$-involutive enhanced monoidal preorders remains $^\star$-involutive. 
\end{proof}

\section{Amalgamation}\label{s:amalgamation}

We now have all the ingredients for proving the strong amalgamation property. Our approach is to first prove that the class of conic algebras has the amalgamation property, and then extend this result to the semiconic case. Unfortunately, the amalgamation property fails for the the whole class of conic idempotent residuated lattices; as we have already mentioned, it even fails for the class of idempotent residuated chains, as we show in Section~\ref{s:APfailschains}. However, the analysis of the structure of conic idempotent residuated lattices---and in particular their decomposition into nested sums and their subalgebra generation---allows us to identify the two obstacles to amalgamation. By stipulating the conditions of being rigid and conjunctive (i.e., all prelattices appearing in the decomposition are lattices; see Section~\ref{s:failure for rigid conic}), we are able to prove the strong amalgamation for this class of conic idempotent residuated lattices and then extend the result to the semiconic variety this class generates.

\subsection{Failure of amalgamation for idempotent residuated chains}\label{s:APfailschains}
We consider the idempotent residuated chains $\m B$ and $\m C$ given in terms of their enhanced monoidal preorder in Figure~\ref{f:APfails}, and their common subalgebra $\m A$, where $A=\{1\}$. We will show that this witnesses the failure of the amalgamation property in conic idempotent residuated lattices even when the V-formation consists of idempotent residuated chains.

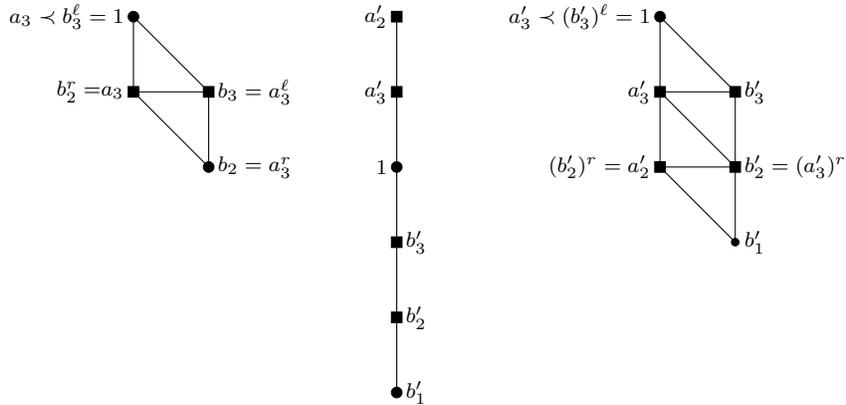
\begin{figure}[htbp]
\begin{center}
\begin{tikzpicture}[scale=1]
\node[left] at (-.5,5) {\footnotesize  $a_3 \prec b_3^\ell=1$};
\draw[mark=*]  plot coordinates {(-.5,5)};
\node[left] at (-.5,4) {\footnotesize  ${b_2^r=}a_3$};
\draw[mark=square*]  plot coordinates {(-.5,4)};
\node[right] at (.5,4) {\footnotesize  $b_3=a_3^\ell$};
\draw[mark=square*]  plot coordinates {(.5,4)};
\node[right] at (.5,3) {\footnotesize  $b_2=a_3^r$};
\draw[mark=*]  plot coordinates {(.5,3)};
\draw  (-.5,5) --(-.5,4) --(.5,3);
\draw (-.5,5) --(.5,4) -- (.5,3);
\draw  (-.5,4)-- (.5,4);
\qquad
\node[left] at (6.5,5) {\footnotesize  $a'_3 \prec (b'_3)^\ell=1$};
\draw[mark=*]  plot coordinates {(6.5,5)};
\node[left] at (6.5,4) {\footnotesize  $a'_3$};
\draw[mark=square*]  plot coordinates {(6.5,4)};
\node[right] at (7.5,4) {\footnotesize  $b'_3$};
\draw[mark=square*]  plot coordinates {(7.5,4)};
\node[right] at (7.5,3) {\footnotesize  $b'_2=(a'_3)^r$};
\draw[mark=square*]  plot coordinates {(7.5,3)};
\node[left] at (6.5,3) {\footnotesize  ${(b'_2)^r}=a'_2$};
\draw[mark=square*]  plot coordinates {(6.5,3)};
\node[right] at (7.5,2) {\footnotesize  $b'_1$};
\draw[fill] (7.5,2)  circle [radius=0.05];
\draw   (6.5,5) --(6.5,4) --(7.5,3);
\draw (6.5,5) --(7.5,4) -- (7.5,3)--(7.5,2);
\draw  (6.5,4)-- (7.5,4);
\draw  (6.5,3)-- (7.5,3);
\draw (6.5,4)--(6.5,3)--(7.5,2);
\qquad
\node[left] at (3,5) {\footnotesize  $a'_2$};
\draw[mark=square*]  plot coordinates {(3,5)};
\node[left] at (3,4) {\footnotesize  $a'_3$};
\draw[mark=square*]  plot coordinates {(3,4)};
\node[left] at (3,3) {\footnotesize  $1$};
\draw[mark=*]  plot coordinates {(3,3)};
\node[right] at (3,2) {\footnotesize  $b'_3$};
\draw[mark=square*]  plot coordinates {(3,2)};
\node[right] at (3,1) {\footnotesize  $b'_2$};
\draw[mark=square*]  plot coordinates {(3,1)};
\node[right] at (3,0) {\footnotesize  $b'_1$};
\draw[mark=*]  plot coordinates {(3,0)};
\draw (3,0)--(3,1)--(3,2)--(3,3)--(3,4)--(3,5);
\end{tikzpicture}
\end{center}
\caption{Failure of amalgamation for conic; $\m B$ is on the left, $\m C$ is on the middle and the right and $A=\{1\}$. All algebras are chains and connected; they are not rigid. \label{f:APfails}}
\end{figure}

If there would be an idempotent residuated chain $\m D$ that would serve as an amalgam for this V-formation, then $\m B$ and $\m C$ could be identified with subalgebras of $\m D$, where the intersection of  $B$ and $C$ could be larger than $A$. We will argue in the algebra $\m D$. Since $b_3^\ell=1$  in $\m B$ and thus also in $\m D$, the element $a_3$ has to be an upper cover of $1$ in $\m D$, by Corollary~\ref{c:RandL}. Likewise, since  $(b'_3)^\ell=1$, the element $a'_3$ is an upper cover of $1$ in $\m D$; hence $a_3=a'_3$ in $\m D$. Therefore,
$b_2=a_3^r=(a'_3)^r=b'_2$ in $\m D$, hence $a'_2=(b'_2)^r=b_2^r=a_3=a'_3$ in $\m D$, which is a contradiction to the fact that $\m C$ is a subalgebra of $\m D$ and in $\m C$ the elements $a'_2$ and $a'_3$ are different.

More generally, if an idempotent residuated chain $\m B$ has a subalgebra $\m A$ and some element outside of $A$ (in the above example this was $b_3$) whose ${}^\star$  lies inside $\m A$ (this was $b_3^\star=b_3^\ell=1$), then a similar failure of amalgamation can be produced.
The condition that ensures precisely that this issue does not come up is rigidity, as we proved in Section~\ref{s:weaklyinvolutive}. Therefore, we will restrict our attention to rigid conic idempotent residuated lattices.

\subsection{Failure of amalgamation for semilinear idempotent and semiconic idempotent residuated lattices}\label{s:APfailssemilinear} It is conceivable that a V-formation of idempotent residuated chains does not have a totally ordered amalgam, but it does have an amalgam in the variety of semilinear idempotent residuated lattices.
We now show that the amalgamation property fails for the variety of semilinear idempotent residuated lattices. 

The \emph{one sided amalgamation property} (1AP) \cite{FM2022} is defined in the same way as the amalgamation property, but by removing the demand that $g_{\m B}: \m B \ra \m D$ is an embedding and only stipulating that it is a homomorphism, while still insisting that $g_{\m C}$ is an embedding; we refer to $\m D$ as a \emph{1-amalgam} of the associated V-formation. We prove that in the setting of residuated lattices 1-amalgamation and amalgamation are more closely related than in arbitrary algebras.

\begin{lemma}\label{l:AP1APmonolith}
If $({\m A},{\m B},{\m C},\fb,\fc)$ is a V-formation of residuated lattices and $\m B$ is subdirectly irreducible with monolith $\m A$, then every residuated lattice that is a 1-amalgam of the V-formation is actually an amalgam. 
\end{lemma}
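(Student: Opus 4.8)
The plan is a short congruence‑theoretic argument that uses subdirect irreducibility of $\m B$ to leave the kernel of $\gb$ nowhere to hide. Fix a $1$-amalgam $(\m D,\gb,\gc)$ of the V-formation, so that by definition $\gc\colon\m C\to\m D$ is an embedding, $\gb\colon\m B\to\m D$ is a homomorphism, and $\gb\circ\fb=\gc\circ\fc$; the goal is to show $\gb$ is an embedding, equivalently that $\theta:=\ker\gb$ is the trivial congruence on $\m B$. The first observation is that $\gb$ is already injective on the image $\fb[A]$ of $\m A$ in $\m B$: since $\fc$ is an embedding (as part of a V-formation) and $\gc$ is an embedding (by the definition of $1$-amalgam), the composite $\gc\circ\fc$ is an embedding, and the commuting relation $\gb\circ\fb=\gc\circ\fc$ then forces $\gb$ to separate the points of $\fb[A]$. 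In other words, the restriction of $\theta$ to $\fb[A]$ is trivial.

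The second step invokes the hypothesis on $\m B$. Being subdirectly irreducible, $\m B$ has a monolith, i.e.\ a least nontrivial congruence $\mu$; the assumption that ``$\m B$ has monolith $\m A$'' amounts to the statement that $\mu$ is witnessed inside the copy $\fb[A]$ of $\m A$, i.e.\ that $\mu$ identifies two distinct elements of $\fb[A]$ (equivalently, $\mu$ restricts to a nontrivial congruence on $\fb[A]$, or again that the associated congruence filter meets $\fb[A]$ nontrivially). Now if $\theta$ were nontrivial we would have $\mu\subseteq\theta$, so $\theta$ would identify two distinct elements of $\fb[A]$, contradicting the previous paragraph. Hence $\theta$ is trivial, so $\gb$ is an embedding and $(\m D,\gb,\gc)$ is a genuine amalgam.

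I do not anticipate any real obstacle here: once the relevant facts are lined up the proof is essentially a two‑line diagram chase, and the only points needing care are keeping track of which maps in a $1$-amalgam are still embeddings (only $\gc$, but that is enough to pin $\gb$ down on $\fb[A]$) and formulating the ``monolith $\m A$'' hypothesis so that collapsing $\mu$ is manifestly incompatible with injectivity on $\fb[A]$. The intended payoff is that, applied to $\m B$ and $\m C$ in turn and to the subdirect chain factors of a hypothetical semilinear amalgam, this lemma promotes a one‑sided amalgam to a genuine amalgam among chains, which is how the failure of the amalgamation property for semilinear idempotent residuated lattices will be reduced to the failure already exhibited among idempotent residuated chains.
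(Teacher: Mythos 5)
Your proof is correct and is essentially the paper's argument: first use the commuting square together with the fact that $\gc\circ\fc$ is an embedding to see that $\gb$ is injective on $\fb[A]$, then observe that a nontrivial kernel of $\gb$ would have to contain the monolith and hence collapse distinct elements of $\fb[A]$. The only cosmetic difference is that the paper unpacks ``monolith $\m A$'' via the correspondence between congruences and convex normal subalgebras (so that a nontrivial kernel would send all of $A$ to $1$), whereas you work directly with the congruence lattice; both routes give the same two-step contradiction.
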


\begin{proof}
Assume that the residuated lattice $\m D$ is a 1-amalgam for the V-formation. By the commutativity of the diagram we obtain that $g_{\m B}$ has to be injective on $\m A$. 

If $\m A$ is the monolith of $\m B$, then $A$ is the smallest non-trivial convex normal subalgebra of $\m B$ and any other such must contain $A$. In particular, since the kernel of $g_{\m B}$ is a convex normal subalgebra of $\m B$, it must contain $A$ or be trivial. If it contains $A$, then all elements of $A$ are mapped to $1$ under $g_{\m B}$, a contradiction to the injectivity of $g_{\m B}$ on $\m A$. Therefore, the kernel of $g_{\m B}$ is trivial, hence $g_{\m B}$ is injective. Consequently, $\m D$ is an amalgam of the V-formation.
\end{proof}

\begin{theorem} \label{t:1APfails}
The 1AP (hence also the AP) fails for the class of idempotent residuated chains. Also, it fails for the class of finitely subdirectly irreducible conic idempotent residuated lattices.
\end{theorem}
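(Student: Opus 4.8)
\emph{Proof proposal.} The plan is to deduce the failure of the $1$AP from a failure of the ordinary AP by means of Lemma~\ref{l:AP1APmonolith}. That lemma guarantees that if the apex $\m B$ of a V-formation is subdirectly irreducible with monolith $\m A$, then every $1$-amalgam of the V-formation is in fact an amalgam. Hence it suffices to exhibit a V-formation $(\m A,\m B,\m C,\fb,\fc)$ of idempotent residuated chains — which are automatically finitely subdirectly irreducible conic idempotent residuated lattices, since $1$ is join-irreducible in any chain — such that $\m B$ is subdirectly irreducible with monolith exactly the common subalgebra $\m A$, and such that the V-formation has no amalgam among (conic) idempotent residuated lattices.

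The example of Section~\ref{s:APfailschains} cannot be used verbatim, since there $\m A=\{1\}$ is trivial whereas the monolith is always a \emph{nontrivial} convex normal subalgebra. I would therefore modify the chains $\m B$ and $\m C$ of Figure~\ref{f:APfails} minimally, by inserting a new element $0$ as a lower cover of $1$ in each (placed between the layer $\{b_3,a_3\}$ and $1$ in $\m B$, and between $\{b_3',a_3'\}$ and $1$ in $\m C$), declared central with $0^\star=1$, so that $0^\ell=0^r=1$. A routine verification using Corollary~\ref{c:idempotentchainsoperations} and the enhanced-monoidal-preorder description shows that the results are still idempotent residuated chains, that $\m A=\{0,1\}$ (the two-element Boolean algebra) is a common subalgebra of $\m B$ and $\m C$, and that the non-rigidity mechanism of Section~\ref{s:APfailschains} survives intact: in $\m B$ one still has $b_3^\ell=1$, $b_3^r=a_3$, $a_3^r=b_2$ and $b_2^r=a_3$, with $b_3\notin A$ but $b_3^\star=1\in A$, and likewise in $\m C$ with $a_2'\neq a_3'$. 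Finally, the congruence-filter calculus of Lemma~\ref{l:congruencefilters2} gives that $\upset 0$ is the least nontrivial congruence filter of $\m B$, so $\m B$ is subdirectly irreducible and its monolith is the convex normal subalgebra $[1]_\theta=\{0,1\}=\m A$.

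Now suppose $\m D$ is a $1$-amalgam of this V-formation, with $g_{\m C}$ an embedding and $g_{\m B}$ a homomorphism. By Lemma~\ref{l:AP1APmonolith}, $g_{\m B}$ is also an embedding, so $\m D$ is an amalgam and we may regard $\m B$ and $\m C$ as subalgebras of $\m D$ sharing $\m A$. If $\m D$ is an idempotent residuated chain, or more generally any conic idempotent residuated lattice — in particular any finitely subdirectly irreducible one, which is conic by Lemma~\ref{c:FSI} — then its skeleton $\m D^i$ is a totally ordered subalgebra (Lemma~\ref{l:A^i_has_conical_elements}) containing $a_3,b_3,b_2,a_3',b_3',b_2',a_2'$. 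Arguing inside the chain $\m D^i$ exactly as in Section~\ref{s:APfailschains}: from $b_3^\ell=1$ and $(b_3')^\ell=1$, Corollary~\ref{c:RandL} forces both $a_3$ and $a_3'$ to be the unique upper cover of $1$ in $\m D^i$, so $a_3=a_3'$; then $b_2=a_3^r=(a_3')^r=b_2'$, and therefore $a_2'=(b_2')^r=b_2^r=a_3=a_3'$, contradicting $a_2'\neq a_3'$ in $\m C$. Hence no $1$-amalgam exists, whether we require it to be an idempotent residuated chain or a finitely subdirectly irreducible conic idempotent residuated lattice, and so the $1$AP — and a fortiori the AP — fails for both classes.

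The main obstacle is the second step: arranging that $\m A$ is genuinely the monolith of $\m B$ (so that Lemma~\ref{l:AP1APmonolith} is applicable) without disturbing the non-rigidity obstruction, i.e.\ simultaneously controlling the congruence lattice of $\m B$ and the action of the inversion operations on the offending element $b_3$. This is elementary given the enhanced monoidal preorder description of idempotent residuated chains and the congruence-filter description in Lemma~\ref{l:congruencefilters2}, but it requires care.
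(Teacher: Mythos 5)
Your proposal is correct, and it follows the same overall strategy as the paper: both arguments hinge on Lemma~\ref{l:AP1APmonolith} applied to a V-formation whose base $\m A$ is the monolith of $\m B$, so that any 1-amalgam is forced to be an amalgam, and then derive a contradiction from cover/inversion computations in the (skeleton of the) putative amalgam. The difference is in the witness. The paper builds a fresh V-formation (Figure~\ref{f:APfailsVar}) with $A=\{b,a,1\}$ a three-element Sugihara chain, and the contradiction is that $b_B^\star=b_C^\star=a$ forces $a_B$ and $a_C$ to be the same cover of $a$ in any amalgam, while $\{a_B,b_B\}$ and $\{a_C,b_C\}$ are respectively a right-zero and a left-zero semigroup. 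You instead recycle the non-rigidity obstruction of Section~\ref{s:APfailschains}, enlarging $\m A$ from $\{1\}$ to $\{0,1\}$ by inserting a central negative coatom $0$ with $0^\star=1$; your claim that ${\uparrow}0$ is the least nontrivial congruence filter checks out (any nontrivial congruence filter contains a strictly negative element, and $s(b_3)=b_3\mt b_3^{\ell\ell}\mt b_3^{rr}=b_2$ shows that admitting $b_3$ or $b_2$ collapses everything), so $\m A$ is indeed the monolith and the enhanced-monoidal-preorder axioms survive the insertion. One point worth making explicit: your modified $\m B$ and $\m C$ are no longer quasi-involutive ($0$ is not an inverse element), so the paper's treatment of the FSI conic case --- which reduces to the chain case by observing that images of quasi-involutive algebras land inside the skeleton of $\m D$ --- is not available verbatim; your direct variant, namely that every element entering the contradiction is an inverse element and hence conical in any conic $\m D$, so that Corollary~\ref{c:RandL} applies there, correctly patches this. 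Both routes prove the theorem; the paper's example is smaller and self-contained, while yours shows that the obstruction already visible over the trivial base persists over a nontrivial monolith.
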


\begin{proof}
We consider the idempotent residuated chains $\m B$ and $\m C$ given in terms of their enhanced monoidal preorder in Figure~\ref{f:APfailsVar}, and their common subalgebra $\m A$, where $A=\{b, a, 1\}$. 
Assume there is a residuated lattice $\m D$ that would serve as a 1-amalgam for this V-formation. Note that by the characterization given in Lemma~\ref{l:congruencefilters2}, $A$ is the smallest non-trivial convex normal subalgebra of $\m B$ and any other such must contain $A$. 
By Lemma~\ref{l:AP1APmonolith}, $\m D$ is actually an amalgam of the V-formation and we can identify $\m B$ and $\m C$ with a subalgebras of $\m D$, and $g_{\m B}$ and $g_{\m C}$ can be taken to be the inlcusion maps. We will argue in the algebra $\m D$. 

First assume that $\m D$ is an idempotent residuated chain. Note that $b_B^\star = a$ and $b_C^\star = a$, so both $a_B$ and $a_C$ are a covers of $a$ in $\m D$. Therefore, $a_B=a_C$, a contradiction on how $a_B$,$b_B$ and $a_C$,$b_C$ multiply. Thus the 1AP fails for the class of idempotent residuated chains.

By the characterization in Lemma~\ref{l:FSI}(4), the algebras $\m A$, $\m B$ and $\m C$ are finitely subdirectly irreducible conic idempotent residuated lattices. We now assume that $\m D$ is also a  finitely subdirectly irreducible conic idempotent residuated lattice. Also, note the algebras $\m A$, $\m B$ and $\m C$ are quasi-involutive, since every element in them is the (left or right) inverse of some element. Therefore, the images of these algebras in $\m D$ (via  $g_{\m B}$ and $g_{\m C}$) are also quasi-involutive and thus are contained in the quasi-involutive skeleton $\m S$ of $\m D$. Thus, $\m S$ is also an amalgam of the V-formation, which happens to be an idempotent residuated chain. This contradicts the fact established above that this particular V-formation does not have an amalgam among idempotent residuated chains. This establishes that the 1AP fails for the class of finitely subdirectly irreducible conic idempotent residuated lattices. 
\end{proof}

\begin{corollary}
The amalgamation property fails for the variety of semilinear idempotent residuated lattices. Also, it fails for the variety of semiconic idempotent residuated lattices.
\end{corollary}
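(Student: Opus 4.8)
The plan is to derive this corollary as a direct consequence of Theorem~\ref{t:1APfails}, using the standard fact that the amalgamation property for a variety transfers down to (failures in) well-behaved subclasses only when one is careful, but here we need the converse direction: a failure of AP in a suitable subclass forces a failure of AP in the whole variety. The subtlety is that a V-formation of idempotent residuated chains might fail to have a \emph{chain} amalgam while still admitting an amalgam somewhere in the larger semilinear (or semiconic) variety; Theorem~\ref{t:1APfails} already addresses exactly this by showing the relevant V-formation has no amalgam even among finitely subdirectly irreducible conic idempotent residuated lattices, and that these are precisely the FSI members of the semilinear (resp.\ semiconic) variety by Lemma~\ref{l:FSI}(4,5).

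First I would recall the reduction principle: if $\mathcal V$ is a variety of residuated lattices (or more generally a congruence-distributive variety, or one with the congruence extension property) and $(\m A, \m B, \m C, \fb, \fc)$ is a V-formation of finitely subdirectly irreducible members of $\mathcal V$ that has no amalgam in the class of finitely subdirectly irreducible members of $\mathcal V$, then $\mathcal V$ itself lacks the amalgamation property. The mechanism is: given any putative amalgam $\m D \in \mathcal V$, embed $\m D$ subdirectly into a product $\prod_{i} \m D_i$ of FSI algebras $\m D_i \in \mathcal V$; since $\m B$ and $\m C$ are FSI (in particular their monoliths are principal), at least one projection $\pi_i$ restricted to $g_{\m B}[B]$ must remain injective, and similarly for $g_{\m C}[C]$ — and using that $\m B$ and $\m C$ are \emph{subdirectly irreducible} one can in fact find a single coordinate $i$ on which both $\pi_i \circ g_{\m B}$ and $\pi_i \circ g_{\m C}$ are embeddings and the square still commutes, producing an amalgam $\m D_i$ in the FSI subclass. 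This last step is where the argument of Lemma~\ref{l:AP1APmonolith} and the join-irreducibility of $1$ (Lemma~\ref{l:FSI}) do the real work.

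Concretely, I would argue as follows. Take the V-formation $(\m A, \m B, \m C, \fb, \fc)$ from the proof of Theorem~\ref{t:1APfails}, with $A = \{b,a,1\}$, which consists of FSI conic idempotent residuated lattices. Suppose for contradiction that $\m D \in \mathcal V$ is an amalgam, where $\mathcal V$ is the variety of semilinear (resp.\ semiconic) idempotent residuated lattices. Write $\m D$ as a subdirect product of FSI algebras $\m D_i \in \mathcal V$, $i \in I$; by Lemma~\ref{l:FSI}(4,5) each $\m D_i$ is a (linear, resp.\ conic) idempotent residuated lattice with $1$ join-irreducible. Since $\m B$ is subdirectly irreducible with monolith generated by $A$, the kernel of $\pi_i \circ g_{\m B}$ is either trivial or contains $A$; because the $\pi_i$ jointly separate points, there is some $i_0$ with $\pi_{i_0}\circ g_{\m B}$ injective on all of $B$ (indeed any $i$ that separates two elements identified by the monolith works). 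For that same $i_0$, the argument of Lemma~\ref{l:AP1APmonolith} applied to the V-formation $(\m A, \m B, \m C, \fb, \fc)$ and the homomorphism $\pi_{i_0}\circ g_{\m C}$ shows that $\pi_{i_0}\circ g_{\m C}$ is also injective (its kernel, a convex normal subalgebra of $\m C$, must be trivial since it cannot contain $A$ — as $\pi_{i_0}\circ g_{\m C}\circ\fc = \pi_{i_0}\circ g_{\m B}\circ\fb$ is injective on $A$). Hence $(\m D_{i_0}, \pi_{i_0}\circ g_{\m B}, \pi_{i_0}\circ g_{\m C})$ is an amalgam of the V-formation inside the class of FSI members of $\mathcal V$, which by Lemma~\ref{l:FSI} is a class of conic idempotent residuated lattices. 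This contradicts Theorem~\ref{t:1APfails}, which shows this particular V-formation has no amalgam among finitely subdirectly irreducible conic idempotent residuated lattices. Therefore no such $\m D$ exists and $\mathcal V$ fails the amalgamation property, establishing the claim for both the semilinear and the semiconic variety.

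The main obstacle I anticipate is the step asserting that a \emph{single} coordinate $i_0$ can be chosen on which \emph{both} $\pi_{i_0}\circ g_{\m B}$ and $\pi_{i_0}\circ g_{\m C}$ are simultaneously embeddings. For $g_{\m B}$ this is immediate from subdirect irreducibility of $\m B$, but for $g_{\m C}$ one genuinely needs the one-sided amalgamation lemma (Lemma~\ref{l:AP1APmonolith}): once $\pi_{i_0}\circ g_{\m B}$ is injective on $A$, commutativity forces $\pi_{i_0}\circ g_{\m C}$ to be injective on $A$, and then — since $A = \{b,a,1\}$ is the smallest nontrivial convex normal subalgebra of $\m C$ as well (one should double-check this holds for $\m C$, or alternatively note $\m C$ is also subdirectly irreducible with monolith containing $A$, again via Lemma~\ref{l:congruencefilters2}) — the kernel of $\pi_{i_0}\circ g_{\m C}$ is trivial. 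If $\m C$ were only finitely subdirectly irreducible rather than subdirectly irreducible one would have to be slightly more careful, but in the explicit example of Theorem~\ref{t:1APfails} the small algebras involved make this transparent.
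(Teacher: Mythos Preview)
Your argument is correct but takes a different route from the paper. The paper invokes the general transfer result of \cite{FM2022}: for a variety $\mathcal V$ with the CEP whose class $\mathcal V_{FSI}$ is closed under subalgebras, $\mathcal V$ has the AP iff $\mathcal V_{FSI}$ has the 1AP. Combining this black box with Lemma~\ref{c:CEP}(1,2) and Lemma~\ref{c:FSI}(4,5), the failure of the 1AP for $\mathcal V_{FSI}$ established in Theorem~\ref{t:1APfails} immediately yields the corollary for both varieties. You instead bypass the \cite{FM2022} machinery and argue directly from a subdirect decomposition of a putative amalgam $\m D$, exploiting the fact that in the specific V-formation of Theorem~\ref{t:1APfails} \emph{both} $\m B$ and $\m C$ are subdirectly irreducible with $A$ as their common monolith; this lets you select a single subdirectly irreducible factor $\m D_{i_0}$ on which both embeddings survive, contradicting Theorem~\ref{t:1APfails}. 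Your approach is more elementary and self-contained (it needs neither the CEP nor the full 1AP/AP equivalence), but it is less portable: it relies on the fortuitous feature that $A$ is the monolith of each of $\m B$ and $\m C$, whereas the paper's route would apply to any V-formation witnessing the failure of 1AP in $\mathcal V_{FSI}$. Your flagged check that $A$ is also the monolith of $\m C$ is indeed routine from Lemma~\ref{l:congruencefilters2} and the symmetry of the two algebras in Figure~\ref{f:APfailsVar}.
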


\begin{proof}
In \cite{FM2022} it is shown that if a variety $\mathcal{V}$ has the CEP and the class $\mathcal{V}_{FSI}$ of its finitely subdirectly irreducibles is closed under subalgebras, then $\mathcal{V}$ has the AP iff $\mathcal{V}_{FSI}$ has the 1AP. Taking $\mathcal{V}$ to be the variety of semilinear idempotent residuated lattices, Lemma~\ref{c:CEP}(2) shows that $\mathcal{V}$ has the CEP, while  Lemma~\ref{c:FSI}(5) shows that $\mathcal{V}_{FSI}$ is exactly the class of idempotent residuated chains, which is clearly closed under subalgebras. Since  $\mathcal{V}_{FSI}$ fails the 1AP by Theorem~\ref{t:1APfails}, we obtain the failure of the AP for  $\mathcal{V}$.

Likewise, if we take  $\mathcal{V}$ to be the variety of  semiconic idempotent residuated lattices, Lemma~\ref{c:CEP}(2) shows that $\mathcal{V}$ has the CEP, while  Lemma~\ref{l:FSI}(4) ensures that $\mathcal{V}_{FSI}$ is closed under subalgebras. Since  $\mathcal{V}_{FSI}$ fails the 1AP by Theorem~\ref{t:1APfails}, we obtain the failure of the AP for  $\mathcal{V}$.
\end{proof}

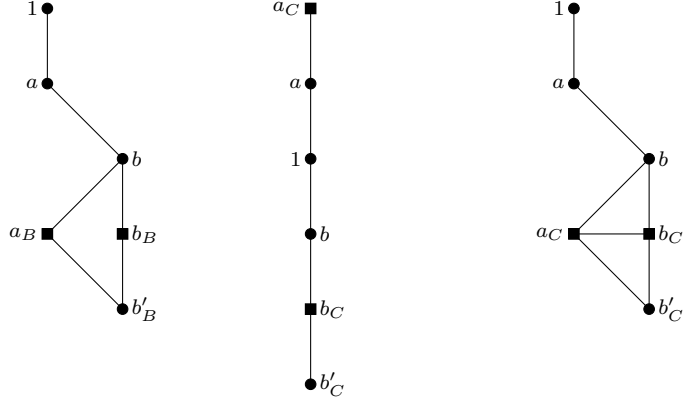
\begin{figure}[htbp]
\begin{center}
\begin{tikzpicture}[scale=1]
\node[left] at (-.5,5) {\footnotesize  $1$};
\draw[mark=*]  plot coordinates {(-.5,5)};
\node[left] at (-.5,4) {\footnotesize  $a$};
\draw[mark=*]  plot coordinates {(-.5,4)};

\node[right] at (.5,3) {\footnotesize  $b$};
\draw[mark=*]  plot coordinates {(.5,3)};
\node[right] at (.5,2) {\footnotesize  $b_B$};
\draw[mark=square*]  plot coordinates {(.5,2)};
\node[left] at (-.5,2) {\footnotesize  $a_B$};
\draw[mark=square*]  plot coordinates {(-.5,2)};
\node[right] at (.5,1) {\footnotesize  $b'_B$};
\draw[mark=*]  plot coordinates {(.5,1)};

\draw   (-.5,5) --(-.5,4) --(.5,3);
\draw (-.5,2)--(.5,3) -- (.5,2)--(.5,1);

\draw (-.5,2)--(.5,1);
\qquad
\node[left] at (6.5,5) {\footnotesize  $1$};
\draw[mark=*]  plot coordinates {(6.5,5)};
\node[left] at (6.5,4) {\footnotesize  $a$};
\draw[mark=*]  plot coordinates {(6.5,4)};

\node[right] at (7.5,3) {\footnotesize  $b$};
\draw[mark=*]  plot coordinates {(7.5,3)};
\node[right] at (7.5,2) {\footnotesize  $b_C$};
\draw[mark=square*]  plot coordinates {(7.5,2)};
\node[left] at (6.5,2) {\footnotesize  $a_C$};
\draw[mark=square*]  plot coordinates {(6.5,2)};
\node[right] at (7.5,1) {\footnotesize  $b'_C$};
\draw[mark=*]  plot coordinates {(7.5,1)};

\draw   (6.5,5) --(6.5,4) --(7.5,3);
\draw (6.5,2)--(7.5,3) -- (7.5,2)--(7.5,1);
\draw  (6.5,2)-- (7.5,2);
\draw (6.5,2)--(7.5,1);
\qquad
\node[left] at (3,5) {\footnotesize  $a_C$};
\draw[mark=square*]  plot coordinates {(3,5)};
\node[left] at (3,4) {\footnotesize  $a$};
\draw[mark=*]  plot coordinates {(3,4)};
\node[left] at (3,3) {\footnotesize  $1$};
\draw[mark=*]  plot coordinates {(3,3)};
\node[right] at (3,2) {\footnotesize  $b$};
\draw[mark=*]  plot coordinates {(3,2)};
\node[right] at (3,1) {\footnotesize  $b_C$};
\draw[mark=square*]  plot coordinates {(3,1)};
\node[right] at (3,0) {\footnotesize  $b'_C$};
\draw[mark=*]  plot coordinates {(3,0)};
\draw (3,0)--(3,1)--(3,2)--(3,3)--(3,4)--(3,5);
\end{tikzpicture}
\end{center}
\caption{Failure of 1AP for idempotent residuated chains; $\m B$ is on the left, $\m C$ is in the middle and on the right and $A=\{1, a, b\}$. \label{f:APfailsVar}}
\end{figure}

\subsection{Failure of amalgamation for rigid conic idempotent residuated lattices}\label{s:failure for rigid conic}
We show that even with the above restriction of rigidity, amalgamation still fails for conic idempotent residuated lattices, even in the commutative case. Let $\m A$, $\m B$ and $\m C$ be the commutative conic idempotent residuated lattices given in Figure~\ref{f:APfails2}, where the decomposition systems are based on $\{a^*, b^*, 1, b, a\}$ and $\{a^*, c^*, d^*, 1, d, c, a\}$.

\begin{figure}[htbp]
\begin{center}
\begin{tikzpicture}[scale=1]
\node[left] at (0,6) {\footnotesize  $a$};
\draw[mark=*]  plot coordinates {(0,6)};
\draw  (0,6)-- (0,3);
\node[left] at (0,3) {\footnotesize  $1$};
\draw[mark=*]  plot coordinates {(0,3)};
\draw  (0,0)-- (0,3);
\node[right] at (0,0) {\footnotesize  $a^*$};
\draw[mark=*]  plot coordinates {(0,0)};
\node[left] at (3,6) {\footnotesize  $a$};
\draw[mark=*]  plot coordinates {(3,6)};
\draw  (2.5,5.25)--(3,6)-- (3.5,5.25);
\node[left] at (2.5,5.25) {\footnotesize  $b_1$};
\draw[mark=*]  plot coordinates {(2.5,5.25)};
\node[right] at (3.5,5.25) {\footnotesize  $b_2$};
\draw[mark=*]  plot coordinates {(3.5,5.25)};
\draw  (2.5,5.25)--(3,4.5)-- (3.5,5.25);
\node[left] at (3,4.5) {\footnotesize  $b$};
\draw[mark=*]  plot coordinates {(3,4.5)};
\draw  (2.5,3.75)--(3,4.5)-- (3.5,3.75);
\node[left] at (2.5,3.75) {\footnotesize  $b'_1$};
\draw[mark=*]  plot coordinates {(2.5,3.75)};
\node[right] at (3.5,3.75) {\footnotesize  $b'_2$};
\draw[mark=*]  plot coordinates {(3.5,3.75)};
\draw  (2.5,3.75)--(3,3)-- (3.5,3.75);
\node[left] at (3,3) {\footnotesize  $1$};
\draw[mark=*]  plot coordinates {(3,3)};
\draw  (3,3)-- (3,1.5);
\node[right] at (3,1.5) {\footnotesize  $b^*$};
\draw[mark=*]  plot coordinates {(3,1.5)};
\draw  (3,0)-- (3,1.5);
\node[right] at (3,0) {\footnotesize  $a^*$};
\draw[mark=*]  plot coordinates {(3,0)};
\node[left] at (6,6) {\footnotesize  $a$};
\draw[mark=*]  plot coordinates {(6,6)};
\draw  (5.5,5.5)--(6,6)-- (6.5,5.5);
\node[left] at (5.5,5.5) {\footnotesize  $c_1$};
\draw[mark=*]  plot coordinates {(5.5,5.5)};
\node[right] at (6.5,5.5) {\footnotesize  $c_2$};
\draw[mark=*]  plot coordinates {(6.5,5.5)};
\draw  (5.5,5.5)--(6,5)-- (6.5,5.5);
\node[left] at (6,5) {\footnotesize  $c$};
\draw[mark=*]  plot coordinates {(6,5)};
\draw (6,4)--(6,5);
\node[left] at (6,4) {\footnotesize  $d$};
\draw[mark=*]  plot coordinates {(6,4)};
\draw  (5.5,3.5)--(6,4)-- (6.5,3.5);
\node[left] at (5.5,3.5) {\footnotesize  $d_1$};
\draw[mark=*]  plot coordinates {(5.5,3.5)};
\node[right] at (6.5,3.5) {\footnotesize  $d_2$};
\draw[mark=*]  plot coordinates {(6.5,3.5)};
\draw  (5.5,3.5)--(6,3)-- (6.5,3.5);
\node[left] at (6,3) {\footnotesize  $1$};
\draw[mark=*]  plot coordinates {(6,3)};
\draw  (6,2)-- (6,3);
\node[right] at (6,2) {\footnotesize  $d^*$};
\draw[mark=*]  plot coordinates {(6,2)};
\draw  (6,2)-- (6,1);
\node[right] at (6,1) {\footnotesize  $c^*$};
\draw[mark=*]  plot coordinates {(6,1)};
\draw  (6,0)-- (6,1);
\node[right] at (6,0) {\footnotesize  $a^*$};
\draw[mark=*]  plot coordinates {(6,0)};
\end{tikzpicture}
\end{center}
\caption{Failure of amalgamation for commutative rigid conic idempotent residuated lattices: the algebras $\m A$, $\m B$, and $\m C$.\label{f:APfails2}}
\end{figure}
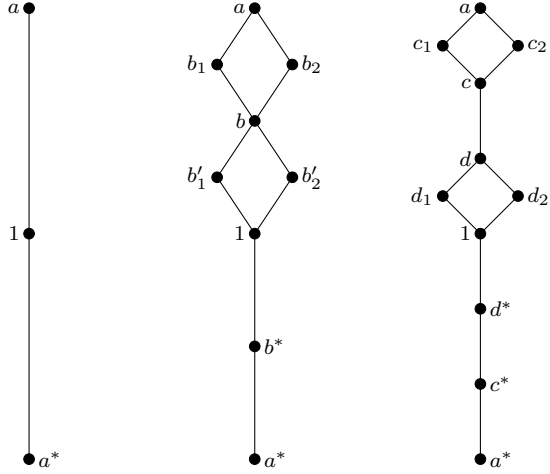

Assume that there is a conic idempotent residuated lattice $\m D$ that serves as an amalgam, where elements of $B$ and $C$ may be identified. 
In the amalgam $\m D$ the element $b$ is an inverse and so is $c$, hence they are conical and thus comparable. If $b<c$ in $\m D$, then since $b_1 \mt b_2 =b$ and since $c$ is conical, we would get $b_1, b_2 \leq c$, hence $a=b_1 \jn b_2\leq  c$, a contradiction. A similar argument with $c_1$ and $c_2$ shows that we cannot have $c<b$ in $\m D$, so $c=b$ in $\m D$. A dual argument involving $b'_1, b'_2$ and $d_1, d_2$ shows that in $\m D$ we must also have $b=d$, implying the contradiction that $c=d$.

More generally, the issue is that the positive blocks are not lattices and the meet of elements of the same block is the top element of the next block below. The condition that captures precisely the conic idempotent residuated lattices for which every block is a lattice is being conjunctive. 
A semiconic  idempotent residuated lattice is called \emph{conjunctive} if it satisfies $\gamma(x \mt y)=\gamma(x) \mt \gamma(y)$.

\begin{lemma}\label{l:DeMorganarelatt}
A conic idempotent residuated lattice is conjunctive iff all of its (positive) blocks are lattices. 
\end{lemma}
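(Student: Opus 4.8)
The plan is to read conjunctivity off the ordinal sum decomposition of Section~\ref{subsec:skeleton}. Recall from there that $\gamma(x)$ is the top element of the block $\gamma^{-1}(\gamma(x))$ containing $x$, that by Lemma~\ref{l:A^i_has_conical_elements}(9) the skeleton $\m A^i$ is a totally ordered subalgebra of $\m A$ (so for $s,t\in A^i$ the meet $s\mt t$ taken in $\m A$ is their minimum), and that the order on $\m A$ is the ordinal sum of its blocks along the chain $\m A^i$, so every element of a lower block lies strictly below every element of a higher block. The first observation is that conjunctivity is automatic for elements in distinct blocks: if $\gamma(x)<\gamma(y)$, then $x$ lies in a strictly lower block than $y$, hence $x<y$, so $x\mt y=x$ and $\gamma(x\mt y)=\gamma(x)=\gamma(x)\mt\gamma(y)$. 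Thus everything reduces to pairs $x,y$ with $\gamma(x)=\gamma(y)=a$, i.e.\ pairs lying inside a single block $\gamma^{-1}(a)$.

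For the direction from ``all positive blocks are lattices'' to conjunctivity, I would first note that in fact \emph{all} blocks are then lattices: by Lemma~\ref{l:A^i_has_conical_elements}(10) negative blocks are Brouwerian lattices and non-central blocks are trivial. Given $x,y\in\gamma^{-1}(a)$, I would check that the meet $x\wedge_{\gamma^{-1}(a)}y$ computed inside the lattice $\gamma^{-1}(a)$ coincides with $x\mt y$ in $\m A$: the common lower bounds of $x$ and $y$ in $\m A$ are those in $\gamma^{-1}(a)$ together with the elements of strictly lower blocks, and the latter all lie below $x\wedge_{\gamma^{-1}(a)}y$, so this element is the greatest common lower bound in $\m A$. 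Since it belongs to $\gamma^{-1}(a)$, we get $\gamma(x\mt y)=a=\gamma(x)\mt\gamma(y)$, proving conjunctivity.

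For the converse I would argue contrapositively. Suppose some block fails to be a lattice. By Lemma~\ref{l:A^i_has_conical_elements}(10)(b),(c) such a block is a proper prelattice $\gamma^{-1}(a)$ with $a$ strictly positive and central, and by Lemma~\ref{l:A^i_has_conical_elements}(10)(a) the element $a$ has a lower cover $a^\downarrow$ in $\m A^i$. Since $\gamma^{-1}(a)$ is a join-semilattice with top but not a lattice, there is a pair $x,y\in\gamma^{-1}(a)$ with no meet in $\gamma^{-1}(a)$. Their meet $x\mt y$ does exist in $\m A$, and $a^\downarrow$ is a common lower bound of $x$ and $y$ in $\m A$; if $x\mt y$ lay in $\gamma^{-1}(a)$ it would be their meet there, so $x\mt y$ lies in a strictly lower block, whence $a^\downarrow\le x\mt y\le a^\downarrow$, i.e.\ $x\mt y=a^\downarrow$. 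Therefore $\gamma(x\mt y)=a^\downarrow<a=\gamma(x)\mt\gamma(y)$, so $\m A$ is not conjunctive. This also accounts for the parenthetical ``positive'': the non-lattice blocks, if any, are forced to be positive.

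The only genuinely delicate point --- routine given the structure theory --- is the bookkeeping in the converse: isolating which prelattice blocks can fail to be lattices (exactly the strictly positive central ones possessing a lower cover) and verifying that the ``missing meet'' in such a block is realized in $\m A$ by the top element $a^\downarrow$ of the block immediately below. Symmetrically, in the forward direction one must be sure that meets computed inside a block are not altered by the ambient ordinal sum. Both facts hinge only on the defining feature of an ordinal sum, namely that a lower summand lies entirely below a higher one, so no serious obstacle is expected.
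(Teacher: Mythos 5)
Your proof is correct and follows essentially the same route as the paper's: reduce conjunctivity to pairs within a single block and use the ordinal-sum structure of the decomposition. The only difference is stylistic — the paper's converse is a two-line direct argument (conjunctivity forces each block to be closed under the ambient meet, hence a sublattice), whereas you argue the contrapositive via the lower-cover clause of Lemma~\ref{l:A^i_has_conical_elements}(10)(a) and explicitly identify the missing meet as $a^\downarrow$; both are fine.
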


\begin{proof}
Assume that all blocks are lattices. Condition  $\gamma(x \mt y)=\gamma(x) \mt \gamma(y)$ holds if $x$ and $y$ are comparable, so we only need to verify it for the case where $x$ and $y$ are in the same block, i.e. $\gamma(x) = \gamma(y)$. Since all blocks are lattices, we also have  $\gamma(x \mt y)=\gamma(x) =\gamma(x) \mt \gamma(y)$.

 Conversely, assume the equation is satisfied. Each block is closed under join, so we only need to check closure under meet. If $x$ and $y$ are in the same block, then $\gamma(x) = \gamma(y)$. Since $\gamma(x \mt y)=\gamma(x) \mt \gamma(y)$, it follows that $x\mt y$ is also in the same block. 
\end{proof}

Therefore, we will restrict our attention to conjunctive rigid conic idempotent residuated lattices.

\subsection{Failure of amalgamation for rigid semiconic idempotent residuated lattices}\label{s:failure for rigid semiconic} As an aside we mention that amalgamation fails even for the variety of rigid semiconic idempotent residuated lattices, by taking a more special V-formation.

\begin{theorem}\label{thm:failures rigid variety}
The amalgamation property fails for the variety of rigid semiconic idempotent residuated lattices, as well as for its commutative subvariety.
\end{theorem}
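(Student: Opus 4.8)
The plan is to produce a single V-formation of rigid semiconic idempotent residuated lattices that fails to amalgamate, and to make it commutative so that the same example handles both claims at once. I would start from the non-rigidity phenomenon already isolated in Section~\ref{s:APfailschains} and \ref{s:failure for rigid conic}, but now I need a \emph{rigid} (in fact $^\star$-involutive-skeleton) counterexample; the only way the failure can still bite is if the obstruction is pushed into the lattice-theoretic structure of the blocks rather than into the skeleton. So the natural choice is a commutative example in the spirit of Figure~\ref{f:APfails2}: take $\m A$ to be a small conic idempotent residuated lattice, and $\m B$, $\m C$ two rigid extensions sharing $\m A$, where $\m B$ forces some block of $\m A$ to grow ``to the left'' and $\m C$ forces the ``same'' element to be identified differently, producing a collision that no conic (hence, after passing to a subdirectly irreducible factor, no linear) quotient can accommodate. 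Concretely I would build $\m B$ and $\m C$ so that a positive element $a$ of $\m A$ is a non-lattice prelattice top in one of them, using the computation $b_1 \mt b_2 = a^{\downarrow}$ exactly as in Section~\ref{s:failure for rigid conic}, but arranged so that rigidity of the skeleton is preserved (every skeleton element fixed by $x \mapsto x^{\star\star}$).

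The key steps, in order. First, I would exhibit the three algebras explicitly via their decomposition systems $(\m S, \{\m A_s : s \in S\})$, checking via Theorem~\ref{t:decomposition} that $\m A_D$ is a conic idempotent residuated lattice, via Lemma~\ref{l:subsystems} that $\m A$ embeds into both $\m B$ and $\m C$ as a subsystem, and via Lemma~\ref{l:weaklyinvolutive}(1) that all three are rigid (this reduces to checking $^\star$-involutivity of the skeleton chain, which for a finite explicit chain is immediate). Second, since rigid semiconic idempotent residuated lattices form a variety with the congruence extension property (Lemma~\ref{c:CEP}(1)) whose finitely subdirectly irreducibles are conic (Lemma~\ref{l:FSI}(4)) and closed under subalgebras, I would invoke the Metcalfe--Montagna--Tsinakis-style reduction together with Lemma~\ref{l:AP1APmonolith}: if the variety had the amalgamation property, the V-formation would have an amalgam, and by passing to a suitable finitely subdirectly irreducible (hence conic) quotient of that amalgam we would obtain a conic idempotent residuated lattice $\m D$ into which $\m B$ and $\m C$ both embed over $\m A$. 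Third, I would argue directly inside $\m D$, exactly as in Section~\ref{s:failure for rigid conic}: the relevant inverse/conical elements of $\m B$ and $\m C$ must be comparable in $\m D$, a short case analysis on which is smaller (using conservativity of multiplication and the fact that joins of block elements land at the block top) forces two a priori distinct elements of $\m B$ (or of $\m C$) to be identified, contradicting that $\m B$ and $\m C$ embed in $\m D$. The commutative subvariety case needs no new idea since the example is already commutative; for the full rigid semiconic variety one just drops commutativity from the conclusion.

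The main obstacle, as I see it, is \emph{designing} the V-formation so that it is simultaneously (i) rigid — so the Section~\ref{s:APfailschains}/Section~\ref{s:failure for rigid conic} style skeleton collapse cannot be blamed — and (ii) still obstructed, which means the collision has to be engineered purely through the block lattices (some positive block being a proper prelattice whose ``missing bottom'' is the top of the next block, as in Figure~\ref{f:APfails2}) \emph{and} through how $\m B$ and $\m C$ are forced to glue that structure incompatibly onto the common part $\m A$. Getting the three decomposition systems to be genuinely mutually compatible as subsystems while still producing an unavoidable identification in any conic $\m D$ is the delicate combinatorial core; everything after that (the reduction to the finitely subdirectly irreducible/conic case and the final argument inside $\m D$) is routine given the machinery already assembled in this section and in \cite{FG1}. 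A secondary, purely bookkeeping obstacle is verifying that the chosen $\m B$ and $\m C$ really do lie in the variety of \emph{rigid} semiconic idempotent residuated lattices rather than merely being rigid conic algebras — but this is immediate since being rigid is an equational condition and the variety they generate therefore consists of rigid algebras.
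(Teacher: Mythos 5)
Your overall strategy matches the paper's — reduce to the finitely subdirectly irreducible (hence conic) case via the CEP and the result of \cite{FM2022}, and then derive a contradiction from a Figure~\ref{f:APfails2}-style V-formation whose positive blocks are proper prelattices. However, there is a genuine gap in your second step. Passing from an amalgam $\m D$ in the variety to a finitely subdirectly irreducible quotient does \emph{not} yield a conic algebra "into which $\m B$ and $\m C$ both embed over $\m A$": composing the embeddings with the projection onto a subdirect factor destroys injectivity. The reduction of \cite{FM2022} only gives the failure of the \emph{one-sided} amalgamation property for $\mathcal{V}_{FSI}$, i.e., a $\m D$ in which $g_{\m C}$ is an embedding but $g_{\m B}$ is merely a homomorphism. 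You invoke Lemma~\ref{l:AP1APmonolith} to restore injectivity of $g_{\m B}$, but that lemma requires $\m A$ to be the monolith of $\m B$ (equivalently, $A$ is the smallest nontrivial convex normal subalgebra of $\m B$). For the Figure~\ref{f:APfails2}-type example this hypothesis fails: the smallest nontrivial congruence filter of $\m B$ is ${\uparrow}b^{*}$, whose associated convex normal subalgebra is $\{b^{*},1,b_1',b_2',b\}$, not $A=\{a^{*},1,a\}$. So the lemma does not apply, and your third step — arguing "exactly as in Section~\ref{s:failure for rigid conic}" — is then unavailable, since a non-injective $g_{\m B}$ may collapse $b_1$ and $b_2$ (hence collapse the whole interval $[b,a]$), after which the computation $g_{\m B}(b_1)\mt g_{\m B}(b_2)=g_{\m B}(b)$ no longer forces any contradiction.

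The paper closes exactly this gap by modifying the blocks: in Figure~\ref{f:APfails3} the intervals $[b,a]$ and $[1,b]$ of $\m B$ are copies of $\m M_3$ rather than two-atom diamonds. Since $\m M_3$ is a simple lattice, its image under the (possibly non-injective) homomorphism $g_{\m B}$ is either a single point or again a copy of $\m M_3$; combined with the conicity of $c$ and $d$ in the 1-amalgam, both alternatives are excluded when $g_{\m B}(b)<c$, which forces $c\le g_{\m B}(b)\le d$ and contradicts the injectivity of $g_{\m C}$. Your proposal would be correct if you either (i) replaced the diamond blocks by $\m M_3$ blocks and ran the argument against a 1-amalgam with only $g_{\m C}$ injective, or (ii) engineered the V-formation so that $\m A$ really is the monolith of $\m B$ — but as written, neither is done, and the argument does not go through.
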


\begin{proof}
By \cite{FM2022}, if a variety $\mathcal{V}$ has the CEP and the class $\mathcal{V}_{FSI}$ of its finitely subdirectly irreducibles is closed under subalgebras, then $\mathcal{V}$ has the AP iff $\mathcal{V}_{FSI}$ has the 1AP. Taking $\mathcal{V}$ to be the variety of (commutative)  rigid semiconic idempotent residuated lattices, Lemma~\ref{t:CEP}(1) shows that $\mathcal{V}$ has the CEP, while Lemma~\ref{c:FSI}(4) shows that $\mathcal{V}_{FSI}$ is exactly the class of (commutative) rigid conic residuated lattices where $1$ is join irreducible, which is clearly closed under subalgebras. We will show that $\mathcal{V}_{FSI}$ fails the 1AP, so  we will obtain the failure of the AP for $\mathcal{V}$.

Let $\m A$, $\m B$ and $\m C$ be the commutative rigid conic idempotent residuated lattices given in Figure~\ref{f:APfails3}, where the decomposition systems are based on $\{a^*, b^*, 1, b, a\}$ and $\{a^*, c^*, d^*, 1, d, c, a\}$, and note that 1 is join irreducible in them, so they are in $\mathcal{V}_{FSI}$. 
Assume that there is a $\m D \in \mathcal{V}_{FSI}$ that serves as a 1-amalgam; we view $\m C$ as a subalgebra of $\m D$.
In the amalgam $\m D$ the element $g_{\m B}(b)$ is an inverse and so is $c$, hence they are conical and thus comparable.  Since the lattice $[b,a]$ is isomorphic to $\m M_3$, its image under $g_{\m B}$ is either a single point or isomorphic to $\m M_3$. If $g_{\m B}(b)<c$ in $\m D$, then $g_{\m B}(b)<c<a=g_{\m B}(a)$, so $g_{\m B}([b,a])$ cannot be a single point (as $g_{\m B}(b)<g_{\m B}(a)$) nor can it be isomorphic to $\m M_3$, as that would contradict the conicity of $c$ (for example $c\leq g_{\m B}(b_1), g_{\m B}(b_2)$ would imply $c\leq g_{\m B}(b_1) \mt g_{\m B}(b_2)=g_{\m B}(b_1 \mt b_2)= g_{\m B}(b)$). So,  by the conicity of $c$, we get  $c \leq g_{\m B}(b)$. A similar argument using the interval $[1,b]$ shows that $g_{\m B}(b)\leq d$ in $\m D$, so $c\leq d$ in $\m D$, a contradiction to the injectivity of $g_{\m C}$. 
\end{proof}

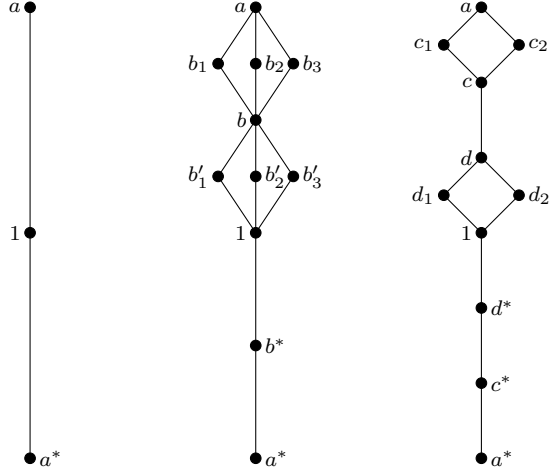
\begin{figure}[htbp]
\begin{center}
\begin{tikzpicture}[scale=1]
\node[left] at (0,6) {\footnotesize  $a$};
\draw[mark=*]  plot coordinates {(0,6)};
\draw  (0,6)-- (0,3);
\node[left] at (0,3) {\footnotesize  $1$};
\draw[mark=*]  plot coordinates {(0,3)};
\draw  (0,0)-- (0,3);
\node[right] at (0,0) {\footnotesize  $a^*$};
\draw[mark=*]  plot coordinates {(0,0)};
\node[left] at (3,6) {\footnotesize  $a$};
\draw[mark=*]  plot coordinates {(3,6)};
\draw  (2.5,5.25)--(3,6)-- (3.5,5.25);
\node[left] at (2.5,5.25) {\footnotesize  $b_1$};
\draw[mark=*]  plot coordinates {(2.5,5.25)};
\node[right] at (3.5,5.25) {\footnotesize  $b_3$};
\draw[mark=*]  plot coordinates {(3.5,5.25)};
\node[right] at (3,5.25) {\footnotesize  $b_2$};
\draw[mark=*]  plot coordinates {(3,5.25)};
\draw  (2.5,5.25)--(3,4.5)-- (3.5,5.25);
\draw  (3,4.5)-- (3,5.25)--(3,6);
\node[left] at (3,4.5) {\footnotesize  $b$};
\draw[mark=*]  plot coordinates {(3,4.5)};
\draw  (2.5,3.75)--(3,4.5)-- (3.5,3.75);
\node[left] at (2.5,3.75) {\footnotesize  $b'_1$};
\draw[mark=*]  plot coordinates {(2.5,3.75)};
\node[right] at (3.5,3.75) {\footnotesize  $b'_3$};
\draw[mark=*]  plot coordinates {(3.5,3.75)};
\node[right] at (3,3.75) {\footnotesize  $b'_2$};
\draw[mark=*]  plot coordinates {(3,3.75)};
\draw  (2.5,3.75)--(3,3)-- (3.5,3.75);
\draw  (3,3)-- (3,3.75)--(3,4.5);
\node[left] at (3,3) {\footnotesize  $1$};
\draw[mark=*]  plot coordinates {(3,3)};
\draw  (3,3)-- (3,1.5);
\node[right] at (3,1.5) {\footnotesize  $b^*$};
\draw[mark=*]  plot coordinates {(3,1.5)};
\draw  (3,0)-- (3,1.5);
\node[right] at (3,0) {\footnotesize  $a^*$};
\draw[mark=*]  plot coordinates {(3,0)};
\node[left] at (6,6) {\footnotesize  $a$};
\draw[mark=*]  plot coordinates {(6,6)};
\draw  (5.5,5.5)--(6,6)-- (6.5,5.5);
\node[left] at (5.5,5.5) {\footnotesize  $c_1$};
\draw[mark=*]  plot coordinates {(5.5,5.5)};
\node[right] at (6.5,5.5) {\footnotesize  $c_2$};
\draw[mark=*]  plot coordinates {(6.5,5.5)};
\draw  (5.5,5.5)--(6,5)-- (6.5,5.5);
\node[left] at (6,5) {\footnotesize  $c$};
\draw[mark=*]  plot coordinates {(6,5)};
\draw (6,4)--(6,5);
\node[left] at (6,4) {\footnotesize  $d$};
\draw[mark=*]  plot coordinates {(6,4)};
\draw  (5.5,3.5)--(6,4)-- (6.5,3.5);
\node[left] at (5.5,3.5) {\footnotesize  $d_1$};
\draw[mark=*]  plot coordinates {(5.5,3.5)};
\node[right] at (6.5,3.5) {\footnotesize  $d_2$};
\draw[mark=*]  plot coordinates {(6.5,3.5)};
\draw  (5.5,3.5)--(6,3)-- (6.5,3.5);
\node[left] at (6,3) {\footnotesize  $1$};
\draw[mark=*]  plot coordinates {(6,3)};
\draw  (6,2)-- (6,3);
\node[right] at (6,2) {\footnotesize  $d^*$};
\draw[mark=*]  plot coordinates {(6,2)};
\draw  (6,2)-- (6,1);
\node[right] at (6,1) {\footnotesize  $c^*$};
\draw[mark=*]  plot coordinates {(6,1)};
\draw  (6,0)-- (6,1);
\node[right] at (6,0) {\footnotesize  $a^*$};
\draw[mark=*]  plot coordinates {(6,0)};
\end{tikzpicture}
\end{center}
\caption{Failure of the 1AP for (commutative) rigid conic idempotent FSI residuated lattices: the algebras $\m A$, $\m B$, and $\m C$. \label{f:APfails3}}
\end{figure}

\subsection{Amalgamation for conjunctive rigid conic idempotent residuated lattices}

The following fact is well known and can also be established easily using residuated frames \cite{GJ2013}.

\begin{lemma}\label{l:sAPlattices}\label{l:sAPBr}
The varieties of lattices and of Brouwerian algebras each have the strong amalgamation property.
\end{lemma}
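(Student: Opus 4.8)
The plan is to handle the two varieties by the same method, exploiting that both admit an analytic (cut-free) sequent calculus and can be presented as residuated structures: a lattice is the $\{\meet,\jn\}$-reduct of an idempotent residuated lattice, while a Brouwerian algebra is exactly a commutative, integral, square-idempotent residuated lattice of the form $(A,\meet,\jn,\meet,\ra,\ra,\top)$, i.e.\ the equivalent algebraic semantics of the positive $\{\meet,\jn,\ra,\top\}$-fragment of intuitionistic logic. First I would reduce to a reduced V-formation, so that $\m A$ is a common subalgebra of $\m B$ and $\m C$ with $B\cap C=A$, and then build a Gentzen frame $\mathbf W$ in the sense of \cite{GJ2013}: its left objects are the (finite multisets of, resp.\ $\ra$-terms over) elements of $B\cup C$, its right objects the corresponding contexts, and its relation $N$ is derivability in the cut-free calculus from the set of \emph{atomic} sequents $\mathrm{Diag}(\m B)\cup\mathrm{Diag}(\m C)$, namely all sequents $x_1,\dots,x_n\Rightarrow y$ with $x_i,y\in B$ true in $\m B$ together with those true in $\m C$.

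By the general theory of Gentzen frames in \cite{GJ2013}, the associated Galois algebra $\mathbf W^{+}$ lies in the variety under consideration (a complete lattice, resp.\ a complete Brouwerian algebra), and mapping $b$, resp.\ $c$, to the Galois-closed set it generates yields homomorphisms $\m B\to\mathbf W^{+}$ and $\m C\to\mathbf W^{+}$ that agree on $A$; this produces the required commutative square. It then remains to upgrade this to a \emph{strong} amalgam, and both the injectivity of these maps and the exact-intersection condition are obtained from cut elimination together with a subformula analysis. A sequent all of whose entries lie in $B$ is derivable from $\mathrm{Diag}(\m B)\cup\mathrm{Diag}(\m C)$ iff it is already derivable from $\mathrm{Diag}(\m B)$ alone: in a cut-free derivation any maximal block of inferences rooted in $\mathrm{Diag}(\m C)$-axioms has all its entries in $C$, hence where it meets $B$ it meets $A=B\cap C$, so it can be rerouted through $\mathrm{Diag}(\m A)\subseteq\mathrm{Diag}(\m B)$; and the resulting derivability from $\mathrm{Diag}(\m B)$ amounts to the corresponding inequality in $\m B$. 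Hence $\m B\to\mathbf W^{+}$ is an embedding, and similarly for $\m C$. For the intersection, if the images of $b\in B$ and $c\in C$ coincide, then $b\Rightarrow c$ and $c\Rightarrow b$ are both derivable, and the same cut-free analysis yields an ``interpolant'' $a\in A$ with $b\Rightarrow a$, $a\Rightarrow b$, $a\Rightarrow c$, $c\Rightarrow a$ all derivable, so $b=a=c$ in the respective algebras; thus $g_{\m B}[B]\cap g_{\m C}[C]=g_{\m B}[A]$, which is strong amalgamation.

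For the variety of lattices one can also bypass frames entirely. Form the poset pushout $P$ of the underlying posets of $\m B$ and $\m C$ over $\m A$, that is, $B\cup C$ ordered by the order generated by $\leq_{\m B}\cup\leq_{\m C}$; a zigzag argument, using crucially that $\m A$ is closed under $\meet$ and $\jn$ in \emph{both} $\m B$ and $\m C$, shows that $\m B$ and $\m C$ embed into $P$ as sub-posets with $B\cap C=A$ and that every lattice operation of $\m B$ (and of $\m C$) remains the relevant least upper bound / greatest lower bound in $P$. Passing to the Dedekind--MacNeille completion $\overline P$, which is a complete lattice preserving all joins and meets that already exist, we obtain a lattice in which $\m B$ and $\m C$ sit as sublattices intersecting exactly in $A$, i.e.\ a strong amalgam.

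The genuine difficulty in either approach is the same single point: showing that combining the data of $\m B$ and $\m C$ creates no new order relations among the elements of one side, equivalently the conservativity-plus-interpolation step resting on cut elimination (or, in the lattice case, the zigzag lemma for $P$). This is precisely where the hypothesis that $\m A$ is a common \emph{subalgebra}, and not merely a common subposet, is indispensable, since it guarantees that the ``interface'' sequents occurring when a $\mathrm{Diag}(\m B)$-part of a derivation meets a $\mathrm{Diag}(\m C)$-part are $\m A$-sequents. The $\ra$-rules make this slightly more delicate for Brouwerian algebras than for lattices, but the argument is entirely standard.
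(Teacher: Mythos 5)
The paper offers no proof of this lemma at all---it records the statement as well known and merely remarks that it ``can also be established easily using residuated frames \cite{GJ2013}''---so your Gentzen-frame argument is precisely the route the authors have in mind, and your poset-pushout plus Dedekind--MacNeille alternative for lattices is a standard elementary substitute; both are essentially correct. The only slip, and it is inessential, is the opening claim that every lattice is the $\{\meet,\join\}$-reduct of an idempotent residuated lattice (false for non-distributive lattices in general, but never used: the Galois algebra of a polarity is a complete lattice regardless).
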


\begin{theorem}\label{t:sAPqichains}
The class of $^\star$-involutive idempotent residuated chains has the strong amalgamation property.
\end{theorem}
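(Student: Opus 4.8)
The plan is to leverage the decomposition of $^\star$-involutive idempotent residuated chains as nested sums of crownians (Lemma~\ref{l:crowns}), which reduces the amalgamation problem to the (essentially trivial) amalgamation problem for the chains indexing these nested sums. Since the class is closed under isomorphism, I may assume the V-formation is reduced, so that $\m A \leq \m B$ and $\m A \leq \m C$ with $\m A = \m B \cap \m C$, all three being $^\star$-involutive idempotent residuated chains. By Lemma~\ref{l:crowns}, write $\m A = \boxplus_{k \in K} \m A_k$, $\m B = \boxplus_{i \in I} \m B_i$, and $\m C = \boxplus_{j \in J} \m C_j$ as nested sums of crownians over chains $K$, $I$, $J$, respectively.

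The key structural observation is that a crownian has no proper nontrivial subalgebra: by Lemma~\ref{l:crowns} a crownian is one-generated and each one-generated subalgebra is generated by any of its non-identity elements, so any subalgebra containing a non-identity element is the whole crownian. Consequently, for each $k$ pick a non-identity $x \in A_k$; since $\m A_k \leq \m B$ and $\m A_k$ is generated by $x$, and since in $\m B$ the piece containing $x$ is precisely the one-generated subalgebra of $\m B$ generated by $x$ (the one-generated subalgebras partition the non-identity elements, so the decomposition of $\m B$ is essentially unique), we get $\m A_k = \m B_{\beta(k)}$ for a well-defined map $\beta\colon K \to I$, and likewise $\m A_k = \m C_{\gamma(k)}$ for $\gamma\colon K \to J$. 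Reading off the nesting relation — $\m A_{k'}$ is nested inside $\m A_k$ in $\m A$ exactly when its elements lie between the positive and negative parts of $\m A_k$, and this relation is inherited in $\m B$ and in $\m C$ — shows that $\beta$ and $\gamma$ are order embeddings of the chain $K$.

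It remains to reassemble. Using that the class of linear orders has the strong amalgamation property, amalgamate $I$ and $J$ over $K$ (along $\beta,\gamma$) to obtain a chain $L$ containing $I$ and $J$ as suborders with $I \cap J = K$ (identifying $K$ with its image). Set $\m D := \boxplus_{l \in L} \m D_l$, where $\m D_l := \m B_l$ for $l \in I$ and $\m D_l := \m C_l$ for $l \in J$; this is consistent on $K$ since $\m B_k = \m A_k = \m C_k$ there. The nested sum is well formed: each $\m D_l$ is a crownian, in which $1$ is isolated, so $a^\star \neq 1$ for every non-identity $a$, which is exactly the side condition required. By the converse direction of Lemma~\ref{l:crowns}, $\m D$ is again a $^\star$-involutive idempotent residuated chain. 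Moreover $\m B = \boxplus_{i \in I} \m D_i$ and $\m C = \boxplus_{j \in J} \m D_j$ are subalgebras of $\m D$: in a nested sum, ${}^\star$ and ${}^{\leftrightarrow}$ leave each piece invariant and $1$ is shared, so a union of pieces together with $1$ is closed under ${}^\leftrightarrow$, ${}^\star$ and $1$, hence is a subalgebra by Lemma~\ref{l:subemp}. Finally, $\m B \cap \m C = \{1\} \cup \bigcup_{l \in I \cap J}(D_l \setminus \{1\}) = \boxplus_{k \in K} \m D_k = \boxplus_{k \in K} \m A_k = \m A$, so $\m D$ together with the inclusion maps is a strong amalgam in the class.

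The main obstacle — and where the earlier structural work does the heavy lifting — is the second paragraph: proving that crownians are subalgebra-simple and deducing that the nested-sum decompositions of $\m A$, $\m B$, and $\m C$ are genuinely compatible, in the sense that the crownian pieces of $\m A$ are literally pieces of $\m B$ and of $\m C$ with matching nesting order. Once this compatibility is secured, the amalgamation is purely combinatorial, reduced to amalgamating chains. The remaining points (that a union of pieces of a nested sum plus $1$ is closed under the operations, and that the reassembled $\m D$ satisfies the side condition of the nested-sum construction) are minor and follow from $1$ being isolated in every crownian.
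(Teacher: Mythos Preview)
Your proof is correct and follows essentially the same strategy as the paper: decompose $\m A$, $\m B$, $\m C$ as nested sums of crownians via Lemma~\ref{l:crowns}, observe that each crownian summand of $\m A$ coincides with a crownian summand of $\m B$ and of $\m C$ (since one-generated subalgebras are generated by any non-identity element), amalgamate the index chains, and reassemble. Your write-up is in fact somewhat more explicit than the paper's about verifying the side conditions of the nested-sum construction and the subalgebra check via Lemma~\ref{l:subemp}.
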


\begin{proof}
Let $\m B$ and $\m C$ be $^\star$-involutive idempotent residuated chains that intersect at a common subalgebra $\m A$. Lemma~\ref{l:crowns} entails that  $\m A$ is a union of one-generated subalgebras of $\m B$ and also a union of one-generated subalgebras of $\m C$. In particular, if a non-trivial one-generated subalgebra of $\m A$ is part of a one-generated subalgebra of $\m B$ (or of $\m C$), then the two one-generated subalgebras are identical.

Therefore, the nested sum decomposition of the corresponding enhanced monoidal preorders afforded by Lemma~\ref{l:crowns} is such that for the corresponding index sets/chains we have that $I_B \cap I_C=I_A$. We can now amalgamate the index sets, as chains to get $I_B \cup I_C$, where the ordering is any linear ordering extending the union of the two orderings. The enhanced monoidal preorder of the algebra $\m D$ is obtained as the nested sum over that index set, where the one-generated subalgebras are exactly the ones of $\m B$ and the ones from $\m C$; if an index is in both  $I_B$ and $I_C$, then it is in $I_A$ and the corresponding one-generated subalgebra in $\m A$,   $\m B$ and $\m C$ are identical.
\end{proof}

\begin{theorem}\label{t:sAPconic}
The class of rigid conjunctive conic idempotent residuated lattices has the strong amalgamation property.
\end{theorem}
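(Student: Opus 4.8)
The strategy is to combine the strong amalgamation of the skeletons (Theorem~\ref{t:sAPqichains}) with the strong amalgamation of the blocks (Lemma~\ref{l:sAPlattices} and Lemma~\ref{l:sAPBr}) via the decomposition system machinery of Section~\ref{sec:basicstructure}. Let $\m B$ and $\m C$ be rigid conjunctive conic idempotent residuated lattices sharing a common subalgebra $\m A$. By Lemma~\ref{l:subsystems}, the decomposition systems $(\m S_A,\{\m A_s\})$, $(\m S_B,\{\m B_s\})$, $(\m S_C,\{\m C_s\})$ satisfy that the first is a subsystem of each of the latter two. Since $\m B,\m C$ are rigid, the skeletons $\m S_B,\m S_C$ are $^\star$-involutive idempotent residuated chains by Lemma~\ref{l:weaklyinvolutive}(1,2), and $\m S_A$ is their common subalgebra. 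First I would apply Theorem~\ref{t:sAPqichains} to obtain a $^\star$-involutive idempotent residuated chain $\m S_D$ strongly amalgamating the V-formation of skeletons; by strongness, $S_D = S_B \cup S_C$ with $S_B \cap S_C = S_A$ (after the usual identification), and the nested-sum/index-set description in the proof of Theorem~\ref{t:sAPqichains} tells us precisely how $\m S_D$ is built.

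Next I would amalgamate the blocks fibre by fibre. For each $s \in S_D$: if $s \in S_B \setminus S_C$ set $\m D_s := \m B_s$; if $s \in S_C \setminus S_B$ set $\m D_s := \m C_s$; if $s \in S_A$, then $\m A_s$ is a common subalgebra (subprelattice, or Brouwerian subalgebra when $s$ is negative) of $\m B_s$ and $\m C_s$, and since $\m B,\m C$ are conjunctive, Lemma~\ref{l:DeMorganarelatt} guarantees that $\m B_s$ and $\m C_s$ are genuine lattices (Brouwerian lattices when $s \le 1$, arbitrary lattices when $s > 1$; note noncentral $s$ forces trivial blocks, so there is nothing to do there). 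Then I would invoke Lemma~\ref{l:sAPlattices}/\ref{l:sAPBr} to choose a strong amalgam $\m D_s$ of $(\m A_s,\m B_s,\m C_s)$ in the appropriate variety, taking $\m D_s$ to have top element $s$ (the top is preserved since lattice homomorphisms coming from strong amalgamation of a topped situation can be arranged to fix the top — this is where I need to be slightly careful about the "topped" decoration). I must also check the decomposition-system axioms for $D = (\m S_D,\{\m D_s\})$: that $\m D_s$ is a lattice when $s$ has no lower cover in $\m S_D$ (which follows because $\m S_D$'s covering relation restricted to $S_B$ or $S_C$ agrees with theirs, so a block that needs a lower cover gets one — this uses the ``$s^\downarrow$'' bookkeeping in the subsystem definition and the way $\m S_D$ was assembled), that negative blocks are Brouwerian, and that noncentral blocks are trivial. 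Having verified these, Theorem~\ref{t:decomposition} yields a conic idempotent residuated lattice $\m A_D$, which is rigid (its skeleton $\m S_D$ is $^\star$-involutive) and conjunctive (all its blocks are lattices, by Lemma~\ref{l:DeMorganarelatt}).

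Finally I would check that $\m A_D$ is a strong amalgam: the inclusions $\m S_B \hookrightarrow \m S_D$, together with the block embeddings $\m B_s \hookrightarrow \m D_s$, assemble into an embedding $g_{\m B}\colon \m B \to \m A_D$ by Lemma~\ref{l:subsystems} (one checks $(\m S_B,\{\m B_s\})$ is a subsystem of $(\m S_D,\{\m D_s\})$), and similarly $g_{\m C}$; commutativity over $\m A$ is immediate from the fibrewise and skeletal commutativity. For strongness, an element of $g_{\m B}[B] \cap g_{\m C}[C]$ lies in some block $\m D_s$; its $\gamma$-value $s$ must be in $S_B \cap S_C = S_A$, and then the element lies in $g_{\m B}[B_s] \cap g_{\m C}[C_s]$, which by the strongness of $\m D_s$ equals the image of $A_s$. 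The main obstacle, I expect, is the bookkeeping around \emph{topped} prelattices and lower covers: I must ensure that the strong amalgam $\m D_s$ of the block lattices can be taken with the \emph{same} top element $s$ that sits in $\m S_D$, and that the requirement ``$s$ has no lower cover $\Rightarrow \m D_s$ is a lattice'' is not violated by the amalgamation of the skeletons introducing a new lower cover of some $s$ below which a proper prelattice used to sit — this is exactly why the linear ordering on $I_B \cup I_C$ must be chosen compatibly with the ``$s^\downarrow$'' data, and verifying this compatibility is the delicate point.
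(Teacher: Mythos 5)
Your proposal is correct and follows essentially the same route as the paper: amalgamate the $^\star$-involutive skeletons via Theorem~\ref{t:sAPqichains}, strongly amalgamate the blocks fibrewise using Lemmas~\ref{l:sAPlattices} and~\ref{l:sAPBr}, and reassemble via Theorem~\ref{t:decomposition} and Lemma~\ref{l:subsystems}. The ``delicate point'' you flag about proper prelattices and the $s^\downarrow$ bookkeeping is vacuous here, since conjunctivity (Lemma~\ref{l:DeMorganarelatt}) makes every block a genuine topped lattice, and the top $s$ is preserved by passing, if necessary, to the sublattice of the amalgam generated by $B_s\cup C_s$.
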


\begin{proof}
Let $\m B$ and $\m C$ be a rigid conjunctive conic idempotent residuated lattices and assume that their intersection is a common subalgebra $\m A$. We will produce a rigid conjunctive conic idempotent residuated lattice $\m D$ that will have $\m B$ and $\m C$ as subalgebras. Let $\m S_{\m A}$, $\m S_{\m B}$ and $\m S_{\m C}$ be the residuated chains of quasi-involutive elements of the three algebras. By rigidity, we get that these skeletons are $^\star$-involutive, so by Lemma~\ref{t:sAPqichains} we have a $^\star$-involutive  residuated chain $\m S_{\m D}$ that has $\m S_{\m B}$ and $\m S_{\m C}$ as subalgebras. 

 Also, we denote the corresponding blocks by $\m A_s$, $s \in S_A$,  $\m B_s$, $s \in S_B$, and  $\m C_s$, $s \in S_C$, which are all lattices by Lemma~\ref{l:DeMorganarelatt}. For a negative $s \in S_D$ we have that  $\m B_s$ and  $\m C_s$ are Brouwerian algebras and $\m A_s$ is their common subalgebra. By Lemma~\ref{l:sAPBr} there exists a Brouwerian algebra $\m D_s$ that has  $\m B_s$ and  $\m C_s$  as subalgebras. Also, for a positive $s \in S_D$ we have that  $\m B_s$ and  $\m C_s$ are lattices and $\m A_s$ is their common sublattice. By Lemma~\ref{l:sAPlattices} there exists a lattice $\m D_s$ that has  $\m B_s$ and  $\m C_s$  as sublattices.  Therefore, $(\m S_{\m D}, \{D_s: s \in S_D\})$ is a decomposition system, so by Theorem~\ref{t:decomposition} it corresponds to a conic idempotent residuated lattice $\m D$, which is conjunctive by Lemma~\ref{l:DeMorganarelatt} and rigid since its skeleton is $^\star$-involutive. Also, by Lemma~\ref{l:subsystems}, this system has  $(\m S_{\m B}, \{B_s: s \in S_B\})$ and $(\m S_{\m C}, \{C_s: s \in S_C\})$ as subsystems, so  $\m D$ has $\m B$ and $\m C$ as subalgebras.
\end{proof}

\subsection{Strong amalgamation for rigid conjunctive semiconic idempotent residuated lattices}\label{s:sAP}

In order to lift the amalgamation result from the conic to the semiconic case, we require a variant of the following theorem.
\begin{theorem}[{\cite[Theorem 9]{MMT2014}}]\label{thm:MMT2014}
Let $\mathcal{S}$ be a subclass of a variety $\mathcal{V}$ satisfying the following conditions:
\begin{enumerate}
\item $\mathcal{S}$ contains all subdirectly irreducible members of $\mathcal{V}$;
\item $\mathcal{S}$ is closed under isomorphisms and subalgebras;
\item For any algebra ${\m B}\in\mathcal{V}$ and subalgebra ${\m A}$ of ${\m B}$, if $\Theta$ is a congruence of ${\m A}$ and ${\m A}/\Theta\in\mathcal{S}$, then there exists a congruence $\Psi$ of ${\m B}$ such that $\Psi\cap A^2=\Theta$ and ${\m B}/\Psi\in\mathcal{S}$;
\item $\mathcal{S}$ has the amalgamation property in $\mathcal{V}$.
\end{enumerate}
Then $\mathcal{V}$ has the amalgamation property.
\end{theorem}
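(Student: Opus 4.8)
The plan is to take an arbitrary V-formation in $\mathcal{V}$ — which, since $\mathcal{V}$ is closed under isomorphism, we may assume reduced, say $(\m A,\m B,\m C,f_{\m B},f_{\m C})$ with $\m A$ a common subalgebra of $\m B$ and $\m C$ and $f_{\m B},f_{\m C}$ the inclusions — and to build an amalgam as a direct product of amalgams of V-formations lying entirely inside $\mathcal{S}$, so that hypothesis~(4) can be applied to each factor. The naive approach — subdirectly decompose just one of $\m B$, $\m C$ into subdirectly irreducibles (which lie in $\mathcal{S}$ by~(1)), transport the induced congruences on $\m A$ to the other algebra via~(3), and amalgamate the resulting pieces — yields a candidate $\m D$ into which $\m A$ embeds but into which $\m B$ and $\m C$ need not embed; the fix is to run the construction symmetrically on both sides and then take a product.

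In detail: by Birkhoff's subdirect representation theorem fix a subdirect embedding $\m B\hookrightarrow\prod_{i\in I}\m B_i$ with each $\m B_i$ subdirectly irreducible, hence $\m B_i\in\mathcal{S}$ by~(1); write $\theta_i$ for the $i$-th projection kernel, so $\bigcap_{i\in I}\theta_i=\Delta_{\m B}$. Symmetrically fix $\m C\hookrightarrow\prod_{j\in J}\m C_j$ with $\m C_j$ subdirectly irreducible (so in $\mathcal{S}$) and projection kernels $\mu_j$, $\bigcap_{j\in J}\mu_j=\Delta_{\m C}$. For each $i\in I$ the restriction to $\m A$ of the $i$-th projection has kernel $\Theta_i:=\theta_i\cap A^2$ and induces an embedding $\m A/\Theta_i\hookrightarrow\m B_i$, whence $\m A/\Theta_i\in\mathcal{S}$ by~(2); applying~(3) to $\m A\le\m C$ gives a congruence $\chi_i$ of $\m C$ with $\chi_i\cap A^2=\Theta_i$ and $\m C/\chi_i\in\mathcal{S}$, together with a compatible embedding $\m A/\Theta_i\hookrightarrow\m C/\chi_i$. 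Thus $(\m A/\Theta_i,\m B_i,\m C/\chi_i)$ is a V-formation in $\mathcal{S}$, and~(4) supplies an amalgam $\m E_i\in\mathcal{V}$ with embeddings $\m B_i\hookrightarrow\m E_i$ and $\m C/\chi_i\hookrightarrow\m E_i$ agreeing on the image of $\m A/\Theta_i$. Running the mirror-image argument over $j\in J$ — with $N_j:=\mu_j\cap A^2$, the embedding $\m A/N_j\hookrightarrow\m C_j$, and a congruence $\psi_j$ of $\m B$ obtained from~(3) with $\psi_j\cap A^2=N_j$ and $\m B/\psi_j\in\mathcal{S}$ — produces amalgams $\m F_j\in\mathcal{V}$ of the V-formations $(\m A/N_j,\m B/\psi_j,\m C_j)$.

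Finally set $\m D:=\prod_{i\in I}\m E_i\times\prod_{j\in J}\m F_j$, which lies in $\mathcal{V}$ since varieties are closed under products. Let $g_{\m B}\colon\m B\to\m D$ act on the $i$-th factor by $\m B\twoheadrightarrow\m B_i\hookrightarrow\m E_i$ and on the $j$-th factor by $\m B\twoheadrightarrow\m B/\psi_j\hookrightarrow\m F_j$, and let $g_{\m C}\colon\m C\to\m D$ act by $\m C\twoheadrightarrow\m C/\chi_i\hookrightarrow\m E_i$ on the $i$-th factor and $\m C\twoheadrightarrow\m C_j\hookrightarrow\m F_j$ on the $j$-th factor. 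Then $g_{\m B}$ is injective because already its composite with the projection onto $\prod_{i}\m E_i$ has kernel contained in $\bigcap_i\theta_i=\Delta_{\m B}$, and dually $g_{\m C}$ is injective via the $\prod_j\m F_j$ factor. For $a\in A$, on the $i$-th factor both $g_{\m B}(a)$ and $g_{\m C}(a)$ equal the image of $a/\Theta_i$ under the two agreeing embeddings of $\m A/\Theta_i$ into $\m E_i$, and similarly they coincide on the $j$-th factor through $\m A/N_j$; hence $g_{\m B}\circ f_{\m B}=g_{\m C}\circ f_{\m C}$, and $(\m D,g_{\m B},g_{\m C})$ is the desired amalgam.

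The crux — the only point that uses the hypotheses in a genuinely nontrivial way rather than mere bookkeeping — is securing injectivity of \emph{both} $g_{\m B}$ and $g_{\m C}$: a single subdirect decomposition cannot achieve this, so the families $\{\m E_i\}_{i\in I}$ and $\{\m F_j\}_{j\in J}$ must be constructed and multiplied together, and the verification that the two resulting maps into $\m D$ still agree on $\m A$ rests precisely on the compatibility conditions $\chi_i\cap A^2=\Theta_i$ and $\psi_j\cap A^2=N_j$ delivered by~(3). Conditions~(1) and~(2) serve only to keep all intermediate algebras inside $\mathcal{S}$, while~(4) is invoked only at the level of V-formations of $\mathcal{S}$-algebras.
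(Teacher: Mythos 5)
Your proof is correct. Note that the paper does not prove this statement itself but imports it verbatim from \cite{MMT2014}; your argument is essentially the standard one given there (and going back to Gr\"atzer--Lakser for the case $\mathcal{S}=\mathcal{V}_{SI}$): separate points of $\m B$ and of $\m C$ by subdirectly irreducible quotients, pull the resulting congruences back to $\m A$, push them across to the other leg via condition~(3), amalgamate each triple of $\mathcal{S}$-quotients using~(4), and embed $\m B$ and $\m C$ into the product of all these amalgams, with injectivity of $g_{\m B}$ (resp.\ $g_{\m C}$) secured by the $\{\m E_i\}$ (resp.\ $\{\m F_j\}$) block and agreement on $A$ secured by $\chi_i\cap A^2=\Theta_i$ and $\psi_j\cap A^2=N_j$. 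Your observation that running the construction on only one side fails to make both maps injective is exactly the reason the symmetric product is needed.
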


To obtain a version of the above for the strong amalgamation property, we exploit the link between strong amalgamation and epimorphism surjectivity. A class $\mathcal{K}$ of similar algebras is said to have \emph{epimorphism surjectivity} or \emph{the ES property} if every epic homomorphism between members of $\mathcal{K}$ is surjective. A subalgebra ${\m A}$ of ${\m B}\in\mathcal{K}$ is said to be \emph{$\mathcal{K}$-epic} if the inclusion homomorphism ${\m A}\hookrightarrow {\m B}$ is an epimorphism, i.e., if for any ${\m C}\in\mathcal{K}$ and any homomorphisms $h,k\colon {\m B}\to {\m C}$ we have that $h\restriction_A=k\restriction_A$ implies $h=k$. Observe that if ${\m A},{\m B}\in\mathcal{K}$ and $h\colon {\m A}\to {\m B}$ is an ${\m K}$-epic homomorphism, then $h({\m A})$ is a ${\m K}$-epic subalgebra of ${\m B}$. Consequently, if $\mathcal{K}$ is closed under taking subalgebras, then $\mathcal{K}$ has the ES property if and only if no ${\m B}\in\mathcal{K}$ has a proper $\mathcal{K}$-epic subalgebra.

 The following is an immediate consequence of \cite[Corollary 2.5.20]{H2001}

\begin{lemma}\label{lem:Hoogland1}
Suppose that $\mathcal{K}$ is a class of similar algebras. If $\mathcal{K}$ has the strong amalgamation property, then $\mathcal{K}$ has the ES property.
\end{lemma}

We will also make use of the following results.

\begin{lemma}[{\cite[Corollary 2.5.23]{H2001}}]\label{lem:Hoogland2}
Suppose that $\mathcal{K}$ is a class of similar algebras that is closed under subalgebras and direct products.
 Then $\mathcal{K}$ has the strong amalgamation property if and only if $\mathcal{K}$ has the amalgamation property and the ES property.
\end{lemma}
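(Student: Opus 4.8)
The forward implication is immediate: the strong amalgamation property trivially entails the amalgamation property, and it entails the ES property by Lemma~\ref{lem:Hoogland1}. So the content is the converse. Assume $\mathcal{K}$ is closed under subalgebras and direct products, has the AP, and has the ES property, and fix a reduced V-formation with $\m{A}\leq\m{B}$ and $\m{A}\leq\m{C}$. Since $\mathcal{K}$ is a prevariety, the amalgamated coproduct $\m{D}:=\m{B}\amalg_{\m{A}}\m{C}$ exists in $\mathcal{K}$ (it is the $\mathcal{K}$-reflection of the corresponding pushout in the ambient variety, and its existence uses closure under $\mathbb{S}$ and $\mathbb{P}$); it comes with canonical homomorphisms $\beta\colon\m{B}\to\m{D}$, $\gamma\colon\m{C}\to\m{D}$ satisfying $\beta|_A=\gamma|_A$, and with the universal property that every pair of homomorphisms $\mu\colon\m{B}\to\m{H}$, $\nu\colon\m{C}\to\m{H}$ into some $\m{H}\in\mathcal{K}$ with $\mu|_A=\nu|_A$ factors uniquely through $\m{D}$. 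First I would use the AP to conclude that $\beta$ and $\gamma$ are embeddings: an amalgam $(\m{D}_0,\beta_0,\gamma_0)$ furnished by the AP induces, via the universal property, a homomorphism $\m{D}\to\m{D}_0$ through which $\beta_0$ factors as $(\text{this map})\circ\beta$, and since $\beta_0$ is injective so is $\beta$ (likewise $\gamma$). Thus $(\m{D},\beta,\gamma)$ is an amalgam, and it remains to show it is strong, i.e. $\beta[B]\cap\gamma[C]=\beta[A]$; the inclusion $\supseteq$ is clear.

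For $\subseteq$, suppose $d=\beta(b)=\gamma(c)$ with $b\in B$, $c\in C$ (unique by injectivity). If $b\in A$ we are done, since then $c=b\in A$ and $d\in\beta[A]$; so assume $b\notin A$ for contradiction. It suffices to produce some $\m{F}\in\mathcal{K}$ and homomorphisms $\mu_1,\mu_2\colon\m{B}\to\m{F}$, $\nu\colon\m{C}\to\m{F}$ with $\mu_1|_A=\mu_2|_A=\nu|_A$ but $\mu_1(b)\neq\mu_2(b)$: by the universal property of $\m{D}$ there are then $\sigma_1,\sigma_2\colon\m{D}\to\m{F}$ with $\sigma_i\beta=\mu_i$ and $\sigma_i\gamma=\nu$, so $\mu_1(b)=\sigma_1(d)=\nu(c)=\sigma_2(d)=\mu_2(b)$, a contradiction. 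To obtain this data, start from $\m{B}_1:=\langle A\cup\{b\}\rangle$, a proper extension of $\m{A}$ inside $\m{B}$; by the ES property (equivalently, since $\mathcal{K}$ is closed under $\mathbb{S}$, the absence of proper epic subalgebras) $\m{A}$ is not epic in $\m{B}_1$, and examining the pushout $\m{B}_1\amalg_{\m{A}}\m{B}_1$ — whose two canonical maps are injective by the AP — yields homomorphisms $j_1\neq j_2\colon\m{B}_1\to\m{G}$ agreeing on $\m{A}$; since $\m{B}_1$ is generated by $A\cup\{b\}$, this forces $j_1(b)\neq j_2(b)$. Then, by a short chain of further pushouts — first extending $j_1$ and $j_2$ along $\m{B}_1\hookrightarrow\m{B}$ into a common target $\m{F}_0$, then pushing $\m{F}_0$ out against $\m{C}$ along the shared restriction to $\m{A}$ — and invoking the AP each time to keep the canonical maps injective, one arrives at the required $\m{F}$ together with $\mu_1,\mu_2,\nu$, the injectivity of $\m{F}_0\hookrightarrow\m{F}$ preserving $\mu_1(b)\neq\mu_2(b)$.

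The main obstacle, and the real content beyond routine diagram-chasing, is this upgrade from the ES property to point-separation: the ES property only guarantees that two homomorphisms agreeing on $\m{A}$ must differ \emph{somewhere}, so one must first exploit that $\m{B}_1$ is one-step generated over $\m{A}$ to force the separation to occur at $b$, and then solve the extension problem — a separating pair of maps out of $\m{B}$ need not agree on $\m{A}$ with any homomorphism out of $\m{C}$ — which is precisely what the iterated pushouts accomplish, each step relying on the AP to guarantee the canonical maps remain embeddings. Verifying that the AP genuinely applies at every stage (i.e. that the subsidiary V-formations consist of embeddings, which holds because embeddings compose) is the bookkeeping that must be carried out carefully.
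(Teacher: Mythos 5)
The paper does not prove this lemma at all---it is imported verbatim from Hoogland's thesis---so there is no internal proof to compare against; your argument is essentially the classical one (Ringel; Kiss--M\'arki--Pr\"ohle--Tholen) recast in the language of amalgamated coproducts, and its overall architecture is sound. The forward direction, the existence of the pushout $\m D=\m B\amalg_{\m A}\m C$ (an $\mathbb{ISP}$-closed class is reflective in the ambient variety, hence cocomplete), the injectivity of its canonical maps via the AP, the reduction of strongness to producing a triple $(\mu_1,\mu_2,\nu)$ separating $b$, and the derivation of $j_1(b)\neq j_2(b)$ from non-epicity of $\m A$ in $\m B_1=\langle A\cup\{b\}\rangle$ together with the generation argument are all correct.

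The one step you leave genuinely under-determined is the construction of the ``common target $\m F_0$,'' and it is exactly the step where the natural wrong choice breaks the proof. Applying the AP to the two V-formations $(\m B_1,\m B,\m G,\subseteq,j_i)$ gives embeddings $u_i\colon\m B\to\m H_i$ and $v_i\colon\m G\to\m H_i$ with $u_i|_{B_1}=v_i\circ j_i$, but into \emph{different} targets; to merge them you must amalgamate $\m H_1$ with $\m H_2$ \emph{over the two copies of $\m G$} (i.e.\ along $v_1$ and $v_2$), obtaining $w_i\colon\m H_i\to\m F_0$ with $w_1\circ v_1=w_2\circ v_2$. Then $\mu_i:=w_i\circ u_i$ satisfies $\mu_i(b)=w_1(v_1(j_i(b)))$, so $\mu_1(b)\neq\mu_2(b)$ because $w_1\circ v_1$ is injective and $j_1(b)\neq j_2(b)$, while $\mu_1|_A=\mu_2|_A$ because $j_1|_A=j_2|_A$. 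Amalgamating $\m H_1$ and $\m H_2$ over $\m A$ instead would \emph{not} work: ordinary amalgamation places no constraint on whether $w_1(u_1(b))$ and $w_2(u_2(b))$ coincide for elements outside the shared subalgebra---preventing precisely such identifications is what the lemma is trying to establish, so that choice would be circular. With the gluing over $\m G$ specified, the final amalgamation of $\m F_0$ with $\m C$ over $\m A$ goes through exactly as you describe. As an aside, closure under products can be used more directly to bypass the pushout machinery: replace the non-injective witnesses of non-epicity by the embeddings $x\mapsto(x,j_i(x))$ into $\m B_1\times\m E$, and finish by taking the product of one base amalgam with a separating amalgam for each offending pair $(b,c)$; this is the more elementary route and is closer to Hoogland's own argument.
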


\begin{lemma}[{\cite[Theorem 22]{C2018}}]\label{lem:Campercholi}
Let $\mathcal{V}$ be an arithmetical variety such that its class $\mathcal{V}_{FSI}$ of finitely subdirectly irreducible members is a universal class. Then $\mathcal{V}$ has the ES property if and only if $\mathcal{V}_{FSI}$ has the ES property.
\end{lemma}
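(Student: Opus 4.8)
The plan is to recast epimorphism surjectivity in terms of dominions and to exploit the Boolean‑product representation that is available for arithmetical varieties whose finitely subdirectly irreducible members form a universal class. For a class $\mathcal{K}\in\{\mathcal{V},\mathcal{V}_{FSI}\}$, an algebra $\m B$ in (or over) $\mathcal{K}$, and a subalgebra $\m A\leq\m B$, write $\operatorname{dom}^{\mathcal{K}}_{\m B}(\m A)$ for the set of $b\in B$ such that any two $\mathcal{K}$-homomorphisms out of $\m B$ that agree on $A$ also agree on $b$. Since $\mathcal{V}$ is a variety and $\mathcal{V}_{FSI}$ is a universal class, both are closed under subalgebras, so each of them has the ES property if and only if no member of it has a proper epic subalgebra, i.e.\ a proper subalgebra $\m A\leq\m B$ with $\operatorname{dom}^{\mathcal{K}}_{\m B}(\m A)=B$. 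The whole proof will be organised around comparing these two dominion operators.

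The first step is to establish the equality $\operatorname{dom}^{\mathcal{V}}_{\m B}(\m A)=\operatorname{dom}^{\mathcal{V}_{FSI}}_{\m B}(\m A)$ for every $\m B\in\mathcal{V}$ and every $\m A\leq\m B$. The inclusion $\supseteq$ is immediate since $\mathcal{V}_{FSI}\subseteq\mathcal{V}$. For $\subseteq$, suppose $b\notin\operatorname{dom}^{\mathcal{V}}_{\m B}(\m A)$, witnessed by $f,g\colon\m B\to\m C\in\mathcal{V}$ with $f\!\restriction_A=g\!\restriction_A$ and $f(b)\neq g(b)$; replacing $\m C$ by $\langle f(B)\cup g(B)\rangle$ and then by $\m C/\theta$ for a congruence $\theta$ maximal with respect to not collapsing $f(b)$ and $g(b)$ yields a subdirectly irreducible (hence $\mathcal{V}_{FSI}$-)quotient, and composing $f,g$ with the quotient map shows $b\notin\operatorname{dom}^{\mathcal{V}_{FSI}}_{\m B}(\m A)$. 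Note this step uses only that $\mathcal{V}$ is a variety. The forward implication of the lemma now drops out: if $\m B\in\mathcal{V}_{FSI}$ and $\m A\leq\m B$ is $\mathcal{V}_{FSI}$-epic, the equality makes $\m A$ also $\mathcal{V}$-epic in $\m B$, so $\m A=\m B$ by the ES property of $\mathcal{V}$; hence $\mathcal{V}_{FSI}$ has ES.

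The substantive direction is the converse. Assume $\mathcal{V}_{FSI}$ has ES, let $\m B\in\mathcal{V}$, and let $\m A\leq\m B$ be $\mathcal{V}$-epic; we must show $\m A=\m B$. Because $\mathcal{V}$ is arithmetical and $\mathcal{V}_{FSI}$ is a universal class, $\m B$ can be realised as a Boolean product $\m B\leq\prod_{x\in X}\m B_x$ with each stalk $\m B_x\in\mathcal{V}_{FSI}$ and $X$ a Boolean space. By the dominion equality $\m A$ is $\mathcal{V}_{FSI}$-epic in $\m B$; since each stalk map $\rho_x\colon\m B\to\m B_x$ is surjective, composing witnessing homomorphisms with $\rho_x$ shows $\rho_x(\m A)$ is $\mathcal{V}_{FSI}$-epic in $\m B_x$, whence $\rho_x(\m A)=\m B_x$ by ES for $\mathcal{V}_{FSI}$. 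Now fix $b\in B$. By the description of dominions in a variety (Isbell's zigzag lemma, equivalently Campercholi's characterisation via primitive positive functions), $b$ is the unique solution in $\m B$ of a primitive positive formula $\varphi(x,\bar y)$ with parameters $\bar a$ from $A$ whose associated functionality quasi-identity holds throughout $\mathcal{V}$, hence in every stalk. Using $\rho_x(\m A)=\m B_x$ one finds, for each $x$, an element of $A$ agreeing with $b$ on a clopen neighbourhood of $x$; compactness of $X$ produces a finite clopen partition on whose blocks $b$ matches elements of $A$, and the functionality of $\varphi$ on the stalks forces the element assembled from these pieces to be exactly $b$, so $b\in A$ and $\m A=\m B$.

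The main obstacle is precisely this last gluing step, and it is where both hypotheses on $\mathcal{V}$ are genuinely needed. A subalgebra of a Boolean product need not be closed under clopen patching, so one cannot simply patch the chosen elements of $A$ inside $\m A$; and the naive hope ``$\m A$ surjects onto every stalk, therefore $\m A=\m B$'' is false, as witnessed by the two‑element subalgebra of the countably infinite atomless Boolean algebra. The argument must therefore route through the dominion — using that $b$ is primitive‑positively definable from parameters in $A$ and that the defining formula is functional on each stalk — together with the arithmetical structure, which is exactly what allows the local matches with members of $A$ to be recombined into a global member of $A$. Making the primitive‑positive/zigzag description of $\mathcal{V}$-dominions interact correctly with the Boolean‑product representation is the heart of the proof.
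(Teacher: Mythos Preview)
The paper does not prove this lemma at all: it is stated as \cite[Theorem~22]{C2018} and used as a black box in the proof of Theorem~\ref{thm:strongMMT2014}. There is therefore no ``paper's own proof'' to compare your attempt against.

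That said, your sketch is broadly faithful to Campercholi's actual argument in \cite{C2018}. The reduction to dominions, the equality $\operatorname{dom}^{\mathcal V}_{\m B}(\m A)=\operatorname{dom}^{\mathcal V_{FSI}}_{\m B}(\m A)$, the Boolean-product representation available for arithmetical varieties with $\mathcal V_{FSI}$ universal, and the use of the primitive-positive characterisation of dominions to carry out the local-to-global patching are exactly the ingredients Campercholi assembles. You are also right that the gluing step is the crux and that the naive ``surjects onto every stalk'' argument fails; Campercholi handles this by showing that the primitive-positive definability of $b$ over $A$ is preserved under the Boolean-product structure, which is where arithmeticity (via the Pierce sheaf) does the real work. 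Your last paragraph correctly flags this as the nontrivial point but does not fully discharge it; if you intend this as a proof rather than a sketch, you would need to spell out precisely how functionality of the pp-formula on stalks plus patching in the Boolean product forces the patched element to lie in $A$, since $A$ itself need not be closed under patching.
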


The next result puts the previous facts together to obtain a variant of Theorem~\ref{thm:MMT2014} for strong amalgamation.

\begin{theorem}\label{thm:strongMMT2014}
Let $\mathcal{V}$ be an arithmetical variety, and denote by $\mathcal{V}_{FSI}$ its class of finitely subdirectly irreducible members. Suppose that:
\begin{enumerate}
\item $\mathcal{V}_{FSI}$ forms a universal class;
\item For any algebra ${\m B}\in\mathcal{V}$ and subalgebra ${\m A}$ of ${\m B}$, if $\Theta$ is a congruence of ${\m A}$ and ${\m A}/\Theta\in\mathcal{V}_{FSI}$, then there exists a congruence $\Psi$ of ${\m B}$ such that $\Psi\cap A^2=\Theta$ and ${\m B}/\Psi\in\mathcal{V}_{FSI}$;
\item $\mathcal{V}_{FSI}$ has the strong amalgamation property in $\mathcal{V}$.
\end{enumerate}
Then $\mathcal{V}$ has the strong amalgamation property.
\end{theorem}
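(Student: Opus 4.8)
The plan is to deduce Theorem~\ref{thm:strongMMT2014} by combining Theorem~\ref{thm:MMT2014} (amalgamation version) with the epimorphism-surjectivity machinery of Lemmas~\ref{lem:Hoogland1}, \ref{lem:Hoogland2}, and \ref{lem:Campercholi}. First I would take $\mathcal{S} := \mathcal{V}_{FSI}$ and check that the hypotheses of Theorem~\ref{thm:MMT2014} are met: condition (1) there asks for all subdirectly irreducible members, which are a fortiori finitely subdirectly irreducible, so they lie in $\mathcal{V}_{FSI}$; condition (2) (closure under isomorphisms and subalgebras) follows since by hypothesis (1) of the present theorem $\mathcal{V}_{FSI}$ is a universal class, hence closed under subalgebras and isomorphic copies; condition (3) is exactly hypothesis (2) of the present theorem; and condition (4) follows from hypothesis (3) of the present theorem, since the strong amalgamation property in $\mathcal{V}$ implies the (ordinary) amalgamation property in $\mathcal{V}$. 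Thus Theorem~\ref{thm:MMT2014} yields that $\mathcal{V}$ has the amalgamation property.

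It remains to upgrade this to the \emph{strong} amalgamation property. The key tool is Lemma~\ref{lem:Hoogland2}: since $\mathcal{V}$ is a variety it is closed under subalgebras and direct products, so it suffices to show that $\mathcal{V}$ has the amalgamation property (just established) together with the ES property. For the ES property I would invoke Lemma~\ref{lem:Campercholi}: $\mathcal{V}$ is arithmetical and $\mathcal{V}_{FSI}$ is a universal class by hypothesis (1), so $\mathcal{V}$ has the ES property if and only if $\mathcal{V}_{FSI}$ has it. Finally, $\mathcal{V}_{FSI}$ has the strong amalgamation property in $\mathcal{V}$ by hypothesis (3); one checks that this forces $\mathcal{V}_{FSI}$ to have the strong amalgamation property as a class in its own right (the amalgam $\m D$ produced lies in $\mathcal{V}$, but since $\m B,\m C \in \mathcal{V}_{FSI}$ embed into it, and $\mathcal{V}_{FSI}$ is universal hence closed under substructures — here one must be slightly careful that $\m D$ itself or an appropriate subalgebra generated by the images lands in $\mathcal{V}_{FSI}$; more simply, strong amalgamation \emph{in} $\mathcal{V}$ already gives a strong amalgam, and Lemma~\ref{lem:Hoogland1} applies with the ambient class being $\mathcal{V}_{FSI}$ provided the amalgam can be chosen inside it). Granting this, Lemma~\ref{lem:Hoogland1} gives that $\mathcal{V}_{FSI}$ has the ES property, hence $\mathcal{V}$ has the ES property, and then Lemma~\ref{lem:Hoogland2} delivers the strong amalgamation property for $\mathcal{V}$.

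The main obstacle I anticipate is the bookkeeping around where the amalgam lives: Lemmas~\ref{lem:Hoogland1}, \ref{lem:Hoogland2} and the definition of the ES property are all stated for a class $\mathcal{K}$ amalgamating and testing epimorphisms \emph{within itself}, whereas hypothesis (3) only gives strong amalgamation of $\mathcal{V}_{FSI}$ \emph{inside} the larger class $\mathcal{V}$. To bridge this gap cleanly I would argue directly at the level of $\mathcal{V}$: use hypothesis (3) plus the already-proven amalgamation property to run the argument of \cite[Corollary 2.5.23]{H2001} (Lemma~\ref{lem:Hoogland2}) with $\mathcal{K} = \mathcal{V}$ — for this I need the ES property of $\mathcal{V}$ itself, which is precisely what Lemma~\ref{lem:Campercholi} reduces to the ES property of $\mathcal{V}_{FSI}$, and the latter follows from its strong amalgamation property in $\mathcal{V}$ via a Hoogland-style argument (a $\mathcal{V}_{FSI}$-epic embedding $\m A \hookrightarrow \m B$ with $\m B - A \neq \emptyset$ would, after strongly amalgamating the two copies of $\m B$ over $\m A$ inside $\mathcal{V}$ and then passing to an FSI quotient via hypotheses (1)–(2) separating a witnessing pair, contradict epicity). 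Assembling these reductions in the right order, with attention to the ambient-class subtlety, is the crux; the individual invocations are then immediate.
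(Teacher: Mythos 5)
Your proposal is correct and follows essentially the same route as the paper: Theorem~\ref{thm:MMT2014} with $\mathcal{S}=\mathcal{V}_{FSI}$ for the amalgamation property, Lemma~\ref{lem:Hoogland2} to reduce strong amalgamation to AP plus ES, Lemma~\ref{lem:Campercholi} to transfer ES from $\mathcal{V}_{FSI}$ to $\mathcal{V}$, and a direct Hoogland-style argument (two copies of $\m B$ strongly amalgamated over a proper subalgebra $\m A$ inside $\mathcal{V}$, then separated in a finitely subdirectly irreducible quotient) for ES of $\mathcal{V}_{FSI}$, which is exactly how the paper resolves the ambient-class subtlety you flag. The only small difference is that the separating quotient is obtained in the paper via a subdirect representation of the amalgam into subdirectly irreducibles rather than via hypothesis (2), which is not needed for that step.
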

\begin{proof}
By Lemma \ref{lem:Hoogland2}, it suffices to show that $\mathcal{V}$ has both the amalgamation property and the ES property.

We show first that $\mathcal{V}$ has the amalgamation property by using Theorem~\ref{thm:MMT2014} for $\mathcal{S}=\mathcal{V}_{FSI}$. Because each subdirectly irreducible is finitely subdirectly irreducible, condition (1) of Theorem~\ref{thm:MMT2014} is satisfied. Condition (2) of Theorem~\ref{thm:MMT2014} is satisfied because $\mathcal{V}_{FSI}$ forms a universal class by the hypotheses. Conditions (3) and (4) of Theorem~\ref{thm:MMT2014} are satisfied directly from the assumptions, noting that $\mathcal{V}_{FSI}$ having the strong amalgamation property in $\mathcal{V}$ implies that in particular $\mathcal{V}_{FSI}$ has the amalgamation property in $\mathcal{V}$. It follows that $\mathcal{V}$ has the amalgamation property.

To conclude the proof, we show that $\mathcal{V}$ has the ES property. To do so, we show that $\mathcal{V}_{FSI}$ has the ES property and apply Lemma~\ref{lem:Campercholi}. Let ${\m B}\in\mathcal{V}_{FSI}$ and suppose that ${\m A}$ is a proper subalgebra of ${\m B}$. As $\mathcal{V}_{FSI}$ is a universal class, we have ${\m A}\in{\mathcal{V}_{FSI}}$. Renaming elements as necessary, let ${\m C}$ be an isomorphic copy of ${\m B}$ with $B\cap C = A$ and let $h\colon {\m B}\to {\m C}$ be an isomorphism with $h[A]=A$. Then ${\m C}\in\mathcal{V}_{FSI}$. By assumption, the reduced V-formation $({\m A},{\m B},{\m C})$ has a strong amalgam ${\m D}\in\mathcal{V}$. Let $s\colon {\m D}\to \prod_{i\in I} {\m D}_i$ be a subdirect representation of ${\m D}$ where each ${\m D}_i$ is subdirectly irreducible (and hence in $\mathcal{V}_{FSI}$). Because ${\m A}$ is a proper subalgebra of ${\m B}$, there exists $x\in B\setminus A$. Since $h(A)=A$ we have $h(x)\in C\setminus A$. Since $B\cap C = A$, we have $h(x)\neq x$. Thus $s(h(x))\neq s(x)$, and consequently there exists $i\in I$ such that $s(h(x))(i)\neq s(x)(i)$. Let $\pi\colon {\m D}\to {\m D}_i$ be the canonical projection map, and let $j\colon {\m B}\to {\m D}$ be the inclusion map. By construction, the maps $\pi\circ s\circ h,\pi\circ s\circ j\colon {\m B}\to {\m D}_i$ are equal on $A$, but $(\pi\circ s\circ h)(x)\neq(\pi\circ s\circ j)(x)$. Thus ${\m A}$ is not a $\mathcal{V}_{FSI}$-epic subalgebra of ${\m B}$. It follows that ${\m B}$ has no proper $\mathcal{V}_{FSI}$-epic subalgebras, whence $\mathcal{V}_{FSI}$ has the ES property as desired.
\end{proof}

\begin{theorem}\label{t:SAPsemiconic}
Let $\mathcal{V}$ be a variety of semiconic idempotent residuated lattices, and suppose that $\mathcal{V}_{FSI}$ has the strong amalgamation property in $\mathcal{V}$. Then $\mathcal{V}$ has the strong amalgamation property, and hence the ES property.
\end{theorem}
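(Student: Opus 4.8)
The plan is to obtain this as a direct application of Theorem~\ref{thm:strongMMT2014} to the class $\mathcal{V}_{FSI}$, followed by an appeal to Lemma~\ref{lem:Hoogland1} for the final clause. Of the hypotheses of Theorem~\ref{thm:strongMMT2014}, two are essentially immediate. First, $\mathcal{V}$ is arithmetical, being a variety of residuated lattices: it is congruence distributive by virtue of its lattice reduct and congruence permutable. Second, condition~(1), that $\mathcal{V}_{FSI}$ be a universal class, holds because by Lemma~\ref{l:FSI}(4) an algebra in $\mathcal{V}$ is finitely subdirectly irreducible exactly when it satisfies the universal first-order sentence $\forall x\,\forall y\,(x\jn y=1 \to (x=1 \lor y=1))$ asserting that $1$ is join irreducible, so $\mathcal{V}_{FSI}$ is cut out within $\mathcal{V}$ by a single universal sentence. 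Finally, condition~(3), that $\mathcal{V}_{FSI}$ have the strong amalgamation property in $\mathcal{V}$, is precisely the standing hypothesis of the theorem.

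Thus the real content lies in verifying condition~(2): given $\m B\in\mathcal{V}$, a subalgebra $\m A$ of $\m B$, and a congruence $\Theta$ on $\m A$ with $\m A/\Theta\in\mathcal{V}_{FSI}$, we must produce a congruence $\Psi$ of $\m B$ with $\Psi\cap A^{2}=\Theta$ and $\m B/\Psi\in\mathcal{V}_{FSI}$. I would argue via congruence filters. Write $F=F_{\Theta}\subseteq A$ for the congruence filter of $\m A$ corresponding to $\Theta$; by Lemma~\ref{l:FSI}(4), join irreducibility of $1$ in $\m A/\Theta$ says that $a\jn b\in F$ forces $a\in F$ or $b\in F$. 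The congruence extension property for semiconic idempotent residuated lattices (Lemma~\ref{t:CEP}(1)) supplies a congruence filter $G_{0}$ of $\m B$ with $G_{0}\cap A=F$, and since the union of a chain of congruence filters is again a congruence filter and the property of meeting $A$ in $F$ is preserved under such unions, Zorn's lemma yields a congruence filter $G\supseteq G_{0}$ that is maximal subject to $G\cap A=F$. Put $\Psi=\theta_{G}$, so $\Psi\cap A^{2}=\Theta$; it remains to show that $1$ is join irreducible in $\m B/G$. Suppose not, so $x\jn y\in G$ with $x,y\notin G$. Then $\langle G\cup\{x\}\rangle$ and $\langle G\cup\{y\}\rangle$ strictly extend $G$, so maximality yields $a,b\in A\setminus F$ with $a\in\langle G\cup\{x\}\rangle$ and $b\in\langle G\cup\{y\}\rangle$. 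By the generation formula of Lemma~\ref{l:congruencefilters2}(5) there are $g_{1},g_{2}\in G$ and $n,m\in\mathbb{N}$ with $a\geq g_{1}\mt s_{n}(x)$ and $b\geq g_{2}\mt s_{m}(y)$; since $s_{k}=s^{k}$ by Lemma~\ref{l:congruencefilters2}(1) and $s(z)\leq z$, the sequence $(s_{k}(z))_{k}$ is decreasing, so we may assume $n=m$. Passing to $\m B/G$, where $g_{1},g_{2}$ are collapsed to $1$, we obtain $a/G\geq 1\mt(s_{n}(x)/G)$ and $b/G\geq 1\mt(s_{n}(y)/G)$, while $(x\jn y)/G=1$ combined with Lemma~\ref{c:CEP}(3) gives $(s_{n}(x)\jn s_{n}(y))/G=1$. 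Now in every conic residuated lattice the quasi-identity $u\jn v=1 \Rightarrow (1\mt u)\jn(1\mt v)=1$ holds, by a short case analysis on the signs of $u$ and $v$; as every algebra in $\mathcal{V}$ embeds subdirectly into a product of conic residuated lattices (its finitely subdirectly irreducible quotients being conic by Lemma~\ref{l:FSI}(4)), this quasi-identity holds throughout $\mathcal{V}$. Applying it in $\m B/G$ gives $(a\jn b)/G\geq(1\mt s_{n}(x)/G)\jn(1\mt s_{n}(y)/G)=1$, i.e.\ $a\jn b\in G\cap A=F$ --- contradicting join irreducibility of $1$ in $\m A/\Theta$. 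Hence $\m B/\Psi\in\mathcal{V}_{FSI}$, establishing condition~(2).

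With all the hypotheses of Theorem~\ref{thm:strongMMT2014} verified, $\mathcal{V}$ has the strong amalgamation property, and the ES property then follows from Lemma~\ref{lem:Hoogland1}. I expect the extraction of the maximal \emph{prime} congruence filter to be the delicate step: it couples the congruence extension property with the explicit filter-generation machinery recalled in Section~\ref{subsec:skeleton} and Lemma~\ref{l:congruencefilters2}, together with the transfer of join irreducibility of $1$ across the operators $s_{n}$ --- which is exactly what Lemma~\ref{c:CEP}(3) is designed to provide. Once that ingredient is in place, the remaining manipulations are routine bookkeeping.
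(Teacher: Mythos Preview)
Your overall architecture is exactly the paper's: apply Theorem~\ref{thm:strongMMT2014}, with arithmeticity and the universal axiomatization of $\mathcal{V}_{FSI}$ immediate, and the real work lying in the extension-to-a-prime-filter argument for condition~(2). The use of the CEP to get $G_0$ (rather than computing $\upset_{\m B}F$ directly as the paper does) is a harmless variant.

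There is, however, a genuine gap in the primeness step. From $x\jn y\in G$ you only obtain $[x\jn y]_G\geq 1$ in $\m B/G$, not $(x\jn y)/G=1$; membership in a congruence filter means the class lies \emph{above} $1$, not that it equals $1$. Lemma~\ref{c:CEP}(3) has the equation $x\jn y=1$ as its hypothesis, so it does not apply as you have written it. The paper closes this gap by first replacing $x,y$ with $x\mt 1,\,y\mt 1$: these are still outside $G$ (upward closure), and the identity $(u\jn v)\mt 1=(u\mt 1)\jn(v\mt 1)$---valid in every semiconic residuated lattice because it holds in every conic one---gives $(x\mt 1)\jn(y\mt 1)=(x\jn y)\mt 1\in G$. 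With both classes now $\leq 1$ in $\m B/G$, the join equals $1$ and Lemma~\ref{c:CEP}(3) applies. You actually isolate this very distributivity-at-$1$ principle, but you invoke it \emph{after} Lemma~\ref{c:CEP}(3) rather than before; reorder the two steps and the argument goes through. The same reduction is needed at the very end: to invoke join irreducibility of $1$ in $\m A/\Theta$ you need $[a]_F\jn[b]_F=1$, not merely $\geq 1$, which follows once $a,b$ are chosen negative (or, equivalently, from conicity of $\m A/\Theta$: if $[a]_F\geq 1$ then $a\in F$, contrary to choice).
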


\begin{proof}
We deploy Theorem~\ref{thm:strongMMT2014}. The variety $\mathcal{V}$ is arithmetical by \cite[p.~94]{GJKO2007}. By Lemma~\ref{l:FSI}(4), $\mathcal{V}_{FSI}$ consists precisely of the conic algebras in $\mathcal{V}$ for which $1$ is join-irreducible (i.e., satisfying the universal sentence $(\forall x,y)(x \jn y =1 \Rightarrow x=1$ or $y=1)$), so it is a universal class and thus condition (1) of Theorem~\ref{thm:strongMMT2014} is satisfied. Condition (3) of Theorem~\ref{thm:strongMMT2014} holds by assumption. For condition (2), suppose that ${\m A}$ and ${\m B}$ are semiconic idempotent residuated lattices, that ${\m A}$ is a subalgebra of ${\m B}$, and that $\Theta$ is a congruence of ${\m A}$ such that ${\m A}/\Theta$ is finitely subdirectly irreducible, hence also conic by Lemma~\ref{l:FSI}(4).

 Let $F$ be the congruence filter of ${\m A}$ corresponding to $\Theta$. Since $F_B:=\upset_{\m B} F$ is a filter of $\m B$ containing $1$, it follows from Lemma~\ref{l:congruencefilters2}(3) that $\langle F_B \rangle=\upset_{\m B} \{t_n(y): n \in \mathbb{N}, y \in F_B\}$. Since $t_n$ is monotone this is equal to $\upset_{\m B} \{t_n(y): n \in \mathbb{N}, y \in F\}$ and since $F$ is closed under $t_n$ it is equal to $\upset_{\m B} F=F_B$. Thus $F_B$ is a congruence filter of $\m B$. Also note that $F_B \cap A=F$, because if $y \in F_B \cap A$, then $y \in A$ and $x \leq y$ for some $x \in F$, so $y \in F$. Therefore, $F_B$ is an element of the poset
\[P=\{G\subseteq B : G\text{ is a congruence filter of }{\m B}\text{ and }F=G\cap A\},\]
which is, therefore, non-empty.

By Zorn's Lemma, $P$ has a maximal element $G$. We will show that $G$ is prime. Toward a contradiction, let $x,y \in B$ such that $x\join y \in G$, $x\notin G$, and $y\notin G$. Since $1\in G$ and ${\m B}$ satisfies distributivity at $1$, we may assume without loss of generality that $x$ and $y$ are negative (meeting with $1$ if necessary). By the maximality of $G$ in $P$, each of the congruence filters $\langle G\cup\{x\}\rangle$ and $\langle G\cup\{y\}\rangle$ is not in $P$. Thus each of $H_x=\langle G\cup \{x\}\rangle \cap A$ and $H_y=\langle G\cup \{y\}\rangle \cap A$ properly contains $F$, whence there exist elements $a\in H_x\setminus F$ and $b\in H_y\setminus F$. Without loss of generality we may assume that $a$ and $b$ are both negative (considering the result of meeting with $1$ if necessary).
 By Lemma~\ref{l:congruencefilters2}(4), there exists $g,h\in G^-$ and $m,n \in \mathbb{N}$ such that $g \mt s_m(x)\leq a$ and $h \mt s_n(y)\leq b$. Now since $x\join y\in G$, we have that $[1]_G\leq [x]_G\join [y]_G$, and since we chose $x$ and $y$ to be negative we have $[x]_G\join [y]_G=[1]_G$. By Lemma~\ref{l:adding_conjugates}(3), it follows that $[s_m(x)]_G\join [s_n(y)]_G=[1]_G$, so:
$$[1]_G = [s_m(x)]_G\join [s_n(y)]_G = [g\meet s_m(x)]_G\join [h\meet s_n(y)]_G\leq [a]_G\join [b]_G.$$
From $[1]_G\leq [a]_G\join [b]_G$ we obtain $a \jn b \in G$; since $a \jn b \in A$, we get $a \jn b \in G \cap A=F$, so  $[1]_F\leq [a]_F\join [b]_F$, and since $a$ and $b$ are negative we get $[a]_F\join [b]_F=[1]_F$. Because ${\m A}/F$ is finitely subdirectly irreducible by Lemma~\ref{l:FSI}(4), this yields $[a]_F=[1]_F$ or $[b]_F=[1]_F$, so $a\in F$ or $b\in F$. This is a contradiction to the choice of $a$ and $b$, so it follows that $G$ is prime.

To complete the proof, observe that $G$ being prime implies that $[1]_G$ is join-irreducible in the quotient ${\m B}/G$, so by Lemma~\ref{l:FSI}(4) we have ${\m B}/G\in\mathcal{V}_{FSI}$.
\end{proof}

The following is immediate from combining Theorem~\ref{t:SAPsemiconic} with Theorem~\ref{t:sAPconic}.

\begin{theorem}\label{t:DeMSemSAP}
The variety of rigid conjunctive semiconic idempotent residuated lattices has the strong amalgamation property, hence the ES property.
\end{theorem}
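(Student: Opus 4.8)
The plan is to obtain the result as a direct combination of Theorem~\ref{t:SAPsemiconic} and Theorem~\ref{t:sAPconic}, so the only real work is to check that the hypotheses of Theorem~\ref{t:SAPsemiconic} hold when $\mathcal{V}$ is taken to be the variety of rigid conjunctive semiconic idempotent residuated lattices. First I would record that this class genuinely is a variety of semiconic idempotent residuated lattices: semiconicity and idempotence are equational, rigidity is captured by the identities $x^r = x^{r\star\star}$ and $x^\ell = x^{\ell\star\star}$, and conjunctivity by $\gamma(x\mt y) = \gamma(x)\mt\gamma(y)$, so $\mathcal{V}$ is an equational class contained in the semiconic idempotent residuated lattices, and Theorem~\ref{t:SAPsemiconic} is applicable to it once its sole extra hypothesis (condition (3), strong amalgamation of $\mathcal{V}_{FSI}$ in $\mathcal{V}$) is verified.

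Next I would pin down $\mathcal{V}_{FSI}$. By Lemma~\ref{l:FSI}(4), the finitely subdirectly irreducible members of any variety of semiconic idempotent residuated lattices are precisely its conic members in which $1$ is join-irreducible; since rigidity and conjunctivity are equational, every member of $\mathcal{V}_{FSI}$ is in particular a rigid conjunctive conic idempotent residuated lattice. Hence $\mathcal{V}_{FSI}$ is a subclass of the class to which Theorem~\ref{t:sAPconic} applies.

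With this in hand, verifying condition (3) of Theorem~\ref{t:SAPsemiconic} is immediate: given a V-formation in $\mathcal{V}_{FSI}$, which (for a class closed under isomorphism) may be taken reduced, its three vertices are rigid conjunctive conic idempotent residuated lattices, so Theorem~\ref{t:sAPconic} produces a strong amalgam $\m D$ that is again a rigid conjunctive conic idempotent residuated lattice. Since every conic residuated lattice is semiconic, $\m D \in \mathcal{V}$, and therefore $\mathcal{V}_{FSI}$ has the strong amalgamation property in $\mathcal{V}$. Theorem~\ref{t:SAPsemiconic} now yields the strong amalgamation property for $\mathcal{V}$, and the ES property follows (either directly from the conclusion of Theorem~\ref{t:SAPsemiconic} or from Lemma~\ref{lem:Hoogland1}).

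I do not expect a genuine obstacle here, since the substantive content was already carried out in Theorems~\ref{t:sAPconic} and \ref{t:SAPsemiconic}; the one point deserving explicit mention is the apparent mismatch between the \emph{strong amalgamation property} established in Theorem~\ref{t:sAPconic} (amalgam in the same conic class) and the \emph{strong amalgamation property in $\mathcal{V}$} demanded as a hypothesis of Theorem~\ref{t:SAPsemiconic}, which is resolved simply by noting that a conic amalgam automatically lies in the ambient semiconic variety.
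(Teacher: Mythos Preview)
Your proposal is correct and matches the paper's own proof, which simply states that the result is immediate from combining Theorem~\ref{t:SAPsemiconic} with Theorem~\ref{t:sAPconic}. Your additional remarks (that $\mathcal{V}$ is genuinely a variety, the identification of $\mathcal{V}_{FSI}$ via Lemma~\ref{l:FSI}(4), and the observation that the conic amalgam from Theorem~\ref{t:sAPconic} automatically lies in the semiconic variety $\mathcal{V}$) are exactly the routine verifications implicit in the word ``immediate.''
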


\subsection{Subvarieties.}\label{s:amalg subvarieties} The method of obtaining Theorem~\ref{t:DeMSemSAP} applies, \emph{mutatis mutandis}, to a host of subvarieties. We do not catalog these exhaustively, and only discuss a few prominent examples. Most notably, since $^\star$-involutive idempotent residuated chains are finitely subdirectly irreducible, rigid (by Lemma~\ref{l:weaklyinvolutive}), and conjunctive, we immediately obtain strong amalgamation for the corresponding semilinear variety by combining Theorems~\ref{t:sAPqichains} and \ref{t:SAPsemiconic}:

\begin{theorem}\label{t:SAPsemilinearStarInv}
The variety of $^\star$-involutive semilinear idempotent residuated lattices has the strong amalgamation property, and hence the ES property.
\end{theorem}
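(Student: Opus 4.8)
The plan is to invoke Theorem~\ref{t:SAPsemiconic} with $\mathcal{V}$ taken to be the variety of $^\star$-involutive semilinear idempotent residuated lattices. The hypothesis of that theorem requires that $\mathcal{V}_{FSI}$ has the strong amalgamation property \emph{in $\mathcal{V}$}, so the first step is to identify $\mathcal{V}_{FSI}$ precisely. By Lemma~\ref{c:FSI}(5), a semilinear idempotent residuated lattice is finitely subdirectly irreducible iff $1$ is join irreducible, and every such algebra is linearly ordered; hence $\mathcal{V}_{FSI}$ consists exactly of the $^\star$-involutive idempotent residuated chains. (One should check that the $^\star$-involutivity identities $x^r=x^{r\star\star}$, $x^\ell=x^{\ell\star\star}$ are preserved under the relevant operations and that on a chain they amount to $x^{\star\star}=x$ via Lemma~\ref{l:weaklyinvolutive}; an $^\star$-involutive idempotent residuated chain is in particular quasi-involutive by Lemma~\ref{l:weaklyinvolutive}(2), so its skeleton is all of it and the two notions of $^\star$-involutivity coincide.)

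The second step is to supply the strong amalgamation property for $\mathcal{V}_{FSI}$ in $\mathcal{V}$. Here I would simply cite Theorem~\ref{t:sAPqichains}, which asserts that the class of $^\star$-involutive idempotent residuated chains has the strong amalgamation property; since any strong amalgam produced there is itself an $^\star$-involutive idempotent residuated chain, it lies in $\mathcal{V}$, so strong amalgamation holds in $\mathcal{V}$ as required. There is nothing more to prove at this stage beyond observing that the amalgam stays inside the class, which is immediate from the construction (a nested sum of vertical crowns).

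The third step is to verify that $\mathcal{V}$ satisfies the running hypotheses of Theorem~\ref{t:SAPsemiconic}, namely that it is a variety of semiconic idempotent residuated lattices. This is clear: semilinear residuated lattices are semiconic (semilinearity is a subdirect product of chains, and chains are conic), and $\mathcal{V}$ is defined by additional identities on top of the semilinear idempotent ones, so it is a subvariety of the variety of semiconic idempotent residuated lattices. Then Theorem~\ref{t:SAPsemiconic} applies verbatim and yields that $\mathcal{V}$ has the strong amalgamation property, and hence the ES property.

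The only genuine subtlety — and the place where one must be slightly careful — is confirming that $^\star$-involutivity is inherited correctly so that $\mathcal{V}_{FSI}$ really is the class of $^\star$-involutive idempotent residuated chains rather than something larger or smaller; but this is exactly what Lemmas~\ref{l:weaklyinvolutive} and \ref{c:FSI}(5) give us, so no real obstacle remains. In effect the theorem is a one-line corollary: combine Theorem~\ref{t:sAPqichains} with Theorem~\ref{t:SAPsemiconic}, exactly as the remark preceding the statement indicates.
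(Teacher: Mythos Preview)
Your proposal is correct and follows exactly the paper's approach: combine Theorem~\ref{t:sAPqichains} (strong amalgamation for $^\star$-involutive idempotent residuated chains) with Theorem~\ref{t:SAPsemiconic}, after identifying $\mathcal{V}_{FSI}$ as the class of $^\star$-involutive idempotent residuated chains via Lemma~\ref{c:FSI}(5). One small terminological slip: the identities $x^r=x^{r\star\star}$ and $x^\ell=x^{\ell\star\star}$ that you call the ``$^\star$-involutivity identities'' are in fact the \emph{rigidity} identities, whereas $^\star$-involutivity is $x^{\star\star}=x$; by Lemma~\ref{l:weaklyinvolutive} these coincide on chains, so the argument is unaffected.
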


The commutative $^\star$-involutive idempotent residuated chains are exactly the odd Sugihara monoids, so in the commutative setting Theorems~\ref{t:sAPqichains} and \ref{t:SAPsemilinearStarInv} are well-known (see e.g. \cite{GR2012}). The fact that odd Sugihara monoid chains admit strong amalgamation allows us to prove the follow result:

\begin{theorem}\label{t:SAPcommsemiconic}
The variety of commutative conjunctive semiconic idempotent residuated lattices has the strong amalgamation property, hence the ES property.
\end{theorem}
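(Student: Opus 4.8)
The plan is to deduce Theorem~\ref{t:SAPcommsemiconic} from Theorem~\ref{t:SAPsemiconic} by verifying its hypothesis, namely that the class $\mathcal V_{FSI}$ of finitely subdirectly irreducible members of the variety $\mathcal V$ of commutative conjunctive semiconic idempotent residuated lattices has the strong amalgamation property in $\mathcal V$. By Lemma~\ref{l:FSI}(4), these finitely subdirectly irreducibles are precisely the commutative conjunctive \emph{conic} idempotent residuated lattices. Moreover, in the commutative case $x^\ell = x^r$, so every element of the skeleton is central; hence quasi-involutivity collapses to involutivity $x^{**}=x$, the skeleton is an odd Sugihara monoid (see the Remark following Theorem~\ref{t:decomposition}), and in particular every commutative conic idempotent residuated lattice is automatically rigid by Lemma~\ref{l:weaklyinvolutive}. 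Thus $\mathcal V_{FSI}$ is exactly the class of \emph{rigid} conjunctive conic idempotent residuated lattices that happen to be commutative.

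First I would observe that, modulo this identification, what needs to be checked is a commutative refinement of Theorem~\ref{t:sAPconic}. I would re-run the proof of Theorem~\ref{t:sAPconic} verbatim: given commutative conjunctive conic idempotent residuated lattices $\m B$ and $\m C$ intersecting in a common subalgebra $\m A$, form their $^\star$-involutive skeletons $\m S_{\m A} \le \m S_{\m B}, \m S_{\m C}$ (which are odd Sugihara monoid chains), amalgamate them into a $^\star$-involutive residuated chain $\m S_{\m D}$ using Theorem~\ref{t:sAPqichains}, amalgamate the negative blocks (Brouwerian algebras) and the positive blocks (lattices) pointwise over $S_D$ using Lemma~\ref{l:sAPBr} and Lemma~\ref{l:sAPlattices}, and reassemble via the decomposition Theorem~\ref{t:decomposition} and the subsystem Lemma~\ref{l:subsystems}. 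The two facts that need to be added are: (i) the amalgamated skeleton $\m S_{\m D}$ may be taken commutative, and (ii) the resulting $\m D$ is commutative. For (i), since in a commutative setting the skeleton is an odd Sugihara monoid and odd Sugihara monoid chains have the strong amalgamation property (this is well known; cf.\ \cite{GR2012}, and is also the commutative instance of Theorem~\ref{t:sAPqichains} once one checks the amalgam constructed there stays commutative — all its one-generated pieces are the $3$-element Sugihara monoid or its nested-sum continuations, which are commutative), we get a commutative $\m S_{\m D}$. For (ii), commutativity of $\m D$ follows because the product on $\m A_D$ in a decomposition system is given by the formula displayed before Theorem~\ref{t:decomposition}, which is manifestly symmetric once $S_D$ is commutative (when $s \ne t$, $st=s$ iff $ts=s$, so $x\cdot y$ and $y\cdot x$ agree; when $s=t$ the product is a meet or a join). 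Hence $\mathcal V_{FSI}$ has the strong amalgamation property in $\mathcal V$, and Theorem~\ref{t:SAPsemiconic} then yields the strong amalgamation property, and thus the ES property, for $\mathcal V$.

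I expect the main obstacle to be purely bookkeeping rather than conceptual: one must be careful that the class $\mathcal V$ of commutative conjunctive semiconic idempotent residuated lattices really is closed under the operations needed for Theorem~\ref{t:SAPsemiconic} (it is a variety, being defined by the semiconic and idempotent identities together with commutativity $xy=yx$ and conjunctivity $\gamma(x\mt y)=\gamma(x)\mt\gamma(y)$), that it is arithmetical (inherited from all semiconic idempotent residuated lattices via \cite[p.~94]{GJKO2007}), and that condition (2) of Theorem~\ref{thm:strongMMT2014}—the congruence-extension-into-$\mathcal V_{FSI}$ condition—goes through exactly as in the proof of Theorem~\ref{t:SAPsemiconic}, since that argument used only Lemma~\ref{l:congruencefilters2}, Lemma~\ref{l:adding_conjugates}, and Lemma~\ref{l:FSI}, all of which are available here and all of which respect commutativity and conjunctivity. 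The one subtlety worth flagging is that $\mathcal V_{FSI}$ must be verified to be a universal class within $\mathcal V$: by Lemma~\ref{l:FSI}(4) it is cut out by the universal sentence $(\forall x,y)(x\jn y=1 \Rightarrow x=1 \text{ or } y=1)$ together with the quasi-equations/identities defining $\mathcal V$, and conicity itself is the universal consequence $(\forall x)(x\le 1 \text{ or } 1\le x)$ of join-irreducibility of $1$ in this setting — so this is immediate.
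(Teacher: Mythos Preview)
Your proposal is correct and follows essentially the same approach as the paper: show that every commutative semiconic idempotent residuated lattice is automatically rigid, observe that commutativity of a conic algebra is equivalent to commutativity of its skeleton (the paper cites Lemma~\ref{c:simple mult}(4) for this, while you argue directly from the decomposition formula), check that the amalgams built in Theorems~\ref{t:sAPqichains} and~\ref{t:sAPconic} preserve commutativity, and then invoke Theorem~\ref{t:SAPsemiconic}. One small imprecision: $\mathcal V_{FSI}$ is not literally the class of commutative conjunctive conic idempotent residuated lattices but rather those in which $1$ is join irreducible; however this does not matter, since the amalgam you build is conic (hence in $\mathcal V$) and Theorem~\ref{t:SAPsemiconic} only requires strong amalgamation of $\mathcal V_{FSI}$ \emph{in} $\mathcal V$.
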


\begin{proof}
Observe that every commutative semiconic idempotent residuated lattice is rigid. Moreover, it follows from Lemma~\ref{c:simple mult}(4) that a conic idempotent residuated lattice is commutative iff its quasi-involutive skeleton is commutative (i.e., an odd Sugihara monoid). Inspection of the proofs of Theorems~\ref{t:sAPqichains} and Theorem~\ref{t:sAPconic} shows that commutativity is preserved in taking the amalgams in each case, so the result follows by Theorem~\ref{t:SAPsemiconic}.
\end{proof}

Distributive lattices do not have the strong amalgamation property \cite{FrGr1990}, so we cannot apply the same methodology to distributive subvarieties of rigid conjunctive semiconic idempotent residuated lattices. However, we may obtain the amalgamation property:

\begin{theorem}\label{c:APsemiconic}
Each of the following subvarieties of rigid conjunctive semiconic idempotent residuated lattices has the amalgamation property.
\begin{enumerate}
\item The variety of distributive, rigid, and conjunctive semiconic idempotent residuated lattices.
\item The variety of distributive, commutative, and conjunctive semiconic idempotent residuated lattices.
\end{enumerate}
\end{theorem}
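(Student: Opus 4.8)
The plan is to adapt the argument that yielded Theorem~\ref{t:DeMSemSAP}, replacing the appeals to strong amalgamation of lattices and Brouwerian algebras by appeals to the (mere) amalgamation property in the distributive setting, and correspondingly replacing Theorem~\ref{thm:strongMMT2014} by the weaker Theorem~\ref{thm:MMT2014}. First I would record the relevant closure facts: each of the two classes in the statement is a variety of semiconic idempotent residuated lattices, so it is arithmetical, and by Lemma~\ref{l:FSI}(4) its class of finitely subdirectly irreducibles consists of the conic members satisfying the universal sentence expressing join-irreducibility of $1$, together with the relevant distributivity, commutativity, and conjunctivity identities; hence $\mathcal{V}_{FSI}$ is a universal class closed under subalgebras. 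The conservation-of-congruence-filters argument in the proof of Theorem~\ref{t:SAPsemiconic} goes through verbatim (it never used distributivity of the lattice beyond distributivity at $1$, which holds in every semiconic residuated lattice), establishing condition (3) of Theorem~\ref{thm:MMT2014}.

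The substantive step is showing that $\mathcal{V}_{FSI}$ has the amalgamation property in $\mathcal{V}$, i.e., that the reduced V-formations of distributive (resp.\ distributive commutative) conjunctive conic idempotent residuated lattices admit amalgams. Here I would rerun the proof of Theorem~\ref{t:sAPconic}: given $\m B,\m C$ with common subalgebra $\m A$, form the $^\star$-involutive skeleton amalgam $\m S_{\m D}$ via Theorem~\ref{t:sAPqichains} (note skeletons of distributive, resp.\ commutative, algebras are automatically fine — for negative blocks of a distributive algebra the skeleton and blocks inherit distributivity, and for commutative algebras rigidity is automatic and the skeleton is an odd Sugihara monoid, which is distributive). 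For each negative $s\in S_D$ the blocks $\m B_s,\m C_s$ are \emph{distributive} Brouwerian algebras with common subalgebra $\m A_s$; distributive Brouwerian (i.e., relatively pseudocomplemented) lattices do have the amalgamation property (this is classical — Maksimova's amalgamation for intuitionistic logic, or directly: Heyting algebras amalgamate, hence so do Brouwerian algebras), so pick an amalgam $\m D_s$. For each positive $s\in S_D$ the blocks $\m B_s,\m C_s$ are \emph{distributive} lattices with common sublattice $\m A_s$, and distributive lattices have the amalgamation property (Gr\"atzer–Lakser–Jónsson), so pick an amalgam $\m D_s$. Then $(\m S_{\m D},\{\m D_s : s\in S_D\})$ is a decomposition system, so by Theorem~\ref{t:decomposition} it gives a conic idempotent residuated lattice $\m D$, conjunctive by Lemma~\ref{l:DeMorganarelatt} and rigid since its skeleton is $^\star$-involutive; and since distributivity of a conic idempotent residuated lattice is equivalent to distributivity of all its blocks (the lattice order is built from an ordinal sum over a chain, and an ordinal sum of distributive lattices over a chain is distributive), $\m D$ is distributive. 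By Lemma~\ref{l:subsystems} the subsystems for $\m B$ and $\m C$ embed, so $\m B,\m C$ are subalgebras of $\m D$, and in case (2) one checks commutativity is preserved exactly as in Theorem~\ref{t:SAPcommsemiconic}. Applying Theorem~\ref{thm:MMT2014} with $\mathcal{S}=\mathcal{V}_{FSI}$ then yields the amalgamation property for $\mathcal{V}$.

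The main obstacle I anticipate is verifying that ``distributive'' behaves well under the decomposition — specifically, that a conic idempotent residuated lattice is distributive (as a lattice) if and only if every block $\gamma^{-1}(a)$ is distributive, and that the ordinal-sum gluing used in Theorem~\ref{t:decomposition} preserves distributivity. The forward direction is immediate since blocks are sublattices; the converse requires observing that the underlying poset of $\m A_D$ is the ordinal sum $\bigoplus_{s\in S}A_s$ over the chain $S$, and a routine check that an ordinal sum of distributive lattices indexed by a chain is distributive (any three elements either lie in a single block, or are split across blocks so that the order among them is total at the relevant triple). A secondary point to be careful about is that the classes in the statement really are varieties — distributivity, commutativity, and $\gamma(x\mt y)=\gamma(x)\mt\gamma(y)$ are all equational relative to semiconic idempotent residuated lattices, with $\gamma$ a term — so that Theorem~\ref{thm:MMT2014} is applicable; this is a quick remark rather than a real difficulty. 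Everything else is bookkeeping parallel to the proofs of Theorems~\ref{t:sAPconic}, \ref{t:SAPsemiconic}, and~\ref{t:SAPcommsemiconic}.
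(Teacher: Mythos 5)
Your proposal is correct and follows essentially the same route as the paper: amalgamate the finitely subdirectly irreducible (conic) members blockwise as in Theorem~\ref{t:sAPconic}, replacing strong amalgamation of positive blocks by ordinary amalgamation of (topped) distributive lattices, and then lift to the variety via Theorem~\ref{thm:MMT2014}, with Condition~3 supplied by the congruence-filter argument from the proof of Theorem~\ref{t:SAPsemiconic}. The only cosmetic difference is that you also weaken the negative (Brouwerian) blocks to ordinary amalgams, whereas the paper retains strong amalgams there via Lemma~\ref{l:sAPBr}; this is immaterial since only the amalgamation property is claimed.
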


\begin{proof}
1. Let ${\m A}$ be a rigid conjunctive conic idempotent residuated lattice, and let $({\m S}_A, \{{\m A}_s : s\in S_A\})$ be its decomposition system. Then ${\m A}$ is distributive iff each ${\m A}_s$, $s>1$, is a distributive lattice with designated top element. The variety of distributive lattices with top element has the amalgamation property, so by repeating the proof of Theorem~\ref{t:sAPconic} (and taking amalgams rather than strong amalgams of positive blocks) we obtain that the class of distributive, rigid, and conjunctive conic idempotent residuated lattices has the amalgamation property. Lifting the amalgamation property to the variety of distributive, rigid, and conjunctive semiconic idempotent residuated lattices is a routine application of Theorem~\ref{thm:MMT2014}, using the proof of Theorem~\ref{t:SAPsemiconic} to establish Condition 3 of Theorem~\ref{thm:MMT2014}.

2. This follows immediately from the proofs of 1 and Theorem~\ref{t:SAPcommsemiconic}.
\end{proof}

Although topped distributive lattices only have the amalgamation property, the trivial subvariety has strong amalgamation thanks to the paucity of V-formations. The recent paper \cite{C2020} studies the subvariety of commutative semiconic idempotent residuated lattices that satisfy the identity $(x\join 1)^{**} = x\join 1,$
called \emph{semiconic generalized Sugihara monoids}. Although it is not discussed in \cite{C2020}, the conic members of this subvariety can be specified by stipulating that strictly positive blocks in the decomposition system are trivial:

\begin{proposition}\label{p:generalizedSugihara}
Let ${\m A}$ be a conic idempotent residuated lattice, and let $({\m S}, \{{\m A}_s : s\in S\})$ be its decomposition system. Then ${\m A}$ is a semiconic generalized Sugihara monoid if and only if ${\m S}$ is an odd Sugihara monoid and ${\m A}_s = \{s\}$ for each strictly positive $s\in S$.
\end{proposition}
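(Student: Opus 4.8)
The plan is to prove Proposition~\ref{p:generalizedSugihara} by translating the defining identity $(x \jn 1)^{**} = x \jn 1$ into the structural language of decomposition systems, using the description of $^*$, $^\ell$, $^r$, and $\gamma$ in the skeleton from Lemma~\ref{l:centralizer} and Lemma~\ref{c:simple mult}, together with the commutativity remark that in a commutative conic idempotent residuated lattice $x^* = x^\ell = x^r$ and the skeleton is an odd Sugihara monoid. Since the identity only constrains positive elements $x \jn 1$, the heart of the matter is to see that it forces each strictly positive block to collapse to a point while placing no new constraint on negative blocks, and that conversely such a collapse makes the identity hold.

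First I would handle the forward direction. Assume ${\m A}$ is a semiconic generalized Sugihara monoid; being commutative and conic idempotent, its skeleton ${\m S} = {\m A}^i = {\m A}_\gamma$ is quasi-involutive and commutative, hence by the Remark after Theorem~\ref{t:decomposition} an odd Sugihara monoid. Now fix a strictly positive $s \in S$ and suppose toward a contradiction that ${\m A}_s$ has an element $a < s$. Since $a \geq 1$, we have $a = a \jn 1$, so the identity gives $a^{**} = a$. The key computation is that $a^* = a^\ell \jn a^r = a^\ell$ (by commutativity) is an inverse element, hence lies in the skeleton $A^i$, and $a^{**} = (a^*)^*$ is therefore also in $A^i$; but $a^{**} = \gamma(a)$-type information shows $a^{**}$ is the least element of the block of $a$, i.e. $a^{**}$ is the skeleton element $\gamma(a) = s$ (using Lemma~\ref{l:centralizer} and the fact that $x^{\ell r}$ recovers the top of a negative block, dualized to the positive side, so that for positive $a$ one gets $a^{**} = s$). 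Then $a = a^{**} = s$ contradicts $a < s$. Hence every strictly positive block is trivial.

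For the converse, suppose ${\m S}$ is an odd Sugihara monoid and ${\m A}_s = \{s\}$ for every strictly positive $s \in S$. Then every positive element of ${\m A}$ is an inverse element, so $A^+ \subseteq A^i = S$. For any $x \in A$, $x \jn 1 \in A^+ \subseteq S$, and since ${\m S}$ is an odd Sugihara monoid it satisfies $y^{**} = y$ for all $y \in S$ (involutivity, by the Remark). Applying this with $y = x \jn 1$ gives $(x \jn 1)^{**} = x \jn 1$, using that $^*$ on ${\m A}$ restricted to inverse elements agrees with $^*$ on the subalgebra ${\m S}$ (Lemma~\ref{l:centralizer}(9), which says ${\m A}^i$ is a subalgebra and ${\m A}_\gamma = {\m A}^i$, so the inverse operations are computed the same way). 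Thus ${\m A}$ satisfies the identity and is a semiconic generalized Sugihara monoid. Note that commutativity was assumed as part of the ambient class $\mathcal{V}$ in \cite{C2020}, so we work inside commutative semiconic idempotent residuated lattices throughout; by Theorem~\ref{t:SAPcommsemiconic}'s proof, commutativity of ${\m A}$ is equivalent to commutativity of its skeleton, which is automatic here once we know ${\m S}$ is an odd Sugihara monoid.

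The main obstacle I anticipate is the precise bookkeeping in the forward direction showing $a^{**} = s$ for $a$ in a positive block with top $s$: one needs to verify that for positive $a$, $a^* = a^\ell$ is the skeleton element immediately "across" $1$ from the block of $a$, and that starring again returns exactly $\gamma(a) = s$ rather than some other element of the chain. This requires carefully invoking the flow-diagram/Corollary~\ref{c:RandL} description of how $^\ell$ and $^r$ act, specialized to central (hence commutative) elements, plus Lemma~\ref{c:simple mult}(5) describing $x \ld y$ in terms of $\gamma$. Once that single identity $a^{**} = \gamma(a)$ for positive $a$ is nailed down, both directions are short; everything else is routine translation between the identity and the block structure.
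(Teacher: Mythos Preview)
Your proposal is correct and follows essentially the same approach as the paper's proof. The one point worth noting is that the ``main obstacle'' you anticipate---showing $a^{**}=\gamma(a)$ for positive $a$---is not actually delicate: since ${\m A}$ is commutative we have $a^\ell=a^r$, so $a^{**}=a^{\ell\ell}=a^{\ell r}=a^{r\ell}=\gamma(a)$ immediately from the definition $\gamma(x)=x^{\ell r}\wedge x^{r\ell}$, and no flow-diagram analysis or Corollary~\ref{c:RandL} is needed.
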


\begin{proof}
Suppose that ${\m A}$ is a semiconic generalized Sugihara monoid. The skeleton ${\m S}$ of ${\m A}$ is an odd Sugihara monoid by commutativity. Moreover, if $s\in S$ with $s>1$ and $a\in A$ with $a^{**}=s$, then $a=a\join 1$ and hence $s=a^{**}=(a\join 1)^{**}=a\join 1 = a$, so $A_s=\{s\}$.

Conversely, suppose ${\m S}$ is an odd Sugihara monoid and ${\m A}_s=\{s\}$ for every strictly positive $s\in S$. That ${\m A}$ is commutative follows from the fact that its skeleton is commutative. If $a\in A$, then either $a\join 1=1$ or $a\join 1>1$. If $a\join 1=1$, then $(a\join 1)^{**}=1=a\join 1$ is immediate. If $a\join 1>1$, then $s=(a\join 1)^{**}$ being a strictly positive element of ${\m S}$ implies $A_s=\{s\}$, so $a\join 1=s=(a\join 1)^{**}$.
\end{proof}

The conic members of the subvariety of \emph{central} semiconic generalized Sugihara monoids, axiomatized relative to semiconic generalized Sugihara monoids by $1\leq x^{**}\join (x^{**}\to x),$ may further be characterized by stipulating that the only non-trivial block is the block of the identity element:
\begin{proposition}
Let ${\m A}$ be a conic idempotent residuated lattice, and let $({\m S}, \{{\m A}_s : s\in S\})$ be its decomposition system. Then ${\m A}$ is a central semiconic generalized Sugihara monoid if and only if ${\m S}$ is an odd Sugihara monoid and ${\m A}_s = \{s\}$ for each $s\in S\setminus\{1\}$.
\end{proposition}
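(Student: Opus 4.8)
The plan is to reduce to Proposition~\ref{p:generalizedSugihara} and then analyze how the central axiom $1\leq x^{**}\join(x^{**}\to x)$ constrains the negative blocks of the decomposition system. The key preliminary observation is that, since ${\m A}$ is commutative, $x^\ell = x^r = x\to 1 = x^*$, so $x^{**} = x^{\ell r}\mt x^{r\ell} = \gamma(x)$; thus $x^{**}$ is precisely the top element $s$ of the block $A_{\gamma(x)}$ to which $x$ belongs, and $x\leq x^{**}$ always holds since $\gamma$ is a closure operator. I also use that $\gamma$ fixes every element of the skeleton ${\m S}$.

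For the forward implication, assume ${\m A}$ is a central semiconic generalized Sugihara monoid. It is in particular a semiconic generalized Sugihara monoid, so by Proposition~\ref{p:generalizedSugihara} its skeleton ${\m S}$ is an odd Sugihara monoid and $A_s = \{s\}$ for every strictly positive $s\in S$. It remains to prove $A_s = \{s\}$ for every strictly negative $s\in S$. Fix such an $s$ and $x\in A_s$, and suppose for contradiction that $x\neq s$, so $x < s$ and hence $s\not\leq x$. By Lemma~\ref{l:idcon_mult}(5), using $\gamma(s) = \gamma(x) = s\leq 1$ and $s\not\leq x$, we get $s\to x = s\Rightarrow_s x = x$, the last equality because $s$ is the top element of the Brouwerian algebra $A_s$. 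Feeding $x$ into the central axiom and using $x^{**} = s$ yields $1\leq s\join x$; but $s$ and $x$ both lie in $A_s$ with $x\leq s$, so this join, being the ordinal-sum join, equals $s$, contradicting $s<1$. Hence $x = s$, so $A_s = \{s\}$.

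For the converse, assume ${\m S}$ is an odd Sugihara monoid and $A_s = \{s\}$ for every $s\in S\setminus\{1\}$. In particular $A_s = \{s\}$ for every strictly positive $s$, so Proposition~\ref{p:generalizedSugihara} gives that ${\m A}$ is a semiconic generalized Sugihara monoid, and it remains to check the central axiom for each $x\in A$. Set $s = \gamma(x) = x^{**}$. If $s\geq 1$ then $x^{**}\join(x^{**}\to x)\geq x^{**} = s\geq 1$. If $s<1$ then $x\in A_s = \{s\}$, so $x = s = x^{**}$, whence $x^{**}\to x = s\to s\geq 1$ and again $x^{**}\join(x^{**}\to x)\geq 1$. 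Thus the axiom holds and ${\m A}$ is central.

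The computations are short, and the step that requires the most care is the correct evaluation of $s\to x$ and of $s\join x$ inside a negative block: one must invoke that $A_s$ is a Brouwerian algebra with designated top element $s$ (so $s\Rightarrow_s x = x$), that $\to$ on that block is governed by Lemma~\ref{l:idcon_mult}(5), and that the lattice join of two comparable elements of a block stays within that block. No other obstacle arises; everything else is an application of Proposition~\ref{p:generalizedSugihara} together with the decomposition machinery.
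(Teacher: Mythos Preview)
Your proof is correct and follows essentially the same approach as the paper: both reduce to Proposition~\ref{p:generalizedSugihara} and then analyze what the central axiom forces on the strictly negative blocks. The only minor difference is in the forward direction: the paper argues that since $x^{**}$ is conical and $x^{**}<1$, the inequality $1\leq x^{**}\jn(x^{**}\to x)$ forces $1\leq x^{**}\to x$ directly (hence $x^{**}\leq x$), whereas you compute $s\to x = s\Rightarrow_s x = x$ explicitly via Lemma~\ref{l:idcon_mult}(5) and the Brouwerian structure before reaching the same contradiction.
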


\begin{proof}
Observe that if ${\m A}$ is a central semiconic generalized Sugihara monoid and $a^{**}=s<1$, where $s\in S$, then $1\leq a^{**}\join (a^{**}\to a)$ implies $1\leq a^{**}\to a$ by the conicity of $a^{**}$. Hence $s=a^{**}=a$, so $A_s=\{s\}$. Conversely, if $A_s=\{s\}$ for all $s<1$, then an easy calculation shows that $1\leq a^{**}\join (a^{**}\to a)$ for all $a\in A$. The rest follows from Proposition~\ref{p:generalizedSugihara}.
\end{proof}

By using a categorical equivalence generalizing \cite{GR2015}, \cite{C2020} obtains the strong amalgamation property for the variety of central semiconic generalized Sugihara monoids. However, it does not discuss amalgamation of semiconic generalized Sugihara monoids beyond the central case. Because the conic members in each variety can be described by stipulating that certain blocks are trivial, it follows from the results of this section that both varieties have the strong amalgamation property:

\begin{corollary}\label{c:SGSM}
Each of the following varieties has the strong amalgamation property, hence the ES property.
\begin{enumerate}
\item The variety of semiconic generalized Sugihara monoids.
\item The variety of central semiconic generalized Sugihara monoids.
\end{enumerate}
\end{corollary}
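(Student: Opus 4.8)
The plan is to invoke the machinery already assembled for rigid conjunctive semiconic idempotent residuated lattices, specializing it to the two subvarieties in question by exploiting the block-triviality characterizations recorded in Proposition~\ref{p:generalizedSugihara} and the subsequent proposition. The key observation is that both varieties are subvarieties of the variety of commutative conjunctive semiconic idempotent residuated lattices (commutativity is immediate from the presence of an involution on the positive cone, and conjunctivity follows because all blocks are either trivial or the block of $1$, and a conjunctivity failure would require two incomparable elements in a common positive block whose meet drops to the next block — impossible when strictly positive blocks are singletons). Hence the abstract infrastructure of Theorem~\ref{t:SAPcommsemiconic} and, behind it, Theorem~\ref{t:SAPsemiconic}/Theorem~\ref{thm:strongMMT2014} is available.

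The concrete work is to check that the strong amalgam constructed in the proofs of Theorems~\ref{t:sAPqichains} and \ref{t:sAPconic} stays inside the relevant subvariety. First I would treat the finitely subdirectly irreducible (i.e., conic) members: by Lemma~\ref{l:FSI}(4) these are exactly the conic algebras with $1$ join-irreducible, and by the two propositions they are precisely those conic idempotent residuated lattices whose decomposition system $(\m S, \{\m A_s : s \in S\})$ has $\m S$ an odd Sugihara monoid and all strictly positive blocks (respectively, all blocks except that of $1$) trivial. When we amalgamate two such algebras $\m B$, $\m C$ over a common subalgebra $\m A$ following the recipe of Theorem~\ref{t:sAPconic}, the skeleton amalgam $\m S_{\m D}$ produced by Theorem~\ref{t:sAPqichains} is again an odd Sugihara monoid (commutativity is preserved, as noted in the proof of Theorem~\ref{t:SAPcommsemiconic}), and the block amalgams $\m D_s$ for strictly positive $s$ are strong amalgams of trivial lattices over a trivial lattice, hence trivial; similarly for negative $s\neq 1$ in the central case, where the Brouwerian block amalgams of trivial algebras are trivial. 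Thus $\m D$ again satisfies the defining block-triviality condition, so $\mathcal{V}_{FSI}$ has the strong amalgamation property in $\mathcal{V}$ for each of the two varieties $\mathcal{V}$.

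Second, I would lift this from $\mathcal{V}_{FSI}$ to $\mathcal{V}$ by applying Theorem~\ref{t:SAPsemiconic} — or more precisely Theorem~\ref{thm:strongMMT2014}, whose hypotheses were verified for arbitrary varieties of semiconic idempotent residuated lattices in the proof of Theorem~\ref{t:SAPsemiconic}: arithmeticity holds by \cite[p.~94]{GJKO2007}, $\mathcal{V}_{FSI}$ is a universal class (it is cut out by the universal sentence expressing join-irreducibility of $1$ together with the universal identities of $\mathcal{V}$), and the congruence-extension-with-control condition (2) of Theorem~\ref{thm:strongMMT2014} is exactly the argument given in the proof of Theorem~\ref{t:SAPsemiconic} using the congruence filter analysis of Lemma~\ref{l:congruencefilters2} and Lemma~\ref{l:adding_conjugates}(3); that argument used only that $\mathcal{V}$ is a variety of semiconic idempotent residuated lattices, so it applies verbatim here. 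The ES property then follows from strong amalgamation by Lemma~\ref{lem:Hoogland1}. The main obstacle — really the only point requiring care — is confirming that the two subvarieties are genuinely closed under the amalgam construction, i.e., that the block-triviality conditions of the two propositions are preserved; but since strong amalgams of one-element algebras are one-element, and the skeleton construction preserves being an odd Sugihara monoid, this is routine once the characterizations are in hand.

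\begin{proof}
By Proposition~\ref{p:generalizedSugihara} and the proposition following it, the finitely subdirectly irreducible (hence conic, by Lemma~\ref{l:FSI}(4)) members of each variety are exactly the conic idempotent residuated lattices whose decomposition system $(\m S, \{\m A_s : s \in S\})$ has $\m S$ an odd Sugihara monoid and $\m A_s$ trivial for each strictly positive $s \in S$ (resp. for each $s \in S \setminus \{1\}$). In particular every such algebra is commutative and, by Lemma~\ref{l:DeMorganarelatt}, conjunctive, and by Lemma~\ref{l:weaklyinvolutive}(1) rigid. Let $\m B$ and $\m C$ be two such algebras sharing a common subalgebra $\m A$; by Lemma~\ref{l:subsystems} their decomposition systems agree on the subsystem corresponding to $\m A$. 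Applying the construction in the proof of Theorem~\ref{t:sAPconic}: by Theorem~\ref{t:sAPqichains} the skeletons $\m S_{\m B}$, $\m S_{\m C}$ amalgamate strongly over $\m S_{\m A}$ to a $^\star$-involutive idempotent residuated chain $\m S_{\m D}$, which is commutative (hence an odd Sugihara monoid) since commutativity is preserved as in the proof of Theorem~\ref{t:SAPcommsemiconic}. For each $s \in S_D$ the block $\m D_s$ is obtained as a strong amalgam (in the variety of Brouwerian algebras if $s$ is negative, of topped lattices if $s$ is positive) of $\m B_s$ and $\m C_s$ over $\m A_s$; when $\m B_s$ and $\m C_s$ are trivial this strong amalgam is trivial. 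Hence in the first case $\m D_s$ is trivial for every strictly positive $s$, and in the second case $\m D_s$ is trivial for every $s \neq 1$. By Theorem~\ref{t:decomposition} the system $(\m S_{\m D}, \{\m D_s : s \in S_D\})$ yields a conic idempotent residuated lattice $\m D$, which by the cited propositions is a semiconic generalized Sugihara monoid (resp. a central semiconic generalized Sugihara monoid), and which has $\m B$ and $\m C$ as subalgebras by Lemma~\ref{l:subsystems}. Since $\m B \cap \m C$ in $\m D$ is exactly $\m A$ (no strictly positive block of $\m D$ contains new shared elements because those blocks are trivial, and on the skeleton and the block of $1$ the intersection is controlled by Theorems~\ref{t:sAPqichains} and the strong amalgamation of the relevant lattice/Brouwerian varieties), $\m D$ is a strong amalgam. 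Thus $\mathcal{V}_{FSI}$ has the strong amalgamation property in $\mathcal{V}$ for each of the two varieties $\mathcal{V}$.

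It remains to lift this to $\mathcal{V}$, which we do via Theorem~\ref{thm:strongMMT2014}. The variety $\mathcal{V}$ is arithmetical by \cite[p.~94]{GJKO2007}. By Lemma~\ref{l:FSI}(4), $\mathcal{V}_{FSI}$ consists of the conic members of $\mathcal{V}$ satisfying the universal sentence $(\forall x, y)(x \jn y = 1 \Rightarrow x = 1 \text{ or } y = 1)$, so it is a universal class and condition (1) holds. Condition (3) was just established. Condition (2) is proved exactly as in the proof of Theorem~\ref{t:SAPsemiconic}: given $\m A$ a subalgebra of $\m B$ in $\mathcal{V}$ and a congruence $\Theta$ of $\m A$ with $\m A/\Theta \in \mathcal{V}_{FSI}$, with corresponding congruence filter $F$, the set $F_B = {\uparrow}_{\m B} F$ is a congruence filter of $\m B$ (using Lemma~\ref{l:congruencefilters2}(3) and monotonicity of the terms $t_n$) with $F_B \cap A = F$; a Zorn's Lemma argument produces a congruence filter $G \supseteq F_B$ maximal with $G \cap A = F$, and the join-irreducibility of $[1]_{\m A/F}$ together with Lemma~\ref{l:adding_conjugates}(3) forces $G$ to be prime, so $\m B/G \in \mathcal{V}_{FSI}$ by Lemma~\ref{l:FSI}(4). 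That argument uses only that $\mathcal{V}$ is a variety of semiconic idempotent residuated lattices, so it applies here without change. Therefore $\mathcal{V}$ has the strong amalgamation property. By Lemma~\ref{lem:Hoogland1}, $\mathcal{V}$ also has the ES property.
\end{proof}
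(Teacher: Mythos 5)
Your proof is correct and follows essentially the same route as the paper: the paper's own (much terser) argument is precisely to apply Theorem~\ref{t:SAPsemiconic} after observing that the amalgam constructed in Theorem~\ref{t:sAPconic} can be taken to preserve trivial blocks, which is exactly what you verify in detail via the block-triviality characterizations of the two propositions. Your re-derivation of the hypotheses of Theorem~\ref{thm:strongMMT2014} is redundant, since Theorem~\ref{t:SAPsemiconic} already packages that lifting for any variety of semiconic idempotent residuated lattices, but it does no harm.
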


\begin{proof}
Both results follow from applying Theorem~\ref{t:SAPsemiconic}, and by noting that in each case the amalgam obtained in the proof of Theorem~\ref{t:sAPconic} may be taken to preserve trivial blocks.
\end{proof}

For brevity, we now assign names to several of the varieties of this study:
\begin{itemize}
\item $\mathcal{R}$: rigid, conjunctive semiconic idempotent residuated lattices.
\item $\mathcal{ISL^\star}$: $^\star$-involutive semilinear idempotent residuated lattices.
\item $\mathcal{DR}$: distributive, rigid, and conjunctive semiconic idempotent residuated lattices.
\item $\mathcal{CR}$: commutative, conjunctive semiconic idempotent residuated lattices.
\item $\mathcal{DCR}$: distributive, commutative, and conjunctive semiconic idempotent residuated lattices.
\item $\mathcal{SGSM}$: semiconic generalized Sugihara monoids.
\item $\mathcal{CSGSM}$: central semiconic generalized Sugihara monoids.
\end{itemize}
For each of the listed varieties $\mathcal{V}$, we denote by $\vdash_\mathcal{V}$ the logic corresponding to $\mathcal{V}$. Each of these is an axiomatic extension of $\vdash_\CIL$.
By applying Theorem~\ref{t:deductive interpolation} and the previous theorem, we obtain from Theorems~\ref{t:DeMSemSAP}, \ref{t:SAPsemilinearStarInv}, \ref{t:SAPcommsemiconic}, \ref{c:APsemiconic}, and Corollary~\ref{c:SGSM} the following result:

\begin{theorem}\label{t:application to interpolation}
Each of $\vdash_\mathcal{R}$, $\vdash_\mathcal{ISL^\star}$, $\vdash_\mathcal{DR}$, $\vdash_\mathcal{CR}$, $\vdash_\mathcal{DCR}$, $\vdash_\mathcal{SGSM}$, and $\vdash_\mathcal{CSGSM}$ has the deductive interpolation property.
\end{theorem}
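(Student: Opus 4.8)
The plan is essentially bookkeeping: the entire force of Theorem~\ref{t:application to interpolation} has already been assembled in the preceding subsections, and what remains is to route each listed variety through the appropriate bridge theorem. First I would recall the logical setup: each of the deductive systems $\vdash_\mathcal{R}$, $\vdash_\mathcal{ISL^\star}$, $\vdash_\mathcal{DR}$, $\vdash_\mathcal{CR}$, $\vdash_\mathcal{DCR}$, $\vdash_\mathcal{SGSM}$, $\vdash_\mathcal{CSGSM}$ is an axiomatic extension of $\vdash_\CIL$, hence is algebraizable with equivalent algebraic semantics the corresponding variety $\mathcal{V}\in\{\mathcal{R},\mathcal{ISL^\star},\mathcal{DR},\mathcal{CR},\mathcal{DCR},\mathcal{SGSM},\mathcal{CSGSM}\}$; and since every extension of $\m{\CIL}$ has a local deduction theorem, each $\vdash_\mathcal{V}$ has a local deduction theorem. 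The hypotheses of Theorem~\ref{t:deductive interpolation} are therefore met for each of these systems.

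Next I would invoke Theorem~\ref{t:deductive interpolation}, which reduces the deductive interpolation property for $\vdash_\mathcal{V}$ to the amalgamation property for $\mathcal{V}$. So it suffices to observe that each of the seven varieties has the amalgamation property, and this is exactly what was proved earlier: $\mathcal{R}$ has the strong amalgamation property by Theorem~\ref{t:DeMSemSAP}; $\mathcal{ISL^\star}$ by Theorem~\ref{t:SAPsemilinearStarInv}; $\mathcal{CR}$ by Theorem~\ref{t:SAPcommsemiconic}; $\mathcal{SGSM}$ and $\mathcal{CSGSM}$ by Corollary~\ref{c:SGSM}; and $\mathcal{DR}$ and $\mathcal{DCR}$ have the (ordinary) amalgamation property by Theorem~\ref{c:APsemiconic}(1) and (2), respectively. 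Since the strong amalgamation property entails the amalgamation property, all seven varieties have the amalgamation property, and the conclusion follows.

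I do not expect any genuine obstacle here: the proof is a one-line citation chain, and the only thing to be careful about is matching each variety to the precise theorem that handles it and making sure the algebraizability-plus-local-deduction-theorem hypotheses of Theorem~\ref{t:deductive interpolation} are explicitly noted. The substantive work — establishing the (strong) amalgamation property for these varieties of semiconic idempotent residuated lattices via the decomposition into skeletons and blocks, the analysis of $^\star$-involutive chains, and the transfer from finitely subdirectly irreducibles to the full variety through Theorem~\ref{thm:strongMMT2014} and Theorem~\ref{t:SAPsemiconic} — was all carried out in Sections~\ref{s:amalgamation} and the structural sections preceding it, so the present statement is simply the harvest.

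\begin{proof}
Each of the listed deductive systems is an axiomatic extension of $\vdash_{\m{\CIL}}$, hence is algebraizable, and its equivalent algebraic semantics is the corresponding variety among $\mathcal{R}$, $\mathcal{ISL^\star}$, $\mathcal{DR}$, $\mathcal{CR}$, $\mathcal{DCR}$, $\mathcal{SGSM}$, $\mathcal{CSGSM}$. Since every extension of $\m{\CIL}$ has a local deduction theorem, each of these systems has a local deduction theorem, so the hypotheses of Theorem~\ref{t:deductive interpolation} are satisfied. By that theorem, it suffices to verify that each of the seven varieties has the amalgamation property.

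Now $\mathcal{R}$ has the strong amalgamation property by Theorem~\ref{t:DeMSemSAP}, $\mathcal{ISL^\star}$ by Theorem~\ref{t:SAPsemilinearStarInv}, $\mathcal{CR}$ by Theorem~\ref{t:SAPcommsemiconic}, and $\mathcal{SGSM}$ and $\mathcal{CSGSM}$ by Corollary~\ref{c:SGSM}; since the strong amalgamation property implies the amalgamation property, each of these varieties has the amalgamation property. Finally, $\mathcal{DR}$ and $\mathcal{DCR}$ have the amalgamation property by Theorem~\ref{c:APsemiconic}(1) and (2), respectively. Applying Theorem~\ref{t:deductive interpolation} in each case yields that $\vdash_\mathcal{R}$, $\vdash_\mathcal{ISL^\star}$, $\vdash_\mathcal{DR}$, $\vdash_\mathcal{CR}$, $\vdash_\mathcal{DCR}$, $\vdash_\mathcal{SGSM}$, and $\vdash_\mathcal{CSGSM}$ each have the deductive interpolation property.
\end{proof}
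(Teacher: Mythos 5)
Your proposal is correct and follows exactly the route the paper takes: algebraizability plus the local deduction theorem for extensions of $\m{\CIL}$ feed into Theorem~\ref{t:deductive interpolation}, and the amalgamation property for each of the seven varieties is supplied by Theorems~\ref{t:DeMSemSAP}, \ref{t:SAPsemilinearStarInv}, \ref{t:SAPcommsemiconic}, \ref{c:APsemiconic}, and Corollary~\ref{c:SGSM}. The citation chain and the matching of varieties to theorems are exactly as in the paper.
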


We may apply Theorem~\ref{t:Beth def} to all but two of the logics mentioned in the previous theorem, arriving at our final result:

\begin{theorem}\label{r:application to Beth}
Each of $\vdash_\mathcal{R}$, $\vdash_\mathcal{ISL^\star}$, $\vdash_\mathcal{CR}$, $\vdash_\mathcal{SGSM}$, and $\vdash_\mathcal{CSGSM}$ has the projective Beth definability property.
\end{theorem}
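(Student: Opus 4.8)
The plan is to derive the statement as a corollary of the strong amalgamation results already obtained, via the bridge theorem linking projective Beth definability to strong epimorphism-surjectivity. First I would record that each of $\vdash_\mathcal{R}$, $\vdash_\mathcal{ISL^\star}$, $\vdash_\mathcal{CR}$, $\vdash_\mathcal{SGSM}$, $\vdash_\mathcal{CSGSM}$ is an axiomatic extension of $\vdash_{\m{\CIL}}$; since $\vdash_{\m{\CIL}}$ is algebraizable and axiomatic extensions of algebraizable deductive systems are again algebraizable, each $\vdash_\mathcal{V}$ is algebraizable with equivalent algebraic semantics the corresponding subvariety $\mathcal{V}\in\{\mathcal{R},\mathcal{ISL^\star},\mathcal{CR},\mathcal{SGSM},\mathcal{CSGSM}\}$ of semiconic idempotent residuated lattices. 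This is exactly the hypothesis required to apply Theorem~\ref{t:Beth def}.

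The second step is to invoke the strong amalgamation property for each of these five varieties: this is Theorem~\ref{t:DeMSemSAP} for $\mathcal{R}$, Theorem~\ref{t:SAPsemilinearStarInv} for $\mathcal{ISL^\star}$, Theorem~\ref{t:SAPcommsemiconic} for $\mathcal{CR}$, and Corollary~\ref{c:SGSM} for $\mathcal{SGSM}$ and $\mathcal{CSGSM}$. Since the strong amalgamation property entails the strong epimorphism-surjectivity property (see the discussion in \cite{GR2015}), each of these varieties has the strong epimorphism-surjectivity property. Applying Theorem~\ref{t:Beth def} to each pair $(\vdash_\mathcal{V},\mathcal{V})$ then yields that $\vdash_\mathcal{V}$ has the projective Beth definability property, which is the assertion.

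It is worth flagging why the list here is shorter than the one in Theorem~\ref{t:application to interpolation}: the omitted logics $\vdash_\mathcal{DR}$ and $\vdash_\mathcal{DCR}$ are precisely the distributive ones, and for these we have only established the amalgamation property (Theorem~\ref{c:APsemiconic}), not the strong amalgamation property---indeed distributive lattices fail strong amalgamation \cite{FrGr1990}---so the route through Theorem~\ref{t:Beth def} is unavailable for them. Consequently the argument is genuinely a routine consequence of the amalgamation work, with no substantive new obstacle; the only points that merit care are confirming that the equivalent algebraic semantics of $\vdash_\mathcal{V}$ is exactly $\mathcal{V}$ (so that the bridge theorem applies verbatim) and maintaining the distinction between the epimorphism-surjectivity property and its strong form, since it is the strong form that feeds the projective Beth definability bridge theorem.
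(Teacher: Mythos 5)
Your proposal is correct and follows essentially the same route as the paper: invoke the strong amalgamation results for the five varieties, pass to the strong epimorphism-surjectivity property, and apply the bridge theorem (Theorem~\ref{t:Beth def}), excluding the distributive cases for exactly the reason you give. Your explicit care about the distinction between the ES property and its strong form is a worthwhile clarification, since the paper's intermediate theorems only record the plain ES property while the bridge theorem requires the strong version.
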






\bibliographystyle{elsarticle-harv} 
\bibliography{conic} 





\end{document}